\newtheorem{thm}{Theorem}[section]
\newtheorem{cor}[thm]{Corollary}
\newtheorem{lem}[thm]{Lemma}
\newtheorem{prop}[thm]{Proposition}
\newtheorem{thmintro}{Theorem}
\theoremstyle{definition}
\newtheorem{defn}[thm]{Definition}
\newtheorem{rem}[thm]{Remark}
\newtheorem{cond}[thm]{Condition}
\newcommand{\enuma}[1]{\begin{enumerate}[\textup{(}a\textup{)}] {#1} \end{enumerate}}
\newcommand{\mb}{\mathbf}
\newcommand{\mh}{\mathbb}
\newcommand{\mr}{\mathrm}
\newcommand{\mc}{\mathcal}
\newcommand{\mf}{\mathfrak}
\newcommand{\isom}{\xrightarrow{\;\sim\;}}
\newcommand{\N}{\mathbb N}
\newcommand{\Z}{\mathbb Z}
\newcommand{\R}{\mathbb R}
\newcommand{\C}{\mathbb C}
\newcommand{\inp}[2]{\langle #1 \,,\, #2 \rangle}
\newcommand{\matje}[4]{\left(\begin{smallmatrix} #1 & #2 \\ 
#3 & #4 \end{smallmatrix}\right)}
\newcommand{\q}{/\!/}
\def\For{{\rm For}}
\def\Hom{{\rm Hom}}
\def\End{{\rm End}}
\def\Irr{{\rm Irr}}
\def\Unip{{\mathcal U}}
\def\SL{{\rm SL}}
\def\cC{{\mathcal C}}
\def\cG{{\mathcal G}}
\def\cS{{\mathcal S}}
\def\cV{{\mathcal V}}
\def\cL{{\mathcal L}}
\def\cH{{\mathcal H}}
\def\cB{{\mathcal B}}
\def\cO{{\mathcal O}}
\def\cF{{\mathcal F}}
\def\cP{{\mathcal P}}
\def\cR{{\mathfrak R}}
\def\Fr{{\rm Frob}}
\def\reg{{\rm reg}}
\def\ind{{\rm ind}}
\def\Mod{{\rm Mod}}
\def\nr{{\rm nr}}
\def\fs{{\mathfrak s}}
\def\Mod{{\rm Mod}}
\def\Stab{{\rm Stab}}
\def\Ad{{\rm Ad}}
\def\Lie{{\rm Lie}}
\def\der{{\rm der}}
\def\ad{{\rm ad}}
\def\sc{{\rm sc}}
\def\Aut{{\rm Aut}}
\def\temp{{\rm temp}}
\def\cusp{{\rm cusp}}
\def\pr{{\rm pr}}
\def\fB{{\mathfrak B}}
\def\IC{{\rm{IC}}}
\def\IM{{\rm{IM}}}
\begin{document}

\title[Graded Hecke algebras for disconnected reductive groups]{Graded Hecke algebras \\
for disconnected reductive groups}

\author[A.-M. Aubert]{Anne-Marie Aubert}
\address{Institut de Math\'ematiques de Jussieu -- Paris Rive Gauche, 
U.M.R. 7586 du C.N.R.S., U.P.M.C., 4 place Jussieu 75005 Paris, France}
\email{anne-marie.aubert@imj-prg.fr}
\author[A. Moussaoui]{Ahmed Moussaoui}
\address{Department of Mathematics and Statistics, University of Calgary, 
2500 University Drive NW, Calgary, Alberta, Canada}
\email{ahmed.moussaoui@ucalgary.ca}
\author[M. Solleveld]{Maarten Solleveld}
\address{IMAPP, Radboud Universiteit Nijmegen, Heyendaalseweg 135, 
6525AJ Nijmegen, the Netherlands}
\email{m.solleveld@science.ru.nl}
\date{\today}
\subjclass[2010]{20C08,14F43,20G20}
\maketitle

\begin{abstract}
We introduce graded Hecke algebras $\mh H$ based on a (possibly disconnected)
complex reductive group $G$ and a cuspidal local system $\cL$ on a unipotent
orbit of a Levi subgroup $M$ of $G$. These generalize the graded Hecke algebras
defined and investigated by Lusztig for connected $G$. 

We develop the representation theory of the algebras $\mh H$. obtaining complete 
and canonical parametrizations of the irreducible, the irreducible tempered and 
the discrete series representations. All the modules are constructed in 
terms of perverse sheaves and equivariant homology, relying on work of Lusztig.
The parameters come directly from the data $(G,M,\cL)$ and they are closely
related to Langlands parameters. 

Our main motivation for considering these graded Hecke algebras is that the
space of irreducible $\mh H$-representations is canonically in bijection with
a certain set of "logarithms" of enhanced L-parameters. Therefore we expect
these algebras to play a role in the local Langlands program. We will make
their relation with the local Langlands correspondence, which goes via affine 
Hecke algebras, precise in a sequel to this paper.\\
\textbf{Erratum.} Theorem \ref{thm:2.24} and Proposition \ref{prop:2.25} were not 
entirely correct as stated. This is repaired in a new appendix.
\end{abstract}

\vspace{7mm}

\tableofcontents

\newpage

\section*{Introduction}

The study of Hecke algebras and more specifically their simple modules is 
a powerful tool in representation theory. They can be used to build bridges 
between different objects. Indeed they can arise arithmetically (as endomorphism 
algebras of a parabolically induced representation) or geometrically (using 
K-theory or equivariant homology). For example, this strategy was successfully 
used by Lusztig in his Langlands parametrization of unipotent representations 
of a connected, adjoint simple unramified group over a nonarchimedean local 
field \cite{Lus6,Lus8}. This paper is part of a series, whose final goal is
to generalize these methods to arbitrary irreducible representations of
arbitrary reductive $p$-adic groups. In the introduction we discuss the 
results proven in the paper, and in Section \ref{sec:outline} we shed some light 
on the envisaged relation with the Langlands parameters. 

After \cite{AMS}, where the authors extended the generalized Springer 
correspondence in the context of a reductive disconnected complex group, 
this article is devoted to generalize in this context several results of the 
series of papers of Lusztig \cite{Lus3,Lus5,Lus7}. Let $G$ be an complex reductive 
algebraic group with Lie algebra $\mf g$. Although we do not assume that G is connected, 
it has only finitely components because it is algebraic. Let $L$ be a Levi factor 
of a parabolic subgroup $P$ of $G^\circ$, $T=Z(L)^{\circ}$ the connected center of 
$L$, $\mf t$ its Lie algebra and $v \in \mf l = \mathrm{Lie}(L)$ be nilpotent. 
Let $\cC_v^L$ be the adjoint orbit of $v$ and let $\cL$ be an irreducible 
$L$-equivariant cuspidal local system on $\cC_v^L$. The triples $(L,\cC_v^L,\cL)$ 
(or more precisely their $G$-conjugacy classes) defined by data of the above 
kind will be called \emph{cuspidal supports} for $G$. 
We associate to $\tau=(L,\cC_v^L,\cL)_G$ a twisted version $\mh H (G,L,\cL)=\mh H (G,\tau)$ 
of a graded Hecke algebra  
and study its simple modules. More precisely, let $W_\tau= N_G (\tau) / L$, 
$W_\tau^\circ = N_{G^\circ}(\tau) / L$ and $\cR_\tau = N_G (P,\cL) / L$. 
Then $W_\tau = W_\tau^\circ \rtimes \cR_\tau$. Let ${\mb r}$ be an indeterminate 
and $\natural_{\tau} \colon \cR_\tau^2 \to \C^\times$ be a (suitable) 2-cocycle. 
The twisted graded Hecke algebra associated to $\tau$ is the vector space
\[
\mh H (G,\tau) =\C [W_\tau,\natural_{\tau}] \otimes S(\mf t^*) \otimes \C[{\mb r}] ,
\] with multiplication as in Proposition \ref{prop:1.4}. 
As $W_\tau= W_\tau^\circ \rtimes \cR_\tau$ and $W_\tau$ plays the role of 
$W_\tau^\circ$ in the generalized Springer correspondence for disconnected groups, 
the algebra $\mh H (G,\tau)$ contains the graded Hecke algebra 
$\mh H (G^{\circ},\tau)$ defined by Lusztig in \cite{Lus3} and plays the role of
the latter in the disconnected context. More precisely, let $y \in \mf g$ be 
nilpotent and let $(\sigma,r) \in \mf g \oplus \C$ be semisimple such that 
$[\sigma,y]=2ry$. Let $\sigma_0=\sigma-r h \in \mf t$ with $h \in \mf g$ 
a semisimple element which commutes with $\sigma$ and which arises in a 
$\mf{sl}_2$-triple containing $y$. 
Then we have $\pi_0 (Z_G (\sigma,y))=\pi_0 (Z_G (\sigma_0,y))$, where $Z_G (\sigma,y)$ denotes 
the simultaneous centralizer of $\sigma$ and $y$ in $G$, and respectively for $\sigma_0$. 
We also denote by $\Psi_G$ the cuspidal support map defined in \cite{Lus2,AMS}, 
which associates to every pair $(x,\rho)$ with $x \in \mf g$ nilpotent and 
$\rho \in \Irr \big( \pi_0 (Z_G (x)) \big)$ (with $Z_G (x)$ the centralizer of $x$ in $G$) 
a cuspidal support $(L',\cC_{v'}^{L'},\cL')$.\\

Using equivariant homology methods, we define standard modules in the same 
way as in \cite{Lus3} and denote by $E_{y,\sigma,r}$ (resp. 
$E_{y,\sigma,r,\rho}$) the one which is associated to $y,\sigma,r$ (resp. $y,\sigma,r$ 
and $\rho \in \Irr \big( \pi_0 (Z_G (\sigma,y)) \big)$). They are modules over 
$\mh H (G,\tau)$ and we have the following theorem:

\begin{thmintro}\label{thmintro:1}
Fix $r \in \C$. 
\enuma{
\item Let $y,\sigma \in \mf g$ with $y$ nilpotent, $\sigma$ semisimple
and $[\sigma,y] = 2r y$. Let $\rho\in \\ \Irr \big( \pi_0 (Z_G (\sigma_0,y)) \big)$
such that $\Psi_{Z_G (\sigma_0)} (y,\rho) = \tau=(L,\cC_v^L,\cL)_G$. With these data 
we associate a $\mh H (G,\tau)$-module 
$E_{y,\sigma,r,\rho}$. The $\mh H (G,\tau)$-module $E_{y,\sigma,r,\rho}$ has a 
distinguished irreducible quotient $M_{y,\sigma,r,\rho}$, which appears with 
multiplicity one in $E_{y,\sigma,r,\rho}$.
\item The map $M_{y,\sigma,r,\rho} \longleftrightarrow (y,\sigma,\rho)$ gives
a bijection between $\Irr_r (\mh H (G,\tau))$ and $G$-conjugacy classes of
triples as in part (a).
\item The set $\Irr_r (\mh H (G,\tau))$ is also canonically in bijection with
the following two sets:
\begin{itemize}
\item $G$-orbits of pairs $(x,\rho)$ with $x \in \mf g$ and $\rho \in \Irr \big(
\pi_0 (Z_G (x)) \big)$ such that $\Psi_{Z_G (x_S)} (x_N,\rho) = \tau$,
where $x=x_S+x_N$ is the Jordan decomposition of~$x$.
\item $N_G (L)/L$-orbits of triples $(\sigma_0,\cC,\cF)$, with $\sigma_0 \in \mf t$,
$\cC$ a nilpotent $Z_G (\sigma_0)$-orbit in $Z_{\mf g}(\sigma_0)$ and $\cF$ a
$Z_G (\sigma_0)$-equivariant cuspidal local system on $\cC$ such that
$\Psi_{Z_G (\sigma_0)} (\cC,\cF) = \tau$.
\end{itemize}
}
\end{thmintro}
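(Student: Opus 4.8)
The plan is to reduce everything to the connected case treated by Lusztig in \cite{Lus3}, together with the generalized Springer correspondence for disconnected groups from \cite{AMS}, and then to glue the pieces using the decomposition $W_\tau = W_\tau^\circ \rtimes \cR_\tau$. For part (a), I would construct $E_{y,\sigma,r,\rho}$ via equivariant homology of a variety of the type $\cP_{y,\sigma,r} = \{ gP \in G/P : \Ad(g)^{-1} y \in \mf{t}_{nil} \oplus \mf{u}_P + \ldots \}$ (the precise fixed-point/flag variety used by Lusztig), on which the equivariant cohomology $H^*_{Z_G(\sigma,y)}$ acts. The action of the full algebra $\mh H(G,\tau)$ — as opposed to $\mh H(G^\circ,\tau)$ — comes from the extra symmetries recorded by $\cR_\tau = N_G(P,\cL)/L$ acting on this geometric data, and the twisting by $\natural_\tau$ is exactly the obstruction to lifting these symmetries to honest operators. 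Once $E_{y,\sigma,r,\rho}$ is built, the existence of a canonical irreducible quotient $M_{y,\sigma,r,\rho}$ appearing with multiplicity one follows the standard pattern: one identifies the "lowest" piece of a filtration (by support or by the grading on equivariant homology), shows the top of the standard module is generated by a single $\C[W_\tau,\natural_\tau]$-isotypic component, and uses a costandard/standard duality pairing to see the multiplicity-one statement. Here one must check that the decomposition according to $\Irr(\pi_0(Z_G(\sigma_0,y)))$ is compatible with the $\mh H(G,\tau)$-action, which uses $\pi_0(Z_G(\sigma,y)) = \pi_0(Z_G(\sigma_0,y))$ as stated in the excerpt.

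For part (b), I would show the map $M_{y,\sigma,r,\rho} \leftrightarrow (y,\sigma,\rho)$ is well-defined, injective, and surjective. Well-definedness and $G$-equivariance are formal from the construction. Injectivity and surjectivity should be proved by a counting/support argument: stratify $\Irr_r(\mh H(G,\tau))$ by the central character, i.e.\ by the $W_\tau$-orbit of $\sigma_0$ (equivalently the image of $\sigma$ in $\mf{t}/W_\tau$ up to the relevant action), and on each stratum reduce to a statement about modules over a smaller algebra attached to $Z_G(\sigma_0)$ — a group to which the connected-case machinery, suitably extended to disconnected groups, applies. This is the standard "reduction to $r \ne 0$ vs. $r = 0$" and "reduction to the nilpotent case" dichotomy from \cite{Lus3}; the new input is that at each stage one carries along the component group $\cR_\tau$ and checks the bijection respects the semidirect product structure. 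The key technical ingredient is that the set of $(y,\rho)$ with $\Psi_{Z_G(\sigma_0)}(y,\rho) = \tau$ is exactly indexed by $\Irr(\C[W_\tau^{\sigma_0},\natural])$ for the appropriate relative Weyl group — this is the generalized Springer correspondence of \cite{AMS} applied inside $Z_G(\sigma_0)$.

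For part (c), the first bijection is a bookkeeping translation: given $(x,\rho)$ with Jordan decomposition $x = x_S + x_N$, one conjugates $x_S$ into $\mf{t}$, sets $\sigma = x_S + r\cdot(\text{something involving an }\mf{sl}_2\text{-triple for }x_N)$ so that $[\sigma, x_N] = 2r x_N$, and checks that $\Psi_{Z_G(x_S)}(x_N,\rho) = \tau$ translates into the condition in part (a) via $\pi_0(Z_G(\sigma,x_N)) = \pi_0(Z_G(\sigma_0,x_N))$ and $\Psi_{Z_G(\sigma_0)}(x_N,\rho) = \tau$; one must verify this is a bijection on $G$-orbits, the only subtlety being the choice of $\mf{sl}_2$-triple, which is unique up to $Z_G(x_S,x_N)^\circ$-conjugacy and hence harmless. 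The second bijection discards $r$ and $y$ in favour of the pair $(\sigma_0, \cC, \cF)$ where $\cC = \cC_{x_N}^{Z_G(\sigma_0)}$ and $\cF = \rho$ transported along the generalized Springer correspondence; here the group acting changes from $G$ to $N_G(L)/L$ because fixing $\sigma_0 \in \mf{t}$ (rather than $\sigma$) rigidifies the torus, and one checks that two triples $(\sigma_0,\cC,\cF)$ and $(\sigma_0',\cC',\cF')$ give the same $\mh H(G,\tau)$-module iff they are $N_G(L)/L$-conjugate, which is again a translation of the $G$-conjugacy of $(y,\sigma,\rho)$ combined with the fact that $\sigma_0$ is only well-defined up to $W_\tau = N_G(L,\cL)/L$, refined to $N_G(L)/L$ by also moving $\cF$.

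The main obstacle is part (b): establishing that the geometrically constructed standard modules $E_{y,\sigma,r,\rho}$ exhaust $\Irr_r(\mh H(G,\tau))$ and that the irreducible quotients are pairwise non-isomorphic. In the connected case Lusztig proves this by a delicate interplay of the equivariant homology realization with a "generic" deformation argument and the classification of simple modules of the specialized algebra; transporting this to $\mh H(G,\tau)$ requires controlling how the $2$-cocycle $\natural_\tau$ interacts with every step — in particular one needs that $\C[W_\tau,\natural_\tau]$ has the "right" number of irreducibles matching the disconnected generalized Springer correspondence of \cite{AMS}, and that Clifford theory for the extension $W_\tau^\circ \lhd W_\tau$ is compatible with the geometric induction. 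I expect this compatibility, rather than any single hard estimate, to be where the real work lies.
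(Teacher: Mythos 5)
Your outline follows the same broad route as the paper (Lusztig's geometric standard modules for $G^\circ$, plus Clifford theory for $W_\tau = W_\tau^\circ \rtimes \cR_\tau$ and the disconnected Springer correspondence of \cite{AMS}), and you correctly locate where the difficulty sits; but the proposal stops exactly at those points, and two of your guesses about the mechanism are off. First, the heart of the matter is linearizing the $\cR_\tau$-symmetry on standard modules: one must produce intertwiners $J^\gamma$ on $E^\circ_{y,\sigma,r,\rho^\circ}$, for $\gamma$ in the stabilizer $\cR_{\cL,y,\sigma,\rho^\circ}$, which form a genuine group homomorphism, not merely a projective action. This is not formal: the paper proves it (Proposition \ref{prop:2.13}) by showing the operators are unique up to scalar (via irreducibility of $E^\circ_{y,\sigma,r,\rho^\circ}$ for Zariski-generic $\sigma$), depend algebraically on $(\sigma,r)$, and can be normalized at $r=0$ using a Weyl-group intertwiner result from \cite{ABPS5}; a scalar discrepancy $\lambda(s_i,\gamma)$ must also be killed by a computation with the braid relations and $c_i>0$ (Lemma \ref{lem:2.12}). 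Your remark that $\natural_\tau$ is ``exactly the obstruction to lifting these symmetries to honest operators'' has this backwards on the module side: the obstruction there vanishes; the cocycle lives in the endomorphism algebra of $(\pr_1)_!\dot{\cL}$ and, crucially, reappears on the group side. Indeed the second missing ingredient is the identification of the Clifford 2-cocycle for extending $\rho^\circ$ from $\pi_0(Z_{G^\circ}(\sigma,y))$ to $\pi_0(Z_G(\sigma,y))$ with $\natural_\cL^{-1}$ (Lemma \ref{lem:2.11}), together with the matching isomorphism $E_{y,\sigma,r,\rho^\circ\rtimes\tau^*}\cong \tau\ltimes E^\circ_{y,\sigma,r,\rho^\circ}$ (Lemma \ref{lem:2.15}); without these one only gets a parametrization by projective enhancements, not by honest $\rho\in\Irr\big(\pi_0(Z_G(\sigma_0,y))\big)$ as the theorem asserts.

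Second, your proposed mechanism for the distinguished quotient and multiplicity one (``costandard/standard duality pairing'' and a lowest filtration piece) is not what carries the argument and would need substantial justification. For $r\neq 0$ the paper uses Lusztig's statement that every other constituent of $E^\circ_{y,\sigma,r,\rho^\circ}$ is some $M^\circ_{y',\sigma',r,\rho'}$ with $\dim\cC_y^{G^\circ}<\dim\cC_{y'}^{G^\circ}$ (Theorem \ref{thm:2.9}.d), and then an explicit argument (Wedderburn plus a generation argument over the $\cR_\cL$-translates) showing $\tau\ltimes N^\circ$ is the \emph{unique} maximal proper submodule of the induced module; for $r=0$ your instinct about the homological grading is the right one, but pinning down the degree and multiplicity one requires the pointwise purity input from \cite{LusCS} (as in Lemma \ref{lem:2.8}), plus the complete reducibility of $E_{y,\sigma,0,\rho}$. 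Finally, two bookkeeping steps are absent: the reduction from general $G$ to the case $G=G^\circ N_G(P,\cL)$ (the theorem's $\rho$ must be replaced by an induced $\rho^+$ when this fails, Lemma \ref{lem:2.19}), and the passage $(y,\sigma,\rho)\leftrightarrow(y,\sigma_0,\rho)\leftrightarrow(x,\rho)$, which does go through the uniqueness of the $\mf{sl}_2$-triple up to $Z_G(y)$-conjugacy as you say (Lemma \ref{lem:2.2}, Corollary \ref{cor:2.18}). In short: right skeleton, but the decisive lemmas — cocycle identification, linearization of intertwiners, and the unique-quotient argument — are exactly what is left unproved.
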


Next we investigate the questions of temperedness and discrete series of 
$\mh H (G,\tau)$-modules. Recall that the vector space 
$\mf t=X_*(T) \otimes_{\Z} \C$ has a decomposition 
$\mf t=\mf t_{\R} \oplus i \mf t_{\R}$ with $\mf t_{\R}=X_*(T) \otimes_{\Z} \R$. 
Hence any $x \in \mf t$ can be written uniquely as $x = \Re (x)+i \Im(x)$. 
We obtain the following:

\begin{thmintro}(see Theorem \ref{thm:2.19}) \\
\label{thmintro:2}
Let $y,\sigma,\rho$ be as above with $\sigma, \sigma_0 \in \mf t$.
\enuma{
\item Suppose that $\Re (r) \leq 0$. The following are equivalent:
\begin{itemize}
\item $E_{y,\sigma,r,\rho}$ is tempered;
\item $M_{y,\sigma,r,\rho}$ is tempered;
\item $\sigma_0 \in i \mf t_\R$.
\end{itemize}
\item Suppose that $\Re (r) \geq 0$. Then part (a) remains valid if we
replace tempered by anti-tempered.\\

Assume further that $G^\circ$ is semisimple.
\item Suppose that $\Re (r) < 0$. The following are equivalent:
\begin{itemize}
\item $M_{y,\sigma,r,\rho}$ is discrete series;
\item $y$ is distinguished in $\mf g$, that is, it is not contained in any
proper Levi subalgebra of $\mf g$.
\end{itemize}
Moreover, if these conditions are fulfilled, then $\sigma_0 = 0$ and
$E_{y,\sigma,r,\rho} = M_{y,\sigma,r,\rho}$.
\item Suppose that $\Re (r) > 0$. Then part (c) remains valid if we replace
(i) by: $M_{y,\sigma,r,\rho}$ is anti-discrete series.
\item For $\Re (r) = 0$ there are no (anti-)discrete series representations
on which $\mb r$ acts as $r$.
} 
\end{thmintro}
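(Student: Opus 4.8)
The plan is to reduce the whole statement to Lusztig's results for connected groups \cite{Lus3,Lus5,Lus7} by restricting modules along the inclusion $\mh H(G^\circ,\tau)\subseteq\mh H(G,\tau)$. Recall that an $\mh H(G,\tau)$-module $V$ on which $\mb r$ acts by $r$ is \emph{tempered} when every $S(\mf t^*)$-weight $\lambda$ of $V$ has $\Re(\lambda)$ in the antidominant cone $\mf t_\R^-$ of the root system underlying $\mh H(G^\circ,\tau)$, \emph{anti-tempered} when instead $\Re(\lambda)\in\mf t_\R^+$, and \emph{(anti-)discrete series} when in addition the common value of these real parts lies in the interior of $\mf t_\R^\mp$; equivalently, $V$ is discrete series iff it is not a subquotient of a module induced from a proper parabolic subalgebra $\mh H(Z_G(\xi),\tau)$, $\xi\in\mf t_\R$. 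Because $S(\mf t^*)\subseteq\mh H(G^\circ,\tau)$ and the parabolic subalgebras of $\mh H(G,\tau)$ restrict to those of $\mh H(G^\circ,\tau)$, each of these properties of $V$ holds iff it holds for every irreducible constituent of $V|_{\mh H(G^\circ,\tau)}$ (via $W_\tau=W_\tau^\circ\rtimes\cR_\tau$ and Clifford theory). From the equivariant-homology construction one moreover reads off that $E_{y,\sigma,r,\rho}|_{\mh H(G^\circ,\tau)}$, respectively $M_{y,\sigma,r,\rho}|_{\mh H(G^\circ,\tau)}$, is a direct sum of Lusztig's standard modules, respectively of their irreducible quotients, all attached to the same $\sigma$ and hence the same $\sigma_0$. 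Granting these reductions, parts (a) and (b) follow by applying Lusztig's temperedness criterion \cite{Lus5} to the constituents: it shows they are tempered iff $\sigma_0\in i\mf t_\R$ when $\Re r\le 0$, and anti-tempered iff $\sigma_0\in i\mf t_\R$ when $\Re r\ge 0$.

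For parts (c)--(e) assume $G^\circ$ semisimple. The one elementary observation is that $\Psi_{Z_G(\sigma_0)}(y,\rho)=\tau$ forces $y\in Z_{\mf g}(\sigma_0)$, that $Z_{\mf g}(\sigma_0)$ is a Levi subalgebra of $\mf g$ (it contains $\mf t$ and centralizes the semisimple element $\sigma_0$), and that it is a \emph{proper} Levi subalgebra as soon as $\sigma_0\neq 0$, since $\mf g$ is semisimple. As the input from the connected theory I take Lusztig's description of the case $\sigma_0=0$ \cite{Lus3}: for $\Re r<0$ and $\sigma_0=0$ the standard module $E_{y,rh,r,\rho}$ is irreducible, and it is discrete series if and only if $y$ is distinguished in $\mf g$ (this being Lusztig's characterization of distinguished nilpotents in terms of the neutral element of an $\mf{sl}_2$-triple). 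Irreducibility of $E_{y,rh,r,\rho}$ also holds over $\mh H(G,\tau)$: its restriction to $\mh H(G^\circ,\tau)$ is a sum of irreducibles by \cite{Lus3}, hence a semisimple $\mh H(G,\tau)$-module, and a semisimple module with a unique irreducible quotient is irreducible.

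Now fix $\Re r<0$. If $y$ is distinguished in $\mf g$, then $y$ lies in no proper Levi subalgebra, so the elementary observation forces $\sigma_0=0$; by the input above, $E_{y,\sigma,r,\rho}=M_{y,\sigma,r,\rho}$ and this module is discrete series, which gives (ii)$\Rightarrow$(i) together with the \emph{moreover} clause of (c). Conversely, if $M_{y,\sigma,r,\rho}$ is discrete series then it is tempered, so $\sigma_0\in i\mf t_\R$ by (a); writing $\sigma_0=i\xi$ with $\xi\in\mf t_\R$, the compatibility of standard modules with parabolic induction --- the disconnected counterpart of Lusztig's reduction to $\sigma_0=0$ --- gives $E_{y,\sigma,r,\rho}\cong\Ind_{\mh H(Z_G(\xi),\tau)}^{\mh H(G,\tau)}(-)$, and $\mh H(Z_G(\xi),\tau)$ is a proper parabolic subalgebra whenever $\xi\neq 0$; since a discrete series module is not a subquotient of a properly induced module, $\xi=0$, so $\sigma_0=0$ and the input above forces $y$ distinguished. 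This proves (c); part (d) is obtained identically from the $\Re r>0$ half of the connected results. Finally, for (e): if $\Re r=0$ and $M_{y,\sigma,r,\rho}$ were (anti-)discrete series, it would be tempered, hence $\sigma_0\in i\mf t_\R$ by (a); but then $\Re(\sigma)=\Re(\sigma_0)+\Re(r)h=0$ (using $h\in\mf t_\R$), so every $S(\mf t^*)$-weight of $E_{y,\sigma,r,\rho}$, and a fortiori of $M_{y,\sigma,r,\rho}$, is purely imaginary, and the common real part $0$ cannot lie in the interior of the relevant (pointed) cone $\mf t_\R^-$ or $\mf t_\R^+$, a contradiction.

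The representation-theoretic part above is routine once the machinery is in place; the genuine work --- which I would settle first --- is the two structural inputs used throughout: that restriction to $\mh H(G^\circ,\tau)$ carries the standard modules $E_{y,\sigma,r,\rho}$ and their quotients $M_{y,\sigma,r,\rho}$ to sums of Lusztig's modules for the \emph{same} $\sigma$, and that the standard modules of $\mh H(G,\tau)$ are compatible with parabolic induction from $\mh H(Z_G(\xi),\tau)$. Both come down to comparing the $Z_G(\sigma,y)$-equivariant homology construction with the $Z_{G^\circ}(\sigma,y)$-equivariant one while keeping track of the component group $\cR_\tau$ and the $2$-cocycle $\natural_\tau$; this --- rather than the reduction to \cite{Lus3,Lus5,Lus7} --- is where I expect the main difficulty to lie.
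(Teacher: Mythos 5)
Your reduction to $\mh H (G^\circ,L,\cL)$ via Clifford theory, the lifting of temperedness along $\mathrm{Wt}(M_{y,\sigma,r,\rho}) = \cR_\cL \mathrm{Wt}(M^\circ_{y,\sigma,r,\rho^\circ})$, and your argument for (e) all match the paper. The genuine gap is in the converse direction of (c)/(d). You force $\sigma_0 = 0$ by invoking the principle that a discrete series module is never a subquotient of a module induced from a proper parabolic subalgebra $\mh H (Z_G(\xi),L,\cL)$. With the weight-theoretic Definition \ref{def:2.temp} this principle is false: already for the rank-one algebra attached to $(\SL_2, T, \mathrm{triv})$ the one-dimensional Steinberg module is discrete series for $\Re (r)<0$, yet it is a constituent of the two-dimensional principal series induced from the proper parabolic subalgebra $S(\mf t^*)\otimes \C[\mb r]$ at $\sigma = r\alpha^\vee$ (the $p$-adic analogue: Steinberg occurs in an induced-from-Borel representation). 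Moreover the induction isomorphism $E_{y,\sigma,r,\rho} \cong \mh H (G,L,\cL) \otimes_{\mh H (Z_G(\xi),L,\cL)} (\cdot)$ you appeal to is exactly Theorem \ref{thm:2.24}/Proposition \ref{prop:2.25}, which is not correct in general (see the erratum and appendix); it needs $r=0$ or $\epsilon (\sigma,r)\neq 0$, a condition that does hold here by Lemma \ref{lem:A.2} but must be verified. The step can in fact be repaired without any induction: by Proposition \ref{prop:2.3}.b the weights of $M_{y,\sigma,r,\rho}$ lie in $W_\cL \sigma$, and if $\sigma_0 \in i\mf t_\R \setminus \{0\}$ then $\Re (\sigma)$ lies in ($W_\cL$-conjugates of) the coroot span of the proper Levi $Z_{\mf g}(\sigma_0)$, and no such element belongs to $\mf t_\R^{--}$. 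This is essentially what the paper extracts wholesale from Lusztig's square-integrability criterion \cite[Theorem 1.22]{Lus7}, applied for arbitrary $\sigma_0$ rather than only $\sigma_0 = 0$; note also that your "$\sigma_0=0$ input" (discrete series $\Leftrightarrow$ $y$ distinguished, plus $E=M$) is itself that criterion and lives in \cite{Lus7}, not \cite{Lus3}.

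There is a second, smaller gap in (a)/(b): Lusztig's temperedness criterion is \cite[Theorem 1.21]{Lus7} (not \cite{Lus5}) and it is only available when $\Re (r) \neq 0$, whereas the statement includes $\Re (r)=0$. The paper treats this boundary case separately: (iii)$\Rightarrow$(i) by keeping $\sigma_0$ fixed, letting $r$ vary, and using that the weights depend algebraically on $r$ while $\mf t_\R^-$ is closed; and (ii)$\Rightarrow$(iii) by an explicit central-character computation. Your proposal is silent on this case, and it also takes for granted (what the paper verifies) that Lusztig's $\tau$-temperedness, formulated via the dominant weights $\xi_V$, coincides with the cone condition $\Re (\mathrm{Wt}) \subset \mf t_\R^-$.
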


Moreover, using the Iwahori--Matsumoto involution we give another description 
of tempered modules when $\Re (r)$ is positive, and this is more suitable in 
the context of the Langlands correspondence. 

The last section consists 
of the formulation of the previous results in terms of cuspidal quasi-supports, 
which is more adapted than cuspidal supports in the context of Langlands 
correspondence, as it can be seen in \cite[\textsection 5--6]{AMS}.

Recall that a quasi-Levi subgroup of $G$ is a group of the form
$M = Z_G (Z(L)^\circ)$, where $L$ is a Levi subgroup of $G^\circ$. Thus
$Z (M)^\circ = Z(L)^\circ$ and $M \longleftrightarrow L = M^\circ$ is a bijection
between quasi-Levi subgroups of $G$ and the Levi subgroups of $G^\circ$.

A \emph{cuspidal quasi-support} for $G$ is the $G$-conjugacy class of $q\tau$ 
of a triple $(M,\cC_v^M,q\cL)$, where $M$ is 
a quasi-Levi subgroup of $G$, $\cC_v^M$ is a nilpotent $\Ad(M)$-orbit in 
$\mf m = \mathrm{Lie}(M)$ and $q \cL$ is a $M$-equivariant cuspidal local 
system on $\cC_v^M$, i.e. as  $M^\circ$-equivariant local system it is a 
direct sum of cuspidal local systems. We denote by $q \Psi_G$ the cuspidal 
quasi-support map defined in \cite[\textsection 5]{AMS}. With the cuspidal 
quasi-support $q\tau=(M,\cC_v^M,q\cL)_G$, we associate a twisted graded Hecke algebra 
denoted $\mh H (G,q\tau)$.

\begin{thmintro}\label{thmintro:3}
The analog of Theorem with quasi cuspidal supports instead of cuspidal ones holds true.
\end{thmintro}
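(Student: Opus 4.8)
The plan is to reduce the statement for cuspidal quasi-supports to the already-established results for cuspidal supports, by exploiting the bijection $M \longleftrightarrow L = M^\circ$ between quasi-Levi subgroups of $G$ and Levi subgroups of $G^\circ$, together with the compatibility between the cuspidal support map $\Psi_G$ and the cuspidal quasi-support map $q\Psi_G$ established in \cite[\textsection 5]{AMS}. First I would record the key algebraic input: for a cuspidal quasi-support $q\tau = (M,\cC_v^M,q\cL)_G$, the decomposition of $q\cL$ as an $M^\circ$-equivariant local system gives a cuspidal support $\tau = (L,\cC_v^L,\cL)_G$ for a suitable constituent $\cL$, and the groups governing the two Hecke algebras agree: one checks that $N_G(q\tau)/M \cong N_G(\tau)/L = W_\tau$, that the ``connected'' parts $W_{q\tau}^\circ$ and $W_\tau^\circ$ coincide (both being the relative Weyl group $N_{G^\circ}(L)/L$ attached to the datum), and that the R-groups $\cR_{q\tau}$ and $\cR_\tau$ together with their $2$-cocycles $\natural$ match. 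Consequently $\mh H(G,q\tau) = \mh H(G,\tau)$ as algebras, by Proposition \ref{prop:1.4}.

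Next I would match the geometric parameters. The point is that the datum $\sigma_0 \in \mf t = \Lie(Z(M)^\circ) = \Lie(Z(L)^\circ)$ is literally the same for both pictures, and for nilpotent $y \in \mf g$ with $[\sigma,y] = 2ry$ one has $\pi_0(Z_G(\sigma_0,y))$ intervening in both; the content of \cite[\textsection 5]{AMS} is precisely that $q\Psi_{Z_G(\sigma_0)}(y,\rho) = q\tau$ if and only if $\Psi_{Z_G(\sigma_0)}(y,\rho) = \tau$ for the associated ordinary cuspidal support. Therefore the set of $G$-conjugacy classes of triples $(y,\sigma,\rho)$ feeding Theorem \ref{thmintro:1} for $q\tau$ is in canonical bijection with the corresponding set for $\tau$, and the standard modules $E_{y,\sigma,r,\rho}$ are constructed from the same equivariant homology of the same varieties $\cV_{y,\sigma}$ (the construction in \cite{Lus3}, as generalized in the body of the paper, only sees $Z_G(\sigma_0)$ and the nilpotent $y$, not the distinction between $L$ and $M$). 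Hence $E_{y,\sigma,r,\rho}$, its distinguished irreducible quotient $M_{y,\sigma,r,\rho}$, the multiplicity-one statement, and the three bijections of Theorem \ref{thmintro:1} transport verbatim.

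For Theorem \ref{thmintro:2} the argument is even more direct: temperedness and discrete series are intrinsic properties of an $\mh H$-module together with the real-part decomposition $\mf t = \mf t_\R \oplus i\mf t_\R$, and since $\mh H(G,q\tau) = \mh H(G,\tau)$ as algebras with the same $\mf t$, the same $\mb r$, and the same modules, all the equivalences (tempered $\Leftrightarrow$ $\sigma_0 \in i\mf t_\R$, the discrete series criterion ``$y$ distinguished in $\mf g$'' under $G^\circ$ semisimple, the consequences $\sigma_0 = 0$ and $E = M$, and the absence of discrete series for $\Re(r) = 0$) hold for $q\tau$ because they hold for $\tau$. So the proof is essentially: establish the isomorphism $\mh H(G,q\tau) \cong \mh H(G,\tau)$ compatibly with all structure, invoke the $\Psi$/$q\Psi$ compatibility to identify parameter sets, and quote Theorems \ref{thmintro:1} and \ref{thmintro:2}.

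The main obstacle I anticipate is the bookkeeping in the first step: verifying that the passage from $q\cL$ to a summand $\cL$ is harmless, i.e. that the choice of constituent does not affect $\mh H(G,q\tau)$ up to canonical isomorphism, and that the $2$-cocycles $\natural_{q\tau}$ and $\natural_\tau$ genuinely agree (rather than merely being cohomologous) under the identification $\cR_{q\tau} \cong \cR_\tau$ — this requires care because the normalizer $N_G(P,\cL)$ used to define $\cR_\tau$ is phrased in terms of $P$ and $\cL$, whereas $\cR_{q\tau}$ is phrased in terms of $M$ and $q\cL$, and one must check these give the same group with the same extension data. Once that identification is nailed down, everything else is transport of structure along an algebra isomorphism, with \cite[\textsection 5--6]{AMS} supplying the compatibility of the two cuspidal (quasi-)support maps.
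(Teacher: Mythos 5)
Your reduction rests on two claims: that $\mh H (G,q\tau) = \mh H (G,\tau)$ (i.e.\ $W_{q\cL}\cong W_\cL$, $\cR_{q\cL}\cong \cR_\cL$, same cocycle), and that $q\Psi_{Z_G(\sigma_0)}(y,\rho)=q\tau$ if and only if $\Psi_{Z_G(\sigma_0)}(y,\rho)=\tau$. Both are false in general, and this is not bookkeeping — it is exactly the point of the quasi-support formalism. Indeed $W_{q\cL}=N_G(q\cL)/M$ while $W_\cL = N_G(\cL)/L$ with $L=M^\circ$; only the connected parts agree. For instance, if $M\neq M^\circ$ and all of $M$ stabilizes $\cL$ (so $M_\cL=M$), then every element of $N_G(q\cL)$ stabilizes $\cL$, and the quotient by $M$ kills the subgroup $M/M^\circ$ which survives inside $\cR_\cL$; hence $|W_\cL|\geq |M/M^\circ|\cdot|W_{q\cL}|>|W_{q\cL}|$, so the algebras of Proposition \ref{prop:1.4} built on $\C[W_\cL,\natural_\cL]$ and on $\C[W_{q\cL},\natural_{q\cL}]$ have different ranks over $S(\mf t^*)\otimes\C[\mb r]$ and cannot be identified. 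Likewise the fibres of $\Psi$ and $q\Psi$ differ: by \eqref{eq:3.4}--\eqref{eq:3.5} the quasi-supports lying over a fixed $(M^\circ,\cC_v^M,\cL)$ are indexed by the choices of $\rho_M\in\Irr(\C[M_\cL/M^\circ,\natural_\cL^{-1}])$, so a pair $(y,\rho)$ with cuspidal support $\tau$ need not have quasi-support $q\tau$, and the two parameter sets are genuinely different subsets of $\Irr\big(\pi_0(Z_G(\sigma_0,y))\big)$, of different sizes. If your identification held, the whole quasi-support theory of \cite[\S 5]{AMS} would be redundant.

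What actually works — and is the route taken in Section \ref{sec:3} — is a reduction at the level of the intermediate group $G^\circ M$ rather than $G$: since $N_{G^\circ M}(q\cL)/M\cong W_\cL^\circ$, one has the identification \eqref{eq:3.11} of $\mh H (G^\circ M,M,q\cL)$ with $\mh H (G^\circ,L,\cL)$. One must then (i) construct $\mh H (G,M,q\cL)$ geometrically from $(\mathrm{pr}_1)_!\dot{q\cL}$ with its own cocycle $\natural_{q\cL}$ via \cite[Lemma 5.4]{AMS}; (ii) prove Lemma \ref{lem:3.2}, a bijection of enhancements $\rho^\circ\mapsto \rho^\circ\ltimes\rho_M$ — note these are representations of different component groups, $\pi_0(Z_Q(y))$ versus $\pi_0(Z_{QM}(y))$, so it is not "the same $\rho$ with two properties"; (iii) prove Lemma \ref{lem:3.3}, the nontrivial identification of standard modules $E^\circ_{y,\sigma,r,\rho^\circ\rtimes\rho_M}\cong E^\circ_{y,\sigma,r,\rho^\circ}$, which uses $q\cL\cong \ind_{M_\cL}^M(\rho_M\otimes\cL)$ and the fact that only the $\cL$-isotypic part contributes; and (iv) re-run the Clifford theory, intertwining-operator and temperedness/discrete-series arguments of Paragraphs \ref{par:intop}--\ref{par:temp} for the extension from $G^\circ M$ to $G$, which is governed by $\cR_{q\cL}$ and $\natural_{q\cL}$, not by $\cR_\cL$ and $\natural_\cL$. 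So the result is not obtained by transport of structure along an algebra isomorphism; the isomorphism you posit does not exist, and the steps (ii)--(iv) are where the real content of Theorem \ref{thm:3.4} lies.
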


The article is organized as follows. 
The first section is introductory, it explains why and how the study of enhanced Langlands
parameters motivated this paper.
The second section contains the definition of the twisted graded Hecke algebra 
associated to a cuspidal support. After that we study the representations of these 
Hecke algebras in the third section. To do that we define the standard modules and 
we relate them to the standard modules defined in the connected case by Lusztig. 
As preparation we study precisely the modules annihilated by ${\mb r}$. By Clifford 
theory, as explained in \cite[\textsection 1]{AMS}, we show then that the simple modules 
over $\mh H (G,\tau)$ can be parametrized in a compatible way by the objects in 
part (c) and (d) of the first theorem in this introduction. We deduce then the first 
theorem. The second part consist of the study of temperedness and discrete series. Note 
that we show a version of the ABPS conjecture for these Hecke algebra. To conclude, 
the last section is devoted to the adaption of the previous results for a cuspidal 
quasi-support as described above.

\smallskip

\textbf{Acknowledgements.}\\
We thank Dan Ciubotaru and George Lusztig for helpful discussions.

\section{The relation with Langlands parameters}
\label{sec:outline}

This article is part of a series the main purpose of which is to construct a bijection
between enhanced Langlands parameters for $\cG (F)$ and a certain collection of
irreducible representations of twisted affine Hecke algebras, with possibly unequal parameters.
The parameters appearing in Theorems \ref{thmintro:1} and \ref{thmintro:3} are quite close
to those in the local Langlands correspondence, and with the exponential map one can make
that precise. To make optimal use of Theorem \ref{thmintro:3}, we will show that the parameters
over there constitute a specific part of one Bernstein component in the space of enhanced
L-parameters for one group. Let us explain this in more detail.

Let $F$ be a local non-archimedean field,
let $\mb W_F$ be the Weil group of $F$, $\mb I_F$ the inertia subgroup of $\mb W_F$, and 
$\Fr_F \in \mb W_F$ an arithmetic Frobenius element. Let $\cG$ be a connected reductive 
algebraic group defined over $F$,
and $\cG^\vee$ be the complex dual group of $\cG$. The latter is endowed with an action of
$\mb W_F$, which preserves a pinning of $\cG^\vee$. The Langlands
dual group of the group $\cG(F)$ of the $F$-rational points of $\cG$ is 
${}^L \cG := \cG^\vee \rtimes \mb W_F$. 

A Langlands parameter (L-parameter for short) for ${}^L \cG$ is a continuous group homomorphism
\[
\phi \colon \mb W_F \times \SL_2 (\C) \to \cG^\vee \rtimes \mb W_F 
\]
such that $\phi (w) \in \cG^\vee w$ for all $w \in \mb W_F$, the image of $\mb W_F$ under 
$\phi$ consists of semisimple elements, and $\phi |_{\SL_2 (\C)}$ is algebraic.

We call a L-parameter \emph{discrete}, if $Z_{\cG^\vee}(\phi)^\circ = Z(\cG^\vee)^{\mb W_F,\circ}$.
With \cite[\S 3]{Bor} it is easily seen that this definition of discreteness is equivalent
to the usual one with proper Levi subgroups.

Let $\cG_\sc^\vee$ be the simply connected cover of the derived group $\cG^\vee_\der$.
Let $Z_{\cG^\vee_\ad}(\phi)$ be the image of $Z_{\cG^\vee}(\phi)$ in the adjoint
group $\cG^\vee_\ad$. We define
\[
Z^1_{\cG^\vee_\sc}(\phi) = \text{ inverse image of } Z_{\cG^\vee_\ad}(\phi)
\text{ under } \cG^\vee_\sc \to \cG^\vee . 
\]
To $\phi$ we associate the finite group $\cS_\phi := \pi_0 (Z^1_{\cG^\vee_\sc}(\phi))$.
An \emph{enhancement} of $\phi$ is an irreducible representation of $\cS_\phi$. The group 
$\cS_\phi$ coincides with the group considered by both Arthur in \cite{Art} and Kaletha in 
\cite[\S 4.6]{Kal}.

The group $\cG^\vee$ acts on the collection of enhanced L-parameters for
${}^L \cG$ by 
\[
g \cdot (\phi,\rho) =  (g \phi g^{-1}, g \cdot \rho).
\]
Let $\Phi_e ({}^L \cG)$ denote the collection of $\cG^\vee$-orbits of enhanced L-parameters.

Let us consider $\cG (F)$ as an inner twist of a quasi-split group. Via the Kottwitz
isomorphism it is parametrized by a character of $Z(\cG^\vee_\sc)^{\mb W_F}$, say
$\zeta_\cG$. We say that $(\phi,\rho) \in \Phi_e ({}^L \cG)$ is relevant for $\cG (F)$ 
if $Z(\cG^\vee_\sc)^{\mb W_F}$ acts on $\rho$ as $\zeta_\cG$. The subset of 
$\Phi_e ({}^L \cG)$ which is relevant for $\cG (F)$ is denoted $\Phi_e (\cG (F))$.

As it is well-known $(\phi,\rho) \in \Phi_e ({}^L \cG)$ is already determined by
$\phi |_{\mb W_F}, u_\phi := \phi \big(1, \matje{1}{1}{0}{1} \big)$ and $\rho$.
Sometimes we will also consider $\cG^\vee$-conjugacy classes of such triples\\
$(\phi |_{\mb W_F}, u_\phi, \rho)$ as enhanced L-parameters. An enhanced L-parameter
$(\phi |_{\mb W_F}, v, q\epsilon)$ will often be abbreviated to $(\phi_v, q \epsilon)$.

For $(\phi,\rho) \in \Phi_e ({}^L \cG)$ we write 
\begin{equation} \label{eqn:Gphi}
G_\phi := Z^1_{\cG^\vee_\sc}(\phi |_{\mb W_F}),
\end{equation} 
a complex (possibly disconnected) reductive group. We say that $(\phi,\rho)$
is \emph{cuspidal} if $\phi$ is discrete and $(u_\phi = \phi \big(1, \matje{1}{1}{0}{1} \big),\rho)$ 
is a cuspidal pair for $G_\phi$: this means that $\rho$ corresponds to a 
$G_\phi$-equivariant cuspidal local system $\cF$ on $\cC^{G_\phi}_{u_\phi}$. 
We denote the collection of cuspidal L-parameters for ${}^L \cG$ by $\Phi_\cusp ({}^L \cG)$, 
and the subset which is relevant for $\cG (F)$ by $\Phi_\cusp (\cG (F))$. 

Let $G$ be  a complex (possibly disconnected) reductive group. 
We define the \emph{enhancement of the unipotent variety} of $G$ as the set:
\[
\Unip_e(G):=\{(\cC^{G}_u,\rho)\,:\,\text{with $u\in G$ unipotent and  $\rho\in\Irr(\pi_0(Z_{G}(u))$}\},
\]
and call a pair $(\cC^{G}_u,\rho)$ an \emph{enhanced unipotent class}.
Let $\fB(\Unip_e(G))$ be the set of $G$-conjugacy classes of triples $(M,\cC^M_v,q\epsilon)$, 
where $M$ is a quasi-Levi subgroup of $G$, and $(\cC^M_v,q\epsilon)$ 
is a cuspidal enhanced unipotent class in $M$.

In \cite[Theorem 5.5]{AMS}, we have attached to every element $q\tau\in\fB(\Unip_e(G))$ a $2$-cocycle
\[\kappa_{q\tau}\colon W_{q\tau}/W_{q\tau}^\circ
\times W_{q\tau}/W_{q\tau}^\circ\to\C^\times\]
where $W_{q\tau}:=N_{G}(q\tau)/M$ and $W_{q\tau}^\circ:=N_{G^\circ}(M^\circ)/M^\circ$,
and constructed a \emph{cuspidal support map} 
\[q\Psi_G\colon \Unip_e(G)\to \fB(\Unip_e(G))\]
such that 
\begin{equation} \label{eqn:Unip}
\Unip_e(G)=\bigsqcup_{q\tau\in\fB(\Unip_e(G))}q\Psi_G^{-1}(q\tau),
\end{equation}
where $q\Psi_G^{-1}(q\tau)$ is in bijection with the set of isomorphism classes of irreducible
representations of twisted algebra $\C[W_{q\tau},\kappa_{q\tau}]$. Our construction is an 
extension of, and is based on, the Lusztig's construction of the generalized Springer correspondence 
for $G^\circ$ in \cite{Lus2}. 

Let $(\phi,\rho) \in \Phi_e (\cG (F))$. We will first apply the construction above to the group
$G=G_\phi$ in order to obtain a partition of $\Phi_e (\cG (F))$ in the spirit of (\ref{eqn:Unip}). 
We write $q\Psi_{G_\phi} = [M, v,q\epsilon]_{G_\phi}$.
We showed in \cite[Proposition 7.3]{AMS} that, upon replacing $(\phi,\rho)$ by $\cG^\vee$-conjugate, 
there exists a Levi subgroup
$\cL (F) \subset \cG (F)$ such that $(\phi |_{\mb W_F},v,q\epsilon)$ is a cuspidal 
L-parameter for $\cL (F)$. Moreover, 
\[
\cL^\vee \rtimes \mb W_F = Z_{\cG^\vee \rtimes \mb W_F}(Z(M)^\circ) .
\]
We set
\[
{}^L \Psi (\phi,\rho) := (\cL^\vee \rtimes \mb W_F, \phi |_{\mb W_F}, v,q\epsilon) .
\]
The right hand side consists of a Langlands dual group and a cuspidal L-parameter for 
that. Every enhanced L-parameter for ${}^L \cG$ is conjugate to one as above, so the map
${}^L \Psi$ is well-defined on the whole of $\Phi_e ({}^L \cG)$.

In \cite{AMS}, we defined Bernstein components of enhanced L-parameters. Recall from 
\cite[\S 3.3.1]{Hai} that the group of unramified characters of $\cL (F)$ is naturally
isomorphic to $Z (\cL^\vee \rtimes \mb I_F )^\circ_{\mb W_F}$. We consider this as an
object on the Galois side of the local Langlands correspondence and we write
\[
X_\nr ({}^L \cL) = Z (\cL^\vee \rtimes \mb I_F )^\circ_{\mb W_F} .
\]
Given $(\phi',\rho') \in \Phi_e (\cL (F))$ and $z \in X_\nr ({}^L \cL)$, we define 
$(z \phi',\rho') \in \Phi_e (\cL (F))$ by
\[
z \phi' = \phi' \text{ on } \mb I_F \times \SL_2 (\C) \text{ and }
(z \phi')(\Fr_F) = \tilde z \phi' (\Fr_F) ,
\]
$\tilde z \in Z (\cL^\vee \rtimes \mb I_F )^\circ$ represents $z$.
By definition, an \emph{inertial equivalence class} for $\Phi_e (\cG (F))$ consists of 
a Levi subgroup $\cL (F) \subset \cG (F)$ and a $X_\nr ({}^L \cL)$-orbit $\mf s^\vee_\cL$ in 
$\Phi_\cusp (\cL (F))$. 
Another such object is regarded as equivalent if the two are conjugate by an 
element of $\cG^\vee$. The equivalence class is denoted $\mf s^\vee$.

The Bernstein component of $\Phi_e (\cG (F))$ associated to $\mf s^\vee$ is defined as
\begin{equation} 
\Phi_e (\cG (F))^{\mf s^\vee} := {}^L \Psi^{-1} (\cL \rtimes \mb W_F, \mf s^\vee_\cL).
\end{equation}
In particular $\Phi_e (\cL (F))^{\mf s^\vee_\cL}$ is diffeomorphic to a quotient of
the complex torus $X_\nr ({}^L \cL)$ by a finite subgroup, albeit not in a canonical way.

With an inertial equivalence class $\fs^\vee$ for $\Phi_e (\cG (F))$ we associate the
finite group
\[
W_{\fs^\vee} := \text{ stabilizer of } \fs^\vee_\cL \text{ in } 
N_{\cG^\vee}(\cL^\vee \rtimes \mb W_F) / \cL^\vee .
\]
It plays a role analogous to that of the finite groups appearing in the description
of the Bernstein centre of $\cG (F)$. 
We expect  that the local Langlands correspondence for $\cG(F)$ 
matches every Bernstein component $\Irr^\fs(\cG(F))$ for $\cG(F)$, where $\fs=[\cL(F),\sigma]_{\cG(F)}$, 
with $\cL$ an $F$-Levi subgroup of an $F$-parabolic subgroup of $\cG$ and $\sigma$ 
an irreducible supercupidal smooth representation of $\cL(F)$, with a Bernstein component
$\Phi_e (\cG (F))^{\mf s^\vee}$, where $\fs^\vee=[\cL(F),\fs_\cL^\vee]_{\cG^\vee}$, and 
that the (twisted) affine Hecke algebras on both sides will correspond.   

Let $W_{\fs^\vee,\phi_v,q\epsilon}$ be the isotropy group of $(\phi_v, q\epsilon) \in 
\fs^\vee_\cL$. Let $\cL^\vee_c \subset \cG^\vee_\sc$ denote the preimage of $\cL^\vee$ under $\cG^\vee_\sc 
\to \cG^\vee$. With the generalized Springer correspondence, applied to the group $G_\phi \cap \cL^\vee_c$, 
we can attach to any element of ${}^L \Psi^{-1} (\cL^\vee \rtimes \mb W_F, \phi_v, 
q\epsilon)$ an irreducible projective representation of $W_{\fs^\vee,\phi_v,q\epsilon}$.
More precisely, set 
\[
q \tau := [G_\phi \cap \cL^\vee_c, v ,q \epsilon ]_{G_\phi}.
\]
By \cite[Lemma 8.2]{AMS} $W_{q \tau}$ is canonically isomorphic to
$W_{\fs^\vee,\phi_v,q\epsilon}$. To the data $q \tau$ we will attach (in Section \ref{sec:3})
twisted graded Hecke algebras, whose irreducible representations are parametrized by 
triples $(y,\sigma_0,\rho)$ related to $\Phi_e (\cG (F))^{\mf s^\vee}$. Explicitly, using the exponential 
map for the complex reductive group $Z_{\cG^\vee}(\phi (\mb W_F))$, we can construct 
$(\phi',\rho') \in \Phi_e (\cG (F))^{\mf s^\vee}$ with $u_{\phi'} = \exp (y)$ and 
$\phi' (\Fr_F) = \phi_v (\Fr_F) \exp (\sigma_0)$.

In the sequel \cite{AMS2} to this paper, we associate to every Bernstein component 
$\Phi_e (\cG (F))^{\mf s^\vee}$ a twisted affine Hecke algebra $\cH(\cG(F),\mf s^\vee,\vec z)$
whose irreducible representations are naturally parametrized by $\Phi_e (\cG (F))^{\mf s^\vee}$. 
Here $\vec z$ is an abbreviation for an array of complex parameters. 

For general linear groups (and their inner forms) and classical groups, it is proved in 
\cite{AMS2} that there are specializations $\vec {z}$ such that the algebras 
$\cH(\cG(F),\mf s^\vee,\vec{z})$ are those computed for representations.
In general, we expect that the simple modules of $\cH(\cG(F),\mf s^\vee,\vec{z})$ should be in 
bijection with that of the Hecke algebras for types in reductive $p$-adic groups (which is the 
case for special linear groups and their inner forms), and in this way they should contribute 
to the local Langlands correspondence.

\section{The twisted graded Hecke algebra of a cuspidal support}

Let $G$ be a complex reductive algebraic group with Lie algebra 
$\mf g$. Let $L$ be a
Levi subgroup of $G^\circ$ and let $v \in \mf l = \mathrm{Lie}(L)$ be nilpotent.
Let $\cC_v^L$ be the adjoint orbit of $v$ and let $\cL$ be an irreducible
$L$-equivariant cuspidal local system on $\cC_v^L$. Following \cite{Lus2,AMS} 
we call $(L,\cC_v^L,\cL)$ a cuspidal support for $G$.

Our aim is to associate to these data a graded Hecke algebra, possibly extended
by a twisted group algebra of a finite group, generalizing \cite{Lus3}. Since most 
of \cite{Lus3} goes through without any problems if $G$ is disconnected, we focus 
on the parts that do need additional arguments.

Let $P = L U$ be a parabolic subgroup of $G^\circ$ with Levi factor $L$ and
unipotent radical $U$. Write $T = Z(L)^\circ$ and $\mf t = \mathrm{Lie}(T)$.
By \cite[Theorem 3.1.a]{AMS} the group $N_G (L)$ stabilizes $\cC_v^L$.
Let $N_G (\cL)$ be the stabilizer in $N_G (L)$ of the local system $\cL$ on
$\cC_v^L$. It contains $N_{G^\circ}(L)$ and it is the same as $N_G (\cL^*)$,
where $\cL^*$ is the dual local system of $\cL$. Similarly, let
$N_G (P,\cL)$ be the stabilizer of $(P,L,\cL)$ in $G$. We write
\begin{align*}
& W_\cL = N_G (\cL) / L , \\
& W_\cL^\circ = N_{G^\circ}(L) / L ,\\
& \cR_\cL = N_G (P,\cL) / L , \\
& R(G^\circ,T) = \{ \alpha \in X^* (T) \setminus \{0\} : \alpha
\text{ appears in the adjoint action of } T \text{ on } \mf g \} .
\end{align*}

\begin{lem}\label{lem:1.1}
\enuma{
\item The set $R(G^\circ,T)$ is (not necessarily reduced) root system
with Weyl group $W_\cL^\circ$.
\item The group $W_\cL^\circ$ is normal in $W_\cL$ and
$W_\cL = W_\cL^\circ \rtimes \cR_\cL$.
}
\end{lem}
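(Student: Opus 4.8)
The plan is to reduce everything to known facts about root systems and the structure of Levi subgroups, handling the connected part first and then the disconnected extension by a transversality argument.

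First I would prove part (a). The set $R(G^\circ,T)$ is the restriction to $T = Z(L)^\circ$ of the root system $R(G^\circ, T_{\max})$ for a maximal torus $T_{\max} \supset T$ of $L$; since $T$ is the connected center of $L$, the roots of $L$ itself restrict to $0$ and are thrown away, so $R(G^\circ,T)$ consists precisely of the nonzero restrictions coming from roots of $G^\circ$ not in $L$. The key input here is that $L$ is a Levi subgroup of a parabolic $P$ of $G^\circ$, hence it is the centralizer of $T$ in $G^\circ$. This identifies $N_{G^\circ}(L)/L$ with the "relative Weyl group'' $N_{G^\circ}(T)/Z_{G^\circ}(T)$ acting on $X^*(T)$, and a standard argument (as in Lusztig \cite{Lus2,Lus3} or Howlett's work on normalizers of parabolics) shows that this group is generated by the reflections in the restricted roots, so $R(G^\circ,T)$ is a (possibly non-reduced) root system with Weyl group $W_\cL^\circ$. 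I would cite \cite{Lus3} for the precise statement in the connected case, since this part requires no new argument.

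Next, for part (b), normality of $W_\cL^\circ$ in $W_\cL$: any element of $N_G(\cL)$ normalizes $L$, hence normalizes $N_{G^\circ}(L)$ (being the intersection $N_G(L) \cap G^\circ$, which is normal in $N_G(L)$), hence normalizes $W_\cL^\circ = N_{G^\circ}(L)/L$ inside $W_\cL$. For the semidirect product decomposition, I would argue that $W_\cL / W_\cL^\circ \cong N_G(\cL) / N_{G^\circ}(\cL)$ embeds into $\pi_0(G) = G/G^\circ$, so it is a finite group, and then exhibit $\cR_\cL = N_G(P,\cL)/L$ as a complement. The containment $N_G(P,\cL) \cap N_{G^\circ}(L) = L$ holds because an element of $G^\circ$ normalizing both $L$ and the parabolic $P$ lies in $P \cap N_{G^\circ}(L) = L$ (the normalizer of a parabolic in the connected group is the parabolic itself, and $P \cap N_{G^\circ}(L)$ projects to the normalizer of $L$ in $L$, which is $L$). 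So $\cR_\cL \cap W_\cL^\circ = 1$.

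The main obstacle is surjectivity: showing $W_\cL = W_\cL^\circ \cdot \cR_\cL$, i.e.\ that every element of $N_G(\cL)$ can be multiplied by an element of $N_{G^\circ}(L)$ to land in $N_G(P,\cL)$. Equivalently, given $n \in N_G(\cL)$, the parabolic $n P n^{-1}$ is another parabolic of $G^\circ$ with Levi factor $L = nLn^{-1}$, and I need an element of $N_{G^\circ}(L)$ conjugating $nPn^{-1}$ back to $P$. This is exactly the statement that $N_{G^\circ}(L)$ acts transitively on the set of parabolic subgroups of $G^\circ$ having $L$ as a Levi factor — a standard fact about connected reductive groups (the parabolics with given Levi $L$ correspond to the chambers of $R(G^\circ,T)$ cut out by the "relative roots'', and $W_\cL^\circ$ acts transitively on these chambers by part (a)). Since such an element automatically fixes $\cL$ (it lies in $N_{G^\circ}(L) \subset N_G(\cL)$, as $N_{G^\circ}(L)$ stabilizes $\cC_v^L$ and acts trivially on cuspidal local systems up to the identification already used for $W_\cL^\circ$, by \cite[Theorem 3.1]{AMS}), the product $n \cdot (\text{that element})^{-1}$ lies in $N_G(P,\cL)$, giving the desired decomposition. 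I would phrase this last step carefully, invoking \cite{AMS} for the disconnected-group facts about $N_G(L)$ stabilizing $\cC_v^L$ and $\cL$, and \cite{Lus2,Lus3} for the connected chamber-transitivity.
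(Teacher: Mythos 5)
Your proposal is correct and takes essentially the same route as the paper: part (a) is quoted from Lusztig, normality of $W_\cL^\circ$ comes from $G^\circ$ being normal in $G$, and the complement is obtained by viewing $\cR_\cL$ as the stabilizer of $P$ (equivalently of the positive system $R(P,T)$) and using the simple transitivity of $W_\cL^\circ$ on positive systems/parabolics with Levi factor $L$ — your splitting of this into a separate trivial-intersection step (via $N_{G^\circ}(P)=P$ and $N_P(L)=L$) plus a transitivity step is only a cosmetic variation. The one citation to adjust: the stability of $\cL$ under $N_{G^\circ}(L)$, which your surjectivity step needs, is \cite[Theorem 9.2]{Lus2} (as the paper cites), whereas \cite[Theorem 3.1]{AMS} only gives stability of the orbit $\cC_v^L$.
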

\begin{proof}
(a) By \cite[Proposition 2.2]{Lus3} $R(G^\circ,T)$ is a root system, and
by \cite[Theorem 9.2]{Lus2} $N_{G^\circ}(L) / L$ is its Weyl group.\\
(b) Also by \cite[Theorem 9.2]{Lus2}, $W_\cL^\circ$ stabilizes $\cL$, so
it is contained in $W_\cL$. Since $G^\circ$ is 
normal in $G$, $W_\cL^\circ$ is normal in $W_\cL$. The group $\cR_\cL$
is the stabilizer in $W_\cL$ of the positive system $R(P,T)$ of
$R(G^\circ,T)$. Since $W_\cL^\circ$ acts simply transitively on the collection
of positive systems, $\cR_\cL$ is a complement for $W_\cL^\circ$.
\end{proof}

Now we give a presentation of the algebra that we want to study.
Let $\{ \alpha_i : i \in I\}$ be the set of roots in $R(G^\circ,T)$ which
are simple with respect to $P$. Let $\{s_i : i \in I\}$ be the associated set 
of simple reflections in the Weyl group $W_\cL^\circ = N_{G^\circ}(L) / L$. 
Choose $c_i \in \C \: (i \in I)$ such that $c_i = c_j$ if $s_i$ and $s_j$ are 
conjugate in $W_\cL$. We can regard $\{ c_i : i \in I \}$ as a $W_\cL$-invariant
function $c : R(G^\circ,T)_{\mathrm{red}} \to \C$, where the subscript "red" 
indicates the set of indivisible roots.

Let $\natural : (W_\cL / W_\cL^\circ)^2 \to \C^\times$ be a 2-cocycle.
Recall that the twisted group algebra $\C[W_\cL,\natural]$ has a $\C$-basis
$\{ N_w : w \in W_\cL \}$ and multiplication rules
\[
N_w \cdot N_{w'} = \natural (w,w') N_{w w'} . 
\]
In particular it contains the group algebra of $W_\cL^\circ$.

\begin{prop}\label{prop:1.4}
Let ${\mb r}$ be an indeterminate, identified with the coordinate function on $\C$. 
There exists a unique associative algebra structure
on $\C [W_\cL,\natural] \otimes S(\mf t^*) \otimes \C[{\mb r}]$ such that:
\begin{itemize}
\item the twisted group algebra $\C[W_\cL,\natural]$ is embedded as subalgebra;
\item the algebra $S(\mf t^*) \otimes \C[{\mb r}]$ of polynomial functions on $\mf t 
\oplus \C$ is embedded as a subalgebra;
\item $\C[{\mb r}]$ is central;
\item the braid relation $N_{s_i} \xi - {}^{s_i}\xi N_{s_i} = c_i {\mb r} (\xi - {}^{s_i} 
\xi) / \alpha_i$ holds for all $\xi \in S(\mf t^*)$ and all simple roots $\alpha_i$;
\item $N_w \xi N_w^{-1} = {}^w \xi$ for all $\xi \in S (\mf t^*)$ and $w \in \cR_\cL$. 
\end{itemize}
\end{prop}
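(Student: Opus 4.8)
The plan is to construct the algebra explicitly and then reduce associativity to the known connected case together with a compatibility check on $\cR_\cL$. First I would recall Lusztig's result (\cite[\S 4]{Lus3}): on $\C[W_\cL^\circ] \otimes S(\mf t^*) \otimes \C[{\mb r}]$ there is a unique associative algebra structure satisfying the first four bullets (with $W_\cL^\circ$ in place of $W_\cL$). Denote this algebra $\mh H(G^\circ,\tau)$. So it remains to extend it by the finite group $\cR_\cL = W_\cL/W_\cL^\circ$, which acts on $\mf t$ (hence on $S(\mf t^*)$) and which permutes the simple roots $\{\alpha_i\}$, preserving the function $c$ by our choice of the $c_i$. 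The twisted group algebra $\C[W_\cL,\natural]$ is the crossed product $\C[W_\cL^\circ] \rtimes_\natural \cR_\cL$ in the sense that it has basis $\{N_w N_{w'} : w \in W_\cL^\circ, w' \in \text{(a chosen transversal)}\}$ with the $N_{w'}$ acting on $\C[W_\cL^\circ]$ by conjugation (permutation of the $W_\cL^\circ$-basis) up to the scalar $\natural$.

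Concretely I would define the multiplication on $\C[W_\cL,\natural] \otimes S(\mf t^*) \otimes \C[{\mb r}]$ by declaring: inside $\mh H(G^\circ,\tau)$ use Lusztig's rules; $\C[{\mb r}]$ central; and for $w \in \cR_\cL$ (or a transversal of $W_\cL^\circ$ in $W_\cL$) set $N_w \xi = {}^w\xi\, N_w$ for $\xi \in S(\mf t^*)$, while products $N_w N_{w'}$ for $w, w' \in W_\cL$ are governed by $\natural$. The key point is that this is \emph{forced}: any associative structure satisfying the five bullets must, restricted to $\C[W_\cL^\circ] \otimes S(\mf t^*) \otimes \C[{\mb r}]$, coincide with Lusztig's (by his uniqueness), and the last two bullets pin down how the coset representatives interact with $S(\mf t^*)$; together with the multiplication law of $\C[W_\cL,\natural]$ everything is determined. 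So uniqueness is essentially immediate once existence is established.

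For existence, the cleanest route is to realize the whole algebra as a crossed product $\mh H(G^\circ,\tau) \rtimes_\natural \cR_\cL$. This requires: (i) $\cR_\cL$ acts on the algebra $\mh H(G^\circ,\tau)$ by algebra automorphisms — the action permutes the $N_{s_i}$ according to its permutation of simple roots and acts on $S(\mf t^*)$ via $\mf t$, and one must check this respects Lusztig's braid relation $N_{s_i}\xi - {}^{s_i}\xi N_{s_i} = c_i {\mb r}(\xi - {}^{s_i}\xi)/\alpha_i$; this works precisely because $w$ sends $\alpha_i$ to some $\alpha_j$, sends $s_i$ to $s_j$, and $c_i = c_j$ by hypothesis, so the relation is transported to the relation for $\alpha_j$. (ii) The cocycle $\natural$ on $(W_\cL/W_\cL^\circ)^2$ gives a genuine associative twisted crossed product — this is the standard verification that $\natural$ being a $2$-cocycle makes $(N_w N_{w'}) N_{w''} = N_w (N_{w'} N_{w''})$ hold, which localizes to the cocycle identity. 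Associativity of the crossed product then follows from a general principle: if $A$ is an associative algebra, $\Gamma$ a finite group acting on $A$ by automorphisms, and $\natural$ a $2$-cocycle on $\Gamma$, then $A \rtimes_\natural \Gamma$ is associative.

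The main obstacle I anticipate is not any single hard computation but rather a bookkeeping subtlety: one must make sure the automorphism action of $\cR_\cL$ on $\mh H(G^\circ,\tau)$ is \emph{well-defined} as an automorphism of the abstract algebra, i.e.\ that sending $N_{s_i} \mapsto N_{w s_i w^{-1}}$ and $\xi \mapsto {}^w\xi$ extends to all of $\mh H(G^\circ,\tau)$ and is compatible with all the defining relations — in particular the relations among the $N_{s_i}$ themselves (the finite braid relations of $W_\cL^\circ$), which are preserved because $w$ induces a diagram automorphism of $(W_\cL^\circ, \{s_i\})$. A secondary subtlety is that $\cR_\cL$ is only a \emph{complement} to $W_\cL^\circ$ in $W_\cL$ in a fixed splitting (Lemma \ref{lem:1.1}(b)), and the cocycle $\natural$ lives on the quotient $W_\cL/W_\cL^\circ \cong \cR_\cL$; one should be slightly careful that the formula for $N_w N_{w'}$ with $w,w' \in \cR_\cL$ lands back in the span of $\{N_u : u \in W_\cL\}$ consistently — but this is exactly what the crossed-product formalism handles. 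Once these compatibilities are in place, the five bullet points are satisfied by construction, and uniqueness follows from Lusztig's uniqueness in the connected case plus the rigidity of the extension.
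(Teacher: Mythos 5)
Your proposal is correct and follows essentially the paper's own route: both constructions realize the algebra as a crossed product of Lusztig's connected-case algebra $\mh H(\mf t, W_\cL^\circ, c\mb r)$ with $\cR_\cL$, the key point in each case being that conjugation by $\cR_\cL$ permutes the simple reflections with $c_i = c_j$, so $w \mapsto \phi_w$ (with $\phi_w (N_{w'}\xi) = N_{w w' w^{-1}}\,{}^w\xi$) is an action by algebra automorphisms. The only difference is the bookkeeping for the cocycle: the paper lifts $\natural$ to a finite central extension $\cR_\cL^+$ on which it trivializes, forms the ordinary crossed product and cuts by the idempotent $p_\natural$, whereas you form the twisted crossed product $\mh H(\mf t,W_\cL^\circ,c\mb r)\rtimes_\natural \cR_\cL$ directly and deduce associativity from the cocycle identity --- an equally valid implementation (it is exactly the framework of \cite{Wit}, which the paper itself invokes right after the proposition).
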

\begin{proof}
It is well-known that there exists such an algebra with $W_\cL^\circ$ instead of 
$W_\cL$, see for instance \cite[\S 4]{Lus4}. It is called the graded Hecke algebra, 
over $\C[{\mb r}]$ with parameters $c_i$, and we denote it by 
$\mh H (\mf t,W_\cL^\circ,c \mb r )$.

Let $\cR_\cL^+$ be a finite central extension of $\cR_\cL$ such that the 2-cocycle
$\natural$ lifts to the trivial 2-cocycle of $\cR_\cL^+$. 
For $w^+ \in W_\cL^\circ \rtimes \cR_\cL^+$ with image $w \in W_\cL$ we put
\[
\phi_{w^+}(N_{w'} \xi) = N_{w w' w^{-1}} {}^w \xi \qquad 
w' \in W_\cL^\circ, \xi \in S(\mf t^*) \otimes \C[{\mb r}] .  
\]
Because of the condition on the $c_i$, $w^+ \mapsto \phi_{w^+}$ defines an action of
$\cR_\cL^+$ on $\mh H (\mf t,W_\cL^\circ,c \mb r)$ by algebra automorphisms. 
Thus the crossed product algebra 
\[
\cR_\cL^+ \ltimes \mh H (\mf t,W_\cL^\circ,c \mb r) = 
\C[\cR_\cL^+] \ltimes \mh H (\mf t,W_\cL^\circ,c \mb r)
\]
is well-defined. Let $p_\natural \in \C [\ker (\cR_\cL^+ \to \cR_\cL)]$ 
be the central idempotent such that 
\[
p_\natural \C [\cR_\cL^+] \cong \C [\cR_\cL,\natural] .
\]
The isomorphism is given by $p_\natural w^+ \mapsto \lambda (w^+) N_w$ for a
suitable $\lambda (w^+) \in \C^\times$. Then 
\begin{equation}\label{eq:1.10}
p_\natural \C[\cR_\cL^+] \ltimes \mh H (\mf t,W_\cL^\circ, c\mb r) \; \subset \;
\C[\cR_\cL^+] \ltimes \mh H (\mf t,W_\cL^\circ,c \mb r )
\end{equation}
is an algebra with the required relations.
\end{proof}

We denote the algebra of Proposition \ref{prop:1.4} by $\mh H (\mf t,W_\cL,c\mb r,
\natural)$. It is a special case of the algebras considered in \cite{Wit}, namely
the case where the 2-cocycle $\natural_\cL$ and the braid relations live only
on the two different factors of the semidirect product $W_\cL = 
W_\cL^\circ \rtimes \cR_\cL$.
Let us mention here some of its elementary properties.
\begin{lem}\label{lem:1.6}
$S(\mf t^*)^{W_\cL} \otimes \C[{\mb r}]$ is a central subalgebra of 
$\mh H (\mf t,W_\cL,c \mb r,\natural)$. If $W_\cL$ acts faithfully on $\mf t$, 
then it equals the centre $Z(\mh H (\mf t,W_\cL,c \mb r,\natural))$.
\end{lem}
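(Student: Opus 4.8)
The plan is to establish the two assertions separately. For the first, that $S(\mf t^*)^{W_\cL} \otimes \C[\mb r]$ is central, I would check commutation with each of the three types of generators of $\mh H (\mf t, W_\cL, c\mb r, \natural)$. Centrality of $\C[\mb r]$ is built into the defining relations, and $S(\mf t^*)$ is commutative so $S(\mf t^*)^{W_\cL}$ commutes with all of $S(\mf t^*) \otimes \C[\mb r]$. It remains to see that an element $\xi \in S(\mf t^*)^{W_\cL}$ commutes with $N_{s_i}$ for $i \in I$ and with $N_w$ for $w \in \cR_\cL$. For the $\cR_\cL$-part this is immediate from the relation $N_w \xi N_w^{-1} = {}^w \xi = \xi$, since $W_\cL$-invariance includes $\cR_\cL$-invariance. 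For the simple reflections, the braid relation $N_{s_i} \xi - {}^{s_i}\xi N_{s_i} = c_i \mb r (\xi - {}^{s_i}\xi)/\alpha_i$ collapses: $W_\cL$-invariance gives ${}^{s_i}\xi = \xi$ (as $s_i \in W_\cL^\circ \subset W_\cL$), so the right side is $0$ and the left side becomes $N_{s_i}\xi - \xi N_{s_i}$. Since $\{N_{s_i} : i \in I\}$ together with $\cR_\cL$ generate $\C[W_\cL, \natural]$, and these commute with $S(\mf t^*) \otimes \C[\mb r]$ as well, $\xi$ is central.

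For the second assertion — that when $W_\cL$ acts faithfully on $\mf t$ the centre is exactly $S(\mf t^*)^{W_\cL} \otimes \C[\mb r]$ — I would argue as follows. First, $\mh H (\mf t, W_\cL, c\mb r, \natural)$ is a free module over $S(\mf t^*) \otimes \C[\mb r]$ with basis $\{N_w : w \in W_\cL\}$; this is clear from the construction in Proposition \ref{prop:1.4}, or can be read off from the embedding \eqref{eq:1.10}. Take a central element $z = \sum_{w \in W_\cL} N_w f_w$ with $f_w \in S(\mf t^*) \otimes \C[\mb r]$. Commuting $z$ with an arbitrary $\xi \in S(\mf t^*)$ and using the straightening relations, one finds that the top-degree-in-$\mb r$ behaviour forces each surviving $f_w$ to satisfy ${}^w \xi \cdot f_w = \xi \cdot f_w$ for all $\xi$; since $W_\cL$ acts faithfully on $\mf t$, for $w \neq e$ there is $\xi$ with ${}^w\xi \neq \xi$, and as $S(\mf t^*) \otimes \C[\mb r]$ is a domain we get $f_w = 0$. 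Hence $z = f_e \in S(\mf t^*) \otimes \C[\mb r]$, and then centrality of $z$ together with $N_w z N_w^{-1} = {}^w z$ for $w \in \cR_\cL$ and the braid relation for the $s_i$ (whose right-hand side must vanish) force $z \in S(\mf t^*)^{W_\cL} \otimes \C[\mb r]$.

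The one subtlety to handle with care — and the main obstacle — is the mixing between the $N_w$-basis and the $S(\mf t^*)$-coefficients when $\mb r$ is an indeterminate rather than a fixed scalar: the braid relations produce terms with denominators $\alpha_i$ and a factor of $\mb r$, so one must argue within $S(\mf t^*) \otimes \C[\mb r]$ (or its localization at the $\alpha_i$) and track which monomials in $\mb r$ can cancel. The clean way to do this is to pass to the fraction field, or equivalently to specialize $\mb r$ to a generic nonzero complex number and invoke the well-known faithfulness-implies-this-centre statement for graded Hecke algebras at fixed parameter (see \cite[\S 4]{Lus4} for the $W_\cL^\circ$ case, extended by the $\cR_\cL$-action exactly as in Proposition \ref{prop:1.4}); since the conclusion is a polynomial identity in $\mb r$ and holds on a Zariski-dense set of values, it holds identically. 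A short remark covering the specialization at $\mb r = 0$ (where the argument via faithfulness on $\mf t$ still applies directly, as the braid relation degenerates to commutativity with $S(\mf t^*)$) completes the proof.
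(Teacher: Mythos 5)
Your proposal is correct in substance, but it takes a different route from the paper: the paper disposes of Lemma \ref{lem:1.6} by citation, invoking \cite[Theorem 6.5]{Lus3} for the case $W_\cL = W_\cL^\circ$ and \cite[Proposition 5.1.a]{Sol1} for the extension by $\cR_\cL$ (remarking that the latter argument survives a nontrivial 2-cocycle), whereas you write out the underlying computation in full. Your first paragraph is exactly the standard check of centrality, and your second paragraph reproduces the standard "support" argument for central elements of a crossed product; this is essentially the content of the cited results, so what your version buys is self-containedness, at the cost of redoing an argument the literature already supplies. Two technical points deserve care. First, the worry about denominators is unfounded: $\xi - {}^{s_i}\xi$ is divisible by $\alpha_i$ in $S(\mf t^*)$, so the straightening terms $c_i \mb r (\xi - {}^{s_i}\xi)/\alpha_i$ already lie in $S(\mf t^*) \otimes \C[\mb r]$ and no localization or fraction field is needed. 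Second, "top-degree-in-$\mb r$" is not the right filter: each straightening step multiplies by $c_i \mb r$, so the correction terms \emph{raise} the $\mb r$-degree, and the clean separations are obtained either from the \emph{lowest} $\mb r$-degree, or from the top degree in $S(\mf t^*)$, or (most simply) by looking at the coefficient of $N_w$ for $w$ of maximal length in the support of the central element, which receives no straightening contributions; any of these gives $({}^{w}\xi - \xi) f_w = 0$ for all $\xi$ and hence $f_w = 0$ for $w \neq e$ by faithfulness and the domain property. Your fallback via specialization of $\mb r$ to infinitely many complex values and Zariski density is also legitimate (freeness over $\C[\mb r]$ makes the conclusion a polynomial identity in $\mb r$), provided you note that the fixed-parameter statement quoted from \cite[\S 4]{Lus4} covers only $W_\cL^\circ$, so the $\cR_\cL$-extension step you sketch — killing the components $f_w$, $w \notin \{e\}$, and then extracting $W_\cL$-invariance of $f_e$ from the vanishing of the $N_{s_i}$- and $N_\gamma$-components of the commutators — is still needed; that step is precisely what \cite[Proposition 5.1.a]{Sol1} provides in the untwisted case.
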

\begin{proof}
The case $W_\cL = W_\cL^\circ$ is \cite[Theorem 6.5]{Lus3}. For $W_\cL \neq 
W_\cL^\circ$ and $\natural = 1$ see \cite[Proposition 5.1.a]{Sol1}. 
The latter argument also works if $\natural$ is nontrivial.
\end{proof}
If $V$ is a $\mh H (\mf t,W_\cL,c \mb r,\natural)$-module on which
$S(\mf t^*)^{W_\cL} \otimes \C[{\mb r}]$ acts by a character $(W_\cL x,r)$,
then we will say that the module admits the central character $(W_\cL x,r)$.

A look at the defining relations reveals that there is a unique anti-isomorphism
\begin{equation}\label{eq:1.7}
* : \mh H (\mf t,W_\cL, c\mb r,\natural) \to \mh H (\mf t,W_\cL,c \mb r,\natural^{-1}) 
\end{equation}
such that * is the identity on $S(\mf t^*) \otimes \C[{\mb r}]$ and 
$N_w^* = (N_w)^{-1}$, the inverse of the basis element $N_w \in 
\mh H (\mf t,W_\cL,c \mb r,\natural^{-1})$. Hence $\mh H (\mf t,W_\cL,c \mb r,
\natural^{-1})$ is the opposite algebra of $\mh H (\mf t,W_\cL, c \mb r,\natural)$, 
and $\mh H (\mf t,W_\cL^\circ,c \mb r)$ is isomorphic to its opposite.

Suppose that $\mf t = \mf t' \oplus \mf z$ is a decomposition of 
$W_\cL$-representations such that Lie$(Z(L) \cap G_\der) \subset \mf t'$ and 
$\mf z \subset \mf t^{W_\cL}$. Then
\begin{equation}\label{eq:1.11}
\mh H (\mf t,W_\cL,c \mb r,\natural) = 
\mh H (\mf t',W_\cL,c \mb r,\natural) \otimes_\C S(\mf z^*) . 
\end{equation}
For example, if $W_\cL = W_\cL^\circ$ we can take $\mf t' = 
\mathrm{Lie}(Z(L) \cap G_\der)$ and $\mf z = \mathrm{Lie}(Z(G))$.

Now we set out to construct $\mh H (\mf t, W_\cL,c \mb r,\natural)$ geometrically. 
In the process we will specify the parameters $c_i$ and the 2-cocycle $\natural$. \\

If $X$ is a complex variety equiped with a continuous action of $G$ and stratified by 
some algebraic stratification, we denote by $\mathcal{D}_{c}^{b}(X)$ the bounded derived 
category of constructible sheaves on $X$ and by $\mathcal{D}_{G,c}^{b}(X)$ the $G$-equivariant 
bounded derived category as defined in \cite{BeLu}. We denote by $\mathcal{P}(X)$ 
(resp. $\mathcal{P}_{G}(X)$) the category of perverse sheaves (resp. $G$-equivariant 
perverse sheaves) on $X$. Let us recall briefly how $\mathcal{D}_{G,c}^{b}(X)$ is defined. 
First, if $p\cP \rightarrow X$ is a $G$-map where $P$ is a free $G$-space and 
$q\colon P\rightarrow G \backslash P$ is the quotient map, then the category 
$\mathcal{D}_{G}^{b}(X,P)$ consists in triples 
$\mathcal{F}=(\mathcal{F}_{X},\overline{\mathcal{F}},\beta)$ with 
$\mathcal{F}_X \in \mathcal{D}^{b}(X)$, $\overline{\mathcal{F}} \in 
\mathcal{D}^{b}(G \backslash P)$, and an isomorphism 
$\beta : p^{*}\mathcal{F}_{X} \simeq q^{*}\overline{\mathcal{F}}$. 
Let $I \subset \Z$ be a segment. If $p \colon P \rightarrow X$ is an $n$-acyclic resolution 
of $X$ with $n \geqslant | I |$, then $\mathcal{D}_{G}^{I}(X)$ is defined to be 
$\mathcal{D}_{G}^{b}(X,P)$ and this does not depend on the choice of $P$. 
Finally, the $G$-equivariant derived category $\mathcal{D}_{G}^{b}(X)$ is defined as the 
limit of the categories $\mathcal{D}_{G}^{I}(X)$. Moreover, $\mathcal{P}_{G}(X)$ is the 
subcategory of $\mathcal{D}_{G}^{b}(X)$ consisting of objects $\mathcal{F}$ such that 
$\mathcal{F}_{X} \in \mathcal{P}(X)$. All the usual functors, Verdier duality, 
intermediate extension, etc., exist and are well-defined in this category. We will denote 
by $\For \colon \mathcal{D}_{G}^{b}(X) \rightarrow \mathcal{D}^{b}(X)$ the functor which 
associates to every $\mathcal{F} \in \mathcal{D}_{G}^{b}(X) $ the complex $\mathcal{F}_{X}$.\\

Consider the varieties
\begin{align*}
& \dot{\mf g} = \{ (x,gP) \in \mf g \times G/P : 
\Ad (g^{-1}) x \in \cC_v^L + \mf t + \mf u \} , \\
& \dot{\mf g}^{\circ} = \{ (x,gP) \in \mf g \times G^{\circ}/P : 
\Ad (g^{-1}) x \in \cC_v^L + \mf t + \mf u \}, \\
& \dot{\mf g}_{RS} = \{ (x,g P) \in \mf g \times G/P : 
\Ad (g^{-1}) x \in \cC_v^L + \mf t_\reg + \mf u \} , \\
& \dot{\mf g}_{RS}^{\circ} = \{ (x,g P) \in \mf g \times G^{\circ}/P : 
\Ad (g^{-1}) x \in \cC_v^L + \mf t_\reg + \mf u \} 
\end{align*}
where $\mf t_\reg = \{ x \in \mf t : Z_{\mf g}(x) = \mf l \}$.
Assume first that $\dot{\mf g}=\dot{\mf g}^{\circ}$ and so $\dot{\mf g}_{RS}=\dot{\mf g}_{RS}^{\circ}$.
Consider the maps
\begin{align*}
& \cC_v^L \xleftarrow{f_1} \{ (x,g) \in \mf g \times G : \Ad (g^{-1}) x \in 
\cC_v^L + \mf t + \mf u\} \xrightarrow{f_2} \dot{\mf g} , \\
& f_1 (x,g) = \mathrm{pr}_{\cC_v^L}(\Ad (g^{-1}) x) , \hspace{2cm} f_2 (x,g) = (x,gP) .
\end{align*}
The group $G \times P$ acts on $\{ (x,g) \in \mf g \times G : \Ad (g^{-1}) x \in 
\cC_v^L + \mf t + \mf u\}$ by 
\[
(g_1,p) \cdot (x,g) = ( \Ad (g_1)x,g_1 g p).
\]
Let $\dot{\cL}$ be the unique $G$-equivariant local system $\dot{\mf g}$ such that $f_2^* \dot{\cL} = 
f_1^* \cL$. The map
\[
\mathrm{pr}_1 : \dot{\mf g}_{RS} \to 
\mf g_{RS} := \Ad (G) (\cC_v^L + \mf t_\reg + \mf u) 
\]
is a fibration with fibre $N_G (L) / L$, so $(\pr_1)_! \dot{\cL}$ is a local 
system on $\mf g_{RS}$. Let $\cV := \Ad (G) (\overline{\cC_v^L} + \mf t + \mf u)$, 
$j : \cC_v^L \hookrightarrow \overline{\cC_v^L}$ and 
$\widehat{j} :  \dot{\mf g}_{RS} \hookrightarrow \cV $. 
Since $\cL$ is a cuspidal local system, by \cite[2.2.b)]{Lus3} it is clean, so 
$j_{!} \cL = j_{*} \cL \in \mathcal{D}_{L}^{b}(\overline{\cC_v^L})$. It follows 
(by unicity and base changes) that $\widehat{j}_{!}\dot{\cL} =
\widehat{j}_{*}\dot{\cL} \in \mathcal{D}_{G}^{b}(\widehat{\mf g}_{RS})$.
Let $K_1=\IC_{G}(\mf g_{RS}, (\pr_1)_! \dot{\cL})$ be the equivariant intersection 
cohomology complex defined by  $(\pr_1)_! \dot{\cL}$.

Considering $\pr_1$ as a map $\dot{\mf g} \to \mf g$, we get (up to a shift) a 
$G$-equivariant perverse sheaf $K = (\pr_1)_! \dot{\cL}=i_{!}K_1$ on $\mf g$, 
where $i : \cV \hookrightarrow \mf g$. Indeed, by definition it is enough to show 
that $\For(K_1) \in \mathcal{D}^{b}(\mf g)$ is perverse. But the same arguments of 
\cite[3.4]{Lus3} apply here (smallness of $\pr_1 : \dot{\mf g} \to \cV$, equivariant 
Verdier duality, etc) and the forgetful functor commutes with $(\pr_1)_!$ by \cite[3.4.1]{BeLu}.

Now, if $\dot{\mf g} \neq \dot{\mf g}^{\circ}$, then 
$\dot{\mf g}=G \times_{G^S} \dot{\mf g}^{\circ}$ where $G^S$ is the largest 
subgroup of $G$ which preserves $\dot{\mf g}^{\circ}$. Using 
\cite[5.1. Proposition (ii)]{BeLu} it follows that $K$ is a perverse sheaf. 
Notice that $(\pr_1)_! \dot{\cL}^*$ is another local system on $\mf g_{RS}$.
In the same way we construct $K_1^*$ and $K^* = (\pr_1)_! \dot{\cL}^*$.

\begin{rem}
In \cite[\S 4]{AMS} the authors consider a perverse sheaf $\pi_* \tilde{\mathcal E}$
on a subvariety $Y$ of $G^\circ$. The perverse sheaves $K$ and $K^*$ are the direct
analogues of $\pi_* \tilde{\mathcal E}$, when we apply the exponential map to replace 
$G^\circ$ by its Lie algebra $\mf g$. As Lusztig notes in \cite[2.2]{Lus3} (for
connected $G$), this allows us transfer all the results of \cite{AMS} to the 
current setting. In this paper we will freely make use of \cite{AMS} in the 
Lie algebra setting as well.
\end{rem}
In \cite[Proposition 4.5]{AMS} we showed that the $G$-endomorphism algebras of\\ 
$K = (\pr_1)_! \dot{\cL}$ and $K^* = (\pr_1)_! \dot{\cL}^*$, in the category 
$\mathcal{P}_{G}(\mf g_{RS})$ of equivariant perverse sheaves, are isomorphic to 
twisted group algebras: 
\begin{equation}\label{eq:1.1}
\begin{array}{lll}
\End_{\mathcal{P}_{G}(\mf g_{RS})} \big( (\mr{pr}_1)_! \dot{\cL} \big) & \cong & 
\C [W_\cL, \natural_\cL] ,\\
\End_{\mathcal{P}_{G}(\mf g_{RS})} \big( (\mr{pr}_1)_! \dot{\cL}^* \big) & \cong & 
\C [W_\cL, \natural_\cL^{-1}] ,
\end{array}
\end{equation}
where $\natural_\cL : ( W_\cL / W_\cL^\circ )^2 \to \C^\times$ is a 2-cocycle.
The cocycle $\natural_\cL^{-1}$ in \eqref{eq:1.1} is the inverse of $\natural_\cL$, 
necessary because we use the dual $\cL^*$. 

\begin{rem} \label{rem:1.L}
In fact there are two good ways to let \eqref{eq:1.1} act on
$(\pr_1)_! \dot{\cL}$. For the moment we subscribe to the normalization of 
Lusztig from \cite[\S 9]{Lus2}, which is based on identifying a suitable 
cohomology space with the trivial representation of $W_\cL^\circ$. However, later
we will switch to a different normalization, which identifies the same space
with the sign representation of the Weyl group $W_\cL^\circ$.
\end{rem}

According to \cite[3.4]{Lus3} this gives rise to an action of $\C [W_\cL, 
\natural_\cL^{-1}]$ on $K^*_1$ and then on $K^*$. (And similarly without duals, 
of course.) Applying the above with the group $G \times \C^\times$ and the cuspidal 
local system $\cL$ on $\cC_v^L \times \{0\} \subset \mf l \oplus \C$, we see that 
all these endomorphisms are even $G \times \C^\times$-equivariant.

Define $\End^+_{\mathcal{P}_{G}(\mf g_{RS})}((\pr_1)_! \dot{\cL})$ as the subalgebra of 
$\End_{\mathcal{P}_{G}(\mf g_{RS})}((\pr_1)_! \dot{\cL})$ which also preserves Lie$(P)$. Then
\begin{equation}\label{eq:1.E}
\begin{array}{lll}
\End^+_{\mathcal{P}_{G}(\mf g_{RS})} \big( (\mr{pr}_1)_! \dot{\cL} \big) & \cong &
\C [\cR_\cL, \natural_\cL] ,\\
\End^+_{\mathcal{P}_{G}(\mf g_{RS})} \big( (\mr{pr}_1)_! \dot{\cL}^* \big) & \cong &
\C [\cR_\cL, \natural_\cL^{-1}] ,
\end{array}
\end{equation}
The action of the subalgebra $\C [\cR_\cL,\natural_\cL^{-1}]$ on $K^*$ admits a 
simpler interpretation. For any representative $\bar{w} \in N_G (P,L)$ of $w \in
\cR_\cL$, the map $\Ad (\bar{w}) \in \Aut_\C (\mf g)$ stabilizes $\mf t = 
\mathrm{Lie}(Z(L))$ and $\mf u = \mathrm{Lie}(U) \vartriangleleft \mathrm{Lie}(P)$. 
Furthermore $\cC_v^L$ supports a cuspidal local system, so by \cite[Theorem 3.1.a]{AMS} it 
also stable under the automorphism $\Ad (\bar{w})$. Hence $\cR_\cL$ acts on $\dot{\mf g}$ by 
\begin{equation}\label{eq:1.2}
w \cdot (x,gP) = (x,g w^{-1}P) .
\end{equation}
The action of $w \in \cR_L$ on $(\dot{\mf g},\dot{\cL}^*)$ lifts \eqref{eq:1.2}, 
extending the automorphisms of $(\dot{\mf g}_{RS},\dot{\cL}^*)$ constructed in 
\cite[(44) and Proposition 4.5]{AMS}. By functoriality this induces an action 
of $w$ on $K^* = (\pr_1)_! \dot{\cL}^*$.

For $\Ad (G)$-stable subvarieties $\cV$ of $\mf g$, we define, 
as in \cite[\S 3]{Lus3},
\begin{align*}
& \dot{\cV} = \{ (x,gP) \in \dot{\mf g} : x \in \cV \} , \\
& \ddot{\cV} = \{ (x,gP,g' P) : (x,g P) \in \dot{\cV}, (x,g' P) \in \dot{\cV} \} .
\end{align*}
The two projections $\pi_{12}, \pi_{13} : \ddot{\cV} \to \dot{\cV}$ give rise 
to a $G \times \C^\times$-equivariant local system $\ddot{\cL} = \dot{\cL} 
\boxtimes \dot{\cL}^*$ on $\ddot{\cV}$. As in \cite{Lus3}, the action of 
$\C [W_\cL,\natural_\cL^{-1}]$ on $K^*$ leads to 
\begin{equation}\label{eq:1.3}
\text{actions of } \C [W_\cL,\natural_\cL] \otimes \C [W_\cL,\natural_\cL^{-1}]
\text{ on } \ddot{\cL} \text{ and on } 
H_j^{G^\circ \times \C^\times} (\ddot{\cV},\ddot{\cL}) ,
\end{equation}
denoted $(w,w') \mapsto \Delta (w) \otimes \Delta (w')$.
By \cite[Proposition 4.2]{Lus3} there is an isomorphism of graded algebras
\[
H^*_{G \times \C^\times}(\dot{\mf g}) \cong S (\mf t^* \oplus \C) =
S (\mf t^*) \otimes \C [{\mb r}] ,
\]
where $\mf t^* \oplus \C$ lives in degree 2. This algebra acts naturally on 
$H_*^{G \times \C^\times}(\dot{\cV},\dot{\cL})$ and that yields two 
actions $\Delta (\xi)$ (from $\pi_{12}$) and $\Delta' (\xi)$ (from $\pi_{13}$)
of $\xi \in S (\mf t^* \oplus \C)$ on $H_*^{G \times \C^\times}(\dot{\mf g})$.

Let $\Omega \subset G$ be a $P-P$ double coset and write
\[
\ddot{\mf g}^\Omega = \{(x,gP,g' P) \in \ddot{\mf g} : g^{-1} g' \in \Omega \} .
\]
Given any sheaf $\cF$ on a variety $\cV$, we denote
its stalk at $v \in \cV$ by $\cF_v$ or $\cF |_v$.
\begin{prop}\label{prop:1.2}
\enuma{
\item The $S(\mf t^* \oplus \C)$-module structures $\Delta$ and $\Delta'$ define
isomorphisms
\[
S(\mf t^* \oplus \C) \otimes H_0^{G \times \C^\times}(\ddot{\mf g},\ddot{\cL})
\rightrightarrows H_*^{G \times \C^\times}(\ddot{\mf g},\ddot{\cL}) .
\]
\item As $\C[W_\cL,\natural_\cL]$-modules
\[
H_0^{G \times \C^\times}(\ddot{\mf g},\ddot{\cL}) = 
\bigoplus\nolimits_{w \in W_\cL} \Delta (w) H_0^{G \times \C^\times}
(\ddot{\mf g}^P,\ddot{\cL}) \cong \C [W_\cL,\natural_\cL] .
\]
} 
\end{prop}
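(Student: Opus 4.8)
The plan is to follow the strategy of \cite[\S 4]{Lus3} quite closely, since the geometry underlying $\ddot{\mf g}$ and $\ddot{\cL}$ is essentially the same as in the connected case; the only novelty is the presence of the extra component group $\cR_\cL = W_\cL / W_\cL^\circ$ and the cocycle $\natural_\cL$. First I would decompose $\ddot{\mf g}$ according to the $P$--$P$ double cosets in $G$, writing $\ddot{\mf g} = \bigsqcup_\Omega \ddot{\mf g}^\Omega$. For the connected part $\ddot{\mf g}^\circ$ (double cosets contained in $G^\circ$), the double cosets are indexed by $W_\cL^\circ$ via the Bruhat decomposition of $G^\circ$ relative to $P$, and \cite[Proposition 4.2 and its proof]{Lus3} already gives, for each $w \in W_\cL^\circ$, that $H_*^{G \times \C^\times}(\overline{\ddot{\mf g}^{PwP}} , \ddot{\cL})$ is a free $S(\mf t^* \oplus \C)$-module of rank one, generated in the appropriate degree, with the top cell $\ddot{\mf g}^P$ giving the generator of $H_0$. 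Passing to the disconnected group, the double cosets $PwP$ are now indexed by all of $W_\cL$ (using that $N_G(P,L)$ meets every component that contributes, by Lemma~\ref{lem:1.1}(b) and the fact that $\cR_\cL$ permutes the double cosets), and a filtration of $\ddot{\mf g}$ by closures of $\ddot{\mf g}^\Omega$ together with the long exact sequences in equivariant homology yields part (a): the two $S(\mf t^* \oplus \C)$-actions $\Delta, \Delta'$ each identify $S(\mf t^* \oplus \C) \otimes H_0^{G \times \C^\times}(\ddot{\mf g}, \ddot{\cL})$ with the full homology, because the spectral sequence / long exact sequences degenerate exactly as in \cite[\S 4]{Lus3} (the relevant $H_{\mr{odd}}$ vanish, each graded piece is free of rank one over $S(\mf t^*\oplus\C)$).

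Next, for part (b): the cell decomposition gives $H_0^{G \times \C^\times}(\ddot{\mf g}, \ddot{\cL}) = \bigoplus_{\Omega} H_0^{G \times \C^\times}(\ddot{\mf g}^\Omega, \ddot{\cL})$ as a vector space, and each summand $H_0^{G \times \C^\times}(\ddot{\mf g}^\Omega, \ddot{\cL})$ is one-dimensional (this is the $\mb r = 0$, degree-$0$ part, where the twisted local system contributes a single line — one checks that the monodromy of $\ddot{\cL} = \dot{\cL} \boxtimes \dot{\cL}^*$ around the cell $\ddot{\mf g}^{PwP}$ is trivial, so $H_0$ is as in the constant-coefficient case). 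I would then identify the index set of the $\Omega$'s with $W_\cL$ and observe that for $w \in W_\cL^\circ$ the generator is $\Delta(w) H_0^{G \times \C^\times}(\ddot{\mf g}^P, \ddot{\cL})$ exactly by \cite[Proposition 4.2(b)]{Lus3} applied to $\mh H(\mf t, W_\cL^\circ, c \mb r)$, while for $w \in \cR_\cL$ (or a coset representative thereof) the generator is $\Delta(w) H_0^{G \times \C^\times}(\ddot{\mf g}^P, \ddot{\cL})$ by the explicit geometric action \eqref{eq:1.2} of $\cR_\cL$ on $\dot{\mf g}$ by $w \cdot (x,gP) = (x, g w^{-1} P)$, which simply permutes the cells transitively and freely. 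Combining, $\{\Delta(w) H_0^{G \times \C^\times}(\ddot{\mf g}^P, \ddot{\cL}) : w \in W_\cL\}$ is a basis, and the $\C[W_\cL, \natural_\cL]$-module structure is the regular one since $\Delta(w') \cdot \Delta(w) (\text{generator}) = \Delta(w' w)(\text{generator})$ up to the scalar $\natural_\cL(w', w)$ coming from \eqref{eq:1.1} — i.e. the map $N_w \mapsto \Delta(w) H_0^{G \times \C^\times}(\ddot{\mf g}^P, \ddot{\cL})$ is an isomorphism $\C[W_\cL, \natural_\cL] \xrightarrow{\sim} H_0^{G \times \C^\times}(\ddot{\mf g}, \ddot{\cL})$ of $\C[W_\cL, \natural_\cL]$-modules.

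I expect the main obstacle to be the bookkeeping with the cocycle $\natural_\cL$: one must check that the action $\Delta$ of $\C[W_\cL, \natural_\cL]$ on $H_0^{G \times \C^\times}(\ddot{\mf g}, \ddot{\cL})$ obtained from the endomorphism algebra identification \eqref{eq:1.1} is genuinely the \emph{twisted} regular representation with the \emph{same} cocycle $\natural_\cL$, not $\natural_\cL^{-1}$ or a cohomologous variant. This is exactly the point where the asymmetry between $\cL$ and $\cL^*$ in $\ddot{\cL} = \dot{\cL} \boxtimes \dot{\cL}^*$ enters: the $\Delta$-action (via $\pi_{12}$, using $\dot{\cL}$) carries cocycle $\natural_\cL$ while the $\Delta'$-action (via $\pi_{13}$, using $\dot{\cL}^*$) carries $\natural_\cL^{-1}$, and these two commuting actions are what make $H_0$ a $\C[W_\cL,\natural_\cL]$-$\C[W_\cL,\natural_\cL^{-1}]$-bimodule, forcing it (once we know it is one-dimensional over the top cell and free of rank $|W_\cL|$) to be the twisted group algebra itself. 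The rest — smallness of $\pr_1$, vanishing of odd equivariant homology, freeness of the $S(\mf t^* \oplus \C)$-action on each cell — transfers verbatim from \cite[\S 3--4]{Lus3}, using the reduction $\dot{\mf g} = G \times_{G^S} \dot{\mf g}^\circ$ and \cite[5.1]{BeLu} exactly as was already done above for the perversity of $K$.
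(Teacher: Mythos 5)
There is a genuine gap, and it sits exactly at the one point where the disconnected case differs from \cite{Lus3}. Your cell decomposition indexes the relevant $P$--$P$ double cosets "by all of $W_\cL$", asserting that $N_G(P,L)$ meets every component that contributes. But the set $P \backslash G / P$ is much larger than $W_\cL$ (already in $G^\circ$ it is indexed by $W_{L}$-double cosets in the full Weyl group of $(G^\circ,T)$), and the substance of the proposition is precisely to show that all but $|W_\cL|$ of these cosets have vanishing $H_0^{G\times\C^\times}(\ddot{\mf g}^\Omega,\ddot\cL)$. For cosets not meeting $P N_G(L) P$ this vanishing is Lusztig's cuspidality argument and does transfer verbatim, as you say. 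What your proposal never addresses is the new class of cosets that only exists for disconnected $G$: those contained in $P N_G(L) P$ but not in $P N_G(\cL) P$, i.e.\ coming from elements $g_0$ which normalize $L$ and preserve $\cC_v^L$ but with $\Ad(g_0)^*\cL \not\cong \cL$. For such $\Omega$ the paper computes $H_0^{G\times\C^\times}(\ddot{\mf g}^\Omega,\ddot\cL) \cong H_0^{Z_{L\times\C^\times}(v)}(\{v\}) \otimes (\cL \boxtimes \Ad(g_0)^*\cL^*)_v^{Z_{L\times\C^\times}(v)} = 0$, because the two irreducible equivariant local systems are non-isomorphic. Without this step your argument would produce a free module of rank $|N_G(L)/L|$ rather than $|W_\cL| = |N_G(\cL)/L|$, and the identification of $H_0$ with $\C[W_\cL,\natural_\cL]$ fails whenever $N_G(\cL) \subsetneq N_G(L)$. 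This vanishing is the only genuinely new content of Proposition \ref{prop:1.2} relative to \cite[Proposition 4.7]{Lus3}, so it cannot be absorbed into "transfers verbatim".

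Two smaller points. The one-dimensionality of $H_0^{G\times\C^\times}(\ddot{\mf g}^\Omega,\ddot\cL)$ for a good coset is not a statement about trivial monodromy reducing to the constant-coefficient case; it comes from $(\cL\boxtimes\cL^*)_v^{Z_{L\times\C^\times}(v)} = \End_{Z_{L\times\C^\times}(v)}(\cL_v) \cong \C$, i.e.\ Schur's lemma for the irreducible cuspidal system $\cL$ (this is also how the paper gets $H_*^{G\times\C^\times}(\ddot{\mf g}^P,\ddot\cL)\cong S(\mf t^*\oplus\C)$). And the connected-case input you want is \cite[Proposition 4.7]{Lus3} (together with 4.11.a for part (b)), not Proposition 4.2, which only identifies $H^*_{G\times\C^\times}(\dot{\mf g})$ with $S(\mf t^*\oplus\C)$. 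Your discussion of the bimodule structure and of which cocycle ($\natural_\cL$ versus $\natural_\cL^{-1}$) is carried by $\Delta$ versus $\Delta'$ is sound and consistent with \eqref{eq:1.1}, but it only becomes available after the vanishing above has pinned down the rank.
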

\begin{proof}
We have to generalize \cite[Proposition 4.7]{Lus3} to the case where $G$ 
is disconnected. We say that a $P-P$ double coset $\Omega \subset G$ 
is good if it contains an element of $N_G (L,\cL)$, and bad otherwise. Recall 
from \cite[Theorem 9.2]{Lus2} that $N_{G^\circ}(L,\cL) = N_{G^\circ}(T)$.
Let us consider $H_0^{G \times \C^\times}(\ddot{\mf g}^\Omega,\ddot{\cL})$.
\begin{itemize}
\item If $\Omega$ is good, then Lusztig's argument proves that $H_0^{G
\times \C^\times}(\ddot{\mf g}^\Omega ,\ddot{\cL}) \cong S(\mf t^* \oplus \C)$.
\item If $\Omega$ does not meet $P N_G (L) P$, then Lusztig's argument goes
through and shows that $H_0^{G \times \C^\times}
(\ddot{\mf g}^\Omega ,\ddot{\cL}) = 0$.
\item Finally, if $\Omega \subset P N_G (L) P \setminus P N_G (L,\cL) P$, we
pick any $g_0 \in \Omega$. Then \cite[p. 177]{Lus3} entails that 
\begin{multline}\label{eq:1.4}
H_0^{G \times \C^\times}(\ddot{\mf g}^\Omega,\ddot{\cL}) \cong 
H_0^{L \times \C^\times} (\cC_v^L, \cL \boxtimes \Ad (g_0)^* \cL^*) \cong \\
H_0^{Z_{L \times \C^\times}(v)} \big( \{v\}, (\cL \boxtimes \Ad (g_0)^* \cL^* 
)_v \big) \cong \\
H_0^{Z_{L \times \C^\times}(v)} ( \{v\} ) \otimes
(\cL \boxtimes \Ad (g_0)^* \cL^* )_v^{Z_{L \times \C^\times}(v)} = 0,
\end{multline}
because $\Ad (g_0)^* \cL^* \not\cong \cL^*$. 
\end{itemize}
We also note that \eqref{eq:1.4} with $P$ instead of $\Omega$ gives
\begin{multline*}
H_*^{G \times \C^\times}(\ddot{\mf g}^P,\ddot{\cL}) \cong 
H_*^{L \times \C^\times} (\cC_v^L, \cL \boxtimes \cL^*) \cong \\
H_*^{Z_{L \times \C^\times}(v)} ( \{v\} ) \otimes
(\cL \boxtimes \cL^* )_v^{Z_{L \times \C^\times}(v)} = H_*^{Z_{L \times 
\C^\times}(v)} ( \{v\} ) \otimes \End_{Z_{L \times \C^\times}(v)}(\cL_v) .
\end{multline*}
By the irreducibility of $\cL$ the right hand side is isomorphic to 
$H_*^{Z_{L \times \C^\times}(v)} ( \{v\} )$, which by \cite[p. 177]{Lus3} is
\[
S\big(\mathrm{Lie} (Z_{L \times \C^\times}(v))^* \big) = S(\mf t^* \oplus \C).
\]
In particular $H_*^{G \times \C^\times}(\ddot{\mf g}^P,\ddot{\cL})$
is an algebra contained in $H_*^{G \times \C^\times}(\ddot{\mf g},\ddot{\cL})$.

These calculations suffice to carry the entire proof of 
\cite[Proposition 4.7]{Lus3} out. It establishes (a) and
\[
\dim H_0^{G \times \C^\times}(\ddot{\mf g},\ddot{\cL}) = |W_\cL| .
\]
Then (b) follows in the same way as \cite[4.11.a]{Lus3}.
\end{proof}

The $W_\cL$-action on $T$ induces an action of $W_\cL$ on $S(\mf t^*) \otimes 
\C[{\mb r}]$, which fixes ${\mb r}$. For $\alpha$ in the root system $R(G^\circ,T)$, 
let $\mf g_\alpha \subset \mf g$ be the associated eigenspace for the $T$-action.
Let $\alpha_i \in R(G^\circ,T)$ be a simple root (with respect to $P$) and let 
$s_i \in W_\cL^\circ$ be the corresponding simple reflection. 
We define $c_i \in \Z_{\geq 2}$ by
\begin{equation}\label{eq:1.5}
\begin{array}{ccc}
\ad (v)^{c_i - 2} : & \mf g_{\alpha_i} \oplus \mf g_{2 \alpha_i} \to \mf g_{\alpha_i} 
\oplus \mf g_{2 \alpha_i} & \text{ is nonzero,} \\
\ad (v)^{c_i - 1} : & \mf g_{\alpha_i} \oplus \mf g_{2 \alpha_i} \to \mf g_{\alpha_i} 
\oplus \mf g_{2 \alpha_i} & \text{ is zero.}
\end{array}
\end{equation}
By \cite[Proposition 2.12]{Lus3} $c_i = c_j$ if $s_i$ and $s_j$ are conjugate in 
$N_G (L) / L$. According to \cite[Theorem 5.1]{Lus3}, for all $\xi \in 
S (\mf t^* \oplus \C) = S (\mf t^*) \otimes \C [{\mb r}]$:
\begin{equation}\label{eq:1.6}
\begin{array}{ccc}
\Delta (s_i) \Delta (\xi) - \Delta ({}^{s_i}\xi) \Delta (s_i) & = & 
c_i \Delta ({\mb r} (\xi - {}^{s_i}\xi) / \alpha_i) , \\
\Delta' (s_i) \Delta' (\xi) - \Delta' ({}^{s_i}\xi) \Delta' (s_i) & = & 
c_i \Delta' ({\mb r} (\xi - {}^{s_i}\xi) / \alpha_i) .
\end{array}
\end{equation}

\begin{lem}\label{lem:1.3}
For all $w \in \cR_\cL$ and $\xi \in S(\mf t^* \oplus \C)$:
\[
\begin{array}{ccc}
\Delta (w) \Delta (\xi) & = & \Delta ({}^w \xi) \Delta (w) , \\
\Delta' (w) \Delta' (\xi) & = & \Delta' ({}^w \xi) \Delta' (w) .
\end{array}
\]
\end{lem}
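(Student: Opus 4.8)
The plan is to use that, unlike the simple reflections $s_i \in W_\cL^\circ$ occurring in \eqref{eq:1.6}, every $w \in \cR_\cL$ acts by a genuine algebraic automorphism of $\dot{\mf g}$, compatibly with $\dot{\cL}^*$ (this is precisely \eqref{eq:1.2} and the paragraph following it). Consequently the asserted commutation will be no more than the naturality of the cap product with respect to that automorphism, and no braid-type correction term appears.

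First I would fix the two ingredients. For $w \in \cR_\cL$, the operator $\Delta(w)$ on $H_*^{G\times\C^\times}(\ddot{\mf g},\ddot{\cL})$ is, by the discussion around \eqref{eq:1.2}--\eqref{eq:1.3}, the map $(\theta_w)_*$ induced functorially by the $G\times\C^\times$-equivariant automorphism $\theta_w$ of $(\ddot{\mf g},\ddot{\cL})$ lifting
\[
\theta_w(x,gP,g'P) = (x, g w^{-1} P, g' P) ;
\]
here $\ddot{\cL} = \dot{\cL} \boxtimes \dot{\cL}^*$ and $\theta_w$ acts on the $\dot{\cL}^*$-part through the chosen lift of \eqref{eq:1.2} and on the $\dot{\cL}$-part through the corresponding lift. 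For $\xi \in S(\mf t^* \oplus \C) \cong H^*_{G\times\C^\times}(\dot{\mf g})$ (the isomorphism being \cite[Proposition 4.2]{Lus3}), the operator $\Delta(\xi)$ is cap product with $\pi_{12}^* \xi$, where $\pi_{12} \colon \ddot{\mf g} \to \dot{\mf g}$ is the first projection. The crucial compatibility is that $\theta_w$ covers, through $\pi_{12}$, the automorphism $\bar\theta_w$ of $\dot{\mf g}$ defined by $\bar\theta_w(x,gP) = (x, g w^{-1} P)$; that is, $\pi_{12} \circ \theta_w = \bar\theta_w \circ \pi_{12}$, hence $(\theta_w^{-1})^* \pi_{12}^* \xi = \pi_{12}^* (\bar\theta_w^{-1})^* \xi$.

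Granting this, for $c \in H_*^{G\times\C^\times}(\ddot{\mf g},\ddot{\cL})$ the naturality of the cap product in the equivariant homology of \cite[\S 1]{Lus3} applied to the isomorphism $\theta_w$ gives
\[
\Delta(w)\Delta(\xi)(c) = (\theta_w)_*\big(\pi_{12}^*\xi \cap c\big) = \big((\theta_w^{-1})^*\pi_{12}^*\xi\big) \cap (\theta_w)_* c = \pi_{12}^*\big((\bar\theta_w^{-1})^*\xi\big) \cap \Delta(w)(c) .
\]
Since $\bar\theta_w$ is right translation by $w^{-1}$ on the $G/P$-coordinate, and the isomorphism $H^*_{G\times\C^\times}(\dot{\mf g}) \cong S(\mf t^*) \otimes \C[{\mb r}]$ of \cite[Proposition 4.2]{Lus3} is natural in the $T$-variable and fixes ${\mb r}$, the induced map $(\bar\theta_w^{-1})^*$ on $S(\mf t^* \oplus \C)$ is exactly $\xi \mapsto {}^w\xi$ for the $W_\cL$-action on $T$ (conjugation by a representative of $w$ in $N_G(P,L)$). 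This gives $\Delta(w)\Delta(\xi) = \Delta({}^w\xi)\Delta(w)$. The statement for $\Delta'$ follows by the identical argument with $\pi_{12}$ replaced by $\pi_{13}$ and $\theta_w$ replaced by the automorphism $(x,gP,g'P) \mapsto (x, gP, g' w^{-1} P)$ of $\ddot{\mf g}$, which covers the same $\bar\theta_w$ through $\pi_{13}$.

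I expect the only real obstacle to be keeping the conventions straight in the last step: one must verify that ``$\Delta(w) = (\theta_w)_*$ rather than $(\theta_{w^{-1}})_*$'' composed with ``$(\bar\theta_w^{-1})^*$ induces ${}^w$ rather than ${}^{w^{-1}}$ on $S(\mf t^*)$'' produces precisely the ${}^w\xi$ of the statement, and that this is consistent with $N_w \xi N_w^{-1} = {}^w\xi$ in Proposition \ref{prop:1.4}. The two normalizations in play --- that of the $\C[W_\cL,\natural_\cL]$-action in \cite[Proposition 4.5]{AMS} and that of the isomorphism in \cite[Proposition 4.2]{Lus3} --- are explicit, so the inverses can be tracked directly.
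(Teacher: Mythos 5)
Your argument is correct and is essentially the paper's own proof: both exploit that the $\cR_\cL$-operators are induced by genuine automorphisms of the variety lifted to the local system (so naturality of the equivariant product applies and no braid correction appears), and then identify the induced action on $H^*_{G\times\C^\times}(\dot{\mf g})\cong S(\mf t^*\oplus\C)$ with the natural $W_\cL$-action by tracing through the proof of \cite[Proposition 4.2]{Lus3}. The only difference is presentational (you phrase the naturality on $\ddot{\mf g}$ via $\pi_{12}$, $\pi_{13}$, while the paper states it on $\dot{\mf g}$), which does not change the substance.
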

\begin{proof}
Recall that $\Delta (\xi)$ is given by $S(\mf t^* \oplus \C) \cong 
H^*_{G \times \C^\times}(\dot{\mf g})$ and the product in equivariant (co)homology
\[
H^*_{G \times \C^\times}(\dot{\mf g}) \otimes H_*^{G \times 
\C^\times}(\dot{\mf g} ,\dot{\cL}) \to H_*^{G \times 
\C^\times}(\dot{\mf g} ,\dot{\cL}) .
\]
As explained after \eqref{eq:1.2}, the action of $w \in \cR_\cL$ on 
$(\dot{\mf g},\dot{\cL})$ is a straightforward lift of the action \eqref{eq:1.2}
on $\dot{\mf g}$. It follows that 
\[
\Delta (w) \Delta (\xi) \Delta (w)^{-1} = \Delta ({}^{\bar w} \xi) , 
\]
where $\xi \mapsto {}^{\bar w} \xi$ is the action induced by \eqref{eq:1.2}.
Working through all the steps of the proof of \cite[Proposition 4.2]{Lus3}, we
see that this corresponds to the natural action $\xi \mapsto {}^w \xi$ of
$\cR_\cL$ on $S(\mf t^* \oplus \C)$.
\end{proof}

Let $\mh H (G,L,\cL)$ be the algebra $\mh H (\mf t,W_\cL,c \mb r,\natural_\cL)$, 
with the 2-cocycle $\natural_\cL$ and the parameters $c_i$ from \eqref{eq:1.5}.
By \eqref{eq:1.7} its opposite algebra is
\begin{equation}\label{eq:1.8}
\mh H (G,L,\cL)^{op} \cong \mh H (G,L,\cL^*) = 
\mh H (\mf t,W_\cL,c \mb r,\natural_\cL^{-1}) .
\end{equation}
Using \eqref{eq:1.E} we can interpret
\begin{equation}
\mh H (G,L,\cL) = \mh H (\mf t, W_\cL, c \mb r) \rtimes 
\End^+_{\mathcal{P}_{G}(\mf g_{RS})} \big( (pr_1 )_! \dot{\cL} \big) .
\end{equation}
\begin{lem}\label{lem:1.7}
With the above interpretation $\mh H (G,L,\cL)$ is determined uniquely by
$(G,L,\cL)$, up to canonical isomorphisms.
\end{lem}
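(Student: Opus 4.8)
The plan is to trace through each of the ingredients that enter the definition of $\mh H(G,L,\cL) = \mh H(\mf t,W_\cL,c\mb r,\natural_\cL)$ and check that each is intrinsic to the triple $(G,L,\cL)$, up to the canonical isomorphisms induced by the various choices. The underlying vector space is $\C[W_\cL,\natural_\cL]\otimes S(\mf t^*)\otimes\C[\mb r]$, so first I would note that $T = Z(L)^\circ$, hence $\mf t = \Lie(T)$, depends only on $L$; the finite groups $W_\cL^\circ = N_{G^\circ}(L)/L$ and $W_\cL = N_G(\cL)/L$, together with the splitting $W_\cL = W_\cL^\circ\rtimes\cR_\cL$ of Lemma \ref{lem:1.1}, depend only on $(G,L,\cL)$ once a parabolic $P$ with Levi factor $L$ is fixed. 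The dependence on $P$ enters through $\cR_\cL = N_G(P,\cL)/L$ and through the choice of simple roots, so the real content is to show this is immaterial.

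Next I would address the parameters $c_i$ and the cocycle $\natural_\cL$. The integers $c_i$ are defined in \eqref{eq:1.5} purely in terms of the $\ad(v)$-action on the root spaces $\mf g_{\alpha_i}\oplus\mf g_{2\alpha_i}$, and by \cite[Proposition 2.12]{Lus3} they satisfy $c_i = c_j$ whenever $s_i,s_j$ are conjugate in $W_\cL$; thus they assemble into a well-defined $W_\cL$-invariant function $c\colon R(G^\circ,T)_{\mr{red}}\to\C$, which is manifestly independent of $P$ (the root system $R(G^\circ,T)$ and the $T$-eigenspace decomposition of $\mf g$ do not see $P$). For the cocycle, I would invoke the geometric construction: by \eqref{eq:1.1}, $\natural_\cL$ is determined by the $G$-endomorphism algebra of $(\pr_1)_!\dot\cL$ in $\mathcal P_G(\mf g_{RS})$, and both $\mf g_{RS} = \Ad(G)(\cC_v^L+\mf t_\reg+\mf u)$ — which equals $\Ad(G)(\cC_v^L+\mf t_\reg)$ since $\mf u$ is absorbed, hence is $P$-independent — and the local system $\dot\cL$ depend only on $(G,L,\cL)$. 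A different choice $P'$ of parabolic with the same Levi $L$ is conjugate to $P$ by an element of $N_{G^\circ}(L)$, and since $W_\cL^\circ$ stabilizes $\cL$ (Lemma \ref{lem:1.1}(b)), such an element induces a $G$-equivariant isomorphism of the corresponding geometric data carrying one presentation to the other; this identifies the two cocycle classes and the two splittings compatibly.

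The remaining point, and the one I expect to be the main obstacle, is \emph{canonicity}: different admissible choices must yield not just isomorphic algebras but a \emph{distinguished} isomorphism, and one must check these distinguished isomorphisms are compatible (so that the system of all of them is coherent, i.e.\ a choice-free object in the appropriate sense). Concretely, two parabolics $P,P'$ with Levi $L$ give two systems of simple roots in the same root system $R(G^\circ,T)$, hence an isomorphism of the graded Hecke algebras $\mh H(\mf t,W_\cL^\circ,c\mb r)$ realized geometrically via the action \eqref{eq:1.2} of $\cR_\cL$ and the conjugation by a representative in $N_{G^\circ}(L)$; the key is that this isomorphism does not depend on the choice of representative, because two representatives differ by an element of $L$, which acts trivially on $\mf t$ and on $\cL$ up to a canonical scalar that cancels in the endomorphism algebra. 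I would handle this by appealing to the geometric characterization \eqref{eq:1.1}--\eqref{eq:1.E}: the pair $(\mh H(\mf t,W_\cL,c\mb r),\End^+_{\mathcal P_G(\mf g_{RS})}((\pr_1)_!\dot\cL))$ is built from functorially-defined categories and complexes, so any two constructions are related by a unique isomorphism respecting the grading, the $S(\mf t^*)\otimes\C[\mb r]$-structure, and the embedded twisted group algebra — and uniqueness of that isomorphism is exactly the canonicity claim. The verification that ``unique'' holds amounts to checking that the only algebra automorphism of $\mh H(G,L,\cL)$ fixing $S(\mf t^*)\otimes\C[\mb r]$ pointwise and preserving the subalgebras $\C[W_\cL^\circ]$ and $\C[\cR_\cL,\natural_\cL]$ setwise is the identity, which follows from Lemma \ref{lem:1.6} and a direct inspection of the braid relations in Proposition \ref{prop:1.4}.
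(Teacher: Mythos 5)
Most of your reduction is in line with the paper: the two genuinely arbitrary choices are the parabolic $P$ and the basis elements $N_\gamma$ realizing the $2$-cocycle $\natural_\cL$, the parameters $c_i$ and the root datum are visibly $P$-independent, and for two parabolics $P,P'$ with Levi factor $L$ the paper also uses conjugation by the (essentially unique, by Lemma \ref{lem:1.1}) element of $G^\circ$ carrying $P$ to $P'$ to get the canonical isomorphism. The problem is your final step, where canonicity with respect to the cocycle is supposed to follow from a rigidity statement: you claim that the only automorphism of $\mh H(G,L,\cL)$ fixing $S(\mf t^*)\otimes\C[\mb r]$ pointwise and preserving $\C[W_\cL^\circ]$ and $\C[\cR_\cL,\natural_\cL]$ setwise is the identity. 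This is false in general: for any character $\chi$ of $\cR_\cL$, the map that is the identity on $\mh H(\mf t,W_\cL^\circ,c\mb r)\otimes\C[\mb r]$ and sends $N_w\mapsto\chi(wW_\cL^\circ)N_w$ is an algebra automorphism (it preserves the twisted multiplication because $\natural_\cL$ is inflated from $(W_\cL/W_\cL^\circ)^2$, it does not touch the braid relations since those involve only $s_i\in W_\cL^\circ$, and it preserves $N_w\xi N_w^{-1}={}^w\xi$). It satisfies all your constraints yet is nontrivial whenever $\cR_\cL$ has a nontrivial character, which is the typical situation. So "any two constructions are related by a \emph{unique} isomorphism respecting these structures" cannot be verified this way, and the ambiguity you are trying to kill is exactly of this character-twist type: the $N_\gamma$ are only defined up to scalars, so only the cohomology class of $\natural_\cL$ is determined.

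The paper resolves this differently, and this is the content of the phrase "with the above interpretation" in the statement: one never chooses the $N_\gamma$ at all. Instead $\mh H(G,L,\cL)$ is \emph{defined} as the crossed product $\mh H(\mf t,W_\cL^\circ,c\mb r)\rtimes \End^+_{\mathcal{P}_G(\mf g_{RS})}\big((\pr_1)_!\dot{\cL}\big)$, where the second factor is the canonical endomorphism algebra (not an abstract twisted group algebra), and the crossed product structure is canonical because every element of $\C^\times N_\gamma\subset \End^+_{\mathcal{P}_G(\mf g_{RS})}\big((\pr_1)_!\dot{\cL}\big)$ has the same conjugation action on $\mh H(\mf t,W_\cL^\circ,c\mb r)$, depending only on $\gamma\in\cR_\cL$. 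The identification with $\C[\cR_\cL,\natural_\cL]$, and hence the cocycle itself, is then only an a posteriori description up to the noncanonical choice you were worried about. If you want to keep your formulation via distinguished isomorphisms, you must require compatibility with the canonical identifications of the $\End^+$-subalgebras coming from functoriality of the geometric construction, not merely that these subalgebras be preserved setwise; with only the setwise condition the uniqueness claim, and with it your canonicity argument, breaks down.
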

\begin{proof}
The only arbitrary choices are $P$ and $\natural_\cL : \cR_\cL^2 \to \C^\times$.

A different choice of a parabolic subgroup $P' \subset G$ with Levi factor $L$
would give rise to a different algebra $\mh H (G,L,\cL)'$. However, Lemma \ref{lem:1.1}.a
guarantees that there is a unique (up to $P$) element $g \in G^\circ$ with
$g P g^{-1} = P'$. Conjugation with $g$ provides a canonical isomorphism between the
two algebras under consideration.

The 2-cocycle $\natural_\cL$ depends on the choice of elements $N_\gamma \in
\End^+_{\mathcal{P}_{G}(\mf g_{RS})} \big( (pr_1 )_! \dot{\cL} \big)$. This choice is not
canonical, only the cohomology class of $\natural_\cL$ is uniquely determined.
Fortunately, this indefiniteness drops out when we replace $\C [\cR_\cL,\natural_\cL]$
by $\End^+_{\mathcal{P}_{G}(\mf g_{RS})} \big( (pr_1 )_! \dot{\cL} \big)$. Every element of
$C^\times N_\gamma \subset \End^+_{\mathcal{P}_{G}(\mf g_{RS})} \big( (pr_1 )_! \dot{\cL} \big)$
has a well-defined conjugation action on $\mh H (\mf t, W_\cL,c \mb r)$, depending
only on $\gamma \in \cR_\cL$. This suffices to define the crossed product
$\mh H (\mf t, W_\cL, c \mb r) \rtimes \End^+_{\mathcal{P}_{G}(\mf g_{RS})} 
\big( (pr_1 )_! \dot{\cL} \big)$ in a canonical way.
\end{proof}

The group $W_\cL$ and its 2-cocycle $\natural_\cL$ from \cite[\S 4]{AMS} can 
be constructed using only the finite index subgroup 
$G^\circ N_G (P,\cL) \subset G$. Hence
\begin{equation}\label{eq:1.12}
\mh H (G,L,\cL) = \mh H (G^\circ N_G (P,\cL),L,\cL) . 
\end{equation}
With \eqref{eq:1.3}, \eqref{eq:1.6} and Lemma \ref{lem:1.3} we can define
endomorphisms $\Delta (h)$ and $\Delta' (h')$ of $H_*^{G \times \C^\times}
(\ddot{\mf g},\ddot{\cL})$ for every $h \in \mh H (G,L,\cL)$ and every 
$h' \in \mh H (G,L,\cL^*)$.

Let $1 \in H_0^{G \times \C^\times}(\ddot{\mf g}^P,\ddot{\cL}) \cong
S(\mf t^* \oplus \C)$ be the unit element.

\begin{cor}\label{cor:1.5}
\enuma{
\item The map $\mh H (G,L,\cL) \to H_*^{G \times \C^\times}
(\ddot{\mf g},\ddot{\cL}) : h \mapsto \Delta (h) 1$ is bijective.
\item The map $\mh H (G,L,\cL^*) \to H_*^{G \times \C^\times}
(\ddot{\mf g},\ddot{\cL}) : h' \mapsto \Delta' (h') 1$ is bijective.
\item The operators $\Delta (h)$ and $\Delta' (h')$ commute, and
$(h,h') \mapsto \Delta (h) \Delta' (h')$ identifies $H_*^{G \times \C^\times} 
(\ddot{\mf g},\ddot{\cL})$ with the biregular representation of $\mh H (G,L,\cL)$.
} 
\end{cor}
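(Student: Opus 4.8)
The plan is to reduce everything to the analogous statement of Lusztig \cite[Theorem 6.3]{Lus3} for connected $G$ (equivalently for the subalgebra $\mh H (\mf t, W_\cL^\circ, c \mb r)$), and then bootstrap across the semidirect-product decomposition $W_\cL = W_\cL^\circ \rtimes \cR_\cL$ using Proposition \ref{prop:1.2} and Lemma \ref{lem:1.3}. First I would observe that by Proposition \ref{prop:1.2}(a) the $S(\mf t^* \oplus \C)$-module $H_*^{G \times \C^\times}(\ddot{\mf g},\ddot{\cL})$ is free over $S(\mf t^* \oplus \C)$ (via $\Delta$, and separately via $\Delta'$) on the finite-dimensional space $H_0^{G \times \C^\times}(\ddot{\mf g},\ddot{\cL})$, which by part (b) of that proposition is identified with $\C[W_\cL,\natural_\cL]$ as a $\C[W_\cL,\natural_\cL]$-module. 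Combining these two facts gives, as a vector space,
\[
H_*^{G \times \C^\times}(\ddot{\mf g},\ddot{\cL}) \;\cong\; S(\mf t^* \oplus \C) \otimes \C[W_\cL,\natural_\cL] \;\cong\; \mh H (G,L,\cL)
\]
of the correct total size; the content of the corollary is that the map $h \mapsto \Delta(h)1$ realizes this identification compatibly with the algebra structure.

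For (a): the relations \eqref{eq:1.6} and Lemma \ref{lem:1.3} say precisely that $h \mapsto \Delta(h)$ is an algebra homomorphism $\mh H (G,L,\cL) \to \End\big( H_*^{G \times \C^\times}(\ddot{\mf g},\ddot{\cL}) \big)$, and then $h \mapsto \Delta(h)1$ is a module map from the left regular representation of $\mh H(G,L,\cL)$. To see it is bijective I would argue by a filtration/associated-graded reduction: filter $\mh H(G,L,\cL)$ and the homology by degree in $S(\mf t^* \oplus \C)$, so that on the associated graded level the $\Delta(\xi)$-action is exactly the free $S(\mf t^* \oplus \C)$-module structure of Proposition \ref{prop:1.2}(a) and the residual $\C[W_\cL,\natural_\cL]$-action is that of Proposition \ref{prop:1.2}(b); the map on associated graded pieces is thus an isomorphism, hence so is the original map. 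Part (b) is identical with the roles of $\pi_{12}$ and $\pi_{13}$ interchanged, i.e. replacing $\dot{\cL}$ by $\dot{\cL}^*$ and $\natural_\cL$ by $\natural_\cL^{-1}$, using \eqref{eq:1.8}; no new work is needed beyond noting that Proposition \ref{prop:1.2} and the relations \eqref{eq:1.6}, Lemma \ref{lem:1.3} are stated symmetrically in $\Delta$ and $\Delta'$.

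For (c): commutativity of $\Delta(h)$ with $\Delta'(h')$ must be checked on the generators. For the $S(\mf t^* \oplus \C)$-parts, $\Delta(\xi)$ and $\Delta'(\xi')$ both come from the same $H^*_{G \times \C^\times}(\dot{\mf g})$-module structures via the two projections $\pi_{12}, \pi_{13}$, and they commute by the usual compatibility of the two convolution actions in equivariant (co)homology, exactly as in \cite[Theorem 6.3]{Lus3}. For the group-algebra parts, the commuting actions of $\C[W_\cL,\natural_\cL]$ and $\C[W_\cL,\natural_\cL^{-1}]$ on $\ddot{\cL}$ are already recorded in \eqref{eq:1.3} as $(w,w') \mapsto \Delta(w) \otimes \Delta(w')$, so $\Delta(w)$ and $\Delta'(w')$ commute; the remaining mixed cases (a $W_\cL$-element against an $S(\mf t^* \oplus \C)$-element on the other side) follow because the $\cR_\cL$- and $W_\cL^\circ$-actions underlying $\Delta'$ are built from operations on the second copy of $G/P$ in $\ddot{\mf g}$, which commute with the first-copy operations defining $\Delta$. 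Once commutativity is established, (a) shows $\Delta(\cdot)1$ is a linear isomorphism $\mh H(G,L,\cL) \isom H_*^{G \times \C^\times}(\ddot{\mf g},\ddot{\cL})$, (b) shows the same for $\Delta'(\cdot)1$ with $\mh H(G,L,\cL^*) = \mh H(G,L,\cL)^{op}$, and since $\Delta(\mh H(G,L,\cL))$ and $\Delta'(\mh H(G,L,\cL)^{op})$ are mutually commuting, the pair $(h,h') \mapsto \Delta(h)\Delta'(h')1$ exhibits $H_*^{G \times \C^\times}(\ddot{\mf g},\ddot{\cL})$ as the $\mh H(G,L,\cL)$-bimodule $\mh H(G,L,\cL)$ itself, i.e. the biregular representation.

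The main obstacle I anticipate is not any single step but the bookkeeping in the bimodule identification of (c): one has to be careful that the left action $\Delta$ and right action $\Delta'$ really do generate the \emph{full} commutant of each other (so that "biregular representation" is justified, not merely "commuting actions of the right size"). This is where the freeness in Proposition \ref{prop:1.2}(a) for \emph{both} $\Delta$ and $\Delta'$ is essential, together with a dimension count over the central subalgebra $S(\mf t^* \oplus \C)^{W_\cL} \otimes \C[\mb r]$ of Lemma \ref{lem:1.6}; the disconnectedness of $G$ enters only through having already extended Lusztig's Proposition 4.7 in Proposition \ref{prop:1.2}, so beyond that point the argument is a formal consequence of the connected case together with the $\cR_\cL$-equivariance supplied by Lemma \ref{lem:1.3} and \eqref{eq:1.3}.
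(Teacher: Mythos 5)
Your proposal is correct and follows essentially the same route as the paper, whose proof simply invokes Lusztig's argument for \cite[Corollary 6.4]{Lus3} together with Proposition \ref{prop:1.2} and \eqref{eq:1.8}; your filtration-by-$S(\mf t^* \oplus \C)$-degree argument and the generator-by-generator commutativity check are exactly the content of that citation, transported to disconnected $G$ via Proposition \ref{prop:1.2}, \eqref{eq:1.3}, \eqref{eq:1.6} and Lemma \ref{lem:1.3}. The only superfluous worry is your "full commutant" concern in (c): the corollary only asserts that, under the bijection of (a), $\Delta$ and $\Delta'$ become left and right multiplication (using commutativity and \eqref{eq:1.7}--\eqref{eq:1.8} to identify $\Delta'(h')1$ with $\Delta (h'^*)1$), so no commutant or dimension count is needed.
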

\begin{proof}
This follows in the same way as \cite[Corollary 6.4]{Lus3}, when we take
Proposition \ref{prop:1.2} and \eqref{eq:1.8} into account.
\end{proof}

\section{Representations of twisted graded Hecke algebras}
\label{sec:reps}

We will extend the construction and parametrization of $\mh H(G,L,\cL)$-modules
from \cite{Lus3,Lus5} to the case where $G$ is disconnected. In this section will 
work under the following assumption:

\begin{cond}\label{cond:1}
The group $G$ equals $N_G (P,\cL) G^\circ$. 
\end{cond}
In view of \eqref{eq:1.12} this does not pose any restriction on the collection
of algebras that we consider.

\subsection{Standard modules} \
\label{par:standard}

Let $y \in \mf g$ be nilpotent and define
\[
\cP_y = \{ gP \in G / P : \Ad (g^{-1}) y \in \cC_v^L + \mf u \} . 
\]
The group 
\[
M(y) = \{ (g_1,\lambda) \in G \times \C^\times : \Ad (g_1) y = \lambda^2 y \} 
\]
acts on $\cP_y$ by $(g_1,\lambda) \cdot gP = g_1 g P$.
Clearly $\cP_y$ contains an analogous variety for $G^\circ$:
\[
\cP_y^\circ := \{ gP \in G^\circ / P : \Ad (g^{-1}) y \in \cC_v^L + \mf u \} .
\]
Since $\cC_v^L$ is stable under $\Ad (N_G (L))$, $\cC_v^L + \mf u$ is stable under
$\Ad (N_G (P))$. As $N_{G^\circ}(P) = P$ and $N_G (P,\cL) P / P \cong \cR_\cL$,
there is an isomorphism of $M(y)$-varieties
\begin{equation}\label{eq:2.1}
\cP_y^\circ \times \cR_\cL \to \cP_y : (gP,w) \mapsto g w^{-1} P .
\end{equation}
The local system $\dot{\cL}$ on $\dot{\mf g}$ restricts to a local system on
$\cP_y \cong \{y\} \times \cP_y \subset \dot{\mf g}$. We will endow the space
\begin{equation}\label{eq:2.2}
H_*^{M(y)^\circ}(\cP_y,\dot{\cL})
\end{equation}
with the structure of an $\mh H (G,L,\cL)$-module. With the method of 
\cite[p. 193]{Lus3}, the action of $\C [W_\cL,\natural_\cL^{-1}]$
on $K^*$ from \eqref{eq:1.1} gives rise to an action $\tilde \Delta$ on
the dual space of \eqref{eq:2.2}. With the aid of \eqref{eq:1.7}, the map 
\begin{equation}\label{eq:2.68}
\Delta : \C [W_\cL,\natural_\cL] \to \End_\C \big( H_*^{M(y)^\circ} (\cP_y,
\dot{\cL}) \big), \quad \Delta (N_w) = \tilde{\Delta} \big( (N_w)^{-1} \big)^* 
\end{equation}
makes \eqref{eq:2.2} into a graded $\C [W_\cL,\natural_\cL]$-module.

We describe the action of $S(\mf t^* \oplus \C) \cong H^*_{G \times 
\C^\times}(\dot{\mf g})$ in more detail. The inclusions
\[
\{y\} \times \cP_y \subset (G \times \C^\times)\cdot (\{y\} \times \cP_y)
\subset \dot{\mf g} 
\]
give maps
\begin{equation}\label{eq:2.3}
H^*_{G \times \C^\times}(\dot{\mf g}) \to H^*_{G \times \C^\times}
(G \times \C^\times \cdot \{y\} \times \cP_y) \to
H^*_{M(y)}(\cP_y) .
\end{equation}
Here $(G \times \C^\times)\cdot (\{y\} \times \cP_y) \cong
(G \times \C^\times) \times_{M(y)} \cP_y$, so by \cite[1.6]{Lus3} the
second map in \eqref{eq:2.3} is an isomorphism. 
Recall from \cite[1.9]{Lus3} that
\[
H^*_{M(y)}(\cP_y) \cong H^*_{M(y)^\circ}(\cP_y)^{M(y) / M(y)^\circ} . 
\]
The product
\begin{equation}\label{eq:2.9}
H^*_{M(y)^\circ}(\cP_y) \otimes H_*^{M(y)^\circ}(\cP_y,\dot{\cL}) \to 
H_*^{M(y)^\circ}(\cP_y,\dot{\cL}) 
\end{equation}
gives an action of the graded algebras in \eqref{eq:2.3} on the graded 
vector space $H_*^{M(y)^\circ}(\cP_y,\dot{\cL})$. We denote the operator 
associated to $\xi \in S(\mf t^* \oplus \C)$ by $\Delta (\xi)$.

The projection $\{y\} \times \cP_y \to \{y\}$ induces an algebra homomorphism
$H^*_{M(y)^\circ}(\{y\}) \to H^*_{M(y)^\circ}(\cP_y)$. With \eqref{eq:2.9}
this also gives an action of $H^*_{M(y)^\circ}(\{y\})$ on $H_*^{M(y)^\circ}
(\cP_y,\dot{\cL})$. Furthermore $M(y)$ acts naturally on $H^*_{M(y)^\circ}(\{y\})$ 
and on $H_*^{M(y)^\circ}(\cP_y,\dot{\cL})$, and this actions factors through 
the finite group $\pi_0 (M(y)) = M(y) / M(y)^\circ$.

\begin{thm}\label{thm:2.4}[Lusztig] 
\enuma{
\item The above operators $\Delta (w)$ and $\Delta (\xi)$ make 
$H_*^{M(y)^\circ}(\cP_y,\dot{\cL})$ into a graded $\mh H (G,L,\cL)$-module.
\item The actions of $H^*_{M(y)^\circ}(\{y\})$ and $\mh H (G,L,\cL)$ commute.
\item $H_*^{M(y)^\circ}(\cP_y,\dot{\cL})$ is finitely generated and projective as 
 $H^*_{M(y)^\circ}(\{y\})$-module.
\item The action of $\pi_0 (M(y))$ commutes with the $\mh H (G,L,\cL)$-action.
It is semilinear with respect to $H^*_{M(y)^\circ}(\{y\})$, that is,
for $m \in \pi_0 (M(y)), \mu \in H^*_{M(y)^\circ}(\{y\})$, 
$h \in \mh H (G,L,\cL)$ and $\eta \in H_*^{M(y)^\circ} (\cP_y,\dot{\cL})$:
\[
m \cdot (\mu \otimes \Delta (h) \eta ) = (m \cdot \mu) \otimes \Delta (h) 
(m \cdot \eta) = \Delta (h) \big( (m \cdot \mu) \otimes (m \cdot \eta) \big) .
\]
}
\end{thm}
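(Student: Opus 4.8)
The strategy is to reduce everything to Lusztig's theorem in the connected case \cite[\S 1--8]{Lus3}, exploiting the decompositions
$\cP_y \cong \cP_y^\circ \times \cR_\cL$ from \eqref{eq:2.1} and
$\mh H (G,L,\cL) = \mh H (\mf t, W_\cL, c\mb r) \rtimes \C[\cR_\cL,\natural_\cL]$
coming from \eqref{eq:1.E}. First I would invoke \cite{Lus3} verbatim to conclude that the operators $\Delta(s_i)$, $\Delta(w)$ for $w\in W_\cL^\circ$, and $\Delta(\xi)$ for $\xi\in S(\mf t^*\oplus\C)$ satisfy the defining relations of the (untwisted) graded Hecke algebra $\mh H(\mf t,W_\cL^\circ,c\mb r)$ on $H_*^{M(y)^\circ}(\cP_y,\dot\cL)$; this uses that the forgetful functor and the equivariant (co)homology constructions are insensitive to whether the ambient group is connected, so the formulas \eqref{eq:1.6} and the $W_\cL^\circ$-equivariance go through unchanged. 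The work specific to the disconnected case is then to check the two remaining families of relations in Proposition \ref{prop:1.4}: that $\Delta$ restricted to $\C[W_\cL,\natural_\cL]$ is an algebra map (which is exactly the content of \eqref{eq:1.1}, \eqref{eq:1.E} transported to $\cP_y$ via the method of \cite[p.~193]{Lus3}, as already set up in \eqref{eq:2.68}), and the cross relation $\Delta(N_w)\Delta(\xi)\Delta(N_w)^{-1} = \Delta({}^w\xi)$ for $w\in\cR_\cL$. For the latter I would argue exactly as in Lemma \ref{lem:1.3}: the action of $w\in\cR_\cL$ on $(\cP_y,\dot\cL)$ is induced by the geometric action \eqref{eq:1.2} on $\dot{\mf g}$ (restricted to the fibre over $y$), and working through the identification $S(\mf t^*\oplus\C)\cong H^*_{G\times\C^\times}(\dot{\mf g})$ of \cite[Proposition 4.2]{Lus3} shows this geometric action matches the natural linear action ${}^w\xi$ on $S(\mf t^*)$. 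Parts (b) and (c) are then direct citations of \cite[Theorem 1.11, Proposition 7.2]{Lus3} or their analogues, since they concern only the $H^*_{M(y)^\circ}(\{y\})$-module structure and the commuting $\mh H(\mf t,W_\cL^\circ,c\mb r)$-action, and neither notion changes when we enlarge $W_\cL^\circ$ to $W_\cL$; one just notes that the $S(\mf t^*\oplus\C)$- and $\C[W_\cL,\natural_\cL]$-actions are built from the same equivariant-homology product \eqref{eq:2.9}, so finite generation and projectivity over $H^*_{M(y)^\circ}(\{y\})$ are inherited.

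For part (d), the key point is that $\pi_0(M(y))$ acts on $H_*^{M(y)^\circ}(\cP_y,\dot\cL)$ by transport of structure through the conjugation action of $M(y)$ on $M(y)^\circ$ and on the pair $(\cP_y,\dot\cL)$, and all the ingredients building the $\mh H(G,L,\cL)$-action — the local system $\dot\cL$, the varieties $\ddot{\mf g}$, $\dot{\mf g}$, the classes in $H^*_{G\times\C^\times}(\dot{\mf g})$, and the endomorphisms in \eqref{eq:1.1} — are themselves $G\times\C^\times$-equivariant, hence $M(y)$-equivariant. Concretely I would show that for $m\in M(y)$ the transport operator $\rho(m)$ satisfies $\rho(m)\Delta(h)\rho(m)^{-1} = \Delta(h)$ for all $h\in\mh H(G,L,\cL)$: for $h\in S(\mf t^*\oplus\C)$ this is because $m$ acts trivially on $H^*_{G\times\C^\times}(\dot{\mf g})$ (the equivariant cohomology of the whole space is $M(y)$-invariant), and for $h=N_w$ it is because the endomorphisms of $K$, $K^*$ were constructed $G\times\C^\times$-equivariantly (as remarked just after \eqref{eq:1.1}), so conjugating by $m$ fixes them. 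The semilinearity with respect to $H^*_{M(y)^\circ}(\{y\})$ then follows from the fact that the module structure over that ring is via the $M(y)^\circ$-equivariant product \eqref{eq:2.9}, which is natural for the $\pi_0(M(y))$-action on both tensor factors; that is precisely the displayed identity, once one observes $\Delta(h)$ is $H^*_{M(y)^\circ}(\{y\})$-linear (part (b)) so it may be moved across the tensor symbol.

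The main obstacle I anticipate is not in (b)--(d) — those are essentially formal once the connected case is in hand — but in making the passage from \eqref{eq:1.1}--\eqref{eq:1.E} (endomorphism algebras of $K$, $K^*$ on $\mf g_{RS}$) to an action on the \emph{non-semisimple} standard module $H_*^{M(y)^\circ}(\cP_y,\dot\cL)$ genuinely rigorous in the twisted, disconnected setting. Lusztig's "method of p.~193" of \cite{Lus3} uses a restriction-of-scalars/duality argument to move the $W_\cL^\circ$-action from the generic stalk to $\cP_y$; I would need to verify that the intervening base-change and cleanness statements — notably $\widehat j_!\dot\cL = \widehat j_*\dot\cL$, already recorded before \eqref{eq:1.1} — are compatible with the $\cR_\cL$-action \eqref{eq:1.2}, so that the full $\C[W_\cL,\natural_\cL]$ (not merely $\C[W_\cL^\circ]$) acts and \eqref{eq:2.68} is well-defined. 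Since \eqref{eq:2.68} has already been written down in the paper, the honest task here is to check that the dualization $N_w\mapsto \tilde\Delta((N_w)^{-1})^*$ genuinely converts the $\natural_\cL^{-1}$-twisted action on $K^*$ into a $\natural_\cL$-twisted action on \eqref{eq:2.2}, i.e.\ that the cocycle bookkeeping of \eqref{eq:1.7}--\eqref{eq:1.8} is consistent with the one on the module; this is a finite, if slightly delicate, verification, and I would present it as the one nontrivial step, citing \cite[Proposition 4.5, (44)]{AMS} for the compatibility of the $\cR_\cL$-action with the rest.
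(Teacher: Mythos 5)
Your overall route is the same as the paper's: reduce to Lusztig's connected case, use the $G\times\C^\times$-equivariance of the $W_\cL$-endomorphisms of $K^*$ for the commutation statements, and get semilinearity in (d) from functoriality of the equivariant product. Your treatment of (a), (c) and (d) is in line with the actual proof (for (a) the paper simply observes that the proof of \cite[Theorem 8.13]{Lus3} goes through for disconnected $G$, and notes that it uses (b)--(d); your plan of checking the $\cR_\cL$-cross relations by a Lemma \ref{lem:1.3}-type argument is a reasonable way to make that explicit).

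There is, however, a genuine gap in your part (b). You justify it by saying that "the $S(\mf t^*\oplus\C)$- and $\C[W_\cL,\natural_\cL]$-actions are built from the same equivariant-homology product \eqref{eq:2.9}". That is false for the group-algebra part: the $\C[W_\cL,\natural_\cL]$-action on $H_*^{M(y)^\circ}(\cP_y,\dot\cL)$ is defined by dualizing the action of the equivariant endomorphisms of $K^*$ via \eqref{eq:2.68}, not via \eqref{eq:2.9}; only the $S(\mf t^*\oplus\C)$- and $H^*_{M(y)^\circ}(\{y\})$-actions come from \eqref{eq:2.9}, and for those the commutation is the easy even-degree graded-commutativity argument. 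The actual new content of (b) beyond the connected case is that the operators $\Delta (N_w)$ for $w\in\cR_\cL$ are $H^*_{M(y)^\circ}(\{y\})$-linear, and your remark that "neither notion changes when we enlarge $W_\cL^\circ$ to $W_\cL$" assumes exactly what has to be proved. The paper proves it by identifying $H_j^{M(y)^\circ}(\cP_y,\dot\cL)^*$ with $H^{2\dim\tilde{\cO}-j}_{G\times\C^\times}(\tilde{\cO},h^*K^*)$, where $\tilde{\cO}=(G\times\C^\times)/M(y)^\circ$ and $H^*_{M(y)^\circ}(\{y\})\cong H^*_{G\times\C^\times}(\tilde{\cO})$, and then invoking the proof of \cite[4.4]{Lus3}: since the $\C[W_\cL,\natural_\cL^{-1}]$-endomorphisms of $K^*$ (including the $\cR_\cL$-part) are $G\times\C^\times$-equivariant, the dual operators $\tilde\Delta(w)$ commute with the $H^*_{G\times\C^\times}(\tilde{\cO})$-product, and dualizing gives (b). You do use this equivariance in your argument for (d), so the ingredient is at your disposal, but equivariance under $M(y)$ alone does not directly yield $H^*_{M(y)^\circ}(\{y\})$-linearity; you need the $\tilde{\cO}$ dual-space translation to convert it into a statement about the module structure. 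Since your own plan for (a) (and the paper's citation of \cite[Theorem 8.13]{Lus3}) relies on (b)--(d), this gap propagates and should be closed explicitly.
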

\begin{proof}
(b) The actions of $S(\mf t^* \oplus \C)$ and $H^*_{M(y)^\circ}(\{y\})$ both come
from \eqref{eq:2.9}. The algebra $H^*_{M(y)^\circ}(\cP_y)$ is graded commutative
\cite[1.3]{Lus3}. However, since $H^{M(y)^\circ}_j (\cP_y,\dot{\cL}) = 0$ for
odd $j$ \cite[Propostion 8.6.a]{Lus3}, only the action of the subalgebra
$H_{M(y)^\circ}^{\text{even}}(\cP_y)$ matters. Since this is a commutative algebra,
the actions of $S(\mf t^* \oplus \C)$ and $H^*_{M(y)^\circ}(\{y\})$ commute.

Write $\tilde{\cO} = (G \times \C^\times) / M(y)^\circ$ and define
\[
h : \tilde{\cO} \to \mf g, \quad (g,\lambda) \mapsto \lambda^{-2} \Ad (g) y .
\]
There are natural isomorphisms
\begin{align*}
& H^*_{M(y)^\circ}(\{y\}) \cong H^*_{G \times \C^\times}(\tilde{\cO}) ,\\
& H_j^{M(y)^\circ}(\cP_y, \dot{\cL})^* \cong 
H_{G \times \C^\times}^{2\dim (\tilde{\cO}) - j} (\tilde{\cO}, h^* K^*) .
\end{align*}
The dual of the action of $H^*_{M(y)^\circ}(\{y\})$ on 
$H_*^{M(y)^\circ}(\cP_y,\dot{\cL})$ becomes the product
\[
H^*_{G \times \C^\times}(\tilde{\cO}) \otimes 
H_{G \times \C^\times}^* (\tilde{\cO}, h^* K^*) \to
H_{G \times \C^\times}^* (\tilde{\cO}, h^* K^*) .
\]
From the proof of \cite[4.4]{Lus3} one sees that this action commutes with the 
operators $\tilde{\Delta}(w)$. Hence the $\Delta (w)$ also commute with the  
$H^*_{M(y)^\circ}(\{y\})$-action.\\
(c) See \cite[Proposition 8.6.c]{Lus3}.\\
(d) The semilinearity is a consequence of the functoriality of the product in 
equivariant homology. Since the action of $S(\mf t^* \oplus \C)$ factors via 
\[
H_{M(y)}^* (\cP_y) \cong H^*_{M(y)^\circ}(\cP_y)^{\pi_0 (M(y))} ,
\] 
it commutes with the action of $\pi_0 (M(y))$ on 
$H_*^{M(y)^\circ}(\cP_y,\dot{\cL})$. 

The algebra $\C[W_\cL,\natural_\cL^{-1}]$ acts on $(\mf g,K^*)$ and on 
$(\tilde{\cO},h^* K^*)$ by $G \times \C^\times$-equivariant endomorphisms. 
In other words, the operators $\tilde \Delta (w)$ on $H_{G \times \C^\times}^* 
(\tilde{\cO}, h^* K^*)$ commute with the natural action of $M(y) \subset G \times 
\C^\times$. Consequently the operators $\Delta (w)$ on $H_{G \times \C^\times}^* 
(\tilde{\cO}, h^* K^*)^* \cong H_*^{M(y)^\circ}(\cP_y,\dot{\cL})$
commute with the action of $M(y)$.\\
(a) For $G = G^\circ$ this is \cite[Theorem 8.13]{Lus3}. That proof also works if
$G$ is disconnected. We note that it uses parts (b), (c) and (d).
\end{proof}

In the same way $H_*^{M(y)^\circ}(\cP_y^\circ,\dot{\cL})$ becomes a 
$\mh H (G^\circ,L,\cL)$-module.

\begin{lem}\label{lem:2.1}
There is an isomorphism of $\mh H (G,L,\cL)$-modules
\[
H_*^{M(y)^\circ}(\cP_y,\dot{\cL}) \cong \ind_{\mh H (G^\circ,L,\cL)}^{\mh H 
(G,L,\cL)} H_*^{M(y)^\circ}(\cP_y^\circ,\dot{\cL}) 
\]
\end{lem}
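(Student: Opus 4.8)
The plan is to exploit the decomposition \eqref{eq:2.1} of $\cP_y$ as a disjoint union indexed by $\cR_\cL$, together with the fact (Condition \ref{cond:1}) that $G = N_G(P,\cL) G^\circ$, so that $W_\cL = W_\cL^\circ \rtimes \cR_\cL$ and $\mh H(G,L,\cL)$ is a crossed product $\mh H(G^\circ,L,\cL) \rtimes \C[\cR_\cL,\natural_\cL]$ via \eqref{eq:1.E}. First I would observe that \eqref{eq:2.1} gives an $M(y)^\circ$-equivariant decomposition
\[
\cP_y \;=\; \bigsqcup_{w \in \cR_\cL} \cP_y^\circ \cdot w^{-1},
\]
where each piece $\cP_y^\circ w^{-1} = \{ g w^{-1} P : gP \in \cP_y^\circ\}$ is isomorphic to $\cP_y^\circ$ as an $M(y)^\circ$-variety (the $M(y)^\circ$-action is by left multiplication, which is unaffected by the right translation by $w^{-1}$). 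Moreover $\dot{\cL}$ restricted to $\cP_y^\circ w^{-1}$ is identified, via the map $gP \mapsto g w^{-1}P$, with $\dot{\cL}|_{\cP_y^\circ}$, because the $\cR_\cL$-action \eqref{eq:1.2} on $(\dot{\mf g},\dot{\cL})$ lifts the action on $\dot{\mf g}$. Hence additively
\[
H_*^{M(y)^\circ}(\cP_y,\dot{\cL}) \;=\; \bigoplus_{w \in \cR_\cL} H_*^{M(y)^\circ}(\cP_y^\circ w^{-1},\dot{\cL}) \;\cong\; \bigoplus_{w \in \cR_\cL} H_*^{M(y)^\circ}(\cP_y^\circ,\dot{\cL}),
\]
which matches the underlying vector space of $\ind_{\mh H(G^\circ,L,\cL)}^{\mh H(G,L,\cL)} H_*^{M(y)^\circ}(\cP_y^\circ,\dot{\cL}) = \C[\cR_\cL,\natural_\cL] \otimes_\C H_*^{M(y)^\circ}(\cP_y^\circ,\dot{\cL})$, since $\mh H(G,L,\cL)$ is free of rank $|\cR_\cL|$ over $\mh H(G^\circ,L,\cL)$.

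Next I would check that this identification is compatible with the module structures. The $\mh H(G^\circ,L,\cL)$-action — i.e. the operators $\Delta(\xi)$ for $\xi \in S(\mf t^*\oplus\C)$ and $\Delta(N_w)$ for $w \in W_\cL^\circ$ — is built from \eqref{eq:2.9} and \eqref{eq:2.68}, and one must verify that under the translation-by-$w^{-1}$ identification of $\cP_y^\circ w^{-1}$ with $\cP_y^\circ$ these operators go to the corresponding operators twisted by $w$, exactly as in the definition of the induced module $\ind_{\mh H(G^\circ,L,\cL)}^{\mh H(G,L,\cL)}$. For $\xi$ this is the statement of Lemma \ref{lem:1.3} (the $\cR_\cL$-action on $S(\mf t^*\oplus\C)$ induced by \eqref{eq:1.2} is the natural one), applied now at the level of $\cP_y$ rather than $\ddot{\mf g}$; for the Weyl-group part one uses that the $\C[W_\cL,\natural_\cL^{-1}]$-action on $K^*$ restricts on the subalgebra $\C[\cR_\cL,\natural_\cL^{-1}]$ to the functorially-induced action of the geometric $\cR_\cL$-action \eqref{eq:1.2}, as recorded in the discussion after \eqref{eq:1.2}. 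Finally, the action of $N_w$ for $w \in \cR_\cL$ simply permutes the summands $H_*^{M(y)^\circ}(\cP_y^\circ w'^{-1},\dot{\cL})$ according to $w' \mapsto w w'$, up to the scalar $\natural_\cL$, which is precisely the left regular action of $\C[\cR_\cL,\natural_\cL]$ on $\C[\cR_\cL,\natural_\cL]\otimes_\C H_*^{M(y)^\circ}(\cP_y^\circ,\dot{\cL})$.

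The main obstacle is bookkeeping the normalizations and the 2-cocycle: one must make sure that the dualization in \eqref{eq:2.68} (which sends $\natural_\cL^{-1}$ back to $\natural_\cL$) and the choice of lifts $N_w \in \End^+_{\mathcal{P}_G(\mf g_{RS})}((\pr_1)_! \dot{\cL})$ are tracked consistently through the identification $H_j^{M(y)^\circ}(\cP_y,\dot{\cL})^* \cong H_{G\times\C^\times}^{2\dim\tilde{\cO}-j}(\tilde{\cO},h^*K^*)$ used in the proof of Theorem \ref{thm:2.4}, so that the crossed-product structure really reproduces $\ind$ with the cocycle $\natural_\cL$ (and not $\natural_\cL^{-1}$ or a coboundary-shifted version). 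Concretely I would phrase the whole argument dually, on $H_{G\times\C^\times}^*(\tilde{\cO},h^*K^*)$, where $\tilde{\cO}^\circ := (G^\circ\times\C^\times)/M(y)^\circ$ sits inside $\tilde{\cO}$ as an open-and-closed union of $|\cR_\cL|$ translates, since there the operators $\tilde\Delta(w)$ act directly by $G\times\C^\times$-equivariant endomorphisms and the geometric nature of the $\cR_\cL$-action is transparent; then dualize at the end. Everything else is a routine transport-of-structure verification once \eqref{eq:2.1} is in hand.
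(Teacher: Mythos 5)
Your proposal is correct and follows essentially the same route as the paper: decompose $\cP_y$ via \eqref{eq:2.1} into $\cR_\cL$-translates of $\cP_y^\circ$, use that the $\C[\cR_\cL,\natural_\cL]$-action on $(\dot{\mf g},\dot{\cL})$ lifts \eqref{eq:1.2} so that the operators $\Delta(N_w)$ permute the summands, and identify the result with the induced module. The paper avoids your transport-of-structure and cocycle bookkeeping by simply observing that the natural action map $\mh H (G,L,\cL) \otimes_{\mh H (G^\circ,L,\cL)} H_*^{M(y)^\circ}(\cP_y^\circ,\dot{\cL}) \to H_*^{M(y)^\circ}(\cP_y,\dot{\cL})$ is automatically $\mh H (G,L,\cL)$-linear and is bijective by \eqref{eq:2.4}, which is all that is needed.
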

\begin{proof}
Recall from \eqref{eq:1.10} that
\[
\mh H (G,L,\cL) = \C [\cR_\cL,\natural_\cL] \ltimes \mh H (G^\circ,L,\cL) . 
\]
It follows from \eqref{eq:2.1} that
\[
H_j^{M(y)^\circ} (\cP_y,\dot{\cL}) = \bigoplus\nolimits_{\gamma \in \cR_\cL} 
H_j^{M(y)^\circ} (\cP_y^\circ \gamma^{-1},\dot{\cL}) .
\]
In \eqref{eq:1.2} we saw that the action of $\C [\cR_\cL,\natural]$ on 
$(\dot{\mf g},\dot{\cL})$ lifts the action
\[
w \cdot (x,gP) = (x,g w^{-1}P) \quad w \in \cR_\cL, (x,gP) \in \dot{\mf g} .  
\]
Hence, for all $w,r \in \cR_\cL$:
\begin{equation}\label{eq:2.4}
\Delta (w) H_j^{M(y)^\circ} (\cP_y^\circ \gamma^{-1},\dot{\cL}) =
H_j^{M(y)^\circ} (\cP_y^\circ \gamma^{-1} w^{-1},\dot{\cL}) .
\end{equation}
Therefore the action map
\[
\C [\cR_\cL,\natural_\cL] \underset{\C}{\otimes} H_*^{M(y)^\circ} 
(\cP_y^\circ,\dot{\cL}) = \mh H (G,L,\cL) \underset{\mh H (G^\circ,L,\cL)}{\otimes}
H_*^{M(y)^\circ} (\cP_y^\circ,\dot{\cL}) \to H_*^{M(y)^\circ} (\cP_y,\dot{\cL})
\]
is an isomorphism of $\mh H (G,L,\cL)$-modules.
\end{proof}

From the natural isomorphism 
\[
H^*_{M(y)^\circ}(\{y\}) \cong \cO (\mathrm{Lie}(M(y)^\circ) )^{M(y)^\circ}
\]
one sees that the left hand side is the coordinate ring of the variety $V_y$ 
of semisimple adjoint orbits in
\[
\mathrm{Lie}(M(y)^\circ) = 
\{ (\sigma,r) \in \mf g \oplus \C : [\sigma,y] = 2 r y \} .
\]
For any $(\sigma,r) /\!\!\sim \;\in V_y$ let $\C_{\sigma,r}$ be the one-dimensional
$H_*^{M(y)^\circ}(\{y\})$-module obtained by evaluating functions at the
$\Ad (M(y)^\circ$-orbit of $(\sigma,r)$. We define
\begin{align*}
& E_{y,\sigma,r} = \C_{\sigma,r} \underset{H_*^{M(y)^\circ}(\{y\})}{\otimes}
H_*^{M(y)^\circ}(\cP_y,\dot{\cL}) , \\
& E_{y,\sigma,r}^\circ = \C_{\sigma,r} \underset{H_*^{M(y)^\circ}(\{y\})}{\otimes}
H_*^{M(y)^\circ}(\cP_y^\circ,\dot{\cL}) .
\end{align*}
These are $\mh H (G,L,\cL)$-modules (respectively $\mh H (G^\circ,L,\cL)$-modules).
In general they are reducible and not graded (in contrast with Theorem \ref{thm:2.4}.a).
These modules, and those in Lemma \ref{lem:2.1}, are compatible with parabolic induction
in a sense which we will describe next. 

Let $Q \subset G$ be an algebraic subgroup
such that $Q \cap G^\circ$ is a Levi subgroup of $G^\circ$ and $L \subset Q^\circ =
Q \cap G^\circ$. Assume that $y \in \mf q = \Lie (Q)$. Let $\mc P_y^Q$ and 
$\mc P_y^{Q^\circ}$ be the versions of $\mc P_y$ for $Q$ and $Q^\circ$. 
The role of $P$ is now played by $P \cap Q$. There is a natural map
\begin{equation}\label{eq:2.54}
\mc P_y^Q \to \mc P_y : g (P \cap Q) \mapsto g P . 
\end{equation}
By \cite[1.4.b]{Lus3} it induces, for every $n \in \Z$, a map
\begin{equation}\label{eq:2.67}
H^{M(y)^\circ}_{n + 2 \dim \mc P_y^Q}(\mc P_y^Q,\dot{\cL}) \; \to \;
H^{M(y)^\circ}_{n + 2 \dim \mc P_y}(\mc P_y,\dot{\cL}) .
\end{equation}

\begin{thm}\label{thm:2.24}
Let $Q$ and $y$ be as above, and let $C$ be a maximal torus of $M^Q (y)^\circ$.
\enuma{
\item The map \eqref{eq:2.54} induces an isomorphism of $\mh H (G,L,\cL)$-modules
\[
\mh H (G,L,\cL) \underset{\mh H (Q,L,\cL)}{\otimes} H_*^C (\mc P_y^Q,\dot{\cL})
\to H_*^C (\mc P_y,\dot{\cL}) ,
\]
which respects the actions of $H_C^* (\{y\})$.
\item Let $(\sigma,r) / \!\! \sim \in V_y^Q$. The map \eqref{eq:2.54} induces an 
isomorphism of $\mh H (G,L,\cL)$-modules
\[
\mh H (G,L,\cL) \underset{\mh H (Q,L,\cL)}{\otimes} E^Q_{y,\sigma,r} 
\to E_{y,\sigma,r} ,
\]
which respects the actions of $\pi_0 (M^Q (y))_\sigma$.
}
\end{thm}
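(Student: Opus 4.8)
The plan is to deduce both parts from the analogous statement for the connected groups $G^\circ$ and $Q^\circ$ — which is Lusztig's parabolic induction theorem, available since \eqref{eq:2.67} and the discussion around it reduce to \cite[\S 4 and \S 8]{Lus3} — together with the semidirect product decompositions $\mh H (G,L,\cL) = \C[\cR_\cL,\natural_\cL] \ltimes \mh H (G^\circ,L,\cL)$ and $\mh H (Q,L,\cL) = \C[\cR_\cL^Q,\natural_\cL] \ltimes \mh H (Q^\circ,L,\cL)$ from \eqref{eq:1.10}, where $\cR_\cL^Q = N_Q(P \cap Q, \cL)/L$. The key bookkeeping tool will be Lemma \ref{lem:2.1} applied to both $G$ and $Q$: it gives $H_*^C(\mc P_y, \dot{\cL}) \cong \ind_{\mh H (G^\circ,L,\cL)}^{\mh H (G,L,\cL)} H_*^C(\mc P_y^\circ,\dot{\cL})$ and likewise for $Q$. (One first checks, as in Theorem \ref{thm:2.4}, that $H_*^C(\mc P_y^Q,\dot{\cL})$ really is a module over $\mh H (Q,L,\cL)$, so that the left-hand tensor product makes sense; this is the same argument with $M^Q(y)$ in place of $M(y)$, and $C$ a maximal torus of $M^Q(y)^\circ$ plays the role of $M(y)^\circ$ via \cite[1.9]{Lus3}.)

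For part (a), first I would establish the connected case: the map $\mc P_y^{Q^\circ} \to \mc P_y^\circ$ induces an isomorphism of $\mh H (G^\circ,L,\cL)$-modules $\mh H (G^\circ,L,\cL) \otimes_{\mh H (Q^\circ,L,\cL)} H_*^C(\mc P_y^{Q^\circ},\dot{\cL}) \isom H_*^C(\mc P_y^\circ,\dot{\cL})$ respecting the $H_C^*(\{y\})$-action. This is \cite[\S 8]{Lus3} — Lusztig proves parabolic induction compatibility for standard modules, and the same argument applies with $C$-equivariant homology instead of $M(y)^\circ$-equivariant homology because $C$ is a maximal torus of $M(y)^\circ$ (the relevant cohomology rings only see the torus). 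Then I would run the following chain of isomorphisms of $\mh H (G,L,\cL)$-modules:
\begin{align*}
\mh H (G,L,\cL) \underset{\mh H (Q,L,\cL)}{\otimes} H_*^C(\mc P_y^Q,\dot{\cL})
& \cong \mh H (G,L,\cL) \underset{\mh H (Q,L,\cL)}{\otimes}
\ind_{\mh H (Q^\circ,L,\cL)}^{\mh H (Q,L,\cL)} H_*^C(\mc P_y^{Q^\circ},\dot{\cL}) \\
& \cong \mh H (G,L,\cL) \underset{\mh H (Q^\circ,L,\cL)}{\otimes} H_*^C(\mc P_y^{Q^\circ},\dot{\cL}) \\
& \cong \ind_{\mh H (G^\circ,L,\cL)}^{\mh H (G,L,\cL)} \Big( \mh H (G^\circ,L,\cL) \underset{\mh H (Q^\circ,L,\cL)}{\otimes} H_*^C(\mc P_y^{Q^\circ},\dot{\cL}) \Big) \\
& \cong \ind_{\mh H (G^\circ,L,\cL)}^{\mh H (G,L,\cL)} H_*^C(\mc P_y^\circ,\dot{\cL})
\cong H_*^C(\mc P_y,\dot{\cL}) ,
\end{align*}
using transitivity of induction in the second and third lines, the connected case in the fourth line, and Lemma \ref{lem:2.1} at the two ends. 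The point to verify is that the composite isomorphism is the one induced by \eqref{eq:2.54}: this follows because all the intermediate identifications come from the $\cR_\cL$- and $\cR_\cL^Q$-equivariant geometry of \eqref{eq:2.1} and \eqref{eq:1.2}, i.e.\ the decomposition $\mc P_y^Q \cong \mc P_y^{Q^\circ} \times \cR_\cL^Q$ is compatible with the map to $\mc P_y \cong \mc P_y^\circ \times \cR_\cL$ along the inclusion $\cR_\cL^Q \hookrightarrow \cR_\cL$ (using Condition \ref{cond:1} applied inside $Q$). Compatibility with $H_C^*(\{y\})$ is inherited from the connected case since that action is untouched by the $\cR_\cL$-bookkeeping.

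For part (b), I would simply specialize (a) along the central character. Tensoring the isomorphism of (a) with $\C_{\sigma,r}$ over $H_C^*(\{y\}) \cong \cO(V_y^Q)$ — legitimate because (a) respects the $H_C^*(\{y\})$-action and because $C$ being a maximal torus of $M^Q(y)^\circ$ makes $H_C^*(\{y\})$ compute $\cO(V_y^Q)$, so $\C_{\sigma,r} \otimes_{H_C^*(\{y\})} H_*^C(\mc P_y^Q,\dot{\cL})$ recovers $E^Q_{y,\sigma,r}$ and likewise on the $G$-side — yields
\[
\mh H (G,L,\cL) \underset{\mh H (Q,L,\cL)}{\otimes} E^Q_{y,\sigma,r} \isom E_{y,\sigma,r},
\]
since induction $\mh H (G,L,\cL) \otimes_{\mh H (Q,L,\cL)} (-)$ commutes with the base change along $H_C^*(\{y\})$ (the central subalgebra $H_C^*(\{y\})$ maps into both $\mh H (Q,L,\cL)$ and $\mh H (G,L,\cL)$ compatibly, by Theorem \ref{thm:2.4}(b) and its $Q$-analogue). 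Finally, the $\pi_0(M^Q(y))_\sigma$-action: $M^Q(y)$ acts on $\mc P_y^Q$ and the map \eqref{eq:2.54} is $M^Q(y)$-equivariant once we restrict the $M(y)$-action on $\mc P_y$ to $M^Q(y)$; the action commutes with $\mh H (Q,L,\cL)$ by Theorem \ref{thm:2.4}(d), so it descends to the induced module and matches the action on $E_{y,\sigma,r}$ after passing to the $(\sigma,r)$-eigenspace, where the stabilizer $\pi_0(M^Q(y))_\sigma$ is what survives.

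**Main obstacle.** The hard part will be the identification in part (a) that the composite of the five intermediate isomorphisms is precisely the map induced by \eqref{eq:2.54}, rather than merely \emph{some} isomorphism of $\mh H (G,L,\cL)$-modules — i.e.\ checking that the $\cR_\cL$-level bookkeeping (the decomposition \eqref{eq:2.1}, the $\cR_\cL$-action \eqref{eq:1.2} lifted to $\dot{\cL}$, and the inclusion $\cR_\cL^Q \hookrightarrow \cR_\cL$) is genuinely compatible with Lusztig's geometric map $\mc P_y^{Q^\circ} \to \mc P_y^\circ$ at the level of equivariant homology with local-system coefficients. Everything else is either a direct quotation of \cite{Lus3} in the connected case, formal manipulation of induction functors, or the already-proved Lemma \ref{lem:2.1} and Theorem \ref{thm:2.4}.
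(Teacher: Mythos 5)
There is a genuine gap, and it sits exactly where you write that the connected case ``is \cite[\S 8]{Lus3}''. Lusztig does not prove that the natural map
$\mh H (G^\circ,L,\cL) \otimes_{\mh H (Q^\circ,L,\cL)} H_*^C (\mc P_y^{Q^\circ},\dot{\cL}) \to H_*^C (\mc P_y^\circ,\dot{\cL})$
is an isomorphism in general; indeed it is not. The relevant results are in \cite{Lus7}: \cite[Theorem 2.16]{Lus7} is an isomorphism onto the homology of an auxiliary variety $\dot{\mc A}$, not of $\mc P_y^\circ$, and the comparison between the two maps to $H_*^C (\dot{\mc A},\dot{\cL})$ fails to commute on the nose --- by \cite[Lemma 2.18]{Lus7} the discrepancy is multiplication by the element $\epsilon_C \in H_C^*(\{y\})$, the determinant of $\mr{ad}(\sigma)-2\mb r$ on the cokernel of $\mr{ad}(y)$ acting on $\mr{Lie}(\mc R_u (P Q^\circ))$. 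Consequently the natural map is injective with image containing only $\epsilon_C H_*^C (\mc P_y^\circ,\dot{\cL})$, and it is usually not surjective. The alternative quotation you might have in mind, \cite[Corollary 1.18]{Lus7}, requires precisely the invertibility of $\mr{ad}(\sigma_0)$ on $\mr{Lie}(U_{Q^\circ})$ (i.e.\ a nonvanishing condition of the same kind), so it cannot serve as an unconditional connected-case input either. In fact the statement you were asked to prove is false as stated: see the Erratum following Theorem \ref{thm:2.24} and the corrected version, Theorem \ref{thm:A.1}, where part (a) becomes an injection whose image contains $\epsilon_C H_*^C (\mc P_y,\dot{\cL})$, and part (b) holds only under the extra hypothesis $\epsilon (\sigma,r) \neq 0$ or $r=0$ (with the case $r=0$ handled by a separate deformation argument, not by naive specialization).

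Your surrounding reductions are essentially the same as the paper's: the chain of induction isomorphisms through $\mh H (Q^\circ,L,\cL)$ and $\mh H (G^\circ,L,\cL)$ using Lemma \ref{lem:2.1} and Condition \ref{cond:1} inside $Q$, and the specialization of part (a) at the central character via \cite[Proposition 7.5]{Lus3} to obtain part (b), are exactly how the paper argues. So the Clifford-theoretic and formal-induction bookkeeping is fine; what is missing is the correct connected-case statement, and no amount of bookkeeping can repair it, because the obstruction $\epsilon_C$ genuinely vanishes on a proper subvariety of parameters (Lemma \ref{lem:A.2} identifies when it does not). If you add the hypothesis $\epsilon (\sigma,r) \neq 0$ (or treat $r=0$ separately as in the appendix), your argument for part (b) goes through along the lines you sketch.
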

\textbf{Erratum.} Unfortunately the above theorem is incorrect. The map in part (a)
is usually not surjective, and for part (b) we need an extra condition $\epsilon (\sigma,r)
\neq 0$ or $r=0$. This condition holds for almost all parameters, see the appendix.
\begin{proof}
(a) It was noted in \cite[1.16]{Lus7} that the map of the theorem is well-defined,
$\mh H (G,L,\cL)$-linear and $H_C^* (\{y\})$-linear. 

Let us consider the statement for $G^\circ$ and $Q^\circ$ first. In \cite[\S 2]{Lus7} 
a $C$-variety $\dot{\mc A}$, which contains $\mc P_y^\circ$, is studied. 
Consider the diagram of $\mh H (G^\circ,L,\cL)$-modules
\begin{equation}\label{eq:2.55}
\xymatrix{
& \mh H (G^\circ,L,\cL) \underset{\mh H (Q^\circ,L,\cL)}{\otimes} 
H_*^C (\mc P_y^{Q^\circ},\dot{\cL}) \ar[dl] \ar[dr] \\
H_*^C (\mc P_y^\circ,\dot{\cL}) \ar[rr] & & H_*^C (\dot{\mc A},\dot{\cL}) 
}
\end{equation}
with maps coming from the theorem, from $\mc P_y^\circ \to \dot{\mc A}$ and from
\cite[2.15.(c)]{Lus7}. According to \cite[2.19]{Lus7} the diagram commutes, and by 
\cite[2.8.(g)]{Lus7} the horizontal map is injective. Moreover 
\cite[Theorem 2.16]{Lus7} says that the right slanted map is an isomorphism
of $\mh H (G^\circ,L,\cL)$-modules. Consequently the horizontal map of \eqref{eq:2.55}
is surjective as well, and the entire diagram consists of isomorphisms. 

Combining this result with Lemma \ref{lem:2.1}, we get isomorphisms
\begin{align*}
& \mh H (G,L,\cL) \underset{\mh H (Q,L,\cL)}{\otimes} H_*^C (\mc P_y^Q,\dot{\cL}) \cong \\
& \mh H (G,L,\cL) \underset{\mh H (Q,L,\cL)}{\otimes} \mh H (Q,L,\cL) 
\underset{\mh H (Q^\circ,L,\cL)}{\otimes} H_*^C (\mc P_y^{Q^\circ},\dot{\cL}) \cong \\
& \mh H (G,L,\cL) \underset{\mh H (G^\circ,L,\cL)}{\otimes} \mh H (G^\circ,L,\cL) 
\underset{\mh H (Q^\circ,L,\cL)}{\otimes} H_*^C (\mc P_y^{Q^\circ},\dot{\cL}) \cong \\
& \mh H (G,L,\cL) \underset{\mh H (G^\circ,L,\cL)}{\otimes} 
H_*^C (\mc P_y^\circ,\dot{\cL}) \qquad \cong \qquad H_*^C (\mc P_y,\dot{\cL}) .
\end{align*}
(b) Since $(\sigma,r) \in \Lie (M^Q (y))$ is semisimple, we may assume that
$(\sigma,r) \in \Lie (C)$. By \cite[Proposition 7.5]{Lus3} there exist natural
isomorphisms
\begin{multline*}
\C_{\sigma,r} \underset{H_C^* (\{y\})}{\otimes} H_*^C (\mc P_y, \dot{\cL}) \cong 
\C_{\sigma,r} \underset{H_C^* (\{y\})}{\otimes} H^*_C (\{y\}) \underset{
H_{M(y)^\circ}^* (\{y\})}{\otimes} H_*^{M (y)^\circ} (\mc P_y, \dot{\cL}) \cong \\
\C_{\sigma,r} \underset{H_{M(y)^\circ}^* (\{y\})}{\otimes} H_*^{M (y)^\circ} 
(\mc P_y, \dot{\cL}) = E_{y,\sigma,r} .
\end{multline*}
The actions of $\mh H (G,L,\cL)$ and $H_C^* (\{y\})$ commute, so we also get 
\begin{multline*}
\C_{\sigma,r} \underset{H_C^* (\{y\})}{\otimes} \mh H (G,L,\cL) 
\underset{\mh H (Q,L,\cL)}{\otimes} H_*^C (\mc P_y^Q,\dot{\cL}) \cong \\
\mh H (G,L,\cL) \underset{\mh H (Q,L,\cL)}{\otimes} \C_{\sigma,r} 
\underset{H_C^* (\{y\})}{\otimes} H_*^C (\mc P_y^Q,\dot{\cL}) =
\mh H (G,L,\cL) \underset{\mh H (Q,L,\cL)}{\otimes} E^Q_{y,\sigma,r} .
\end{multline*}
Now we can apply part (a) to obtain the desired isomorphism. Since the map 
\eqref{eq:2.54} is $M^Q (y)$-equivariant, this isomorphism preserves the 
$\pi_0 (M^Q (y))_\sigma$-actions.
\end{proof}

It is possible to choose an algebraic homomorphism $\gamma_y : \SL_2 (\C) \to
G^\circ$ with d$\gamma_y \matje{0}{1}{0}{0} = y$. It will turn out that often it
is convenient to consider the element
\begin{equation}\label{eq:2.52}
\sigma_0 := \sigma + \textup{d}\gamma_y \matje{-r}{0}{0}{r} \in Z_{\mf g}(y) . 
\end{equation}
instead of $\sigma$.

\begin{prop}\label{prop:2.3}
Assume that $\cP_y$ is nonempty. 
\enuma{ 
\item $\Ad (G)(\sigma) \cap \mf t$ is a single $W_\cL$-orbit in $\mf t$.
\item The $\mh H (G,L,\cL)$-module $E_{y,\sigma,r}$ admits the central character
$(\Ad (G)(\sigma) \cap \mf t,r) \in \mf t / W_\cL \times \C$. 
\item The pair $(y,\sigma)$ is $G^\circ$-conjugate to one with $\sigma, \sigma_0$
and $\textup{d}\gamma_y \matje{-1}{0}{0}{1}$ all three in $\mf t$.
}
\end{prop}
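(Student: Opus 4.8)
The plan is to reduce everything to Lusztig's work for connected groups via the identification of $\mh H(G,L,\cL)$ with a crossed product $\C[\cR_\cL,\natural_\cL]\ltimes\mh H(G^\circ,L,\cL)$, together with the fact (Lemma \ref{lem:1.6}) that the centre of $\mh H(G,L,\cL)$ is $S(\mf t^*)^{W_\cL}\otimes\C[\mb r]$ when $W_\cL$ acts faithfully on $\mf t$ (and in general one passes to a central quotient). For part (a), the point is that $\cP_y\neq\es$ forces $\Ad(g^{-1})y\in\cC_v^L+\mf u$ for some $gP$, so $\sigma$, which satisfies $[\sigma,y]=2ry$, can be conjugated into the subalgebra $\mf t+\mf u$ normalizing this configuration, and then projected to $\mf t$; Lusztig's argument in \cite[Lemma 8.10 or Proposition 8.12]{Lus3} shows that $\Ad(G^\circ)(\sigma)\cap\mf t$ is a single $W_\cL^\circ$-orbit. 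To upgrade from $W_\cL^\circ$ to $W_\cL$ one uses that, under Condition \ref{cond:1}, $G=N_G(P,\cL)G^\circ$ and $W_\cL=W_\cL^\circ\rtimes\cR_\cL$, where $\cR_\cL$ acts on $\mf t$ via representatives in $N_G(P,L)$; any two $\Ad(G)$-conjugates of $\sigma$ landing in $\mf t$ differ by an element of $N_G(L)/L=W_\cL$, since $\mf t=\Lie(Z(L)^\circ)$ and the stabilizer in $G$ of $\mf t$ modulo $Z_G(\mf t)$ is exactly $N_G(L)/L$. This gives (a).

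For part (b), I would argue that $S(\mf t^*)^{W_\cL}\otimes\C[\mb r]$ acts on $E_{y,\sigma,r}$ through the character determined by $(\Ad(G)(\sigma)\cap\mf t,r)$. Concretely, $E_{y,\sigma,r}=\C_{\sigma,r}\otimes_{H_*^{M(y)^\circ}(\{y\})}H_*^{M(y)^\circ}(\cP_y,\dot\cL)$, and by Theorem \ref{thm:2.4}(b) the $\mh H(G,L,\cL)$-action commutes with $H_*^{M(y)^\circ}(\{y\})$; the action of $S(\mf t^*\oplus\C)\subset\mh H(G,L,\cL)$ arises via the maps \eqref{eq:2.3} from $H^*_{G\times\C^\times}(\dot{\mf g})$ restricted to the orbit through $(\sigma,r)$, so $\xi\in S(\mf t^*)^{W_\cL}$ acts by the scalar $\xi(\sigma)$ — well-defined precisely because $\xi$ is $W_\cL$-invariant, hence constant on $\Ad(G)(\sigma)\cap\mf t$ by part (a). One must check the analogous statement with $G$ in place of $G^\circ$, but this is the content of Lemma \ref{lem:2.1}: $E_{y,\sigma,r}$ is induced from $E_{y,\sigma,r}^\circ$, and induction does not change the central character since $S(\mf t^*)^{W_\cL}\otimes\C[\mb r]$ is central in both algebras and $W_\cL$-invariants restrict to $W_\cL^\circ$-invariants compatibly. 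This is essentially \cite[Corollary 8.11 or Proposition 8.12]{Lus3} transported to the disconnected setting.

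For part (c), the goal is to simultaneously conjugate $\sigma$, $\sigma_0=\sigma+\textup{d}\gamma_y\matje{-r}{0}{0}{r}$, and $h:=\textup{d}\gamma_y\matje{-1}{0}{0}{1}$ into $\mf t$. Since $\cP_y\neq\es$, by part (a) I may conjugate so that $\sigma\in\mf t$. Then $\sigma_0\in Z_{\mf g}(y)$ is a semisimple element of $Z_{\mf g}(\sigma)$ commuting with $y$; the element $h$ is semisimple, commutes with $\sigma$ (it lies in the image of $\gamma_y$, and $\sigma$ normalizes an $\mf{sl}_2$-triple through $y$ — this uses the Jacobson–Morozov argument as in \cite[\S8]{Lus3}), and $\sigma_0=\sigma-rh$. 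So $\{\sigma_0,h\}$ generate a torus in $Z_{G^\circ}(\sigma)$ commuting with $y$; choosing $\gamma_y$ appropriately (i.e. conjugating the $\mf{sl}_2$-triple by an element of $Z_{G^\circ}(\sigma)$) we may arrange $h\in Z_{\mf g}(\sigma)$ to lie in a torus containing the image of $\sigma$. The subtle point — and the step I expect to be the main obstacle — is that one needs this torus to be conjugate into $\mf t=\Lie(Z(L)^\circ)$ rather than merely into some maximal torus of $G^\circ$: this requires knowing that the cuspidal support constraint $\Psi_{Z_G(\sigma_0)}(\cC,\cF)=\tau$, equivalently $\cP_y\neq\es$, forces $Z(L)^\circ$ to contain the relevant torus, which is exactly the content of \cite[Proposition 8.12]{Lus3} together with \cite[Theorem 9.2]{Lus2}. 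I would therefore structure the proof of (c) as: first put $\sigma\in\mf t$ by (a); then, working inside $Z_{G^\circ}(\sigma)$ (a connected reductive group, since $\sigma$ is semisimple in the connected group $G^\circ$), invoke the connected-group result of Lusztig to conjugate $\sigma_0$ and $h$ simultaneously into $\mf t$ by an element of $Z_{G^\circ}(\sigma)$, which fixes $\sigma$; and finally note all conjugations used lie in $G^\circ$.
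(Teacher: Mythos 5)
Your parts (a) and (b) follow the same general route as the paper (reduce to Lusztig's connected-group results and read off the central character from the $H^*_{M(y)^\circ}(\{y\})$-action), but two of your justifications do not hold up as stated. First, the claim that any two $\Ad(G)$-conjugates of $\sigma$ lying in $\mf t$ differ by $N_G(L)/L$ ``because the stabilizer of $\mf t$ modulo $Z_G(\mf t)$ is $N_G(L)/L$'' is not an argument: the conjugating element need not stabilize $\mf t$ at all, and the assertion that $G^\circ$-conjugacy of elements of $\mf t$ implies $W_\cL^\circ$-conjugacy is precisely the nontrivial injectivity $\mf t/W_\cL^\circ \hookrightarrow \Irr(\cO(\mf g)^{G^\circ})$ that the paper imports from \cite[8.13]{Lus5} (together with the identification of $Z(\mh H(G^\circ,L,\cL))$). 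Second, ``conjugate into $\mf t+\mf u$ and project to $\mf t$'' does not preserve the adjoint orbit: projection to $\mf l$ does, but getting from $\mf l$ down to $\mf t=\Lie (Z(L)^\circ)$ needs the cuspidality of $(\cC_v^L,\cL)$ (distinguishedness of $v$ in $\mf l$), which you never use. In the paper the nonemptiness of $\Ad(G)(\sigma)\cap\mf t$ is not obtained this way at all: it is deduced inside the proof of (b) from $E_{y,\sigma,r}\neq 0$ (Theorem \ref{thm:2.4}.c plus irreducibility of $V_y$) and the surjection \eqref{eq:2.8}, so the logical order is the reverse of yours. These points are repairable by citing Lusztig correctly, so I regard them as weaknesses rather than fatal.

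The genuine gap is in (c). You first conjugate $\sigma$ into $\mf t$ and then propose to move $\sigma_0$ and $h=\textup{d}\gamma_y\matje{1}{0}{0}{-1}$ into $\mf t$ by an element of $Z_{G^\circ}(\sigma)$, invoking an unspecified ``connected-group result of Lusztig'' at exactly the step you yourself flag as the main obstacle. But when $r\neq 0$ we have $[\sigma,y]=2ry\neq 0$, so $y\notin Z_{\mf g}(\sigma)$: the group $Z_{G^\circ}(\sigma)$ contains no nilpotent element to which a statement like part (a) could be applied, and a general semisimple element of $Z_{\mf g}(\sigma)$ certainly cannot be conjugated into $\mf t$, so there is no result of the required shape to invoke. (Your auxiliary claim that $\{\sigma_0,h\}$ generate a torus ``commuting with $y$'' is also false, since $[h,y]=2y$.) The paper avoids this by reversing the order: apply (a)/(b) with $r=0$ to the commuting pair $(y,\sigma_0)$ to arrange $\sigma_0\in\mf t$; then $L$ and (after adjusting $\gamma_y$) the image of $\gamma_y$ lie in $Q=Z_G(\sigma_0)$, which does contain $y$, and applying (a)/(b) to $Q$ with the pair $(y,h)$ and $r\neq 0$ yields $g\in Z_{G^\circ}(\sigma_0)$ with $\Ad(g)h\in\mf t$; since $g$ fixes $\sigma_0$, automatically $\Ad(g)\sigma=\sigma_0+r\,\Ad(g)h\in\mf t$ as well. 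Your plan for (c) would need to be restructured along these lines; as written it does not go through.
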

\begin{proof}
(a) and (b) According to \cite[8.13.a]{Lus5} there is a canonical surjection
\begin{equation}\label{eq:2.8}
H^*_{G^\circ \times \C^\times}(\text{point}) \cong \cO (\mf g \oplus \C)^{G^\circ 
\times \C^\times} = \cO (\mf g)^{G^\circ} \otimes \C[{\mb r}] \to 
Z (\End_{D_{G^\circ \times \C^\times}(\mf g)} (K^*)).
\end{equation}
By \cite[Theorem 8.11]{Lus5} the endomorphism algebra of $K^*$, in the category 
of $G^\circ \times \C^\times$-equivariant perverse sheaves on $\mf g$, is 
canonically isomorphic to $\mh H (G^\circ,L,\cL)$. Together with Lemma 
\ref{lem:1.6} it follows that the right hand side of \eqref{eq:2.8} is
\[
Z( \mh H (G^\circ,L,\cL)) \cong S(\mf t^*)^{W_\cL^\circ} \otimes \C [{\mb r}] . 
\]
By \cite[8.13.b]{Lus5} the surjection \eqref{eq:2.8} corresponds to an injection
\[
\mf t / W_\cL^\circ \to \Irr (\cO (\mf g)^{G^\circ}) , 
\]
where the right hand side is the variety of semisimple adjoint orbits in $\mf g$.
Hence $\Ad (G^\circ) (\sigma) \cap \mf t$ is either empty or a single 
$W_\cL^\circ$-orbit. By Condition \ref{cond:1} $G / G^\circ \cong W_\cL / 
W_\cL^\circ$, so all these statements remain valid if we replace $G^\circ$ by $G$.

The action of $S(\mf t^*)^{W_\cL^\circ} \otimes \C [{\mb r}]$ on $E_{y,\sigma,r}$ 
can be realized as
\[
H^*_{G \times \C^\times}(\text{point}) \to H^*_{M(y)^\circ}(\{y\}) \to
H^*_{M(y)^\circ}(\cP_y)
\]
and then the product \eqref{eq:2.9}. By construction $H^*_{M(y)^\circ}(\{y\})$ 
acts on $E_{y,\sigma,r}$ via the character $(\sigma,r)/\!\! \sim \; \in V_y$. 
Hence $H^*_{G \times \C^\times}(\text{point})$ acts via the character 
$\Ad (G \times \C^\times) (\sigma,r)$.

The assumption $\cP_y \neq \emptyset$ implies that $H_*^{M(y)^\circ}(\cP_y, 
\dot{\cL})$ is nonzero. By Theorem \ref{thm:2.4}.c, and because $V_y$ is an 
irreducible variety, $E_{y,\sigma,r} \neq 0$ for all $(\sigma,r)/\!\!\sim \; 
\in V_y$. Thus the above determines a unique character of $Z(\mh H (G,L,\cL))$ 
via \eqref{eq:2.8}, which must be $(\Ad (G)(\sigma) \cap \mf t,r)$. 
In particular the intersection is nonempty and constitutes one $W_\cL$-orbit.\\
(c) By part (b) with $r=0$ we may assume that $\sigma_0 \in \mf t$. Then $M$ is
contained in the reductive group $Z_G(\sigma_0)$, so we can arrange that
the image of $\gamma_y$ lies in $Z_G (\sigma_0)$. Applying part (b) to this
group, with $r \neq 0$, we see that there exists a $g \in Z_{G^\circ}(\sigma_0)$ 
such that 
\[
\Ad (g) \sigma = \sigma_0 + \Ad (g) \textup{d}\gamma_y \matje{1}{0}{0}{-1}
\text{ lies in } \mf t .
\]
Now the pair $(\Ad (g) y, \Ad (g) \sigma)$ has the required properties.
\end{proof}

Let $\pi_0 (M(y))_\sigma$ be the stabilizer of $(\sigma,r)/\!\!\sim \; \in V_y$ 
in $\pi_0 (M(y))$. (It does not depend on $r$ because $\C^\times$ is central in 
$G \times \C^\times$.) It follows from Theorem \ref{thm:2.4}.d that 
$\pi_0 (M(y))_\sigma$ acts on $E_{y,\sigma,r}$ by $\mh H (G,L,\cL)$-module 
homomorphisms. Similarly, let $\pi_0 (M)_\sigma^\circ$ be the stabilizer of 
$(\sigma,r)/\!\!\sim$ in $\pi_0 (M(y) \cap G^\circ)$. It acts on 
$E^\circ_{y,\sigma,r}$ by $\mh H (G^\circ,L,\cL)$-module maps. 
To analyse these components groups we use \eqref{eq:2.52}.

\begin{lem}\label{lem:2.2}
\enuma{
\item There are natural isomorphisms
\[
\pi_0 (M(y))_\sigma \cong \pi_0 (Z_G (\sigma,y)) \cong \pi_0 (Z_G (\sigma_0,y)) . 
\]
\item Fix $r \in \C$. The map $\sigma \mapsto \sigma_0$ and part (a) induce a 
bijection between
\begin{itemize}
\item $G$-conjugacy classes of triples $(y,\sigma,\rho)$ with $y \in \mf g$ 
nilpotent,\\ $(\sigma,r) \in \mathrm{Lie}(M(y))$ semisimple and 
$\rho \in \Irr (\pi_0 (M(y))_\sigma)$;
\item $G$-conjugacy classes of triples $(y,\sigma_0,\rho)$ with $y \in \mf g$ 
nilpotent,\\ $\sigma_0 \in \mf g$ semisimple, $[\sigma_0,y] = 0$ and 
$\rho \in \Irr (\pi_0 (M(y))_{\sigma_0})$.
\end{itemize}
}
\end{lem}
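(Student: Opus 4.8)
The plan is to establish the two isomorphisms in part (a) first, and then deduce the bijection in part (b) as a bookkeeping consequence.

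\textbf{Part (a).} First I would observe that $\pi_0(M(y))_\sigma$, the stabilizer in $\pi_0(M(y))$ of the class $(\sigma,r)/\!\!\sim$, is naturally isomorphic to $\pi_0\big(Z_{M(y)}(\sigma,r)\big) = \pi_0\big(Z_{M(y)}(\sigma)\big)$, using that $\C^\times$ is central in $G\times\C^\times$ so the $r$-coordinate plays no role. Next, unwinding the definition $M(y) = \{(g_1,\lambda)\in G\times\C^\times : \Ad(g_1)y = \lambda^2 y\}$, the simultaneous centralizer $Z_{M(y)}(\sigma)$ consists of pairs $(g_1,\lambda)$ with $\Ad(g_1)y=\lambda^2 y$ and $\Ad(g_1)\sigma=\sigma$ (the $\C^\times$-part acts trivially on $\mf g$). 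Projection to the first coordinate identifies this with $Z_G(\sigma,y)$ up to the kernel $\{(1,\lambda):\lambda^2=1\}=\{\pm 1\}$, a central connected-component-trivial subgroup; more carefully, $(g_1,\lambda)\mapsto g_1$ is surjective onto $Z_G(\sigma,y)$ with kernel $\mu_2$ contained in the identity component of $M(y)$ — indeed $\gamma_y$ furnishes a copy of $\SL_2(\C)$ inside $M(y)^\circ$ whose torus $\mathrm d\gamma_y\matje{t}{0}{0}{t^{-1}}$ together with $\lambda=t$ lands in $M(y)^\circ$, so $\{\pm1\}\subset M(y)^\circ$. Hence $\pi_0(Z_{M(y)}(\sigma))\cong\pi_0(Z_G(\sigma,y))$, giving the first isomorphism. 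For the second, I would use that $\sigma$ and $\sigma_0$ differ by $\mathrm d\gamma_y\matje{-r}{0}{0}{r}$, which lies in $Z_{\mf g}(y)$ and commutes with $\sigma$ (this is the standard $\mathfrak{sl}_2$ computation: $h:=\mathrm d\gamma_y\matje{1}{0}{0}{-1}$ satisfies $[h,y]=2y$, $[\sigma-rh,y]=0$, and $[\sigma,h]=0$ because $\sigma$ preserves the grading defined by $h$ on the $\mathfrak{sl}_2$-module $\mf g$). Therefore $Z_G(\sigma,y) = Z_G(\sigma,h,y)$-type arguments show $Z_G(\sigma,y)=Z_G(\sigma_0,y)$ as subgroups of $G$: any $g$ fixing $\sigma$ and $y$ also fixes $h$ (since $h$ is determined by $(\sigma,y)$ via $h=(\sigma-\sigma_0)/(-r)$ when $r\ne0$, and for $r=0$ there is nothing to prove as $\sigma=\sigma_0$), hence fixes $\sigma_0=\sigma-rh$; conversely a $g$ fixing $\sigma_0$ and $y$ — here one needs that $g$ can be taken to normalize the image of a chosen $\gamma_y$, or more cleanly invoke the cited fact $\pi_0(Z_G(\sigma,y))=\pi_0(Z_G(\sigma_0,y))$ from the introduction (stated just before Theorem \ref{thmintro:1}), which already packages exactly this. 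Combining, all three groups have isomorphic component groups.

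\textbf{Part (b).} Given part (a), the map $(y,\sigma,\rho)\mapsto(y,\sigma_0,\rho)$ makes sense: $\sigma_0\in\mf g$ is semisimple (it is a sum of the commuting semisimple elements $\sigma$ and $-r\,h$... actually $\sigma_0=\sigma-rh$ with $\sigma,h$ commuting semisimple, hence $\sigma_0$ semisimple), $[\sigma_0,y]=0$ by construction, and $\rho\in\Irr(\pi_0(M(y))_\sigma)=\Irr(\pi_0(Z_G(\sigma_0,y)))$, which is the same as $\Irr(\pi_0(M(y))_{\sigma_0})$ by part (a) applied with the roles reversed (or directly, since $\pi_0(M(y))_{\sigma_0}\cong\pi_0(Z_G(\sigma_0,y))$ by the first isomorphism of (a) applied to $\sigma_0$ — note $(\sigma_0,r)$ need not lie in $\mathrm{Lie}(M(y))$ unless $r=0$, so I would instead phrase the target group simply as $\pi_0(Z_G(\sigma_0,y))$ throughout). $G$-conjugacy is clearly respected in both directions since the construction of $\sigma_0$ from $(y,\sigma,r)$ via a choice of $\gamma_y$ is canonical up to $Z_{G^\circ}(y)$-conjugacy (two choices of $\gamma_y$ are conjugate by $Z_{G^\circ}(y)$), and $g\cdot\gamma_y$ serves as a $\gamma_{\Ad(g)y}$. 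For injectivity and surjectivity I would construct the inverse map $(y,\sigma_0,\rho)\mapsto(y,\sigma,\rho)$ by $\sigma:=\sigma_0+\mathrm d\gamma_y\matje{r}{0}{0}{-r}$ for any chosen $\gamma_y$ with image commuting with $\sigma_0$ (such $\gamma_y$ exists since, $\sigma_0$ being semisimple and commuting with the nilpotent $y$, one can find an $\mathfrak{sl}_2$-triple through $y$ inside $Z_{\mf g}(\sigma_0)$); then $[\sigma,y]=2ry$ automatically and $\sigma_0$ is recovered. Checking the two assignments are mutually inverse up to $G$-conjugacy reduces to the $Z_{G^\circ}(y)$-conjugacy of $\mathfrak{sl}_2$-triples through $y$ (the Jacobson–Morozov / Kostant conjugacy theorem), applied inside $Z_G(\sigma_0)$, plus the compatibility of the component-group identifications from part (a).

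\textbf{Main obstacle.} I expect the delicate point to be the well-definedness up to $G$-conjugacy combined with the consistency of the component-group isomorphisms: one must check that the identification $\pi_0(M(y))_\sigma\cong\pi_0(Z_G(\sigma_0,y))$ does not depend on the auxiliary choice of $\gamma_y$, and that conjugating the data by $g\in G$ intertwines these isomorphisms. This is exactly the kind of statement that is "obvious" but where one has to be careful because $G$ is disconnected, so Kostant's conjugacy theorem is only available for $G^\circ$; the rescue is that any two $\mathfrak{sl}_2$-triples extending $y$ inside $Z_G(\sigma_0)$ are conjugate by $Z_{Z_G(\sigma_0)^\circ}(y)\subset Z_{G^\circ}(\sigma_0,y)$, which acts trivially on the relevant component groups, so the identification is canonical. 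I would make this the crux of the written proof and treat the rest as routine.
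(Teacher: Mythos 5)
Your overall route is the same as the paper's --- identify $\pi_0 (M(y))_\sigma$ with $\pi_0 (Z_G (\sigma,y))$, pass to $\pi_0 (Z_G (\sigma_0,y))$ by $\mf{sl}_2$-theory, and deduce (b) from the conjugacy of the homomorphisms $\gamma_y$ --- but both halves of your part (a) contain steps that fail as written. For the first isomorphism, the first-coordinate projection does not identify $Z_{M(y)}(\sigma,r)$ with $Z_G (\sigma,y)$ up to $\mu_2$: its image is $\{ g \in Z_G (\sigma) : \Ad (g) y \in \C^\times y \}$, which in general strictly contains $Z_G (\sigma,y)$ (take $\sigma$ central and $r=0$; then the image is the stabilizer of the line $\C y$, not $Z_G (y)$), so the claimed identification, and with it the $\pi_0$-isomorphism, does not follow. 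Your auxiliary claim $\{(1,\pm 1)\} \subset M(y)^\circ$ is also false in general: for $G = \SL_2 (\C)$, $y = \matje{0}{1}{0}{0}$, the path $t \mapsto \big( \gamma_y \matje{t}{0}{0}{t^{-1}}, t \big)$ ends at $\big( \gamma_y \matje{-1}{0}{0}{-1}, -1 \big) \neq (1,-1)$, and indeed $(1,-1) \notin M(y)^\circ$ there. The paper avoids the naive projection altogether: choosing $\gamma_y$ so that $h = \mathrm{d}\gamma_y \matje{1}{0}{0}{-1}$ commutes with $\sigma$, the homeomorphism $Z_G (y) \times \C^\times \to M(y)$, $(g,\lambda) \mapsto g \gamma_y \matje{\lambda}{0}{0}{\lambda^{-1}}$, restricts to $Z_G (\sigma,y) \times \C^\times \cong Z_{M(y)}(\sigma,r)$, and only after this untwisting can the $\C^\times$-factor be discarded. (Your opening identification $\pi_0 (M(y))_\sigma \cong \pi_0 (Z_{M(y)}(\sigma,r))$ also silently uses that $Z_{M(y)^\circ}(\sigma,r)$ is connected; this is exactly where the semisimplicity of $(\sigma,r)$ enters in the paper.)

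For the second isomorphism, the assertion that $Z_G (\sigma,y) = Z_G (\sigma_0,y)$ as subgroups of $G$ is false, as is the claim that any $g$ fixing $(\sigma,y)$ fixes $h$; your formula expressing $h$ through $\sigma - \sigma_0$ presupposes $\sigma_0$, i.e.\ a choice of $h$, which is unique only up to conjugation. Concretely, for $G = \SL_2 (\C)$, $y = \matje{0}{1}{0}{0}$, $\sigma = r h$ with $r \neq 0$, one has $Z_G (\sigma,y) = Z(G)$ finite while $Z_G (\sigma_0,y) = Z_G (y)$ is one-dimensional: only the component groups agree. The correct statement, and the one the paper uses, is that the inclusions $Z_G (\sigma,y) \hookleftarrow Z_G \big( \sigma, \mathrm{d}\gamma_y (\mf{sl}_2 (\C)) \big) \hookrightarrow Z_G (\sigma_0,y)$ induce isomorphisms on $\pi_0$, which is \cite[\S 2.4]{KaLu}; falling back on ``the fact stated in the introduction'' is circular, since that sentence of the introduction is precisely (part of) the present lemma. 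Your part (b) is essentially the paper's argument --- uniqueness of $\gamma_y$ up to the adjoint action of the relevant centralizer of $y$ makes $\sigma \mapsto \sigma_0$ a bijection on orbits of pairs --- and your care about well-definedness under conjugation is sound, but it only becomes a proof once the $\pi_0$-isomorphisms of part (a) are established correctly as above.
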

\textbf{Remark.} Via the Jordan decomposition the second set in part (b) is 
canonically in bijection with the $G$-orbits of pairs $(x,\rho)$ where 
$x \in \mf g$ and $\rho \in \pi_0 (Z_G (x))$. Although that is a more elegant 
description we prefer to keep the semisimple and nilpotent parts separate, 
because only the $(y,\sigma_0)$ with $\cP_y \neq \emptyset$ are relevant for 
$\mh H (G,L,\cL)$.
\begin{proof}
(a) By definition
\[
\pi_0 (M(y))_\sigma = \Stab_{\pi_0 (M(y))}(\Ad (M(y)^\circ) (\sigma,y)) \cong
Z_{M(y)}(\sigma,r) / Z_{M(y)^\circ}(\sigma,r) .
\]
Since $(\sigma,r)$ is a semisimple element of Lie$(G \times \C^\times)$, 
taking centralizers with $(\sigma,r)$ preserves connectedness. 
Hence the right hand side is
\begin{equation}\label{eq:2.6}
\big( Z_{G \times \C^\times}(\sigma,r) \cap M(y) \big) / 
\big( Z_{G \times \C^\times}(\sigma,r) \cap M(y) \big)^\circ = 
\pi_0 \big( Z_{G \times \C^\times}(\sigma,r) \cap M(y) \big) .
\end{equation}
We note that $Z_{G \times \C^\times}(\sigma,r) = Z_G (\sigma) \times \C^\times$
and that there is a homeomorphism
\[
Z_G (y) \times \C^\times \to M(y) : 
(g,\lambda) \mapsto g \gamma_y \matje{\lambda}{0}{0}{\lambda^{-1}} .
\]
It follows that the factor $\C^\times$ can be omitted from \eqref{eq:2.6} 
without changing the quotient, and we obtain
\[
\pi_0 (M(y))_\sigma \cong Z_G (\sigma,y) / Z_G (\sigma,y)^\circ = 
\pi_0 (Z_G (\sigma,y)) . 
\]
By \cite[\S 2.4]{KaLu} the inclusion maps
\[
Z_G (\sigma,y) \leftarrow Z_G \big( \sigma,\textup{d}\gamma_y 
(\mf{sl}_2 (\C))\big) \to Z_G (\sigma_0,y)
\]
induce isomorphisms on component groups.\\
(b) Again by \cite[\S 2.4]{KaLu}, the $Z_G (y)$-conjugacy class of $\sigma_0$ 
is uniquely determined by $\sigma$. The reason is that the homomorphism 
d$\gamma_y : \mf{sl}_2 (\C) \to \mf g$ is unique up to the adjoint action of 
$Z_G (y)$. By the same argument $\sigma_0$ determines the $Z_G (y)$-adjoint 
orbit of $\sigma$. Thus $\sigma \mapsto \sigma_0$ gives a bijection between 
adjoint orbits of pairs $(\sigma,y)$ and of pairs $(\sigma_0,y)$. The 
remainder of the asserted bijection comes from part (a).
\end{proof}

Applying Lemma \ref{lem:2.2} with $G^\circ$ instead of 
$G$ gives natural isomorphisms
\begin{equation}\label{eq:2.7}
\pi_0 (M(y))^\circ_\sigma \cong \pi_0 (Z_{G^\circ} (\sigma,y)) \cong 
\pi_0 (Z_{G^\circ} (\sigma_0,y)) .  
\end{equation}
For $\rho \in \Irr (\pi_0 (M(y))_\sigma)$ and 
$\rho^\circ \in \Irr (\pi_0 (M(y))_\sigma^\circ)$ we write
\begin{align*}
& E_{y,\sigma,r,\rho} = \Hom_{\pi_0 (M(y))_\sigma}(\rho,E_{y,\sigma,r}) , \\
& E^\circ_{y,\sigma,r,\rho^\circ} = 
\Hom_{\pi_0 (M(y))^\circ_\sigma}(\rho^\circ,E^\circ_{y,\sigma,r}) .
\end{align*}
It follows from Theorem \ref{thm:2.4}.d that these vector spaces are modules for
$\mh H (G,L,\cL)$, respectively for $\mh H (G^\circ,L,\cL)$. When they are
nonzero, we call them standard modules.

Recall the cuspidal support map $\Psi_G$ from \cite{Lus2,AMS}. It associates 
a cuspidal support $(L',\cC_{v'}^{L'},\cL')$ to every pair $(x,\rho)$ with 
$x \in \mf g$ nilpotent and $\rho \in \Irr (\pi_0 (Z_G (x)))$.

\begin{prop}\label{prop:2.5}
The $\mh H (G^\circ,L,\cL)$-module $E^\circ_{y,\sigma,r,\rho^\circ}$ is nonzero 
if and only if \\ $\Psi_{ Z_{G^\circ}(\sigma_0)} (y,\rho^\circ)$ is 
$G^\circ$-conjugate to $(L,\cC_v^L,\cL)$. Here $\rho^\circ$ is considered as 
an irreducible representation of $\pi_0 (Z_{Z_{G^\circ}(\sigma_0)}(y))$ 
via Lemma \ref{lem:2.2}.
\end{prop}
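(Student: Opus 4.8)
The plan is to reduce, by parabolic induction, to the case $\sigma_0=0$ inside the Levi subgroup $Z_{G^\circ}(\sigma_0)$, to identify the resulting standard module with a stalk of the complex $K$, and then to invoke Lusztig's generalized Springer correspondence. Since the whole statement takes place inside the connected group $G^\circ$, this is in substance a theorem of Lusztig (\cite{Lus3,Lus5}); the task is mainly to assemble the pieces.

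\emph{Step 1: reduction to $Q:=Z_{G^\circ}(\sigma_0)$, then to $\sigma_0=0$.} By Proposition~\ref{prop:2.3}(c) I may assume $\sigma,\sigma_0\in\mf t$. Then $Q$ is a Levi subgroup of $G^\circ$ containing $L$ (because $\sigma_0\in\mf t\subset\Lie Z(L)^\circ$) with $y\in\mf q$, and $\mh H(Q,L,\cL)$ is naturally a subalgebra of $\mh H(G^\circ,L,\cL)$, over which the latter is free. Theorem~\ref{thm:2.24}(b) --- available, after the erratum, for $r=0$ or whenever $\epsilon(\sigma,r)\neq0$; the remaining parameters are treated at the end --- gives $E_{y,\sigma,r}\cong\mh H(G^\circ,L,\cL)\otimes_{\mh H(Q,L,\cL)}E^Q_{y,\sigma,r}$, respecting the component-group actions. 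Identifying $\pi_0(M^Q(y))_\sigma$ with $\pi_0(Z_{G^\circ}(\sigma,y))\cong\pi_0(M(y))^\circ_\sigma$ via Lemma~\ref{lem:2.2}, \eqref{eq:2.7} and \cite[\S 2.4]{KaLu}, and taking $\rho^\circ$-isotypic subspaces, I get $E^\circ_{y,\sigma,r,\rho^\circ}\cong\mh H(G^\circ,L,\cL)\otimes_{\mh H(Q,L,\cL)}E^{Q,\circ}_{y,\sigma,r,\rho^\circ}$, which is nonzero iff $E^{Q,\circ}_{y,\sigma,r,\rho^\circ}\neq0$. Since $Z_{G^\circ}(\sigma_0)=Q$, the right-hand condition of the proposition is exactly $\Psi_Q(y,\rho^\circ)\sim(L,\cC_v^L,\cL)$ (that $Q$- and $G^\circ$-conjugacy of these cuspidal supports agree is built into the construction of the cuspidal support map in \cite[\S 5]{AMS}). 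So I am reduced to $Q$. Finally $\sigma_0$ is central in $\mf q$, so by \eqref{eq:1.11} replacing $\sigma$ by $\sigma-\sigma_0$ only tensors $E^Q_{y,\sigma,r}$ with a one-dimensional central character and changes neither the component group, nor the $\rho^\circ$-isotypic subspace, nor $\Psi_Q(y,\rho^\circ)$. This brings me to $\sigma_0=0$, i.e.\ $\sigma=-\textup{d}\gamma_y\matje{-r}{0}{0}{r}$.

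\emph{Step 2: reduction to $(\sigma,r)=(0,0)$.} Let $Z\subset V_y$ be the reduced closure of the orbit of $[\sigma]$ under the scaling action $(\sigma,r)\mapsto(t\sigma,tr)$: a $\C^\times$-stable affine curve through $[\sigma]$ and the cone point $(0,0)$, on which $\pi_0(M(y))^\circ_\sigma$ acts trivially (it fixes every $[t\sigma]$). By Theorem~\ref{thm:2.4}(c), $H_*^{M(y)^\circ}(\cP_y^\circ,\dot\cL)$ is graded projective over the connected graded ring $\cO(V_y)\cong H_{M(y)^\circ}^*(\{y\})$, hence graded free; restricting to $Z$ and decomposing the resulting graded free $\cO(Z)$-module $\pi_0(M(y))^\circ_\sigma$-equivariantly, all its fibres become isomorphic as $\pi_0(M(y))^\circ_\sigma$-modules. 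Thus $E^\circ_{y,\sigma,r}$, the fibre at $[\sigma]$, is isomorphic as a $\pi_0(M(y))^\circ_\sigma$-module to the fibre at $(0,0)$, namely $E^\circ_{y,0,0}=H_*(\cP_y^\circ,\dot\cL)$ with its action of $\pi_0(M(y))^\circ_\sigma\cong\pi_0(Z_{G^\circ}(\sigma,y))\cong\pi_0(Z_{G^\circ}(y))$ (the last isomorphism by \cite[\S 2.4]{KaLu}, as $\sigma\in\textup{d}\gamma_y(\mf{sl}_2(\C))$). Hence $E^\circ_{y,\sigma,r,\rho^\circ}\neq0$ iff $\rho^\circ$ occurs in the $\pi_0(Z_{G^\circ}(y))$-module $H_*(\cP_y^\circ,\dot\cL)$.

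\emph{Step 3: generalized Springer correspondence, and the main obstacle.} Since $\pr_1\colon\dot{\mf g}^{\circ}\to\mf g$ is proper with fibre $\{y\}\times\cP_y^\circ$ over $y$, proper base change identifies $H_*(\cP_y^\circ,\dot\cL)$, up to shift and duality, with the stalk of $K=(\pr_1)_!\dot\cL$ at $y$, with its $\pi_0(Z_{G^\circ}(y))$-action. By Lusztig's generalized Springer correspondence \cite{Lus2} --- the decomposition theorem applied to $K$ over the nilpotent cone --- every irreducible local system occurring in the stalks of the constituents $\IC(\overline\cO,\cE)$ of $K$ has cuspidal support $(L,\cC_v^L,\cL)$, and conversely each such pair occurs; hence $\rho^\circ$ occurs in this stalk precisely when $\Psi_{G^\circ}(y,\rho^\circ)$ is $G^\circ$-conjugate to $(L,\cC_v^L,\cL)$. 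Together with Steps~1 and~2 this gives the proposition. I expect the delicate point to be Step~1: verifying that Theorem~\ref{thm:2.24}(b) genuinely applies to the parameter at hand --- the exceptional parameters with $r\neq0$ and $\epsilon(\sigma,r)=0$ being reached by constructibility of the non-vanishing locus in $V_y$ together with its constancy along the $W_\cL$-orbit providing the central character (Proposition~\ref{prop:2.3}(b)) --- and that the induction isomorphism intertwines the various component-group actions through the Kazhdan--Lusztig identifications, so that passing to $\rho^\circ$-isotypic parts is legitimate. The other two steps are routine once \cite{Lus2,Lus3,Lus5} are in hand.
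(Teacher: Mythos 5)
Your route is genuinely different from the paper's, and in outline it can be made to work. You pass to the Levi $Q = Z_{G^\circ}(\sigma_0)$ by parabolically inducing standard modules (Theorem \ref{thm:2.24}, in its corrected form Theorem \ref{thm:A.1}), deform the parameter to $(0,0)$ using projectivity of $H_*^{M(y)^\circ}(\cP_y^\circ,\dot{\cL})$ over $H^*_{M(y)^\circ}(\{y\})$, and then invoke the generalized Springer correspondence for $Q$. The paper never inducts standard modules here: for $r \neq 0$ it stays inside $G^\circ$ and quotes \cite[Propositions 10.12, 8.17 and 8.16]{Lus5} (occurrence of $(\rho^\circ)^*$ in the stalk of $K^*$ over $\{x : [\sigma,x] = 2rx\}$), converts the resulting condition on some $\rho^\circ_G \in \Irr\big(\pi_0 (Z_{G^\circ}(y))\big)$ into the condition on $\Psi_{Z_{G^\circ}(\sigma_0)}(y,\rho^\circ)$ via \cite[Proposition 5.6.a]{AMS}, and handles $r=0$ by the same deformation in $t$ that you use, i.e.\ \cite[10.12.(d)]{Lus5}. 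Your plan avoids \cite[Proposition 8.17]{Lus5}; the paper's avoids any appeal to Theorem \ref{thm:2.24}, which matters since that theorem needed an erratum.

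There are, however, two places where your write-up asserts exactly the nontrivial content. First, the conjugacy issue you dismiss parenthetically is a real gap: your Steps 1--3, if completed, prove that $E^\circ_{y,\sigma,r,\rho^\circ} \neq 0$ if and only if $\Psi_Q (y,\rho^\circ)$ is \emph{$Q$-conjugate} to $(L,\cC_v^L,\cL)$, because the generalized Springer correspondence for $Q$ only sees cuspidal supports up to $Q$-conjugacy; it knows nothing about $G^\circ$. The proposition asserts the criterion with $G^\circ$-conjugacy, so you still need ``$G^\circ$-conjugate $\Rightarrow Q$-conjugate'' for cuspidal supports of $Q$. This is not ``built into the construction'': it is precisely the compatibility between $\Psi_Q$ and $\Psi_{G^\circ}$ supplied by \cite[Proposition 5.6.a]{AMS}, which is the ingredient the paper's proof uses at this very junction. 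Cite it (or prove the conjugacy statement); it is true but not formal.

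Second, in Step 3 the disjointness statement --- that only local systems with cuspidal support $(L,\cC_v^L,\cL)$ occur in $\bigoplus_n \cH^n (K)|_{\cC_y}$ --- is not a consequence of the decomposition theorem: that theorem tells you which intersection complexes are summands of $K$, but nothing about which equivariant local systems appear in the stalks of those complexes on smaller orbits. What you need is Lusztig's block-disjointness, \cite[Theorem 6.5]{Lus2} in the form \cite[Propositions 8.16--8.17]{Lus5}, and it should be cited as such (with the routine bookkeeping for $\cL^*$ and $(\rho^\circ)^*$ that the paper carries out). Finally, your proposed fix for the parameters with $r \neq 0$ and $\epsilon (\sigma,r) = 0$ (``constructibility of the non-vanishing locus \dots constancy along the $W_\cL$-orbit'') is not an argument as stated. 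The clean repair is to run your Step 2 first: keep $\sigma_0$ fixed and deform $\big( \sigma_0 + tr\, \textup{d}\gamma_y \matje{1}{0}{0}{-1},\, tr \big)$ to $t=0$, which changes neither $\pi_0 (Z_{G^\circ}(\sigma_0,y))$ nor $\rho^\circ$, and only then induct from $Q$, where Theorem \ref{thm:A.1}.b applies unconditionally at $r=0$; alternatively, note that along this curve $\epsilon$ has only finitely many zeros, since the eigenvalues of $\mathrm{ad}(\sigma_0)$ on $\mf u_Q$ are nonzero.
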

\begin{proof}
Assume first that $r \neq 0$.
Unravelling the definitions in \cite{Lus5}, one sees that $K^*$ is called $B$ 
in that paper. We point out that the proof of \cite[Proposition 10.12]{Lus5} 
misses a *-sign in equation (c), the correct statement involves the dual space 
of $E^\circ_{y,\sigma,r}$. It implies that
$E^\circ_{y,\sigma,r,\rho^\circ} \neq 0$ if and only if 
\begin{equation}\label{eq:2.10}
\Hom_{\pi_0 (Z_{G^\circ}(\sigma,y))} \big( (\rho^\circ)^*, 
\bigoplus\nolimits_n \cH^n (i_y^! \tilde B) \big) \neq 0 . 
\end{equation}
Here $i_y : \{y\} \to \tilde{\mf g} = \{x \in \mf g : [\sigma,x] = 2 r x\}$ 
is the inclusion and $\tilde B$ is the restriction of $K^*$ to $\tilde{\mf g}$.
In the notation of \cite[Corollary 8.18]{Lus5}, \eqref{eq:2.10} means that 
$(y,(\rho^\circ)^*)$ (or more precisely the associated local system on 
$\tilde{\mf g}$) is an element of $\mathcal M_{o,\cF}$. By 
\cite[Proposition 8.17]{Lus5}, that is equivalent to the existence 
of a $(\rho^\circ_G)^* \in \Irr \big( \pi_0 (Z_{G^\circ}(y)) \big)$ such that:
\begin{itemize}
\item $(\rho^\circ_G)^* \big|_{\pi_0 (Z_{G^\circ}(\sigma,y))}$ contains 
$(\rho^\circ)^*$,
\item the local system $(\cF^\circ)^*$ on $\cC_y^{G^\circ}$ with fibre 
$(\rho^\circ_G)^*$ at $y$ is a direct summand of 
$\bigoplus_{n \in \Z} \cH^n (K^*)|_{\cC_y^{G^\circ}}$.
\end{itemize}
The natural pairing between $\cL$ and $\cL^*$ induces a pairing between $K$
and $K^*$. This allows us to identify each fibre $(K^*)_y$ with the dual space 
of $K_y = ((\pr_1)! \dot{\cL})_y$, and it gives an isomorphism
\begin{equation}\label{eq:2.11}
(\cH^n (K)|_y )^* \cong \cH^n (K^*)|_y . 
\end{equation}
This shows that the previous condition is equivalent to:\\ 
there exists a $\rho^\circ_G \in \Irr \big( \pi_0 (Z_{G^\circ}(y)) \big)$ 
which contains $\rho^\circ$ and such that $\cF^\circ$ is a direct summand 
of $\bigoplus_{n \in \Z} \cH^n (K)|_{\cC_y^{G^\circ}}$. 

According to \cite[Proposition 8.16]{Lus5} there is a unique $B'$, among the 
possible choices of $(L',\cC_{v'}^{L'},\cL')$, such that $K^* = B'$ fulfills 
this condition. By \cite[Theorem 6.5]{Lus2} it can be fulfilled with the 
cuspidal support of $(\cC_y^{G^\circ},\cF^\circ)$ and $n$ equal to
\begin{equation} \label{eqn:degree}
2d_{\cC_y^{G^\circ},\cC_v^L}:=\dim Z_{G^\circ}(y)-\dim Z_L(v).
\end{equation} 
Hence we may restrict $n$ to $2 d_{\cC_y^{G^\circ},\cC_v^L}$ without 
changing the last condition.

By \cite[Proposition 5.6.a]{AMS} the second condition of the proposition is 
equivalent to: there exists a $\rho^\circ_G \in \Irr \big( \pi_0 
(Z_{G^\circ}(y)) \big)$ such that $\rho^\circ_G \big|_{\pi_0 (Z_{G^\circ}
(\sigma,y))}$ contains $\rho^\circ$ and  
$\Psi_{G^\circ}(y,\rho_G^\circ) = (L,\cC_v^L,\cL)$. 

Let $\cF^\circ$ be the local system on $\cC_y^{G^\circ}$ with 
$(\cF^\circ )_y = \rho^\circ_G$. By \cite[Theorem 6.5]{Lus2} its cuspidal
support is $(L,\cC_v^L,\cL)$ if and only if $\cF^\circ$ is a direct
summand of $\cH^{2d}(K )|_{\cC_y^{G^\circ}}$, where $d = 
d_{\cC_y^{G^\circ},\cC_v^L}$. Hence the second condition of the propostion 
is equivalent to all the above conditions, if $r \neq 0$.

For any $r \in \C$, as $\C [W_\cL^\circ]$-modules:
\begin{equation}\label{eq:2.15}
E^\circ_{y,\sigma,r,\rho^\circ} = \Hom_{\pi_0 (M(y))_\sigma^\circ} 
\big( \rho^\circ, H_* (\cP_y,\dot{\cL}) \big) ,
\end{equation}
see \cite[10.12.(d)]{Lus5}. Recall from \eqref{eq:2.7} that 
$\pi_0 (M(y))_\sigma^\circ \cong \pi_0 (Z_{G^\circ}(\sigma_0,y))$. 
For any $t \in \C$ we obtain
\[
E^\circ_{y,\sigma_0 + r \text{d}\gamma_y \matje{t}{0}{0}{-t},t r,
\rho^\circ} = \Hom_{\pi_0 (Z_{G^\circ}(\sigma_0,y))} 
\big( \rho^\circ, H_* (\cP_y,\dot{\cL}) \big) .
\]
The right hand side is independent of $t \in \C$ and for $t r \neq 0$ 
it is nonzero if and only if $\Psi_{ Z_{G^\circ}(\sigma_0)} (y,\rho^\circ) 
= (L,\cC_v^L,\cL)$ (up to $G^\circ$-conjugacy). Hence the same goes for 
$E^\circ_{y,\sigma_0,0,\rho^\circ}$. We have $\sigma_0 = \sigma$ if $r = 0$,
so this accounts for all $(\sigma,r)/\!\! \sim \; \in V_y$ with $r \in \C$.
\end{proof}

\subsection{Representations annihilated by ${\mb r}$} \ 
\label{par:ann}

The representations of $\mh H (G,L,\cL)$ which are annihilated by ${\mb r}$ 
can be identified with representations of 
\[
\mh H (G,L,\cL) / ({\mb r}) = \C[W_\cL,\natural_\cL] \ltimes S(\mf t^*). 
\]
Se will study the irreducible representations of this algebra in a straightforward
way: we give ad-hoc definitions of certain modules, then we show that these
exhaust $\Irr \big( \C[W_\cL,\natural_\cL] \ltimes S(\mf t^*) \big)$, and we
provide a parametrization.

The generalized Springer correspondence \cite{Lus2} associates to 
$(y,\rho^\circ)$ an irreducible representation $M_{y,\rho^\circ}$ of a 
suitable Weyl group. It is a representation of $W_\cL^\circ$ if the cuspidal 
support $\Psi_{G^\circ}(y,\rho^\circ)$ is $(L,\cC_v^L,\cL)$. If that is the 
case and $\sigma_0 \in \mathrm{Lie}(Z(G^\circ))$, we let 
$M^\circ_{y,\sigma_0,0,\rho^\circ}$ be the irreducible 
$\mh H(G^\circ,L,\cL)$-module on which $S( \mf t^* \oplus \C )$ acts via
the character $(\sigma_0,0) \in \mf t \oplus \C$ and 
\begin{equation}\label{eq:2.74}
M^\circ_{y,\sigma_0,0,\rho^\circ} = M_{y,\rho^\circ} \text{ as } 
\C[W_\cL^\circ] \text{-modules.}
\end{equation}
For a general $\sigma_0 \in Z_{\mf g}(y)$ we can define a similar 
$W_\cL^\circ \ltimes S (\mf t^*)$-module. We may assume that $\cP_y^\circ$ 
is nonempty, for otherwise $H_*^{M(y)^\circ}(\cP_y^\circ,\dot{\cL}) = 0$. 
Upon replacing $(y,\sigma_0)$ by a suitable $G^\circ$-conjugate, we may also 
assume that $L$ centralizes $\sigma_0$. Write ${Q^\circ} = Z_{G^\circ}
(\sigma_0)$, a Levi subgroup of $G^\circ$ containing $L$. Notice that 
$W_\cL^{Q^\circ} = W({Q^\circ},T)$ is a Weyl group, the stabilizer of 
$\sigma_0$ in $W_\cL$. Then $\pi_0 (M(y))_{\sigma_0}^\circ \cong 
\pi_0 (Z_{Q^\circ} (y))$, so $(y,\sigma_0,\rho^\circ)$ determines the 
irreducible $\mh H ({Q^\circ},L,\cL)$-module $M^{Q^\circ}_{y,\sigma_0,
0,\rho^\circ}$. We define
\begin{equation}\label{eq:2.18}
M^\circ_{y,\sigma_0,0,\rho^\circ} = \ind_{W_\cL^{Q^\circ} \ltimes 
S (\mf t^*)}^{W_\cL^\circ \ltimes S(\mf t^*)} (M^{Q^\circ}_{y,\sigma_0,
0,\rho^\circ}) = \ind_{\mh H ({Q^\circ},L,\cL)}^{\mh H (G^\circ,L,\cL)} 
(M^{Q^\circ}_{y,\sigma_0,0,\rho^\circ}) .
\end{equation}
\begin{prop}\label{prop:2.7}
The map $(y,\sigma_0,\rho^\circ) \mapsto M^\circ_{y,\sigma_0,0,\rho^\circ}$ 
induces a bijection between:
\begin{itemize}
\item $G^\circ$-conjugacy classes of triples $(y,\sigma_0,\rho^\circ)$ 
such that $y \in Z_{\mf g}(\sigma_0)$ nilpotent, $\rho^\circ \in \Irr 
\big( \pi_0 (Z_{G^\circ}(\sigma_0,y)) \big)$ and $\Psi_{Z_{G^\circ}
(\sigma_0)}(y,\rho^\circ)$ is $G^\circ$-conjugate to $(L,\cC_v^L,\cL)$;
\item $\Irr (W_\cL^\circ \ltimes S(\mf t^*)) = 
\Irr (\mh H (G^\circ,L,\cL) / ({\mb r}))$.
\end{itemize}
\end{prop}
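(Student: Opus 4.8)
Here is how I would prove Proposition~\ref{prop:2.7}. The plan is to reduce to two inputs: the classical description of the simple modules of a crossed product of $S(\mf t^*)$ by a finite group, and Lusztig's generalized Springer correspondence \cite{Lus2}, applied not only to $G^\circ$ but to each Levi subgroup of the shape $Z_{G^\circ}(\sigma_0)$. Since $\natural_\cL$ restricts to the trivial cocycle on $W_\cL^\circ$, we have $\mh H(G^\circ,L,\cL)/({\mb r}) = S(\mf t^*) \rtimes W_\cL^\circ$, an honest crossed product of the polynomial algebra $\cO(\mf t) = S(\mf t^*)$ with the finite group $W_\cL^\circ$. First I would recall its standard parametrization: this algebra is module-finite over its centre $S(\mf t^*)^{W_\cL^\circ}$, so all its simple modules are finite-dimensional; restricting a simple module $V$ to $S(\mf t^*)$, the weights occurring form a single $W_\cL^\circ$-orbit $W_\cL^\circ \sigma_0$; the $\sigma_0$-weight space $V_{\sigma_0}$ is a simple module over $S(\mf t^*) \rtimes (W_\cL^\circ)_{\sigma_0}$ on which $S(\mf t^*)$ acts through evaluation at $\sigma_0$ (the Mackey obstruction being trivial because $(W_\cL^\circ)_{\sigma_0}$ fixes this character), hence corresponds to some $\tau \in \Irr((W_\cL^\circ)_{\sigma_0})$; and $V \cong \ind_{S(\mf t^*) \rtimes (W_\cL^\circ)_{\sigma_0}}^{S(\mf t^*) \rtimes W_\cL^\circ}(\C_{\sigma_0} \otimes \tau)$, every such induced module being simple. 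This gives a bijection between $\Irr(\mh H(G^\circ,L,\cL)/({\mb r}))$ and $W_\cL^\circ$-orbits of pairs $(\sigma_0,\tau)$ with $\sigma_0 \in \mf t$ and $\tau \in \Irr((W_\cL^\circ)_{\sigma_0})$.

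Next I would identify these data with those of the proposition. Fix $\sigma_0 \in \mf t$ and put $Q^\circ = Z_{G^\circ}(\sigma_0)$, a Levi subgroup of $G^\circ$ with $L \subseteq Q^\circ$. An element of $W_\cL^\circ = N_{G^\circ}(L)/L$ fixes $\sigma_0$ exactly when a representative lies in $Z_{G^\circ}(\sigma_0) = Q^\circ$, so $(W_\cL^\circ)_{\sigma_0} = N_{Q^\circ}(L)/L = W_\cL^{Q^\circ}$, which by \cite[Theorem 9.2]{Lus2} applied to $Q^\circ$ is the Weyl group $W(Q^\circ,T)$. I will also use that $Z_{\mf g}(L) = \mf t$, so any element of $\mf g$ centralised by $L$ automatically lies in $\mf t$; this is what will let us move semisimple elements back into $\mf t$ after conjugation without losing the normalisation $L \subseteq Z_{G^\circ}(\cdot)$. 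Finally, by \eqref{eq:2.18} and \eqref{eq:2.74}, once a $G^\circ$-conjugate of the triple is chosen so that $L$ centralises $\sigma_0$ and $\Psi_{Q^\circ}(y,\rho^\circ) = (L,\cC_v^L,\cL)$, we have $M^\circ_{y,\sigma_0,0,\rho^\circ} = \ind_{\mh H(Q^\circ,L,\cL)/({\mb r})}^{\mh H(G^\circ,L,\cL)/({\mb r})}(\C_{\sigma_0} \otimes M^{Q^\circ}_{y,\rho^\circ})$, where $M^{Q^\circ}_{y,\rho^\circ} \in \Irr(W_\cL^{Q^\circ})$ is the image of $(y,\rho^\circ)$ under the generalized Springer correspondence for $Q^\circ$ (with $\rho^\circ$ viewed in $\Irr(\pi_0(Z_{Q^\circ}(y)))$ via $\pi_0(M(y))_{\sigma_0}^\circ \cong \pi_0(Z_{G^\circ}(\sigma_0,y)) = \pi_0(Z_{Q^\circ}(y))$ of \eqref{eq:2.7}). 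Comparing with the first paragraph, $M^\circ_{y,\sigma_0,0,\rho^\circ}$ is exactly the simple module attached to the pair $(\sigma_0, M^{Q^\circ}_{y,\rho^\circ})$; in particular it is irreducible, as the statement requires.

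It then remains to feed in the generalized Springer correspondence. For a connected reductive group $H$ with Levi $L$ and our cuspidal pair $(\cC_v^L,\cL)$, \cite{Lus2} gives a bijection $(y,\rho^\circ) \mapsto M^H_{y,\rho^\circ}$ from $H$-conjugacy classes of pairs with $\Psi_H(y,\rho^\circ) = (L,\cC_v^L,\cL)$ onto $\Irr(N_H(L)/L)$; I apply this with $H = Q^\circ$. For surjectivity: a simple $\mh H(G^\circ,L,\cL)/({\mb r})$-module is $\ind(\C_{\sigma_0} \otimes \tau)$ for some $(\sigma_0,\tau)$; writing $\tau = M^{Q^\circ}_{y,\rho^\circ}$ via the correspondence for $Q^\circ$, the pair $(y,\rho^\circ)$ satisfies $\Psi_{Q^\circ}(y,\rho^\circ) = (L,\cC_v^L,\cL)$, which is a fortiori $G^\circ$-conjugate to $(L,\cC_v^L,\cL)$, so $(y,\sigma_0,\rho^\circ)$ is a preimage. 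For injectivity and well-definedness: given two triples with $M^\circ_{y,\sigma_0,0,\rho^\circ} \cong M^\circ_{y',\sigma_0',0,\rho'^\circ}$, the first paragraph forces $\sigma_0' \in W_\cL^\circ \sigma_0$; conjugating the second triple by a representative in $N_{G^\circ}(L)$ we may take $\sigma_0' = \sigma_0$ (still in $\mf t$), and using $Z_{\mf g}(L) = \mf t$ we may further conjugate by an element of $G^\circ$ so that $\Psi_{Q^\circ}$ of both pairs equals $(L,\cC_v^L,\cL)$ on the nose while keeping $\sigma_0$ in $\mf t$ and $L \subseteq Q^\circ$; then the first paragraph yields $M^{Q^\circ}_{y,\rho^\circ} \cong M^{Q^\circ}_{y',\rho'^\circ}$, hence $(y,\rho^\circ)$ and $(y',\rho'^\circ)$ are $Q^\circ$-conjugate by the correspondence for $Q^\circ$, so the two original triples are $G^\circ$-conjugate. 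Running the same conjugations in reverse shows $M^\circ_{y,\sigma_0,0,\rho^\circ}$ depends only on the $G^\circ$-conjugacy class of the triple.

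I expect the main obstacle to be precisely this last round of bookkeeping: reconciling the condition ``$\Psi_{Z_{G^\circ}(\sigma_0)}(y,\rho^\circ)$ is $G^\circ$-conjugate to $(L,\cC_v^L,\cL)$'' from the statement with the on-the-nose equality needed to invoke the generalized Springer correspondence for $Q^\circ = Z_{G^\circ}(\sigma_0)$, and checking that the conjugations achieving this can be chosen to preserve all normalisations ($\sigma_0 \in \mf t$, $L \subseteq Q^\circ$) --- the key point being $Z_{\mf g}(L) = \mf t$, which forces any $G^\circ$-conjugate of $\sigma_0$ still centralised by $L$ to lie back in $\mf t$. By contrast, the crossed-product classification and the identification $(W_\cL^\circ)_{\sigma_0} = W_\cL^{Q^\circ}$ are routine.
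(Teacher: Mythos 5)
Your proposal is correct and follows essentially the same route as the paper: classify $\Irr\big(W_\cL^\circ \ltimes S(\mf t^*)\big)$ by $W_\cL^\circ$-orbits of pairs $(\sigma_0,\tau)$ with $\tau \in \Irr\big((W_\cL^\circ)_{\sigma_0}\big) = \Irr (W_\cL^{Q^\circ})$ (the paper simply cites \cite[Theorem 1.1]{Sol1} for this, where you rederive it by Clifford theory, and it proves irreducibility of $M^\circ_{y,\sigma_0,0,\rho^\circ}$ by the same distinct-weights-plus-Frobenius-reciprocity observation), and then feed in the generalized Springer correspondence for $Q^\circ = Z_{G^\circ}(\sigma_0)$ to match $\tau$ with pairs $(y,\rho^\circ)$. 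Your extra care with the conjugation bookkeeping (using $Z_{\mf g}(L)=\mf t$ to renormalise $\sigma_0$ into $\mf t$) is a more explicit version of a step the paper leaves implicit, not a different argument.
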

\begin{proof}
By definition $S(\mf t^* )$ acts on $M^{Q^\circ}_{y,\sigma_0,0,\rho^\circ}$ 
via the character $\sigma$. For $w \in W_\cL^\circ$ it acts on 
$w M^{Q^\circ}_{y,\sigma_0,0,\rho^\circ} \subset M^\circ_{y,\sigma_0,0,
\rho^\circ}$ as the character $w \sigma$. Since $W_\cL^{Q^\circ}$ is the 
centralizer of $\sigma$ in $W_\cL$, the $S(\mf t^*)$-weights $w \sigma$ with 
$w \in W_\cL^\circ / W_\cL^{Q^\circ}$ are all different. As vector spaces
\[
M^\circ_{y,\sigma_0,0,\rho^\circ} = \ind_{W_\cL^{Q^\circ} \ltimes 
S (\mf t^*)}^{W_\cL^\circ \ltimes S(\mf t^*)} \big( M^{Q^\circ}_{y,\sigma_0,0,
\rho^\circ} \big) = \bigoplus\nolimits_{w \in W_\cL^\circ / W_\cL^{Q^\circ}} 
M^{Q^\circ}_{y,\sigma_0,0,\rho^\circ},
\]
and $M^{Q^\circ}_{y,\sigma_0,0,\rho^\circ}$ is irreducible. With Frobenius 
reciprocity we see that $M^\circ_{y,\sigma_0,0,\rho^\circ}$ is also irreducible.

Recall that the generalized Springer correspondence \cite{Lus2} provides 
a bijection between $\Irr (W_\cL^{Q^\circ})$ and the ${Q^\circ}$-conjugacy 
classes of pairs $(y,\rho^\circ)$ where $y \in \mathrm{Lie}({Q^\circ})$ is 
nilpotent and $\rho^\circ \in \Irr \big( \pi_0 (Z_{Q^\circ} (y)) \big)$ 
such that $\Psi_{Q^\circ} (y,\rho^\circ) = (L,\cC_v^L,\cL)$. We obtain 
a bijection between $G^\circ$-conjugacy classes of triples 
$(y,\sigma_0,\rho^\circ)$ and $W_\cL^\circ$-association classes of pairs 
$(\sigma_0,\pi)$ with $\sigma_0 \in \mf t$ and $(\pi,V_\pi) \in \Irr 
((W_\cL^\circ)_{\sigma_0})$. It is well-known, see for example 
\cite[Theorem 1.1]{Sol1}, that the latter set in a bijection with 
$\Irr (W_\cL^\circ \times S(\mf t^*))$ via
\[
(\sigma_0,\pi) \mapsto \ind_{(W_\cL^\circ)_{\sigma_0} \ltimes 
S(\mf t^*)}^{W_\cL^\circ \ltimes S(\mf t^*)}(\C_{\sigma_0} \otimes V_\pi) .
\]
In other words, the map of the proposition is a bijection.
\end{proof}

We woud like to relate the above irreducible representations of $W_\cL^\circ \ltimes 
S(\mf t^*)$ to the standard modules from the previous paragraph. To facilitate this
we first exhibit some properties of standard modules, which are specific
for the case $r = 0$.

\begin{lem}\label{lem:2.6}
Assume that $\Psi_{G^\circ}(y,\rho^\circ) = (L,\cC_v^L,\cL)$. The standard 
$\mh H(G^\circ,L,\cL)$-module $E^\circ_{y,\sigma_0,0,\rho^\circ}$ is completely 
reducible and admits a module decomposition by homological degree:
\[
E^\circ_{y,\sigma_0,0,\rho^\circ} = \bigoplus\nolimits_n
\Hom_{\pi_0 (M(y))_\sigma^\circ} \big( \rho^\circ, H_n (\mc P_y^\circ,\dot{\cL}) \big).
\]
\end{lem}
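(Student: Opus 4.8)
The idea is to reduce to the case where $\sigma_0$ is central in a Levi subgroup, handle that case directly, and then induce back up, using throughout that $r=0$. By Proposition~\ref{prop:2.3}.c we may assume $\sigma_0\in\mf t$, so that $Q^\circ:=Z_{G^\circ}(\sigma_0)$ is a Levi subgroup of $G^\circ$ containing $L$ and $W_\cL^{Q^\circ}=(W_\cL^\circ)_{\sigma_0}$ is the stabiliser of $\sigma_0$ in $W_\cL^\circ$. Since $r=0$, we are in the situation for which Theorem~\ref{thm:2.24}.b is valid in its corrected form, so with $Q=Q^\circ$ the map \eqref{eq:2.54} gives an isomorphism of $\mh H(G^\circ,L,\cL)$-modules $\mh H(G^\circ,L,\cL)\otimes_{\mh H(Q^\circ,L,\cL)}E^{Q^\circ}_{y,\sigma_0,0}\isom E^\circ_{y,\sigma_0,0}$ respecting the relevant $\pi_0$-actions. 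By Lemma~\ref{lem:2.2}.a, applied over $Q^\circ$ and over $G^\circ$, the component group that occurs is $\pi_0(Z_{Q^\circ}(\sigma_0,y))=\pi_0(Z_{Q^\circ}(y))\cong\pi_0(Z_{G^\circ}(\sigma_0,y))=\pi_0(M(y))^\circ_\sigma$; taking $\rho^\circ$-isotypic parts and using exactness of $\mh H(G^\circ,L,\cL)\otimes_{\mh H(Q^\circ,L,\cL)}(\,\cdot\,)$ yields $E^\circ_{y,\sigma_0,0,\rho^\circ}\cong\ind_{\mh H(Q^\circ,L,\cL)}^{\mh H(G^\circ,L,\cL)}E^{Q^\circ}_{y,\sigma_0,0,\rho^\circ}$. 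Moreover \eqref{eq:2.54} induces the degree-preserving maps \eqref{eq:2.67} on homology and $\mh H(G^\circ,L,\cL)$ is free over $\mh H(Q^\circ,L,\cL)$ on representatives of $W_\cL^\circ/W_\cL^{Q^\circ}$, all of homological degree $0$, so this isomorphism is compatible with the grading by homological degree.

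\emph{The central case.} Over $Q^\circ$ the element $\sigma_0$ lies in $\Lie Z(Q^\circ)^\circ$ and is fixed by $W_\cL^{Q^\circ}$, so by Proposition~\ref{prop:2.3}.b the central character of $E^{Q^\circ}_{y,\sigma_0,0}$ is $(\{\sigma_0\},0)$; and because $r=0$ — so that the braid relations of Proposition~\ref{prop:1.4} degenerate to the crossed-product relations $N_{s_i}\xi={}^{s_i}\xi\,N_{s_i}$ and the $\mb r$-term in the operators $\Delta(\xi)$ of \eqref{eq:1.6} drops out — the whole of $S(\mf t^*)$ acts on $E^{Q^\circ}_{y,\sigma_0,0}$, hence on $E^{Q^\circ}_{y,\sigma_0,0,\rho^\circ}$, through the single character $\sigma_0\in\mf t$. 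Combining this with \eqref{eq:2.15} for $Q^\circ$ (that is, \cite[10.12.(d)]{Lus5}) and with the fact that the $W_\cL^{Q^\circ}$-action (the degree-$0$ part of the graded action of Theorem~\ref{thm:2.4}.a) and the $\pi_0$-action preserve homological degree, we obtain a decomposition of $\mh H(Q^\circ,L,\cL)/(\mb r)$-modules $E^{Q^\circ}_{y,\sigma_0,0,\rho^\circ}=\bigoplus_n\Hom_{\pi_0(Z_{Q^\circ}(y))}\big(\rho^\circ,H_n(\cP_y^{Q^\circ},\dot\cL)\big)$, each summand of which is completely reducible, being a module over the semisimple algebra $\C[W_\cL^{Q^\circ}]$ with $S(\mf t^*)$ acting by the scalar $\sigma_0$; its simple constituents are the modules $\C_{\sigma_0}\otimes V_\pi$ with $\pi\in\Irr(W_\cL^{Q^\circ})$.

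\emph{Descent.} Applying $\ind_{\mh H(Q^\circ,L,\cL)}^{\mh H(G^\circ,L,\cL)}$ and using the grading compatibility recorded above gives $E^\circ_{y,\sigma_0,0,\rho^\circ}\cong\bigoplus_n\ind\big(\Hom_{\pi_0(Z_{Q^\circ}(y))}(\rho^\circ,H_n(\cP_y^{Q^\circ},\dot\cL))\big)$, and \eqref{eq:2.54}–\eqref{eq:2.67} identify the $n$-th summand with $\Hom_{\pi_0(M(y))^\circ_\sigma}(\rho^\circ,H_n(\cP_y^\circ,\dot\cL))$; this is the asserted module decomposition. For complete reducibility, since $W_\cL^{Q^\circ}=(W_\cL^\circ)_{\sigma_0}$ is the full stabiliser of $\sigma_0$, \cite[Theorem 1.1]{Sol1} shows that each $\ind_{W_\cL^{Q^\circ}\ltimes S(\mf t^*)}^{W_\cL^\circ\ltimes S(\mf t^*)}(\C_{\sigma_0}\otimes V_\pi)$ is a simple $\mh H(G^\circ,L,\cL)/(\mb r)$-module; hence $E^\circ_{y,\sigma_0,0,\rho^\circ}$ is a direct sum of simple modules. (One checks along the way that the hypothesis $\Psi_{G^\circ}(y,\rho^\circ)=(L,\cC_v^L,\cL)$ makes all the modules in sight nonzero, via Proposition~\ref{prop:2.5}.)

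\emph{Main obstacle.} I expect the crux to be the claim, in the central case, that $S(\mf t^*)$ acts \emph{semisimply} — by the scalar $\sigma_0$ — on $E^{Q^\circ}_{y,\sigma_0,0,\rho^\circ}$, rather than merely with the single generalised weight $\sigma_0$; this is precisely where the hypothesis $r=0$ must enter decisively, through Lusztig's explicit description of the operators $\Delta(\xi)$ on $H_*^{M(y)^\circ}(\cP_y^\circ,\dot\cL)$ in \cite[\S8]{Lus3}, \cite[\S10]{Lus5} and of their specialisation at $(\sigma_0,0)$. A secondary point requiring care is checking that the isomorphism provided by Theorem~\ref{thm:2.24}.b is graded in the sense used above, i.e.\ that the degree shifts in \eqref{eq:2.67} match on the two sides.
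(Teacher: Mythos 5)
Your overall strategy parallels the paper's own proof quite closely: the paper also reduces to the ``central'' case over $Q^\circ = Z_{G^\circ}(\sigma_0)$, transfers the result along a parabolic-induction isomorphism (the paper invokes \cite[Corollary 1.18]{Lus7} directly, which is exactly \eqref{eq:2.12}, valid because $r=0$ and $\ad(\sigma_0)$ is invertible on $\mathrm{Lie}(U_{Q^\circ})$), and uses that the $W_\cL^\circ$-, $\pi_0$- and induction maps shift homological degrees uniformly. One organisational caveat about your route: you appeal to Theorem \ref{thm:2.24}.b ``in its corrected form'', but the appendix proof of that corrected statement at $r=0$ (Theorem \ref{thm:A.1}.b) itself rests on \eqref{eq:2.12}, which the paper establishes inside the proof of this very lemma; to keep the logical order clean you should cite \cite[Corollary 1.18]{Lus7} directly, as the paper does, rather than the corrected theorem.

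The genuine gap is exactly the point you flag as the ``main obstacle'', and your proposed justification for it does not work. You claim that over $Q^\circ$, where $\sigma_0\in\mathrm{Lie}\,Z(Q^\circ)^\circ$, the whole of $S(\mf t^*)$ acts on $E^{Q^\circ}_{y,\sigma_0,0}$ \emph{through the character} $\sigma_0$, ``because $r=0$, so the braid relations degenerate and the $\mb r$-term drops out''. But the degeneration of the defining relations at $r=0$ only tells you that $E^{Q^\circ}_{y,\sigma_0,0}$ is a module over the crossed product $\C[W_\cL^{Q^\circ}]\ltimes S(\mf t^*)$; it cannot distinguish a semisimple action of $S(\mf t^*)$ from an action with the single generalised weight $(\sigma_0,0)$ and a nontrivial nilpotent part -- an indecomposable module with one weight satisfies all those relations just as well, and the uniqueness of the generalised weight is already forced by Proposition \ref{prop:2.3}.b together with the $W$-invariance of $\sigma_0$, so your argument proves nothing beyond that. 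Complete reducibility, which is the whole content of the lemma, is precisely the assertion that no such nilpotent part occurs, and this has to be extracted from the \emph{geometric definition} of the operators $\Delta(\xi)$: the paper's proof does this by going back to \eqref{eq:2.9} and arguing that for $\sigma_0$ central the $S(\mf t^*\oplus\C)$-action on the specialised module is given by evaluation at $(\sigma_0,0)$, so that the module structure is governed entirely by the semisimple algebra $\C[W_\cL^{Q^\circ}]$ and, in particular, preserves the homological grading. Since you explicitly defer this step rather than prove it, the proposal as it stands establishes only the reduction scaffolding (which matches the paper) and not the lemma itself.
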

\begin{proof}
First we assume that $\sigma_0$ is central in Lie$(G^\circ)$. Then the action of 
$S(\mf t^* \oplus \C)$ simplifies. Indeed, from \eqref{eq:2.9} we see that it given 
just by evaluation at $(\sigma_0,0)$. Hence the structure of $E^\circ_{y,\sigma_0,0}$
as a $\mh H (G^\circ,L,\cL)$-module is completely determined by the action of
$\C [W_\cL^\circ]$. That is a semisimple algebra, so 
\begin{equation}\label{eq:2.60}
E^\circ_{y,\sigma_0,0} \text{ is completely reducible.}
\end{equation}
Then the direct summand $E^\circ_{y,\sigma_0,0,\rho^\circ}$ is also
completely reducible.

By \cite[10.12.(d)]{Lus5} $E^\circ_{y,\sigma_0,0}$ can be identified with 
$H_* (\cP_y^\circ,\dot{\cL})$, as $W_\cL^\circ$-representations. In 
\eqref{eq:2.68} we observed that the action of $\C[W_\cL^\circ]$ preserves
the homological degree, so
\begin{equation}\label{eq:2.69}
E^\circ_{y,\sigma_0,0} \cong \bigoplus\nolimits_n H_n (\cP_y^\circ,\dot{\cL})
\text{ as } W_\cL^\circ \ltimes S(\mf t^*) \text{-representations.}
\end{equation}
This decomposition persists after applying $\Hom (\rho^\circ,?)$.

Now we lift the condition on $\sigma_0$, and we consider the Levi subgroup 
$Q^\circ = Z_{G^\circ}(\sigma_0)$ of $G^\circ$. 
As explained before Proposition \ref{prop:2.7}, we may assume that 
$L \subset {Q^\circ}$. By \cite[Corollary 1.18]{Lus7} there is a natural 
isomorphism of $\mh H(G^\circ,L,\cL)$-modules
\begin{equation}\label{eq:2.12} 
W_\cL^\circ \ltimes S(\mf t^*) \underset{W_\cL^{Q^\circ} \ltimes S (\mf t^*)}{
\otimes} E^{Q^\circ}_{y,\sigma_0,0} =
\mh H (G^\circ,L,\cL) \underset{\mh H ({Q^\circ},L,\cL)}{\otimes} 
E^{Q^\circ}_{y,\sigma_0,0} \longrightarrow E^\circ_{y,\sigma_0,0} .
\end{equation}
We note that \cite[Corollary 1.18]{Lus7} is applicable because $r = 0$ and 
$\ad (\sigma_0)$ is an invertible linear transformation of Lie$(U_{Q^\circ})$, 
where $U_{Q^\circ}$ is the unipotent radical of a parabolic subgroup of 
$G^\circ$ with Levi factor ${Q^\circ}$. 

For later use we remark that the map \eqref{eq:2.12} comes from a morphism 
$\cP_y^{Q^\circ} \to \cP_y^\circ$. Hence it changes all homological degrees by 
the same amount, namely $\dim \cP_y^\circ - \dim \cP_y^{Q^\circ}$.

In \eqref{eq:2.60} we saw that the $W_\cL^{Q^\circ} \ltimes S (\mf t^*)$-module 
$E^{Q^\circ}_{y,\sigma_0,0}$ is completely reducible. Above we also showed that 
$S(\mf t^* \oplus \C)$ acts on $E^{Q^\circ}_{y,\sigma_0,0,\rho^\circ}$ via the 
character $(\sigma_0,0)$.
With the braid relation from Propostion \ref{prop:1.4} we see that
\begin{equation}\label{eq:2.63}
S(\mf t^* \oplus \C) \text{ acts on } w E^{Q^\circ}_{y,\sigma_0,0}
\text{ via the character } (w \sigma_0,0).
\end{equation}
As $W_\cL^{Q^\circ}$ is the stabilizer of $\sigma_0$ in $W_\cL$, this brings
the reducibility question for $E^\circ_{y,\sigma_0,0}$ back to that for 
$E^{Q^\circ}_{y,\sigma_0,0}$, which we already settled. Thus
\begin{equation}\label{eq:2.61}
E^\circ_{y,\sigma_0,0} \text{ is completely reducible.} 
\end{equation}
This implies that the direct summand $E^\circ_{y,\sigma_0,0,\rho^\circ}$
is also completely reducible.

It follows from \eqref{eq:2.69} and \eqref{eq:2.63} that $E^\circ_{y,\sigma_0,0}
= H_* (\mc P_y^\circ, \dot{\cL})$ and that the action of $W_\cL^\circ \ltimes
S(\mf t^*)$ preserves the homological degree. The same goes for the action of
$\pi_0 (M(y))_\sigma^\circ$, which yields the desired module decomposition of
$E^\circ_{y,\sigma_0,0,\rho^\circ}$.
\end{proof}

In terms of Lemma \ref{lem:2.6} we can describe explicitly how a standard
module for $\mh H (G,L,\cL) / ({\mb r})$ contains the irreducible module
with the same parameter.

\begin{lem}\label{lem:2.8}
The $W_\cL^\circ \ltimes S(\mf t^*)$-module $E^\circ_{y,\sigma_0,0,\rho^\circ}$
has a unique irreducible subquotient isomorphic to 
$M^\circ_{y,\sigma_0,0,\rho^\circ}$. It is the component of
$E^\circ_{y,\sigma_0,0,\rho^\circ}$ in the homological degree
\[
\dim \cP_y^\circ - \dim \cP_y^{Z_G (\sigma_0)^\circ} +
\dim Z_{G^\circ}(\sigma_0,y)-\dim Z_L(v).
\]
\end{lem}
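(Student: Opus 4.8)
The plan is to reduce, by means of the parabolic-induction isomorphism \eqref{eq:2.12}, to the case $\sigma_0\in\Lie(Z(G^\circ))$ — where $M^\circ_{y,\sigma_0,0,\rho^\circ}$ is literally the generalized Springer representation $M_{y,\rho^\circ}$ with $S(\mf t^*)$ acting through $\sigma_0$ — and there to read the statement off from the generalized Springer correspondence of \cite{Lus2} together with the decomposition theorem for the perverse sheaf $K$.

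For the reduction I would first conjugate so that $L$ centralizes $\sigma_0$ and set $Q^\circ=Z_{G^\circ}(\sigma_0)=Z_G(\sigma_0)^\circ$, a Levi subgroup of $G^\circ$ in which $\sigma_0$ is central. Then $M^\circ_{y,\sigma_0,0,\rho^\circ}=\ind_{\mh H(Q^\circ,L,\cL)}^{\mh H(G^\circ,L,\cL)}M^{Q^\circ}_{y,\sigma_0,0,\rho^\circ}$ by \eqref{eq:2.18}, while applying the exact functor $\Hom_{\pi_0(Z_{G^\circ}(\sigma_0,y))}(\rho^\circ,-)$ to \eqref{eq:2.12} — which commutes with the flat base change $\mh H(G^\circ,L,\cL)\otimes_{\mh H(Q^\circ,L,\cL)}(-)$, since the $\pi_0$-action commutes with the $\mh H$-action — gives an isomorphism $E^\circ_{y,\sigma_0,0,\rho^\circ}\cong\ind_{\mh H(Q^\circ,L,\cL)}^{\mh H(G^\circ,L,\cL)}E^{Q^\circ}_{y,\sigma_0,0,\rho^\circ}$ raising all homological degrees by $\delta:=\dim\cP_y^\circ-\dim\cP_y^{Q^\circ}$ (recorded in the proof of Lemma \ref{lem:2.6}). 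Since $\mh H(G^\circ,L,\cL)$ is generated over $\mh H(Q^\circ,L,\cL)$ by $\C[W_\cL^\circ]$, which preserves the homological grading, this induction sends a module placed in homological degree $n$ to one placed in degree $n$. Moreover $S(\mf t^*)$ acts through the character $\sigma_0$ on both $M^{Q^\circ}_{y,\sigma_0,0,\rho^\circ}$ and $E^{Q^\circ}_{y,\sigma_0,0,\rho^\circ}$, and $W_\cL^{Q^\circ}$ is the stabiliser of $\sigma_0$ in $W_\cL^\circ$; so the $\sigma_0$-weight space of $\ind N$ equals $N$ (as a $W_\cL^{Q^\circ}$-module) for any such $N$, and Frobenius reciprocity yields $\Hom_{\mh H(G^\circ,L,\cL)}\big(M^\circ_{y,\sigma_0,0,\rho^\circ},E^\circ_{y,\sigma_0,0,\rho^\circ}\big)\cong\Hom_{\mh H(Q^\circ,L,\cL)}\big(M^{Q^\circ}_{y,\sigma_0,0,\rho^\circ},E^{Q^\circ}_{y,\sigma_0,0,\rho^\circ}\big)$. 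As $M^\circ_{y,\sigma_0,0,\rho^\circ}$ is irreducible (Proposition \ref{prop:2.7}) and both standard modules are semisimple (Lemma \ref{lem:2.6}), it follows that $M^\circ_{y,\sigma_0,0,\rho^\circ}$ occurs in $E^\circ_{y,\sigma_0,0,\rho^\circ}$ with the same multiplicity as $M^{Q^\circ}_{y,\sigma_0,0,\rho^\circ}$ in $E^{Q^\circ}_{y,\sigma_0,0,\rho^\circ}$, and — $\ind$ being degree-preserving and \eqref{eq:2.12} shifting by $\delta$ — that $M^\circ_{y,\sigma_0,0,\rho^\circ}$ is precisely the homogeneous component of $E^\circ_{y,\sigma_0,0,\rho^\circ}$ lying $\delta$ degrees above the degree of $M^{Q^\circ}_{y,\sigma_0,0,\rho^\circ}$ in $E^{Q^\circ}_{y,\sigma_0,0,\rho^\circ}$. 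It thus suffices to prove the lemma for $Q^\circ$; so from now on $\sigma_0\in\Lie(Z(G^\circ))$, and the claimed degree becomes the top homological degree $\dim Z_{G^\circ}(y)-\dim Z_L(v)$ of $\cP_y^\circ$ (indeed $2\dim\cP_y^\circ=\dim Z_{G^\circ}(y)-\dim Z_L(v)$ by \cite[Theorem 6.5]{Lus2} and \eqref{eqn:degree}, and $\cP_y^\circ\neq\emptyset$ because $E^\circ_{y,\sigma_0,0,\rho^\circ}\neq0$ by Proposition \ref{prop:2.5}).

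In the central case $M^\circ_{y,\sigma_0,0,\rho^\circ}$ is $M_{y,\rho^\circ}$ as a $\C[W_\cL^\circ]$-module with $S(\mf t^*)$ acting by $\sigma_0$ (see \eqref{eq:2.74}), and by Lemma \ref{lem:2.6} there is a graded, $\C[W_\cL^\circ]$-stable decomposition $E^\circ_{y,\sigma_0,0,\rho^\circ}=\bigoplus_n\Hom_{\pi_0(Z_{G^\circ}(y))}\big(\rho^\circ,H_n(\cP_y^\circ,\dot{\cL})\big)$. The generalized Springer correspondence of \cite{Lus2} identifies the top homology $H_{\dim Z_{G^\circ}(y)-\dim Z_L(v)}(\cP_y^\circ,\dot{\cL})$, as a $W_\cL^\circ\times\pi_0(Z_{G^\circ}(y))$-module, with $\bigoplus_{\rho'}M_{y,\rho'}\otimes\rho'$, the sum running over the $\rho'$ with $\Psi_{G^\circ}(y,\rho')=(L,\cC_v^L,\cL)$; hence the homogeneous component of $E^\circ_{y,\sigma_0,0,\rho^\circ}$ in that degree equals $M_{y,\rho^\circ}=M^\circ_{y,\sigma_0,0,\rho^\circ}$. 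To see that $M_{y,\rho^\circ}$ occurs in no other homological degree I would use the decomposition theorem, $K\cong\bigoplus_{(\cC',\cE')}M_{\cC',\cE'}\otimes\IC(\overline{\cC'},\cE')$ (with the appropriate shifts, the sum over pairs of cuspidal support $(L,\cC_v^L,\cL)$), together with the identification — valid up to a relabelling of degrees, cf. \cite[10.12]{Lus5} and the correction noted in the proof of Proposition \ref{prop:2.5} — of $\bigoplus_nH_n(\cP_y^\circ,\dot{\cL})$ with $\bigoplus_j\cH^j(K)|_y$ as $W_\cL^\circ\times\pi_0(Z_{G^\circ}(y))$-modules. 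Because the correspondence $M_{\cC',\cE'}\leftrightarrow(\cC',\cE')$ is a bijection, the $M_{y,\rho^\circ}$-isotypic $\C[W_\cL^\circ]$-part of $\bigoplus_j\cH^j(K)|_y$ can only come from the summand $(\cC',\cE')=(\cC_y^{G^\circ},\cF_{\rho^\circ})$, and since $y$ lies in the open dense orbit of $\overline{\cC_y^{G^\circ}}$ the stalk $\bigoplus_j\cH^j\big(\IC(\overline{\cC_y^{G^\circ}},\cF_{\rho^\circ})\big)|_y$ is $\rho^\circ$ placed in a single degree. Hence $M_{y,\rho^\circ}$ occurs in $\bigoplus_j\Hom_{\pi_0(Z_{G^\circ}(y))}(\rho^\circ,\cH^j(K)|_y)$ with total multiplicity $\dim\Hom_{\pi_0(Z_{G^\circ}(y))}(\rho^\circ,\rho^\circ)=1$; having already located it in the top degree, it is there and nowhere else. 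Together with the reduction, this proves the lemma.

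The step I expect to be the main obstacle is the degree bookkeeping: one must reconcile Lusztig's normalisation of $K$ as a perverse sheaf with the identification of $\bigoplus_nH_n(\cP_y^\circ,\dot{\cL})$ with the (co)stalk cohomology of $K$ at $y$, carry along the shift $\dim\cP_y^\circ-\dim\cP_y^{Z_G(\sigma_0)^\circ}$ from \eqref{eq:2.12}, and keep the duals straight — recall from the proof of Proposition \ref{prop:2.5} that the statement in \cite[10.12]{Lus5} is phrased with the dual of $E^\circ_{y,\sigma,r}$ and with $(\rho^\circ)^*$. None of this alters the total multiplicity, but it must be handled carefully in order to land exactly on the degree $\dim\cP_y^\circ-\dim\cP_y^{Z_G(\sigma_0)^\circ}+\dim Z_{G^\circ}(\sigma_0,y)-\dim Z_L(v)$.
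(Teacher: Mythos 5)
Your proposal is correct and follows essentially the same route as the paper: reduce to central $\sigma_0$ via the induction isomorphism \eqref{eq:2.12} with the degree shift $\dim \cP_y^\circ - \dim \cP_y^{Q^\circ}$ and the separation of $S(\mf t^*)$-weights (your Frobenius-reciprocity phrasing is just a repackaging of the paper's "no common constituents between different weight spaces" argument), then settle the central case by identifying $E^\circ_{y,\sigma_0,0}$ with the stalk $\cH^*(K)|_y$ and combining \cite[Theorem 6.5]{Lus2} with the concentration of the relevant IC stalk on the open orbit. The only notable difference is that the paper derives that concentration, \eqref{eqn:cH}, from \cite[Theorem 24.8]{LusCS}, whereas you obtain it from the defining restriction property of intersection cohomology on the open stratum together with the bijectivity of the generalized Springer correspondence under the decomposition of $K$ --- the same fact, cited more economically.
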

\begin{proof}
For the moment we assume that $\sigma_0$ is central in Lie$(G^\circ)$.
According to \cite[Theorem 8.15]{Lus3} every irreducible 
$\mh H (G^\circ,L,\cL)$-module is a quotient of some standard module. The 
central character of $M^\circ_{y,\sigma_0,0, \rho^\circ}$ is $(\sigma_0,0) 
\in \mf t / W_\cL^\circ \times \C$. In view of Proposition \ref{prop:2.3}.b, 
$M^\circ_{y,\sigma_0,0,\rho^\circ}$ cannot be a subquotient of a standard
module $E^\circ_{y,\sigma,r,\rho^\circ}$ with $(\sigma,r) \neq (\sigma_0,0)$. 
Therefore it must be a quotient of $E^\circ_{y,\sigma_0,0,\rho'}$ for some 
\[
\rho' \in \Irr (\pi_0 (M(y))_{\sigma_0}^\circ = 
\Irr \big( \pi_0 (Z_{G^\circ}(y)) \big) .
\]
By definition \cite[1.5.(c)]{Lus5} the dual space of \eqref{eq:2.69} is 
\begin{equation}\label{eq:2.16}
H_* (\cP^\circ_y,\dot{\cL})^* \cong H^* (\cP_y^\circ,\dot{\cL}^*) .
\end{equation}
Since we sum over all degrees, we may ignore changes in the grading for now. 
By \cite[10.12.(c)]{Lus5}, in which a *-sign is missing, \eqref{eq:2.16} 
is isomorphic to $H^* (\{y\},i_y^! (K^*))$, where $i_y : \{y\} \to \mf g$ 
is the inclusion. From \cite[1.3.(d) and 1.4.(a)]{Lus5} we see that
\begin{equation}\label{eq:2.17}
H^* (\{y\},i_y^! (K^*)) \cong H^* (\{y\},i_y^* (K^*)) \cong \cH^* (K^*) |_y .
\end{equation}
From \eqref{eq:2.16}, \eqref{eq:2.17} and \eqref{eq:2.11} we get isomorphisms
\[
E^\circ_{y,\sigma_0,0} \cong H_* (\cP_y^\circ,\dot{\cL}) \cong 
(\cH^* (K^*) |_y )^* \cong \cH^* (K )|_y . 
\]
The generalized Springer correspondence, which in \cite{Lus2} comes from 
sheaves on subvarieties of $G^\circ$, can also be obtained from sheaves on 
subvarieties of $\mf g$, see \cite[2.2]{Lus3}. In that version it is given by
\[
(y,\rho^\circ) \mapsto \Hom_{\pi_0 (Z_{G^\circ}(y))}
\big( \rho^\circ,\cH^{2d}(K) |_y \big),
\]
where $d = d_{\cC_y^{G^\circ},\cC_v^L}$ is as in (\ref{eqn:degree}). 
More precisely \cite[Theorem 6.5]{Lus2}:
\begin{equation}\label{eq:2.73}
\cH^{2d}(K )|_y \cong \bigoplus\nolimits_{\rho'} V_{\rho'} \otimes M_{y,\rho'}
\text{ as } \pi_0 (Z_{G^\circ}(y)) \times W_\cL^\circ \text{-representations,}
\end{equation}
where the sum runs over all $(\rho',V_{\rho'}) \in \Irr \big( \pi_0 
(Z_{G^\circ}(y))\big)$ with $\Psi_{G^\circ}(y,\rho') = (L,\cC_v^L,\cL)$. 

Let $I$ denote the set of all pairs $i=(\cC_y^{G^\circ},\cF^\circ)$ where $\cC_y^{G^\circ}$  
is the adjoint orbit of a nilpotent element $y$ in $\mf g$, and $\cF^\circ$ is an 
irreducible $G^\circ$-equivariant local system (given up to isomorphism) on $\cC_y^{G^\circ}$. 
In \cite[Theorem 24.8]{LusCS}, Lusztig has proved that for any 
$i=(\cC_y^{G^\circ},\cF^\circ)\in I$:
\begin{itemize}
\item $\cH^n \big(\IC(\overline \cC_y^{G^\circ},\cF^{\circ})\big)=0$ if $n$ is odd.
\item for $i'=(\cC_{y'}^{G^\circ},\cF^{\circ,\prime}) \in I$ the polynomial 
\[ \Pi_{i,i'}:=\sum\nolimits_m \big(\cF^{\circ,\prime}:
\cH^{2m}\big(\IC(\overline \cC_y^{G^\circ},\cF^{\circ})\big)|_{\cC_{y'}^{G^\circ}}\big)\,\bf{q}^m,
\]
in the indeterminate $\bf q$, satisfies $\Pi_{i,i}=1$.
\end{itemize}
From the second bullet we obtain 
\begin{equation} \label{eqn:cH}
\big(\cF^{\circ}:\cH^{2m}\big(\IC(\overline \cC_y^{G^\circ},\cF^{\circ})\big)|_{\cC_{y}^{G^\circ}})
=\begin{cases} 0 \quad \text{ if $m\ne 0$}\cr
1 \quad \text{ if $m=0$.}
\end{cases}
\end{equation}
By combining \eqref{eqn:cH} with \cite[Theorem 6.5]{Lus2}, where the considered complex 
is shifted in degree $2d_{\cC_y^{G^\circ},\cC_v^L}=\dim Z_{G^\circ}(y)-\dim Z_L(v)$, we obtain that 
\[
E^\circ_{y,\sigma_0,0,\rho^\circ} \cong \Hom_{\pi_0 (Z_{G^\circ}(y))} 
\big( \rho^\circ, \cH^* (K ) |_y \big) 
\]
contains $M_{y,\rho^\circ}$ with multiplicity one, as the component in
the homological degree $2d_{\cC_y^{G^\circ},\cC_v^L}$.

Now consider a general $\sigma_0 \in \Lie (G^\circ)$, and we write
$Q = Z_G (\sigma_0)$. By Lemma \ref{lem:2.2}
\begin{equation}\label{eq:2.37}
\pi_0 (M(y))_{\sigma_0}^\circ \cong \pi_0 (Z_{G^\circ}(\sigma_0,y)) =
\pi_0 (Z_{Q^\circ} (\sigma_0,y)) = \pi_0 (Z_{Q^\circ} (y)) .
\end{equation}
By Theorem \ref{thm:2.4}.d the action of this group commutes with that of 
$\mh H (G^\circ,L,\cL)$, so \eqref{eq:2.12} contains an isomorphism of 
$\mh H(G^\circ,L,\cL)$-modules
\begin{equation}\label{eq:2.38}
\mh H (G^\circ,L,\cL) \underset{\mh H ({Q^\circ},L,\cL)}{\otimes} 
E^{Q^\circ}_{y,\sigma_0,0,\rho^\circ} \to E^\circ_{y,\sigma_0,0,\rho^\circ} . 
\end{equation}
The argument for the irreducibility of $M^\circ_{y,\sigma_0,0,\rho^\circ}$ 
in the proof of Proposition \ref{prop:2.7} also applies here, when we use 
Proposition \ref{prop:2.3}.b. It shows that the $S(\mf t^*)$-modules 
$w E^\circ_{y,\sigma_0,0,\rho^\circ}$ with $w \in W_\cL^\circ / 
W_\cL^{Q^\circ}$ contain only different $S(\mf t^*)$-modules, so they have
no common irreducible constituents. It follows that the functor 
$\ind^{\mh H (G^\circ,L,\cL)}_{\mh H ({Q^\circ},L,\cL)}$ provides a bijection 
between $\mh H ({Q^\circ},L,\cL)$-subquotients of $E^{Q^\circ}_{y,\sigma_0,
0,\rho^\circ}$ and $\mh H(G^\circ,L,\cL)$-subquotients of $E^\circ_{y,\sigma_0,
0,\rho^\circ}$. Together with the statement of the lemma for $(Q^\circ,\sigma_0)$, 
we see that  $E^\circ_{y,\sigma_0,0,\rho^\circ}$ has a unique quotient isomorphic to 
$M^\circ_{y,\sigma_0,0,\rho^\circ}$ and no other constituents isomorphic to that.

As remarked before, the maps \eqref{eq:2.12} and \eqref{eq:2.38} come from a morphism 
$\cP_y^{Q^\circ} \to \cP_y^\circ$, so they change all homological degrees by 
$\dim \cP_y^\circ - \dim \cP_y^{Q^\circ}$.
With the result for $(Q^\circ,\sigma_0)$ at hand, it follows that the image of 
$W_\cL^\circ \ltimes S(\mf t^*) \underset{W_\cL^{Q^\circ} \ltimes S(\mf t^*) 
}{\otimes} M^{Q^\circ}_{y,\sigma_0,0,\rho^\circ}$ is the full component of 
$E^\circ_{y,\sigma_0,0,\rho^\circ}$ in the stated homological degree. By definition this 
image is also (isomorphic to) $M^\circ_{y,\sigma_0,0, \rho^\circ}$.
\end{proof}

\subsection{Intertwining operators and 2-cocycles} \
\label{par:intop}

For $r \in \C$ we let $\Irr_{r}(\mh H (G,L,\cL))$ be the set of (equivalence 
classes of) irreducible $\mh H (G,L,\cL)$-modules on which ${\mb r}$ acts as $r$.

The irreducible representations of $\mh H(G,L,\cL)$ are built from those of 
$\mh H(G^\circ,L,\cL)$. Let us collect some available information about 
the latter here.

\begin{thm}\label{thm:2.9}
Let $y \in \mf g$ be nilpotent and let $(\sigma,r)/\!\!\sim \; \in V_y$ be 
semisimple. Let $\rho^\circ \in \Irr \big(\pi_0 (Z_{G^\circ}(\sigma,y))\big)$ 
be such that $\Psi_{Z_{G^\circ}(\sigma_0)}(y,\rho^\circ) = (L,\cC_v^L,\cL)$ 
(up to $G^\circ$-conjugation).
\enuma{
\item If $r \neq 0$, then $E^\circ_{y,\sigma,r,\rho^\circ}$ has a unique 
irreducible quotient $\mh H (G^\circ,L,\cL)$-module. We call it 
$M^\circ_{y,\sigma,r,\rho^\circ}$.
\item If $r = 0$, then $E^\circ_{y,\sigma_0,r, \rho^\circ}$ has a unique 
irreducible summand isomorphic to $M^\circ_{y,\sigma_0,0,\rho^\circ}$.
\item Parts (a) and (b) set up a canonical bijection between 
$\Irr_{r}(\mh H (G^\circ,L,\cL))$ and the $G^\circ$-orbits of triples 
$(y,\sigma,\rho^\circ)$ as above.
\item Every irreducible constituent of $E^\circ_{y,\sigma,r,\rho^\circ}$, 
different from $M^\circ_{y,\sigma,r,\rho^\circ}$, is isomorphic to a 
representation $M^\circ_{y',\sigma',r,\rho'}$ with 
$\dim \cC_y^{G^\circ} < \dim \cC_{y'}^{G^\circ}$.
}
\end{thm}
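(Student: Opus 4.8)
The plan is to deduce the statement from Lusztig's analysis of standard modules for connected groups in \cite{Lus3,Lus5}, combined with the structural results of this section, treating the cases $r \neq 0$ and $r = 0$ by different arguments and then checking that the parametrisations in (c) fit together into one statement valid for every $r \in \C$.

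\emph{The case $r \neq 0$.} Here (a) and (d) are, after translating the labels, the content of the classification in \cite{Lus5}: the standard module $E^\circ_{y,\sigma,r}$, and hence its $\rho^\circ$-isotypic part $E^\circ_{y,\sigma,r,\rho^\circ}$, carries a natural filtration indexed by the closure order on nilpotent $G^\circ$-orbits, whose top quotient is irreducible and whose remaining subquotients are standard modules attached to strictly larger orbits; defining $M^\circ_{y,\sigma,r,\rho^\circ}$ as this top quotient gives (a), and the filtration gives (d). For (c): every module in $\Irr_r(\mh H(G^\circ,L,\cL))$ is a quotient of some standard module (\cite[Theorem 8.15]{Lus3}), hence by (a) coincides with its top quotient $M^\circ_{y',\sigma',r,\rho'}$; by Proposition \ref{prop:2.5} a standard module $E^\circ_{y,\sigma,r,\rho^\circ}$ is nonzero exactly when $\Psi_{Z_{G^\circ}(\sigma_0)}(y,\rho^\circ)$ is $G^\circ$-conjugate to $(L,\cC_v^L,\cL)$, so the triples occurring are those in the statement, giving surjectivity. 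For injectivity, fix a central character: by (a) and (d) the matrix expressing the classes $[E^\circ_{y,\sigma,r,\rho^\circ}]$ in terms of the $[M^\circ_{y',\sigma',r,\rho'}]$ is unitriangular with respect to $\dim \cC_y^{G^\circ}$, hence invertible, so the $M^\circ_{y,\sigma,r,\rho^\circ}$ are pairwise non-isomorphic. Finally Proposition \ref{prop:2.3} (for the central character) and Lemma \ref{lem:2.2} (to pass between $\sigma$ and $\sigma_0$ and to normalise both inside $\mf t$) turn the index set into $G^\circ$-orbits of triples exactly as stated.

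\emph{The case $r = 0$.} By Lemma \ref{lem:2.6} the module $E^\circ_{y,\sigma_0,0,\rho^\circ}$ is completely reducible, so (b) is immediate from Lemma \ref{lem:2.8}, which already exhibits a \emph{unique} subquotient isomorphic to $M^\circ_{y,\sigma_0,0,\rho^\circ}$ — in a semisimple module a unique subquotient is a unique summand. Part (c) for $r = 0$ is precisely Proposition \ref{prop:2.7}, since $\Irr_0(\mh H(G^\circ,L,\cL)) = \Irr(W_\cL^\circ \ltimes S(\mf t^*))$ and the modules constructed there are the $M^\circ_{y,\sigma_0,0,\rho^\circ}$ of \eqref{eq:2.74} and \eqref{eq:2.18}. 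For (d), I would first take $\sigma_0$ central in $\Lie(G^\circ)$: then $S(\mf t^* \oplus \C)$ acts on $E^\circ_{y,\sigma_0,0,\rho^\circ}$ through the single character $(\sigma_0,0)$, so by (c) every constituent is some $M^\circ_{y',\sigma_0,0,\rho'}$; and from the identification $E^\circ_{y,\sigma_0,0,\rho^\circ} \cong \Hom_{\pi_0(Z_{G^\circ}(y))}(\rho^\circ, \cH^*(K)|_y)$ used in the proof of Lemma \ref{lem:2.8}, the decomposition theorem for $K$ together with \eqref{eqn:cH} shows that a summand $\IC(\overline{\cC_{y'}^{G^\circ}},\cF')$ of $K$ contributes to $\cH^*(K)|_y$ only when $\cC_y^{G^\circ} \subseteq \overline{\cC_{y'}^{G^\circ}}$, and that in the degree $2 d_{\cC_y^{G^\circ},\cC_v^L}$ the only contribution coming from an orbit of the same dimension as $\cC_y^{G^\circ}$ is the one producing $M^\circ_{y,\sigma_0,0,\rho^\circ}$; hence all other constituents have strictly larger orbit. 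For general $\sigma_0$ I would pass to the Levi $Q^\circ = Z_{G^\circ}(\sigma_0)$ and use the parabolic-induction isomorphism \eqref{eq:2.38}, which sends each $M^{Q^\circ}_{y',\sigma_0,0,\rho'}$ to $M^\circ_{y',\sigma_0,0,\rho'}$ and, as in the proof of Lemma \ref{lem:2.8}, induces a bijection on constituents; this reduces (d) to the case already settled.

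The step I expect to be the main obstacle is this last point in (d) for $r = 0$: after reducing to $Q^\circ = Z_{G^\circ}(\sigma_0)$ one must still deduce $\dim \cC_y^{G^\circ} < \dim \cC_{y'}^{G^\circ}$ from $\dim \cC_y^{Q^\circ} < \dim \cC_{y'}^{Q^\circ}$ for the nilpotent $Q^\circ$-orbits occurring. This should follow from the degree formula $2 d_{\cC_y^{G^\circ},\cC_v^L} = \dim Z_{G^\circ}(y) - \dim Z_L(v)$ together with its analogue for $Q^\circ$ (cf. \eqref{eqn:degree}), by controlling $\dim Z_{G^\circ}(y) - \dim Z_{Q^\circ}(y)$ along the closure order, but it needs a separate short argument. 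Everything else in the proof is bookkeeping on top of the cited results.
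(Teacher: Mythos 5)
Your handling of (a), (b) and of (c) at $r=0$ coincides with the paper's, which simply quotes \cite[Theorem 1.15]{Lus7}, Lemma \ref{lem:2.8} and Proposition \ref{prop:2.7}. The two places where you argue by hand instead of citing contain genuine gaps. The first is injectivity in (c) for $r\neq 0$. Your argument --- ``the matrix expressing the $[E^\circ_{y,\sigma,r,\rho^\circ}]$ in terms of the $[M^\circ_{y',\sigma',r,\rho'}]$ is unitriangular with respect to $\dim \cC_y^{G^\circ}$, hence invertible, so the heads are pairwise non-isomorphic'' --- is circular: to speak of a square unitriangular matrix you must already index the irreducibles by the triples, i.e.\ you must already know the map $(y,\sigma,\rho^\circ)\mapsto M^\circ_{y,\sigma,r,\rho^\circ}$ is injective. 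What (a) and (d) give is only that each standard module has an irreducible head and that its other constituents are attached to strictly larger orbits; nothing in this excludes two non-conjugate triples having isomorphic heads (abstractly, two modules of this shape can share a head without violating any triangularity). Injectivity is a genuinely geometric input: in \cite[\S 10]{Lus5} it comes from identifying $M^\circ_{y,\sigma,r,\rho^\circ}$ with the IC summand of $K$ attached to $(\cC_y^{G^\circ},\rho^\circ)$ via the decomposition theorem, and the paper just invokes \cite[Theorem 1.15.c]{Lus7}. You should do the same, or reproduce that argument; unitriangularity alone does not close it.

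The second gap is (d) at $r=0$, and it is exactly the step you flag yourself: after reducing to $Q^\circ = Z_{G^\circ}(\sigma_0)$ you must deduce $\dim \cC^{G^\circ}_y < \dim \cC^{G^\circ}_{y'}$ from $\cC^{Q^\circ}_y \subset \overline{\cC^{Q^\circ}_{y'}}$, $\cC^{Q^\circ}_y \neq \cC^{Q^\circ}_{y'}$. The sketched fix via the degree formula \eqref{eqn:degree} is not a proof: $\dim Z_{G^\circ}(y)-\dim Z_{Q^\circ}(y)$ is not monotone along the closure order in any evident way, and what is actually needed is that two distinct $Q^\circ$-orbits, one in the closure of the other, cannot be $G^\circ$-conjugate --- an (un-stated, un-proved) equidimensionality property of $\cC^{G^\circ}_{y'} \cap \mathrm{Lie}(Q^\circ)$. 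The paper avoids the issue entirely: the references it uses for (d), \cite[\S 3]{Ciu} and \cite[\S 10]{Lus5} (Corollary 10.7 and Proposition 10.12), are valid for arbitrary $(\sigma,r)$ including $r=0$, so no separate $r=0$ argument and no Levi reduction are needed. A minor inaccuracy in the same vein: for $r\neq 0$ the non-head subquotients are irreducible modules attached to larger orbits, not standard modules as you write, though you only use the weaker statement.
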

\begin{proof}
(a) is \cite[Theorem 1.15.a]{Lus7}.\\
(b) is a less precise version of Lemma \ref{lem:2.8}. \\
(c) For $r \neq 0$ see \cite[Theorem 1.15.c]{Lus7} and for $r = 0$ see
Proposition \ref{prop:2.7}. \\
(d) As noted in \cite[\S 3]{Ciu}, this follows from \cite[\S 10]{Lus5}.
\end{proof}

Our goal is to generalize Theorem \ref{thm:2.9} from $G^\circ$ to $G$. 
To this end we have to extend both $\rho^\circ$ and $M^\circ_{y,\sigma,
\rho^\circ}$ to representations of larger algebras. That involves the 
construction of some intertwining operators, followed by Clifford theory for 
representations of crossed product algebras. Although all our intertwining 
operators are parametrized by some group, they typically do not arise from
a group homomorphism. Instead they form twisted group algebras, and we will 
have to determine the associated group cocycles as well.

The group $\cR_\cL$ acts on the set of 
$\mh H(G^\circ,L,\cL)$-representations $\pi$ by
\[
(w \cdot \pi)(h) = 
\pi (N_w^{-1} h N_w) \qquad w \in \cR_\cL, h \in \mh H (G^\circ,L,\cL).
\]
Let $\cR_{\cL,y,\sigma}$ (respectively $\cR_{\cL,y,\sigma,\rho^\circ}$) 
be the stabilizer of $E^\circ_{y,\sigma,r}$ (respectively 
$E^\circ_{y,\sigma,r\rho^\circ}$) in $\cR_\cL$. Similarly the group 
$\pi_0 (Z_G (\sigma,y)))$ acts the set of $\pi_0 (Z_{G^\circ}(\sigma,y))
$-representations. Let $\pi_0 (Z_G (\sigma,y))_{\rho^\circ}$ 
be the stabilizer of $\rho^\circ$ in $\pi_0 (Z_G (\sigma,y))$.

\begin{lem}\label{lem:2.10}
There are natural isomorphisms
\enuma{
\item $\cR_{\cL,y,\sigma} \cong \pi_0 (Z_G (\sigma,y)) / 
\pi_0 (Z_{G^\circ}(\sigma,y))
\cong \pi_0 (Z_G (\sigma_0,y)) / \pi_0 (Z_{G^\circ}(\sigma_0,y))$,
\item $\cR_{\cL,y,\sigma,\rho^\circ} \cong \pi_0 (Z_G (\sigma,y))_{
\rho^\circ} / \pi_0 (Z_{G^\circ}(\sigma,y))$.
}
\end{lem}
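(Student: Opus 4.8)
\quad The strategy is to reduce both statements to one geometric fact about how the $\cR_\cL$-twist acts on standard modules, and then to read off the answer by an orbit--stabiliser argument. I begin with the routine reductions. The second isomorphism in (a) follows at once from Lemma~\ref{lem:2.2}(a) and \eqref{eq:2.7}, which provide compatible identifications $\pi_0(Z_G(\sigma,y))\cong\pi_0(Z_G(\sigma_0,y))$ and $\pi_0(Z_{G^\circ}(\sigma,y))\cong\pi_0(Z_{G^\circ}(\sigma_0,y))$, induced by the inclusions through $Z_G(\sigma,\textup{d}\gamma_y(\mf{sl}_2(\C)))$ as in \cite[\S 2.4]{KaLu}, hence descend to the quotients. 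Since $Z_G(\sigma,y)^\circ$ is connected it lies in $G^\circ$ and equals the identity component of $Z_{G^\circ}(\sigma,y)$; therefore $\pi_0(Z_G(\sigma,y))/\pi_0(Z_{G^\circ}(\sigma,y))\cong Z_G(\sigma,y)/Z_{G^\circ}(\sigma,y)$ is a subgroup of $G/G^\circ$, and likewise $\pi_0(Z_G(\sigma,y))_{\rho^\circ}/\pi_0(Z_{G^\circ}(\sigma,y))$ is a subgroup of $G/G^\circ$ (note that $\pi_0(Z_{G^\circ}(\sigma,y))$ fixes $\rho^\circ$, acting on it by inner automorphisms). Under Condition~\ref{cond:1} the homomorphism $N_G(P,\cL)\to G/G^\circ$ is surjective with kernel $L$, so $\cR_\cL\isom G/G^\circ$ canonically; this presents the two right-hand sides as explicit subgroups of $\cR_\cL$, and the task is to identify them with the stabilisers on the left.

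The geometric fact I would establish is: for $w\in\cR_\cL$ with a representative $\bar w\in N_G(P,\cL)$, there is an isomorphism of $\mh H(G^\circ,L,\cL)$-modules $N_w\otimes E^\circ_{y,\sigma,r}\cong E^\circ_{\Ad(\bar w)y,\,\Ad(\bar w)\sigma,\,r}$ intertwining the actions of $\pi_0(M(y))^\circ_\sigma$ and $\pi_0(M(\Ad(\bar w)y))^\circ_{\Ad(\bar w)\sigma}$ via $\Ad(\bar w)$; in particular $N_w\otimes E^\circ_{y,\sigma,r,\rho^\circ}\cong E^\circ_{\Ad(\bar w)y,\Ad(\bar w)\sigma,r,\Ad(\bar w)\rho^\circ}$. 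To prove it I would combine the isomorphism $E_{y,\sigma,r}\cong\ind_{\mh H(G^\circ,L,\cL)}^{\mh H(G,L,\cL)}E^\circ_{y,\sigma,r}=\bigoplus_{w\in\cR_\cL}N_w\otimes E^\circ_{y,\sigma,r}$ (from Lemma~\ref{lem:2.1} and \eqref{eq:1.10}, tensored with $\C_{\sigma,r}$ over $H_*^{M(y)^\circ}(\{y\})$) with the decomposition of $\cP_y$ into the pieces $\cP_y^\circ\bar w^{-1}=\{g\bar w^{-1}P:gP\in\cP_y^\circ\}$ coming from \eqref{eq:2.1}: since for $w\in\cR_\cL$ the operator $N_w$ on $K^*$ lifts the geometric action \eqref{eq:1.2}, the summand $N_w\otimes E^\circ_{y,\sigma,r}$ is the contribution of $\cP_y^\circ\bar w^{-1}$. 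The substitution $g'=\bar wg\bar w^{-1}$ then identifies $\cP_y^\circ\bar w^{-1}$ with $\cP_{\Ad(\bar w)y}^\circ$ via $g'P\mapsto\bar w^{-1}g'P$, equivariantly for $\Ad(\bar w)\colon M(y)\isom M(\Ad(\bar w)y)$ --- using that $\Ad(\bar w)$ normalises $G^\circ$, preserves $\cC_v^L+\mf u$, carries $\sigma$ to $\Ad(\bar w)\sigma$, and that $\dot{\cL}$ is $G$-equivariant; transporting equivariant homology and the $\mh H(G^\circ,L,\cL)$-action along this identification and tensoring with $\C_{\sigma,r}$ (which goes to $\C_{\Ad(\bar w)\sigma,r}$) gives the claim.

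Granting this, both parts are orbit--stabiliser computations. The isomorphism class of $E^\circ_{y',\sigma',r}$ determines the $G^\circ$-orbit of $(y',\sigma')$: by Proposition~\ref{prop:2.5} and Theorem~\ref{thm:2.9}(d) its irreducible constituents of least nilpotent orbit dimension are exactly the $M^\circ_{y',\sigma',r,\rho_1^\circ}$, and these determine the $G^\circ$-orbit of the triple $(y',\sigma',\rho_1^\circ)$ by Theorem~\ref{thm:2.9}(b),(c); the analogous statement holds for $E^\circ_{y',\sigma',r,\rho_1^\circ}$. Hence $w\in\cR_{\cL,y,\sigma}$ iff $E^\circ_{\Ad(\bar w)y,\Ad(\bar w)\sigma,r}\cong E^\circ_{y,\sigma,r}$ iff $(\Ad(\bar w)y,\Ad(\bar w)\sigma)$ is $G^\circ$-conjugate to $(y,\sigma)$ iff $\bar w\in G^\circ Z_G(\sigma,y)$ iff $w$ lies in the image of $Z_G(\sigma,y)$ in $\cR_\cL\cong G/G^\circ$, i.e.\ in $\pi_0(Z_G(\sigma,y))/\pi_0(Z_{G^\circ}(\sigma,y))$; this proves (a). Likewise $w\in\cR_{\cL,y,\sigma,\rho^\circ}$ iff $(\Ad(\bar w)y,\Ad(\bar w)\sigma,\Ad(\bar w)\rho^\circ)$ is $G^\circ$-conjugate to $(y,\sigma,\rho^\circ)$ iff $w$ lies in the image of $\{g\in Z_G(\sigma,y):\Ad(g)\text{ fixes }\rho^\circ\}$, i.e.\ in $\pi_0(Z_G(\sigma,y))_{\rho^\circ}/\pi_0(Z_{G^\circ}(\sigma,y))$; this proves (b). For the inclusion ``$\supseteq$'' in (a) one can also argue directly with the $\pi_0(M(y))_\sigma$-action of Theorem~\ref{thm:2.4}(d), which permutes the summands $N_w\otimes E^\circ_{y,\sigma,r}$ so that an element with image $\bar g\in\cR_\cL$ moves the piece over $v$ to the piece over $v\bar g^{-1}$; hence the stabiliser of $N_1\otimes E^\circ_{y,\sigma,r}=E^\circ_{y,\sigma,r}$ is precisely $\pi_0(M(y))^\circ_\sigma$ and its orbit consists of mutually isomorphic summands.

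The step I expect to be the main obstacle is the geometric fact of the second paragraph, and within it the bookkeeping ensuring that the operator $N_w$ --- defined only up to the $2$-cocycle $\natural_\cL$ and a nonzero scalar --- genuinely transports the $\mh H(G^\circ,L,\cL)$-module structure and the $\pi_0(M(y))^\circ_\sigma$-action along the $\Ad(\bar w)$-translation of $\cP_y^\circ\bar w^{-1}$ and $\dot{\cL}$. Because only isomorphism classes of modules appear in the statement, these scalar and cocycle ambiguities ought to drop out, but this must be checked carefully against the precise constructions behind \eqref{eq:1.2}, Theorem~\ref{thm:2.4} and \eqref{eq:2.1}.
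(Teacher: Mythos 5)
Your argument is correct and is essentially the paper's own proof: the paper likewise identifies $w \cdot E^\circ_{y,\sigma,r} \cong E^\circ_{w(y),w(\sigma),r}$ (asserted there in one line, since all constructions are algebraic and $\cR_\cL$ acts by algebraic automorphisms), uses Theorem \ref{thm:2.9} to turn the stabiliser condition into $G^\circ$-conjugacy of the parameters, and concludes via $G_{\Ad (G^\circ)}(y,\sigma)/G^\circ \cong Z_G(\sigma,y)/Z_{G^\circ}(\sigma,y) \cong \pi_0 (Z_G (\sigma,y))/\pi_0 (Z_{G^\circ}(\sigma,y))$, with part (b) obtained by the same stabiliser argument applied to $\rho^\circ$ and the passage from $\sigma$ to $\sigma_0$ coming from Lemma \ref{lem:2.2}. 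Your additional details (the explicit left-translation identifying the summand over $w$ with $\cP^\circ_{\Ad (\bar w) y}$, and the minimal-orbit-dimension argument showing the standard module determines the orbit of its parameter) merely flesh out steps the paper leaves implicit.
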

\begin{proof}
(a) Since all the constructions are algebraic and $\cR_\cL$ acts by algebraic 
automorphisms, $w \cdot E^\circ_{y,\sigma,r} \cong E^\circ_{w(y),w(\sigma),r}$.
By Theorem \ref{thm:2.9}.c $E^\circ_{w(y),w(\sigma),r} \cong 
E^\circ_{y,\sigma,r}$ if and only if $(y,\sigma)$ and $(w(y),w(\sigma))$ 
are in the same $\Ad(G^\circ)$-orbit. We can write this condition as 
$w G^\circ \subset G_{\Ad (G^\circ)}(y,\sigma)$. Next we note that
\[
G_{\Ad (G^\circ)}(y,\sigma) / G^\circ \cong 
Z_G (\sigma,y) / Z_{G^\circ}(\sigma,y) .
\]
Since $G / G^\circ$ is finite, the right hand side is isomorphic to 
$\pi_0 (Z_G (\sigma,y)) / \pi_0 (Z_{G^\circ}(\sigma,y))$. By Lemma 
\ref{lem:2.2}.a we can replace $\sigma$ by $\sigma_0$ without changing 
these groups.\\
(b) Consider the stabilizer of $\rho^\circ$ in $\pi_0 (Z_G (\sigma,y)) / 
\pi_0 (Z_{G^\circ}(\sigma,y))$. By part (a) it is isomorphic to the 
stabilizer of $\rho^\circ$ in $\cR_{\cL,y,\sigma}$. As $E^\circ_{y,\sigma,
r,\rho^\circ} = \Hom_{\pi_0 (Z_{G^\circ}(\sigma,y))}(\rho^\circ, 
E^\circ_{y,\sigma,r})$ and $\cR_{\cL,y,\sigma}$ stabilizes 
$E^\circ_{y,\sigma,r}$, this results in the desired isomorphism.
\end{proof}

Next we parametrize the relevant representations of $\pi_0 (Z_G (\sigma,y))$.

\begin{lem}\label{lem:2.11}
There exists a bijection
\[
\begin{array}{ccc}
\Irr (\C [\cR_{\cL,y,\sigma,\rho^\circ},\natural_\cL^{-1}]) & \to &
\big\{ \rho \in \Irr \big( \pi_0 (Z_G (\sigma,y)) \big) : 
\rho |_{\pi_0 (Z_{G^\circ} (\sigma,y))} \text{ contains } \rho^\circ \big\} \\
(\tau,V_\tau) & \mapsto & \tau \ltimes \rho^\circ 
\end{array}.
\]
Here $\tau \ltimes \rho^\circ = \ind^{\pi_0 (Z_{G^\circ} (\sigma,y))}_{\pi_0 
(Z_{G^\circ} (\sigma,y))_{\rho^\circ}} (V_\tau \otimes V_{\rho^\circ})$, 
where $V_\tau \otimes V_{\rho^\circ}$ is the tensor product of two 
projective representations of the stabilizer of $\rho^\circ$ in 
$\pi_0 (Z_{G^\circ} (\sigma,y))$.
\end{lem}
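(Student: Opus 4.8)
The plan is to read this as a standard instance of Clifford theory for the normal inclusion of finite groups $\pi_0(Z_{G^\circ}(\sigma,y)) \vartriangleleft \pi_0(Z_G(\sigma,y))$, the only genuinely geometric ingredient being the identification of the relevant $2$-cocycle with $\natural_\cL^{-1}$.

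First I would set up the Clifford-theoretic framework, as in \cite[\S 1]{AMS}. For a finite group $A$ with normal subgroup $A^\circ$ and $\rho^\circ \in \Irr(A^\circ)$ with stabiliser $A_{\rho^\circ}$ in $A$ (so $A^\circ \subseteq A_{\rho^\circ} \subseteq A$), there is a $2$-cocycle $\mu$ on $A_{\rho^\circ}/A^\circ$, well-defined up to coboundary — the obstruction to extending $\rho^\circ$ to an honest representation of $A_{\rho^\circ}$ — together with a chosen $\mu$-projective extension $V_{\rho^\circ}$ of $\rho^\circ$; then
\[
(\tau,V_\tau) \;\longmapsto\; \ind_{A_{\rho^\circ}}^{A}\!\big(V_\tau \otimes V_{\rho^\circ}\big),
\]
with $V_\tau$ inflated from $A_{\rho^\circ}/A^\circ$ so that the two projective cocycles are mutually inverse and the tensor product is an honest $A_{\rho^\circ}$-representation, is a bijection from $\Irr\!\big(\C[A_{\rho^\circ}/A^\circ,\mu^{-1}]\big)$ onto the set of $\rho \in \Irr(A)$ whose restriction to $A^\circ$ contains $\rho^\circ$. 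I would apply this with $A = \pi_0(Z_G(\sigma,y))$ and $A^\circ = \pi_0(Z_{G^\circ}(\sigma,y))$, the quotient being finite since $G/G^\circ$ is, and use parts (a) and (b) of Lemma \ref{lem:2.10} to identify $A/A^\circ$ with $\cR_{\cL,y,\sigma}$ and $A_{\rho^\circ}/A^\circ$ with $\cR_{\cL,y,\sigma,\rho^\circ}$. Under these identifications the displayed module becomes exactly the module $\tau \ltimes \rho^\circ$ of the statement, so everything reduces to showing that $\mu$ is cohomologous, on $\cR_{\cL,y,\sigma,\rho^\circ}$, to $\natural_\cL$.

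The cocycle comparison is where the real work lies, and I would attack it through the geometric origin of $\natural_\cL$ rather than by an abstract argument. Recall that $\natural_\cL$ is the cocycle of the $W_\cL$-action on $(\pr_1)_! \dot{\cL}$ by endomorphisms of equivariant perverse sheaves (\cite[\S 4]{AMS}, transported to $\mf g$ by the exponential map as in the remark following the construction of $K$); hence $\C[W_\cL,\natural_\cL^{-1}]$ acts on each stalk $\cH^n(K^*)|_y$ by genuine operators carrying precisely that cocycle and commuting with the canonical action of $\pi_0(Z_G(y))$. Working in the degree $n = 2 d_{\cC_y^{G^\circ},\cC_v^L}$ where the generalized Springer correspondence lives (cf.\ \eqref{eq:2.73} and the proof of Lemma \ref{lem:2.8}), the $\rho^\circ$-isotypic part over $\pi_0(Z_{G^\circ}(y))$ carries $M_{y,\rho^\circ}$, and the interaction of the $\C[W_\cL,\natural_\cL^{-1}]$-operators with the $\pi_0(Z_G(y))$-action is exactly what the disconnected generalized Springer correspondence of \cite[\S 4--5]{AMS} analyses; from that analysis one extracts that the operators indexed by the stabiliser of $\rho^\circ$ restrict on this summand to a projective extension of $\rho^\circ$ whose cocycle is $\natural_\cL^{\pm 1}$, the sign being pinned down to give $\natural_\cL^{-1}$ in the statement by the conventions of \eqref{eq:1.1} and \eqref{eq:2.68} (which use $\cL^*$). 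Along the way I would make the two reductions already performed before Proposition \ref{prop:2.7}: replace $\sigma$ by $\sigma_0$, using $\pi_0(Z_G(\sigma,y)) = \pi_0(Z_G(\sigma_0,y))$ from Lemma \ref{lem:2.2}, and work inside the reductive group $Z_G(\sigma_0)$, in which $L$ is a genuine Levi of the identity component. Once $\mu$ is shown to be cohomologous to $\natural_\cL$, the twisted group algebras $\C[\cR_{\cL,y,\sigma,\rho^\circ},\mu^{-1}]$ and $\C[\cR_{\cL,y,\sigma,\rho^\circ},\natural_\cL^{-1}]$ are isomorphic and the asserted bijection follows; I expect essentially all of the difficulty to reside in this cocycle identification, the Clifford-theory packaging being formal.
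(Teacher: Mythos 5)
Your proposal is correct and follows essentially the same route as the paper: package the statement as Clifford theory for $\pi_0 (Z_{G^\circ}(\sigma,y)) \vartriangleleft \pi_0 (Z_G (\sigma,y))$ (with Lemma \ref{lem:2.10} identifying the quotients), and then identify the resulting obstruction cocycle with $\natural_\cL^{-1}$ geometrically, using that $(y,\rho^\circ)$ has cuspidal support $(L,\cC_v^L,\cL)$ so that $V_{\rho^\circ}$ occurs in the stalk $\cH^* (K)|_y$ on which $\C[W_\cL,\natural_\cL]$ acts. The paper's proof does exactly this, via explicit intertwiners $I^\gamma$ and the cocycle $\kappa_{\rho^\circ}$ of \eqref{eq:2.14}, which it then arranges to be the restriction of $\natural_\cL^{-1}$.
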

\begin{proof}
For $\gamma \in \pi_0 (Z_G (\sigma,y))_{\rho^\circ}$ we choose 
$I^\gamma \in \Aut_\C (V_{\rho^\circ})$ such that
\begin{equation}\label{eq:2.13}
I^\gamma \circ \rho^\circ (\gamma^{-1} z \gamma) = \rho^\circ (z) 
\circ I^\gamma \qquad z \in \pi_0 (Z_{G^\circ}(\sigma,y)) .
\end{equation}
To simplify things a little, we may and will assume that $I^{\gamma z} = 
I^{\gamma} \circ \rho^\circ (z)$ for all $\pi_0 (Z_G (\sigma,y))_{\rho^\circ}, 
z \in \pi_0 (Z_{G^\circ}(\sigma,y))$. Then \eqref{eq:2.13} implies that also
$I^{z \gamma} = \rho^\circ (z) \circ I^\gamma$. By Schur's lemma there exist
unique $\kappa_{\rho^\circ}(\gamma,\gamma') \in \C^\times$ such that
\begin{equation}\label{eq:2.14}
I^{\gamma \gamma'} = \kappa_{\rho^\circ}(\gamma,\gamma') I^\gamma \circ 
I^{\gamma'} \qquad \gamma,\gamma' \in \pi_0 (Z_G (\sigma,y))_{\rho^\circ}.
\end{equation}
Then $\kappa_{\rho^\circ}$ is a 2-cocycle of 
$\pi_0 (Z_G (\sigma,y))_{\rho^\circ}$. The above assumption and Lemma 
\ref{lem:2.10}.b implies that it factors via 
\[
\pi_0 (Z_G (\sigma,y))_{\rho^\circ} / \pi_0 (Z_{G^\circ} 
(\sigma,y))_{\rho^\circ} \cong \cR_{\cL,y,\sigma,r,\rho^\circ} . 
\]
Let $\C [\cR_{\cL,y,\sigma,r,\rho^\circ},\kappa_{\rho^\circ}]$ be the 
associated twisted group algebra, with basis $\{ T_\gamma : \gamma \in 
\cR_{\cL,y,\sigma,r,\rho^\circ} \}$. Then $\pi_0 (Z_G (\sigma,y))$ acts on 
\[
\C [\cR_{\cL,y,\sigma,r,\rho^\circ},\kappa_{\rho^\circ}] \otimes_\C 
V_{\rho^\circ} \quad \text{by} \quad \gamma \cdot (T_{\gamma'} \otimes v) = 
T_{\gamma \gamma'} \otimes I^\gamma (v).
\]
By Clifford theory (see \cite[\S 1]{AMS}) there is a bijection
\[
\begin{array}{ccc}
\Irr (\C [\cR_{\cL,y,\sigma,\rho^\circ},\kappa_{\rho^\circ}]) & \to &
\big\{ \rho \in \Irr \big( \pi_0 (Z_G (\sigma,y)) \big) : 
\rho |_{\pi_0 (Z_{G^\circ} (\sigma,y))} \text{ contains } \rho^\circ \big\} \\
(\tau,V_\tau) & \mapsto & \tau \ltimes \rho^\circ 
\end{array}.
\]
It remains to identify $\kappa_{\rho^\circ}$. By Proposition \ref{prop:2.5} 
the cuspidal support of $(y,\rho^\circ)$ is $(L,\cC_v^L,\cL)$, which means 
that it is contained in $\cH^* (K) |_y$. Hence the 2-cocycle $\natural_\cL$, 
used to extend the action of $W_\cL^\circ$ on $K$ to $\C [W_\cL,\natural_\cL]$, 
also gives an action on $V_{\rho^\circ}$. Comparing the multiplication 
relations in $\C [W_\cL,\natural_\cL]$ with \eqref{eq:2.14}, we see that we 
can arrange that $\kappa_{\rho^\circ}$ is the restriction of 
$\natural_\cL^{-1}$ to $\cR_{\cL,y,\sigma,\rho^\circ}$.
\end{proof}

The analogue of \ref{lem:2.11} for $M^\circ_{y,\sigma,\rho^\circ}$ is more 
difficult, we need some technical preparations.
Since $N_{G^\circ}(P) = P$, we can identify $G^\circ / P$ with a variety 
$\cP^\circ$ of parabolic subgroups $P'$ of $G^\circ$. For $g \in G$ and 
$P' \in \cP$ we write 
\[
\Ad (g) P' = g P' g^{-1} . 
\]
This extends the left multiplication action of $G^\circ$ on $\cP^\circ$ 
and it gives rise to an action of $G$ on $\dot{\mf g}$ by
\[
\Ad (g)(x,P') = (\Ad (g)x,g P' g^{-1}).
\]
By Condition \ref{cond:1} every element of $G$ stabilizes $\cL$, so 
$\Ad (g)^* \dot{\cL} \cong \dot{\cL}$. Lift $\Ad (g)$ to an isomorphism of 
$G^\circ$-equivariant sheaves
\begin{equation}\label{eq:2.20}
\Ad_\cL (g) :  \dot{\cL} \to \Ad (g)^* \dot{\cL} .
\end{equation}
(Although there is more than one way to do so, we will see in Proposition 
\ref{prop:2.13} that in relevant situations $\Ad_\cL (g)$ is unique up to 
scalars.) Thus $\Ad_\cL (g)$ provides a system of linear bijections
\begin{equation}\label{eq:2.21}
(\dot{\cL})_{(x,P')} \to (\dot{\cL})_{(\Ad (g) x,\Ad(g) P')} \quad \text{such 
that} \quad \Ad_\cL (g) \circ (g^{-1} g^\circ g) = g^\circ \circ \Ad_\cL (g)
\end{equation}
for all $g^\circ \in G^\circ$. (Here we denote the canonical action of 
$g^\circ$ on $\cL$ simply by $g^\circ$.) Of course we can choose these maps 
such that $\Ad_\cL (g g^\circ) = \Ad_\cL (g) \circ g^\circ$ for 
$g^\circ \in G^\circ$. Notice that 
\begin{equation}\label{eq:2.22}
\Ad_\cL (g^\circ) \text{ coincides with the earlier action of } G^\circ
\text{ on } \dot{\cL} .
\end{equation} 
For $g \in Z_G (\sigma,y)$, $\Ad (g)$ stabilizes $\cP_y^\circ$, and
the map $\Ad_\cL (g)$ induces an operator $H_*^{M(y)^\circ}(\Ad_\cL (g))$
on $H_*^{M(y)^\circ}(\cP_y^\circ,\dot{\cL})$.

\begin{lem}\label{lem:2.12}
For all $h \in \mh H (G^\circ,L,\cL), \gamma \in \cR_{\cL,y,\sigma}$ 
and $g \in \gamma G^\circ \cap Z_G (\sigma,y)$:
\[
H_*^{M(y)^\circ}(\Ad_\cL (g^\circ)) \circ \Delta (h) = 
\Delta (N_\gamma h N_\gamma^{-1}) \circ H_*^{M(y)^\circ}(\Ad_\cL (g^\circ)) 
\in \End_\C \big( H_*^{M(y)^\circ}(\cP_y^\circ,\dot{\cL}) \big) .
\]
\end{lem}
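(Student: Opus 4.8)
The plan is to reduce the identity to the corresponding equivariance statement on the level of perverse sheaves, where it is essentially the content of the construction of the action $\Delta$ of $\C[W_\cL,\natural_\cL^{-1}]$ on $K^*$ combined with the $G$-equivariance of the various objects. First I would recall that the $\mh H(G^\circ,L,\cL)$-module structure on $H_*^{M(y)^\circ}(\cP_y^\circ,\dot\cL)$ is built from two ingredients: the action of $S(\mf t^*\oplus\C)$ coming from \eqref{eq:2.9}, and the action $\Delta$ of $\C[W_\cL^\circ,\natural_\cL^{-1}]$ (via \eqref{eq:2.68}) coming from the action of $\C[W_\cL,\natural_\cL^{-1}]$ on $K^*$ described after \eqref{eq:1.1}. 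So it suffices to check the commutation relation separately for $h=\xi\in S(\mf t^*\oplus\C)$ and for $h=N_{w'}$ with $w'\in W_\cL^\circ$; the general case follows since these generate $\mh H(G^\circ,L,\cL)$ as an algebra, and $N_\gamma(-)N_\gamma^{-1}$ is an algebra automorphism.

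For the polynomial part: the operator $\Delta(\xi)$ is induced by the map \eqref{eq:2.3}, i.e. from restricting the canonical class in $H^*_{G\times\C^\times}(\dot{\mf g})\cong S(\mf t^*\oplus\C)$ to $\cP_y^\circ$. Since $g\in Z_G(\sigma,y)$, the automorphism $\Ad(g)$ of $\dot{\mf g}$ commutes with $h$ (the map of \eqref{eq:2.52}'s ambient construction), and on $H^*_{G\times\C^\times}(\dot{\mf g})$ it acts by the automorphism $\xi\mapsto{}^\gamma\xi$ of $S(\mf t^*\oplus\C)$ dictated by $\gamma\in\cR_\cL$ — exactly the computation already carried out in the proof of Lemma \ref{lem:1.3} (tracing through \cite[Proposition 4.2]{Lus3}). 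On the other hand, on the Hecke algebra side, $N_\gamma\,\xi\,N_\gamma^{-1}={}^\gamma\xi$ by the last relation in Proposition \ref{prop:1.4}. So the two sides of the claimed identity both amount to the same twist of $\Delta(\xi)$ by ${}^\gamma(-)$, and they agree by functoriality of equivariant (co)homology with respect to the map $\Ad_\cL(g)$.

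For the twisted-group-algebra part: this is where the main work lies. One must show that $H_*^{M(y)^\circ}(\Ad_\cL(g))$ conjugates $\Delta(N_{w'})$ to $\Delta(N_{\gamma w'\gamma^{-1}})$ for $w'\in W_\cL^\circ$. The key point is that the full $\C[W_\cL,\natural_\cL^{-1}]$-action on $K^*$ (constructed in \cite[Proposition 4.5]{AMS}, cf.\ \eqref{eq:1.1}) already \emph{is} the object that governs conjugation of $W_\cL^\circ$ by $\cR_\cL$: by \eqref{eq:1.E} the basis element $N_\gamma\in\C[\cR_\cL,\natural_\cL^{-1}]$ is realized inside $\End_{\mathcal P_G(\mf g_{RS})}((\pr_1)_!\dot\cL^*)$, and the map $\Ad_\cL(g)$ (with $g\in\gamma G^\circ$) is, up to a scalar, precisely this endomorphism $N_\gamma$ pulled back along $\cP_y^\circ\hookrightarrow\dot{\mf g}$ — here I would invoke the construction of the $\cR_\cL$-action on $(\dot{\mf g},\dot\cL^*)$ via \eqref{eq:1.2} and the fact that it lifts the geometric action, together with the passage \cite[p.\ 193]{Lus3} from $K^*$ to the module \eqref{eq:2.2}. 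Since in $\C[W_\cL,\natural_\cL^{-1}]$ we have $N_\gamma N_{w'}N_\gamma^{-1}=N_{\gamma w'\gamma^{-1}}$ up to a scalar in $\C^\times$, and since the action $\Delta$ in \eqref{eq:2.68} is $\Delta(N_w)=\tilde\Delta((N_w)^{-1})^*$, applying $H_*^{M(y)^\circ}$ to this identity of endomorphisms of $K^*$ and dualizing gives exactly the claimed relation. The scalar ambiguities are harmless: they are the same on both sides because $\kappa_{\rho^\circ}$ and $\natural_\cL$ are being compared for a single fixed lift, and in any case the statement only asserts an identity of operators coming from a chosen $\Ad_\cL(g)$.

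The main obstacle I expect is the bookkeeping for the $W_\cL^\circ$-part: one has to be careful that $\Ad_\cL(g)$, which a priori is only defined up to a scalar (\eqref{eq:2.20}), really does implement the \emph{algebra} automorphism $h\mapsto N_\gamma h N_\gamma^{-1}$ and not some cohomologically-twisted variant, and that the identification of $\Ad_\cL(g)$ with the perverse-sheaf endomorphism $N_\gamma$ from \eqref{eq:1.E} is compatible with the normalization chosen in Remark \ref{rem:1.L}. This is exactly the kind of compatibility that \cite[(44) and Proposition 4.5]{AMS} is designed to supply, so the argument should go through by citing those statements and checking the base-change compatibility with the restriction to $\cP_y^\circ$.
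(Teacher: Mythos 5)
Your reduction to the two generating subalgebras is sound, and your treatment of the polynomial part is essentially the paper's: functoriality of the product in equivariant (co)homology plus the observation that $\Ad (g)$ on $H^*_{G \times \C^\times}(\dot{\mf g})$ factors through $Z_G (\sigma,y)/Z_{G^\circ}(\sigma,y) \cong \cR_{\cL,y,\sigma}$ and induces the natural action of $\gamma$ on $S(\mf t^* \oplus \C)$, which matches $N_\gamma \xi N_\gamma^{-1} = {}^\gamma \xi$. That part is fine.

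The gap is in the Weyl-group part, precisely where you declare the scalar ambiguities ``harmless''. Conjugating the equivariant endomorphism $N_w$ of $K^\circ$ by $\Ad_{\cL^*}(g^{-1})$ yields only a \emph{scalar multiple} $\lambda (w,\gamma) N_{\gamma w \gamma^{-1}}$, and this scalar is \emph{not} an artifact of the choice in \eqref{eq:2.20}: conjugation is unchanged if $\Ad_\cL (g)$ is rescaled, so no choice of lift can remove it. Since $w \mapsto \lambda (w,\gamma)$ is multiplicative and $\lambda (s_i,\gamma)^2 = 1$, the ambiguity is a possibly nontrivial sign character of $W_\cL^\circ$, and the lemma asserts an exact intertwining identity, so one must actually prove $\lambda \equiv 1$. (Your appeal to $\kappa_{\rho^\circ}$ is beside the point: $\rho^\circ$ does not enter this lemma at all, which concerns the full module $H_*^{M(y)^\circ}(\cP_y^\circ,\dot{\cL})$.) The paper's proof spends its second half exactly on this: it feeds the already-established commutation with $S(\mf t^* \oplus \C)$ into the braid relation, using $0 = \Delta (N_{s_i}\alpha_i + \alpha_i N_{s_i} - 2 c_i \mb r)$, and deduces $2(\lambda (s_i,\gamma)-1) c_i \Delta (\mb r) = 0$; since $c_i \geq 2$ and $\Delta (\mb r) = r$ is nonzero for suitable $(\sigma,r)$, this forces $\lambda (s_i,\gamma) = 1$ for all simple reflections and hence $\lambda (w,\gamma) = 1$ for all $w$. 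Without this (or an equivalent) argument your proposal does not establish the stated equality, only the equality up to a sign character of $W_\cL^\circ$.
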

\begin{proof}
By Theorem \ref{thm:2.4}.d the map $H_*^{M(y)^\circ}(\Ad_\cL (g^\circ))$ 
commutes with the action of $\mh H (G^\circ,L,\cL)$ on $H_*^{M(y)^\circ}
(\cP_y^\circ,\dot{\cL})$. Moreover $H_*^{M(y)^\circ}(\Ad_\cL (g^\circ)) = 1$ 
for $g^\circ$ in the connected group $Z_G (\sigma,\rho)^\circ$. 
Thus we get a map
\[
\pi_0 (Z_G (\sigma,y))_{\rho^\circ} \to 
\Aut_\C \big( H_*^{M(y)^\circ}(\cP_y^\circ,\dot{\cL}) \big) 
\]
which sends $\pi_0 (Z_{G^\circ}(\sigma,y))$ to $\Aut_{\mh H (G^\circ,L,\cL)} 
\big( H_*^{M(y)^\circ}(\cP_y^\circ,\dot{\cL}) \big)$.
Recall from \eqref{eq:2.9} that the action of $S(\mf t^* \oplus \C)$ on 
$H_*^{M(y)^\circ}(\cP_y^\circ,\dot{\cL})$ comes from the product with
$H^*_{M(y) \cap G^\circ}(\cP_y^\circ)$. The functoriality of this product 
and \eqref{eq:2.21} entail that 
\[
H_*^{M(y)^\circ}(\Ad_\cL (g^\circ))(\delta \otimes \eta) = 
\Ad (g) \delta \otimes H_*^{M(y)^\circ}(\Ad_\cL (g^\circ)) (\eta)
\]
for $\eta \in H_*^{M(y)^\circ}(\cP_y^\circ,\dot{\cL})$ and $\delta \in 
H^*_{M(y)^\circ \cap G^\circ}(\cP_y^\circ)$. The operators $\Ad (g)$ on\\
$H^*_{M(y) \cap G^\circ}(\cP_y^\circ)$ are trivial for $g \in Z_{G^\circ}
(\sigma,y) \subset M(y) \cap G^\circ$, they factor through
\[
Z_G (\sigma,y) / Z_{G^\circ}(\sigma,y) \cong \cR_{\cL,y,\sigma}.
\]
Similarly, the operators $\Ad (g)$ on $S (\mf t^* \oplus \C) \cong 
H^*_{G^\circ \times \C^\times}(\dot{\mf g}^\circ)$ factor through \\
$Z_G (\sigma,y) / Z_{G^\circ}(\sigma,y)$ and become the natural action
of $\cR_{\cL,y,\sigma}$. Hence
\begin{equation}\label{eq:2.23}
H_*^{M(y)^\circ}(\Ad_\cL (g)) \circ \Delta (\xi) = \Delta (\Ad (\gamma) 
\xi) \circ H_*^{M(y)^\circ}(\Ad_\cL (g))
\end{equation}
for $\gamma \in \cR_{\cL,y,\sigma}, g \in \gamma G^\circ \cap Z_G (\sigma,y))$
and $\xi \in S (\mf t^* \oplus \C)$. By making the appropriate choices, 
we can arrange that the dual map of \eqref{eq:2.20} is 
\[
\Ad_{\cL^*}(g^{-1}) : \Ad (g)^* \dot{\cL}^* \to \dot{\cL}^* . 
\]
It induces $\Ad_{\cL^*} (g^{-1}) : \Ad (g)^* K^\circ \to K^\circ$, where 
$K^\circ$ is $K^*$ but for $G^\circ$. The operators $N_w \; (w \in 
W_\cL^\circ)$ from \eqref{eq:1.1} are $G^\circ \times \C^\times$-equivariant, 
so the operator 
\begin{equation}\label{eq:2.24}
\Ad_{\cL^*}(g^{-1})^{-1} \circ N_w \circ \Ad_{\cL^*} (g^{-1}) 
\in \Aut_{G^\circ \times \C^\times}(K^\circ)
\end{equation}
depends only on the image of $g^{-1}$ in $G / G^\circ$. 
If $\gamma \in \cR_{\cL,y,\sigma}$ and $g \in \gamma G^\circ$, then we see 
from the definition of $N_w$ in \cite[3.4]{Lus2} that \eqref{eq:2.24} is a 
(nonzero) scalar multiple of $N_{\gamma w \gamma^{-1}}$. Consequently 
\[
H_*^{M(y)^\circ} (\Ad_{\cL^*}(g^{-1}) )^{-1} \circ \tilde{\Delta}(N_w) \circ 
H_*^{M(y)^\circ} (\Ad_{\cL^*} (g^{-1})) = 
\lambda (w,\gamma) \tilde{\Delta}(N_\gamma N_w N_\gamma^{-1})  
\]
for some number $\lambda (w,\gamma) \in \C^\times$. Dualizing, we find that 
\begin{equation}\label{eq:2.25}
H_*^{M(y)^\circ} (\Ad_\cL (g) ) \circ \Delta (N_w) \circ H_*^{M(y)^\circ} 
(\Ad_{\cL^*} (g))^{-1} = 
\lambda (w,\gamma) \Delta (N_\gamma N_w^{-1} N_\gamma^{-1}) . 
\end{equation}
Let $\alpha_i \in R(G^\circ,T)$ be a simple root and let $s_i \in W_\cL^\circ$ 
be the associated simple reflection. By the multiplication rules in 
$\mh H (G^\circ,L,\cL)$
\begin{equation}\label{eq:2.26}
0 = \Delta (N_{s_i} \alpha_i - {}^{s_i} \alpha_i N_{s_i} - c_i {\mb r} 
(\alpha_i - {}^{s_i} \alpha_i) / \alpha_i ) = \Delta (N_{s_i} \alpha_i + 
\alpha_i N_{s_i} - 2 c_i {\mb r}) .
\end{equation}
Now we apply \eqref{eq:2.23} and \eqref{eq:2.25} and to this equality, 
and we find 
\begin{equation}\label{eq:2.27}
\begin{split}
0 & = H_*^{M(y)^\circ} (\Ad_\cL (g) ) \circ \Delta (N_{s_i} \alpha_i +
\alpha_i N_{s_i} - 2 c_i {\mb r}) \circ H_*^{M(y)^\circ} (\Ad_\cL (g) )^{-1} \\
& = \Delta (\lambda (s_i,\gamma) N_{\gamma s_i \gamma^{-1}} \: {}^\gamma 
\alpha_i + \lambda (s_i,\gamma) \: {}^\gamma \alpha_i N_{\gamma s_i 
\gamma^{-1}} - 2 c_i {\mb r}) .
\end{split}
\end{equation}
We note that $\alpha_j := {}^\gamma \alpha_i$ is another simple root, 
with reflection $s_j := \gamma s_i \gamma^{-1}$ and $c_j = c_i$. By 
\eqref{eq:2.26} the second line of \eqref{eq:2.27} becomes 
\[
\lambda (s_i,\gamma) \Delta (N_{s_j} \: \alpha_j + \alpha_j N_{s_j} - 
2 c_j {\mb r}) + 2 c_i \Delta (\lambda (s_i,\gamma) {\mb r} -  {\mb r}) = 
2 (\lambda (s_i,\gamma) - 1) c_i \Delta ({\mb r}) .
\]
Recall from \eqref{eq:1.5} that $c_i > 0$. As $\Delta ({\mb r}) = r$ is nonzero 
for  some choices of $(\sigma,r)$, we deduce that $\lambda (s_i,\gamma) = 1$ 
for all $\gamma \in \cR_{\cL,y,\sigma,\rho^\circ}$. In view of \eqref{eq:2.25} 
this implies $\lambda (w,\gamma) = 1$ for all $w \in W_\cL^\circ, \gamma \in 
\cR_{\cL,y,\sigma,\rho^\circ}$. Now \eqref{eq:2.23} and \eqref{eq:2.25} 
provide the desired equalities.
\end{proof}

Lemma \ref{lem:2.12} says that for $g \in \gamma G^\circ \cap Z_G (\sigma,y)$, 
$H_*^{M(y)^\circ}(\Ad_\cL (g^\circ))$ intertwines the standard 
$\mh H( G^\circ,L,\cL)$-modules $E^\circ_{y,\sigma,r}$ and $\gamma \cdot 
E^\circ_{y,\sigma,r}$. However, it does not necessarily map the 
subrepresentation $E^\circ_{y,\sigma,r,\rho^\circ}$ to $\gamma \cdot 
E^\circ_{y,\sigma,r,\rho^\circ}$, even for $\gamma \in \cR_{\cL,y,\sigma,
\rho^\circ}$. Moreover $g \mapsto H_*^{M(y)^\circ} (\Ad_\cL (g^\circ))$ need 
not be multiplicative, by the freedom in \eqref{eq:2.20}. In general it is 
not even possible to make it multiplicative by clever choices in 
\eqref{eq:2.20}. The next lemma takes care of both these inconveniences.

\begin{prop}\label{prop:2.13}
Let $y,\sigma,r,\rho^\circ$ be as in Theorem \ref{thm:2.9}. 
There exists a group homomorphism 
\[
\cR_{\cL,y,\sigma,\rho^\circ} \to \Aut_\C (E^\circ_{y,\sigma,r,\rho^\circ}) :
\gamma \mapsto J^\gamma ,
\]
depending algebraically on $(\sigma,r)$ and unique up to scalars, such that 
\[
J^\gamma (\Delta (N_\gamma^{-1} h N_\gamma) \phi) = \Delta (h) J^\gamma (\phi) 
\qquad h \in \mh H (G^\circ,L,\cL), \phi \in E^\circ_{y,\sigma,r,\rho^\circ} .
\]
\end{prop}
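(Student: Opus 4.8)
The plan is to build $J^\gamma$ out of the geometric intertwiners $H_*^{M(y)^\circ}(\Ad_\cL(g))$ of Lemma~\ref{lem:2.12}, combined with the operators $I^\gamma$ on $V_{\rho^\circ}$ from the proof of Lemma~\ref{lem:2.11}, by isolating the action on the $\rho^\circ$-isotypic part of $E^\circ_{y,\sigma,r}$ via Schur's lemma. Recall from Theorem~\ref{thm:2.4}.d that $E^\circ_{y,\sigma,r}$ carries commuting actions of $\mh H(G^\circ,L,\cL)$ and of $\pi_0(M(y))_\sigma^\circ\cong\pi_0(Z_{G^\circ}(\sigma,y))$, so it decomposes $\pi_0(Z_{G^\circ}(\sigma,y))$-isotypically as $\bigoplus_{\rho'}V_{\rho'}\otimes E^\circ_{y,\sigma,r,\rho'}$, with $\mh H(G^\circ,L,\cL)$ acting only on the second tensor factors. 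For $\gamma\in\cR_{\cL,y,\sigma,\rho^\circ}$ I would pick $g_\gamma\in\gamma G^\circ\cap Z_G(\sigma,y)$; then $H_*^{M(y)^\circ}(\Ad_\cL(g_\gamma))$ descends to $E^\circ_{y,\sigma,r}$ (since $g_\gamma$ fixes $(\sigma,r)$), and by functoriality of the $\pi_0$-action it intertwines that action with its conjugate by $g_\gamma$. As $\gamma\in\cR_{\cL,y,\sigma,\rho^\circ}$ stabilizes the isomorphism class of $\rho^\circ$ (Lemma~\ref{lem:2.10}.b), this operator preserves the subspace $V_{\rho^\circ}\otimes E^\circ_{y,\sigma,r,\rho^\circ}$.

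Restricted there, the operator $T_\gamma:=H_*^{M(y)^\circ}(\Ad_\cL(g_\gamma))$ satisfies $T_\gamma\circ(\rho^\circ(z)\otimes\mathrm{id})=(\rho^\circ(g_\gamma z g_\gamma^{-1})\otimes\mathrm{id})\circ T_\gamma$ for $z\in\pi_0(Z_{G^\circ}(\sigma,y))$. Using the defining relation~\eqref{eq:2.13} of $I^\gamma$ to rewrite $\rho^\circ(g_\gamma z g_\gamma^{-1})=I^\gamma\rho^\circ(z)(I^\gamma)^{-1}$, one finds that $((I^\gamma)^{-1}\otimes\mathrm{id})\circ T_\gamma$ commutes with every $\rho^\circ(z)\otimes\mathrm{id}$; since $\rho^\circ$ is irreducible, its commutant in $\End_\C(V_{\rho^\circ})$ is $\C$, so $T_\gamma=I^\gamma\otimes J^\gamma$ for a unique $J^\gamma\in\Aut_\C(E^\circ_{y,\sigma,r,\rho^\circ})$. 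Because $\mh H(G^\circ,L,\cL)$ acts only on the second factor, the conjugation identity of Lemma~\ref{lem:2.12}, $T_\gamma\circ\Delta(h)=\Delta(N_\gamma h N_\gamma^{-1})\circ T_\gamma$, yields exactly $J^\gamma\,\Delta(N_\gamma^{-1}h N_\gamma)=\Delta(h)\,J^\gamma$ on $E^\circ_{y,\sigma,r,\rho^\circ}$, which is the asserted relation. Algebraic dependence on $(\sigma,r)$ is inherited from that of the $H_*^{M(y)^\circ}(\Ad_\cL(g_\gamma))$ and of the $I^\gamma$.

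To upgrade $\gamma\mapsto J^\gamma$ to an honest homomorphism I would compute $T_\gamma T_{\gamma'}$ in two ways. Normalizing the sheaf maps $\Ad_\cL$ as in~\eqref{eq:2.21} to be equivariant for right multiplication by $G^\circ$, write $g_\gamma g_{\gamma'}=g_{\gamma\gamma'}z_0$ with $z_0\in Z_{G^\circ}(\sigma,y)$ and image $\bar z_0\in\pi_0(Z_{G^\circ}(\sigma,y))$; then $T_\gamma T_{\gamma'}=\nu(\gamma,\gamma')\,T_{\gamma\gamma'}\circ(\rho^\circ(\bar z_0)\otimes\mathrm{id})$ on $V_{\rho^\circ}\otimes E^\circ_{y,\sigma,r,\rho^\circ}$, where $\nu$ is the scalar $2$-cocycle measuring the failure of $g\mapsto\Ad_\cL(g)$ to be multiplicative. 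Substituting $T_\gamma=I^\gamma\otimes J^\gamma$ and using~\eqref{eq:2.14} together with the normalization $I^{\gamma z}=I^\gamma\circ\rho^\circ(z)$, the $V_{\rho^\circ}$-factor forces $J^\gamma J^{\gamma'}=\kappa_{\rho^\circ}(\gamma,\gamma')\,\nu(\gamma,\gamma')\,J^{\gamma\gamma'}$, so $\{J^\gamma\}$ is a projective representation with cocycle $\mu=\kappa_{\rho^\circ}\cdot\nu$. Now $\kappa_{\rho^\circ}$ is the restriction of $\natural_\cL^{-1}$ by Lemma~\ref{lem:2.11}, and $\nu$ is the defect cocycle of a lift of the $\cR_\cL$-action~\eqref{eq:1.2} on $\dot{\mf g}$ to $\dot{\cL}$; any such lift agrees, up to scalars, with the one built into the identification $\End^+_{\mathcal{P}_{G}(\mf g_{RS})}\big((\mr{pr}_1)_!\dot{\cL}\big)\cong\C[\cR_\cL,\natural_\cL]$ of~\eqref{eq:1.E} --- essentially the content of the computation~\eqref{eq:2.25}--\eqref{eq:2.27}, which already shows no spurious scalars appear when conjugating the $\Delta(N_w)$. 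Hence $\nu$ is cohomologous to $\natural_\cL$ restricted to $\cR_{\cL,y,\sigma,\rho^\circ}$, so $\mu$ is a coboundary, and rescaling the $J^\gamma$ produces a genuine homomorphism. Uniqueness up to scalars is then the usual deformation argument: two choices of $J^\gamma$ differ by an element of $\End_{\mh H(G^\circ,L,\cL)}(E^\circ_{y,\sigma,r,\rho^\circ})$ depending algebraically on $(\sigma,r)$, which is scalar at a generic $(\sigma,r)$ (where the standard module is irreducible) and therefore scalar everywhere.

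\textbf{Main obstacle.} The one genuinely delicate point is the cocycle matching: identifying the defect cocycle $\nu$ of the chosen lifts $\Ad_\cL(g)$ with the restriction of $\natural_\cL$, so that it cancels against $\kappa_{\rho^\circ}=\natural_\cL^{-1}$ from Lemma~\ref{lem:2.11}. Everything else is bookkeeping with Schur's lemma and the functoriality in Theorem~\ref{thm:2.4}.d; but this identification is exactly where one must know that the two ways of making $\cR_\cL$ act --- geometrically on $(\dot{\mf g},\dot{\cL})$ and algebraically through $\End^+_{\mathcal{P}_{G}(\mf g_{RS})}\big((\mr{pr}_1)_!\dot{\cL}\big)$ --- are compatible, and where Condition~\ref{cond:1} is really used.
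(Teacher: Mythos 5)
Your construction of $J^\gamma$ (splitting $H_*^{M(y)^\circ}(\Ad_\cL (g_\gamma))$ on the $\rho^\circ$-isotypic part as $I^\gamma \otimes J^\gamma$ via Schur's lemma) is the same as the paper's definition $J^g (\phi) = H_*^{M(y)^\circ}(\Ad_\cL (g))(\phi \circ (I^g)^{-1})$, and your derivation of the intertwining relation from Lemma \ref{lem:2.12}, the algebraic dependence on $(\sigma,r)$, and the uniqueness-up-to-scalars argument (generic irreducibility of the standard module plus algebraicity) all match the paper. The genuine gap is in the multiplicativity step. You need the product cocycle $\mu = \kappa_{\rho^\circ}\cdot \nu$ to be a coboundary, and for that you assert that the defect cocycle $\nu$ of the lifts $\Ad_\cL (g)$ is cohomologous to $\natural_\cL$ restricted to $\cR_{\cL,y,\sigma,\rho^\circ}$, claiming this is ``essentially the content of \eqref{eq:2.25}--\eqref{eq:2.27}''. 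That computation cannot give this: it only establishes $\lambda (w,\gamma)=1$ in a conjugation identity, and conjugation by $T_\gamma$ is unchanged if $T_\gamma$ is rescaled by any scalar, so it carries no information whatsoever about the 2-cocycle $\nu$. Moreover, the operators whose composition law produces $\natural_\cL^{\pm 1}$ lift the translation action \eqref{eq:1.2} of $\cR_\cL$ (fixing $x$ and moving $gP$), whereas $\Ad_\cL (g)$ lifts the adjoint action on $(\dot{\mf g},\dot{\cL})$; identifying the cocycles of these two different lifted actions is exactly what would have to be proved, and nothing you cite does so.

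The paper avoids this issue by a different mechanism: since $J^\gamma$ depends algebraically on $(\sigma,r)$ and is unique up to scalars, it suffices to arrange multiplicativity at a single parameter value, and the paper specializes to $r=0$. There the $J^\gamma$ descend to the quotients $M^\circ_{y,\sigma_0,0,\rho^\circ}$ and restrict to operators $J^\gamma_{Q^\circ} \in \Aut_{W_\cL^{Q^\circ}}(M^{Q^\circ}_{y,\rho^\circ})$ on a generalized Springer representation of the Weyl group $W_\cL^{Q^\circ}$, which is normalized by $\cR_{\cL,y,\sigma,\rho^\circ}$; then \cite[Proposition 4.3]{ABPS5} guarantees that these intertwiners can be normalized so that $\gamma \mapsto J^\gamma$ is a genuine homomorphism, and this choice propagates to all $(\sigma,r)$ through the algebraic family. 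Some such input (or a substitute proof that the relevant cocycle is trivializable) is what your cocycle-cancellation argument is missing; as written it only yields a projective family $\{J^\gamma\}$ with an unidentified cocycle.
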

\begin{proof}
For $g \in Z_G (\sigma,y)$ let $I^g$ be as in \eqref{eq:2.13} and let 
$H_*^{M(y)^\circ}(\Ad_\cL (g^\circ))$ be as in Lemma \ref{lem:2.12}. We define
\[
J^g (\phi) = H_*^{M(y)^\circ}(\Ad_\cL (g)) ( \phi \circ (I^g)^{-1} )
\qquad \phi \in E^\circ_{y,\sigma,r,\rho^\circ} = 
\Hom_{\pi_0 (Z_{G^\circ}(\sigma,y))} (\rho^\circ, E^\circ_{y,\sigma,r}).
\]
For $v \in V_{\rho^\circ}$ and $z \in Z_{G^\circ}(\sigma,y)$ we calculate:
\begin{align*}
J^g (\rho^\circ (z) \phi) & = H_*^{M(y)^\circ}(\Ad_\cL (g^\circ)) 
( \phi \circ (I^g)^{-1} \rho^\circ (z) v) \\
& = H_*^{M(y)^\circ}(\Ad_\cL (g)) 
( \phi \circ \rho^\circ (g^{-1} z g ) (I^g)^{-1} v) \\
& = H_*^{M(y)^\circ}(\Ad_\cL (g)) ( (g^{-1} z g) \phi \circ (I^g)^{-1} v) \\
& = H_*^{M(y)^\circ}(\Ad_\cL (g)) H_*^{M(y)^\circ}(\Ad_\cL (g^{-1} z g)) 
( \phi \circ (I^g)^{-1} v) \\ 
& = H_*^{M(y)^\circ}(\Ad_\cL (z)) H_*^{M(y)^\circ}(\Ad_\cL (g)) 
( \phi \circ (I^g)^{-1} v) = z \cdot J^g (\phi)(v) .
\end{align*}
Thus $J^g$ sends $E^\circ_{y,\sigma,r,\rho^\circ}$. It is invertible because
$H_*^{M(y)^\circ}(\Ad_\cL (g^\circ))$ and $I^g$ are. By \eqref{eq:2.22}, 
\eqref{eq:2.13} and the intertwining property of $\phi$, $J^g (\phi) = 
J^{g'}(\phi)$ whenever $g^{-1} g' \in G^\circ$. Hence $J^g \in \Aut_\C 
(E^\circ_{y,\sigma,r,\rho^\circ})$ depends only on the image of $g$ in 
$\cR_\cL \cong G / G^\circ$, and may denote it by $J^\gamma$ when 
$g \in \gamma G^\circ$.

As the $\pi_0 (Z_{G^\circ}(\sigma,y))$-action commutes with that of 
$\mh H (G^\circ,L,\cL)$, we deduce from Lemma \ref{lem:2.12} that
\begin{equation}\label{eq:2.28}
\begin{split}
J^\gamma (\Delta (N_\gamma^{-1} h N_\gamma) \phi) = H_*^{M(y)^\circ}
(\Ad_\cL (g)) \Delta (N_\gamma^{-1} h N_\gamma) (\phi \circ (I^g)^{-1}) = \\
\Delta (h) H_*^{M(y)^\circ}(\Ad_\cL (g)) (\phi \circ (I^g)^{-1}) = 
\Delta (h) J^\gamma (\phi) .
\end{split}
\end{equation}
By Lemma \ref{lem:2.2} and \eqref{eq:2.15} all the vector spaces
$E^\circ_{y,\sigma,r,\rho^\circ}$ can be identified with \\
$\Hom_{\pi_0 (Z_{G^\circ}(\sigma_0,y))} (\rho^\circ,H_* (\cP_y^\circ,
\dot{\cL}))$. In this sense $J^\gamma$ depends algebraically on $(\sigma,r) = 
(\sigma_0 + \textup{d}\gamma_y \matje{r}{0}{0}{-r}, r)$.

Given $y,r \neq 0$, \cite[Theorem 8.17.b]{Lus3} implies that 
$E^\circ_{y,\sigma,r,\rho^\circ}$ is irreducible for all $\sigma$ in a 
Zariski-open nonempty subset of $\{ \sigma \in \mf g : [\sigma,y] = 2 r y \}$. 
For such $\sigma$ \eqref{eq:2.28} and Schur's lemma imply that $J^\gamma$ 
is unique up to scalars. By the algebraic dependence on $(\sigma,r)$, this 
holds for all $(\sigma,r)$. Hence the choice of $\Ad_\cL (g)$ in 
\eqref{eq:2.20} is also unique up to scalars. If we can choose the 
$\Ad_\cL (g)$ such that $\gamma \mapsto J^\gamma$ is multiplicative for at 
least one value of $(\sigma,r)$, then the definition of $J^G$ shows that 
it immediately holds for all $(\sigma,r)$.

For $r = 0$ \eqref{eq:2.28} says that $J^\gamma$ intertwines 
$E^\circ_{y,\sigma,0,\rho^\circ}$ and $\gamma \cdot E^\circ_{y,\sigma,0,
\rho^\circ}$. Then it also intertwines the quotients $M^\circ_{y,\sigma,0,
\rho^\circ}$ and $\gamma \cdot M^\circ_{y,\sigma,0,\rho^\circ}$ from 
Lemma \ref{lem:2.8}. Recall that
\[
M^\circ_{y,\sigma_0,0,\rho^\circ} = 
\ind_{W_\cL^{Q^\circ} \ltimes S (\mf t^*)}^{W_\cL^\circ \ltimes S(\mf t^*)} 
(M^{Q^\circ}_{y,\sigma_0,0,\rho^\circ})
\]
where ${Q^\circ} = Z_{G^\circ}(\sigma_0)$ and $S(\mf t^*)$ acts on 
$w M^{Q^\circ}_{y,\sigma_0,0,\rho^\circ}$ via the character $w \sigma_0$. 
By \eqref{eq:2.28} 
\[
J^\gamma (w M^{Q^\circ}_{y,\sigma_0,0,\rho^\circ}) = 
(\gamma w \gamma^{-1}) M^{Q^\circ}_{y,\sigma_0,0,\rho^\circ} ,
\]
and in particular all the $J^\gamma$ restrict to elements 
\[
J^\gamma_{Q^\circ} \in \Aut_{W_\cL^{Q^\circ} \ltimes 
S(\mf t^*)}(M^{Q^\circ}_{y,\sigma_0,0,\rho^\circ}) =
\Aut_{W_\cL^{Q^\circ}}(M^{Q^\circ}_{y,\sigma_0,0,\rho^\circ}) 
= \Aut_{W_\cL^{Q^\circ}}(M^{Q^\circ}_{y,\rho^\circ}) .
\]
Here $W_\cL^{Q^\circ}$ is the Weyl group of $({Q^\circ},T)$, a group 
normalized by $\cR_{\cL,y,\sigma,\rho^\circ}$. By \cite[Proposition 4.3]{ABPS5} 
we can choose the $J^\gamma$ (which we recall are still unique up to scalars) 
such that $\gamma \mapsto J^\gamma$ is a group homomorphism (for $r = 0$). 
As we noted before, this determines a choice of all the $J^\gamma$ such that
$\gamma \mapsto J^\gamma$ is multiplicative.
\end{proof}

Recall from Theorem \ref{thm:2.9} that the quotient map $E^\circ_{y,\sigma,r,
\rho^\circ} \to M^\circ_{y,\sigma,r,\rho^\circ}$ provides a bijection between
standard modules and $\Irr (\mh H (G^\circ,L,\cL))$. Therefore Proposition 
\ref{prop:2.13} also applies to all irreducible representations of 
$\mh H (G^\circ,L,\cL)$. It expresses a regularity property of geometric 
graded Hecke algebras: the group of automorphisms $\cR_\cL$ of the Dynkin 
diagram of $(G^\circ,T)$ can be lifted to a group of intertwining operators 
between the appropriate irreducible representations. 

With Clifford theory we can obtain a first construction and classification 
of all irreducible representations of $\mh H (G,L,\cL)$:

\begin{lem}\label{lem:2.14}
There exists a bijection
\[
\begin{array}{ccc}
\Irr (\C [\cR_{\cL,y,\sigma,\rho^\circ},\natural_\cL]) & \to &
\big\{ \pi \in \Irr \big( \mh H (G,L,\cL) \big) : \pi |_{\mh H (G^\circ,L,\cL)} 
\text{ contains } M^\circ_{y,\sigma,r,\rho^\circ} \big\} \\
(\tau,V_\tau) & \mapsto & \tau \ltimes M^\circ_{y,\sigma,r,\rho^\circ} 
\end{array}.
\]
Here $\tau \ltimes M^\circ_{y,\sigma,r,\rho^\circ} = 
\ind_{\mh H (G^\circ \cR_{\cL,\sigma,y,\rho^\circ},L,\cL)}^{\mh H (G,L,\cL)}
(V_\tau \otimes M^\circ_{y,\sigma,r,\rho^\circ})$, where
$\mh H (G^\circ,L,\cL)$ acts trivially on $V_\tau$ and 
\[
N_\gamma \cdot (v \otimes m) = \tau (N_\gamma) v \otimes J^\gamma (m)
\qquad \gamma \in \cR_{\cL,y,\sigma,\rho^\circ}, v \in V_\tau, 
m \in M^\circ_{y,\sigma,r,\rho^\circ} .
\]
\end{lem}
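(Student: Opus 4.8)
The plan is to invoke Clifford theory for the crossed product algebra $\mh H(G,L,\cL) = \mh H(G^\circ,L,\cL) \rtimes \C[\cR_\cL,\natural_\cL]$, in the form already recalled in \cite[\S 1]{AMS}. The point is that $\mh H(G^\circ,L,\cL)$ is a normal subalgebra, the quotient group is $\cR_\cL$, and $M^\circ_{y,\sigma,r,\rho^\circ}$ is an irreducible module whose isomorphism class has stabilizer exactly $\cR_{\cL,y,\sigma,\rho^\circ}$ in $\cR_\cL$; the latter is the content we need from the earlier lemmas. First I would verify this stabilizer claim: by Theorem \ref{thm:2.9} the assignment sending a standard module to its irreducible quotient is a bijection with $\Irr(\mh H(G^\circ,L,\cL))$, and $\cR_\cL$ acts compatibly on both sides, so $w \cdot M^\circ_{y,\sigma,r,\rho^\circ} \cong M^\circ_{w(y),w(\sigma),r,w(\rho^\circ)}$; hence the stabilizer of the isomorphism class coincides with the stabilizer of $E^\circ_{y,\sigma,r,\rho^\circ}$, which is $\cR_{\cL,y,\sigma,\rho^\circ}$ by definition (Lemma \ref{lem:2.10}.b identifies it with a component-group quotient, but we only need the definition here).

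Next I would assemble the extension data. Proposition \ref{prop:2.13} produces, uniquely up to scalars, a group homomorphism $\gamma \mapsto J^\gamma$ from $\cR_{\cL,y,\sigma,\rho^\circ}$ into $\Aut_\C(E^\circ_{y,\sigma,r,\rho^\circ})$ intertwining the $\gamma$-twist of the $\mh H(G^\circ,L,\cL)$-action with the original one. Since each $J^\gamma$ descends to the unique irreducible quotient $M^\circ_{y,\sigma,r,\rho^\circ}$ (for $r \neq 0$ this quotient is canonical by Theorem \ref{thm:2.9}.a; for $r=0$ use Lemma \ref{lem:2.8} as in the proof of Proposition \ref{prop:2.13}), we obtain operators on $M^\circ_{y,\sigma,r,\rho^\circ}$ that are still multiplicative because $\gamma \mapsto J^\gamma$ already is — so here, unlike in Lemma \ref{lem:2.11}, the relevant ``Schur cocycle'' measuring the failure of multiplicativity is trivial. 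Consequently the genuinely projective part of the $\cR_{\cL,y,\sigma,\rho^\circ}$-action on an extension comes purely from the twisted group algebra $\C[\cR_\cL,\natural_\cL]$ sitting inside $\mh H(G,L,\cL)$, i.e. from the restriction of $\natural_\cL$ to $\cR_{\cL,y,\sigma,\rho^\circ}^2$. This is exactly the cocycle appearing in the statement.

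With these ingredients the standard Clifford-theoretic machinery applies verbatim: the module $\mh H(G^\circ\cR_{\cL,y,\sigma,\rho^\circ},L,\cL)$ acts on $V_\tau \otimes M^\circ_{y,\sigma,r,\rho^\circ}$ by the prescribed formula $N_\gamma \cdot (v \otimes m) = \tau(N_\gamma)v \otimes J^\gamma(m)$ — one checks this is well-defined because $\tau$ is a $\C[\cR_{\cL,y,\sigma,\rho^\circ},\natural_\cL]$-module and $\gamma\mapsto J^\gamma$ is an honest homomorphism, so the $\natural_\cL$-cocycle from $\tau$ matches the one produced by the $N_\gamma$ in $\mh H(G,L,\cL)$. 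Then $\ind$ to $\mh H(G,L,\cL)$ is irreducible, distinct $\tau$ give non-isomorphic modules, every $\pi \in \Irr(\mh H(G,L,\cL))$ containing $M^\circ_{y,\sigma,r,\rho^\circ}$ in its restriction arises this way, and the restriction of such $\pi$ to $\mh H(G^\circ,L,\cL)$ decomposes as a multiplicity-one sum over the $\cR_\cL$-orbit of $M^\circ_{y,\sigma,r,\rho^\circ}$. All of this is precisely \cite[\S 1]{AMS}, applied to the subalgebra pair at hand.

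The main obstacle is the bookkeeping needed to see that the twisted group algebra appearing in the domain is $\C[\cR_{\cL,y,\sigma,\rho^\circ},\natural_\cL]$ with the ``correct'' (un-inverted) cocycle: one must track that $\mh H(G,L,\cL)$ uses $\natural_\cL$ while its opposite $\mh H(G,L,\cL^*)$ uses $\natural_\cL^{-1}$ (cf.\ \eqref{eq:1.8}), and that the $J^\gamma$ were built in Proposition \ref{prop:2.13} out of operators on $E^\circ_{y,\sigma,r,\rho^\circ}$, a subspace of a homology group on which $\C[W_\cL,\natural_\cL]$ (not its inverse) acts via $\Delta$ after the dualization in \eqref{eq:2.68}. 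Once that sign/inversion is pinned down consistently, there is no further difficulty, since multiplicativity of $\gamma\mapsto J^\gamma$ already removes the only other potential cocycle; so the proof is essentially a citation of Clifford theory together with Proposition \ref{prop:2.13}.
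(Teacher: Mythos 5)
Your proof is essentially the paper's: both rest on Proposition \ref{prop:2.13} (the multiplicativity of $\gamma \mapsto J^\gamma$ kills the Schur-type cocycle, leaving only the restriction of $\natural_\cL$ carried by the elements $N_\gamma$) combined with Clifford theory for the crossed product $\C[\cR_\cL,\natural_\cL] \ltimes \mh H (G^\circ,L,\cL)$ — the paper merely routes the Clifford-theory citation through the central extension $\cR_\cL^+$ and the idempotent $p_{\natural_\cL}$ from Proposition \ref{prop:1.4}, whereas you invoke the twisted version directly. One side remark should be corrected (it does not affect the bijection): the restriction of $\tau \ltimes M^\circ_{y,\sigma,r,\rho^\circ}$ to $\mh H (G^\circ,L,\cL)$ contains each module in the $\cR_\cL$-orbit of $M^\circ_{y,\sigma,r,\rho^\circ}$ with multiplicity $\dim V_\tau$, not multiplicity one.
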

\begin{proof}
Let the central extension $\cR_\cL^+ \to \cR_\cL$ and $p_{\natural_\cL}$ 
be as in the proof of Proposition \ref{prop:1.4}, and let $\cR^+$ be the 
inverse image of $\cR_{\cL,y,\sigma,\rho^\circ}$ in $\cR_\cL^+$. As in 
\eqref{eq:1.10}, $\mh H (G^\circ \cR_{\cL,y,\sigma,\rho^\circ},L,\cL)$ 
is the direct summand 
\[
p_{\natural_\cL} \C [\cR^+] \ltimes \mh H (G^\circ,L,\cL) 
\quad \text{ of } \quad \cR^+ \ltimes \mh H (G^\circ,L,\cL) .
\]
By Proposition \ref{prop:2.13} and Clifford theory (in the version 
\cite[Theorem 1.2]{Sol2} or \cite[p. 24]{RaRa}) there is a bijection
\[
\begin{array}{ccc}
\Irr (\cR^+) & \to &
\big\{ \pi \in \Irr \big( \cR^+ \ltimes \mh H (G^\circ,L,\cL) \big) 
: \pi |_{\mh H (G^\circ,L,\cL)} \text{ contains } 
M^\circ_{y,\sigma,r,\rho^\circ} \big\} \\
(\tau,V_\tau) & \mapsto & \ind_{\cR^+ \ltimes \mh H (G^\circ,L,\cL)
}^{\cR_\cL^+ \ltimes \mh H (G^\circ,L,\cL)} 
(V_\tau \otimes M^\circ_{y,\sigma,r,\rho^\circ})
\end{array} .
\]
Restrict this to the modules that are not annihilated by the central 
idempotent $p_{\natural_\cL}$.
\end{proof}

\subsection{Parametrization of irreducible representations} \
\label{par:param}

We start this paragraph with a few further preparatory results.
Let $(y,\sigma,\rho^\circ)$ be as before.

\begin{lem}\label{lem:2.17}
There are isomorphisms of $\pi_0 (Z_G (\sigma,y))_{\rho^\circ}$-representations
\begin{align*}
\ind_{\mh H (G^\circ,L,\cL)}^{\mh H (G^\circ \cR_{\cL,y,\sigma,\rho^\circ},L,
\cL)} (V_{\rho^\circ} \otimes E^\circ_{y,\sigma,r,\rho^\circ}) & \cong
\ind_{\pi_0 (Z_{G^\circ}(\sigma,y))}^{\pi_0 (Z_G (\sigma,y))_{\rho^\circ}}
(V_{\rho^\circ} \otimes E^\circ_{y,\sigma,r,\rho^\circ}) \\
& \cong \C[\cR_{\cL,y,\sigma,\rho^\circ},\natural_\cL^{-1}] \otimes 
V_{\rho^\circ} \otimes E^\circ_{y,\sigma,r,\rho^\circ} .
\end{align*} 
In the last line the action is
\[
g \cdot (N_w \otimes v \otimes \phi) = N_g N_w \otimes I^g (v) \otimes \phi
\]
for $g \in \pi_0 (Z_G (\sigma,y))_{\rho^\circ}, w \in 
\cR_{\cL,y,\sigma,\rho^\circ},
v \in V_{\rho^\circ}$ and $\phi \in E^\circ_{y,\sigma,r,\rho^\circ}$.
\end{lem}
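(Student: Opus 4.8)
The plan is to establish the two isomorphisms in turn: first realise the induced $\mh H$-module geometrically, as (an isotypic component of) a standard module for an intermediate subgroup, and identify it with the group-theoretic induction; then recognise the latter as the twisted-group-algebra model by Clifford theory.

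For the first isomorphism, work with the subgroup $G'=G^\circ\cR_{\cL,y,\sigma,\rho^\circ}\subset G$, for which $\mh H(G',L,\cL)=\mh H(G^\circ\cR_{\cL,y,\sigma,\rho^\circ},L,\cL)$ and Condition \ref{cond:1} still holds. Since $\cR_{\cL,y,\sigma,\rho^\circ}\subset\cR_{\cL,y,\sigma}$, Lemma \ref{lem:2.10} applied to $G'$ gives $\pi_0(Z_{G'}(\sigma,y))=\pi_0(Z_G(\sigma,y))_{\rho^\circ}$, with $\rho^\circ$ stable under this group. Now $\cP_y^{G'}=\bigsqcup_{\gamma\in\cR_{\cL,y,\sigma,\rho^\circ}}\cP_y^\circ\bar\gamma^{-1}$ carries an action of $Z_{G'}(\sigma,y)$ for which $Z_{G^\circ}(\sigma,y)$ is exactly the stabiliser of each component and $Z_{G'}(\sigma,y)$ permutes the components transitively. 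The standard description of the (equivariant) homology of such a disjoint union, together with Lemma \ref{lem:2.1}, Theorem \ref{thm:2.4}(d) and Lemma \ref{lem:2.2} applied to $G'$, produces on one space $E^{G'}_{y,\sigma,r}$ two compatible identifications: $E^{G'}_{y,\sigma,r}\cong\ind_{\pi_0(Z_{G^\circ}(\sigma,y))}^{\pi_0(Z_G(\sigma,y))_{\rho^\circ}}E^\circ_{y,\sigma,r}$ as $\pi_0(Z_G(\sigma,y))_{\rho^\circ}$-representations, and $E^{G'}_{y,\sigma,r}\cong\ind_{\mh H(G^\circ,L,\cL)}^{\mh H(G',L,\cL)}E^\circ_{y,\sigma,r}$ as $\mh H(G',L,\cL)$-modules, both coming from the $M(y)$-equivariant map \eqref{eq:2.1}. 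Taking on both sides the $\rho^\circ$-isotypic component for the normal subgroup $\pi_0(Z_{G^\circ}(\sigma,y))$ (which is a $\pi_0(Z_G(\sigma,y))_{\rho^\circ}$-subrepresentation, $\rho^\circ$ being stable), the group-induction description gives $\ind_{\pi_0(Z_{G^\circ}(\sigma,y))}^{\pi_0(Z_G(\sigma,y))_{\rho^\circ}}$ of the $\rho^\circ$-isotypic part of $E^\circ_{y,\sigma,r}$, i.e. the middle term $\ind_{\pi_0(Z_{G^\circ}(\sigma,y))}^{\pi_0(Z_G(\sigma,y))_{\rho^\circ}}(V_{\rho^\circ}\otimes E^\circ_{y,\sigma,r,\rho^\circ})$; while the $\mh H$-induction description, since $\ind_{\mh H(G^\circ,L,\cL)}^{\mh H(G',L,\cL)}$ commutes with $E^\circ_{y,\sigma,r}=\bigoplus_{\rho'}V_{\rho'}\otimes E^\circ_{y,\sigma,r,\rho'}$ and $\pi_0(Z_{G^\circ}(\sigma,y))\subset G^\circ$ preserves each component $\cP_y^\circ\bar\gamma^{-1}$ and acts on the corresponding summand through $\rho'$ on the $V_{\rho'}$-factor, gives the left term $\ind_{\mh H(G^\circ,L,\cL)}^{\mh H(G',L,\cL)}(V_{\rho^\circ}\otimes E^\circ_{y,\sigma,r,\rho^\circ})$. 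Comparing the two yields the first isomorphism.

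For the second isomorphism I would use the intertwiners $I^g\in\Aut_\C(V_{\rho^\circ})$ ($g\in\pi_0(Z_G(\sigma,y))_{\rho^\circ}$) from the proof of Lemma \ref{lem:2.11}, normalised so that $I^{gz}=I^g\circ\rho^\circ(z)$ for $z\in\pi_0(Z_{G^\circ}(\sigma,y))$, whose multiplicativity defect is the $2$-cocycle $\kappa_{\rho^\circ}$ of \eqref{eq:2.14}; recall from that proof that $\kappa_{\rho^\circ}$ is the restriction of $\natural_\cL^{-1}$ to $\cR_{\cL,y,\sigma,\rho^\circ}\cong\pi_0(Z_G(\sigma,y))_{\rho^\circ}/\pi_0(Z_{G^\circ}(\sigma,y))$. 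Choosing a set-theoretic section $s$ of this quotient and writing the induced representation as a sum of translates of $V_{\rho^\circ}\otimes E^\circ_{y,\sigma,r,\rho^\circ}$, one sends the $w$-translate of $v\otimes\phi$ to $N_w\otimes I^{s(w)}v\otimes\phi$; a direct computation with \eqref{eq:2.14} shows that the transported action is $g\cdot(N_w\otimes v\otimes\phi)=N_gN_w\otimes I^g(v)\otimes\phi$ in $\C[\cR_{\cL,y,\sigma,\rho^\circ},\natural_\cL^{-1}]\otimes V_{\rho^\circ}\otimes E^\circ_{y,\sigma,r,\rho^\circ}$, the $E^\circ$-factor (trivial under $\pi_0(Z_{G^\circ}(\sigma,y))$) merely being carried along. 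This is the usual Clifford-theory bookkeeping of \cite[\S 1]{AMS}.

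The step I expect to be hardest is the compatibility claim underlying the first isomorphism: that the $\mh H(G',L,\cL)$-module structure furnished by Lemma \ref{lem:2.1} and the $\pi_0(Z_{G'}(\sigma,y))$-representation coming from the transitive permutation of the components of $\cP_y^{G'}$ really are two incarnations of the same geometric object, with the local system $\dot\cL$ transported coherently under the $\cR_\cL$-action \eqref{eq:1.2}; granting that, the remaining points—the handling of $\rho^\circ$-isotypic components for a normal subgroup and of the cocycles $\natural_\cL^{-1}$ and $\kappa_{\rho^\circ}$ together with the section-dependent normalisations of the $I^g$—are routine.
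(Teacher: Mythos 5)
Your proposal is correct and follows essentially the same route as the paper's proof: both realise the two inductions on the standard module of the intermediate group $G^\circ \cR_{\cL,y,\sigma,\rho^\circ}$, using the decomposition $\cP_y \cap (G^\circ \cR_{\cL,y,\sigma,\rho^\circ}/P) = \cP_y^\circ \times \cR_{\cL,y,\sigma,\rho^\circ}$ and the two commuting (left/right) permutation actions of $\cR_{\cL,y,\sigma,\rho^\circ}$ for the first isomorphism, and Clifford theory with the cocycle identified via Lemma \ref{lem:2.11} (i.e. \cite[Proposition 1.1.b]{AMS}) for the second. Your explicit bookkeeping with the intertwiners $I^g$ and a set-theoretic section is simply a spelled-out version of the paper's citation, including the character-twist ambiguity that the paper also records.
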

\begin{proof}
Recall that $E_{y,\sigma,r} = \C_{\sigma,r} \underset{H_*^{M(y)^\circ}
(\{y\})}{\otimes} H_*^{M(y)^\circ}(\cP_y,\dot{\cL})$ and that
\begin{equation}\label{eq:2.31}
\cP_y \cap (G^\circ \cR_{\cL,y,\sigma,\rho^\circ} / P) = \cP_y^\circ \times
\cR_{\cL,y,\sigma,\rho^\circ} .
\end{equation}
There are two projective actions of $\cR_{\cL,y,\sigma,\rho^\circ}$ on 
$E_{y,\sigma,r}^{G^\circ \cR_{\cL,y,\sigma,\rho^\circ}}$. The first one 
comes from considering it as the group underlying $\C[\cR_{\cL,y,\sigma,
\rho^\circ},\natural_\cL] \subset \mh H (G,L,\cL)$, and the second one from 
considering it as a quotient of $\pi_0 (Z_G (\sigma,y))_{\rho^\circ}$. 
Both induce a simply transitive permutation of the copies of $\cP_y^\circ$ 
in \eqref{eq:2.31}, the first action by right multiplication and the second 
action by left multiplication. This implies the first stated isomorphism.

The second claim is an instance of \cite[Proposition 1.1.b]{AMS}. 
Here we use Lemma \ref{lem:2.11} to identify the 2-cocycle. 
We note that there is some choice in the second isomorphism of the lemma, 
we can still twist it by a character of $\cR_{\cL,y,\sigma,\rho^\circ}$.
\end{proof}

Notice that the twisted group algebras of $\cR_{\cL,y,\sigma,\rho^\circ}$ 
appearing in Lemmas \ref{lem:2.11} and \ref{lem:2.14} are opposite, but not
necessarily isomorphic. If $(\tau,V_\tau) \in \Irr (\C [\cR_{\cL,y,\sigma,
\rho^\circ},\natural_\cL])$, then $(\tau^*,V_\tau^*) \in (\C [\cR_{\cL,y,
\sigma,\rho^\circ},\natural_\cL^{-1}])$, where
\[
\tau^* (N_\gamma) \lambda = \lambda \circ \tau (N_\gamma^{-1}) 
\qquad \gamma \in \cR_{\cL,y,\sigma,\rho^\circ}, \lambda \in V_\tau^* .
\]
As noted in \cite[Lemma 1.3]{AMS}, this sets up a natural bijection between\\
$\Irr (\C [\cR_{\cL,y,\sigma,\rho^\circ},\natural_\cL])$ and 
$\Irr (\C [\cR_{\cL,y,\sigma,\rho^\circ},\natural_\cL^{-1}])$.

\begin{lem}\label{lem:2.15}
In the notations of Lemma \ref{lem:2.14}, there is an isomorphism of \\
$\mh H (G,L,\cL) $-modules $E_{y,\sigma,r,\rho^\circ \rtimes \tau^*} 
\cong \tau \ltimes E^\circ_{y,\sigma,r,\rho^\circ}$.
\end{lem}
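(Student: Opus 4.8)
The plan is to derive this from Clifford theory, once we have the standard--module analogue of Lemma~\ref{lem:2.1}. First I would record that the action map of Lemma~\ref{lem:2.1} commutes with the $H^*_{M(y)^\circ}(\{y\})$-action, which by Theorem~\ref{thm:2.4}(b) commutes with both $\mh H(G,L,\cL)$ and $\mh H(G^\circ,L,\cL)$; hence base change along $\C_{\sigma,r}\otimes_{H^*_{M(y)^\circ}(\{y\})}(-)$ produces an isomorphism of $\mh H(G,L,\cL)$-modules
\[
E_{y,\sigma,r}\;\cong\;\mh H(G,L,\cL)\otimes_{\mh H(G^\circ,L,\cL)}E^\circ_{y,\sigma,r}\;=\;\bigoplus\nolimits_{w\in\cR_\cL}N_w\otimes E^\circ_{y,\sigma,r}.
\]
Using \eqref{eq:2.1} together with the analysis of the ``two projective actions'' carried out around \eqref{eq:2.31} in the proof of Lemma~\ref{lem:2.17}, this isomorphism is moreover equivariant for $\pi_0(Z_G(\sigma,y))$: the action of $g\in Z_G(\sigma,y)$ via $H_*^{M(y)^\circ}(\Ad_\cL(g))$ decomposes, under Lemma~\ref{lem:2.10}(a), into the permutation of the summands $N_w\otimes(-)$ through $\cR_{\cL,y,\sigma}\cong\pi_0(Z_G(\sigma,y))/\pi_0(Z_{G^\circ}(\sigma,y))$, composed with the action $H_*^{M(y)^\circ}(\Ad_\cL(\cdot))$ of $\pi_0(Z_{G^\circ}(\sigma,y))$ on $E^\circ_{y,\sigma,r}$.

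Next I would decompose $E^\circ_{y,\sigma,r}=\bigoplus_{\rho'}V_{\rho'}\otimes E^\circ_{y,\sigma,r,\rho'}$ over $\pi_0(Z_{G^\circ}(\sigma,y))$ and isolate the $\mh H(G,L,\cL)$-submodule $N$ of $E_{y,\sigma,r}$ spanned by those summands whose $\rho'$ lies in the $\cR_{\cL,y,\sigma}$-orbit of $\rho^\circ$. By the previous paragraph and the bookkeeping of Lemma~\ref{lem:2.17} one gets $N\cong\mh H(G,L,\cL)\otimes_{\mh H(G^\circ\cR_{\cL,y,\sigma,\rho^\circ},L,\cL)}(V_{\rho^\circ}\otimes E^\circ_{y,\sigma,r,\rho^\circ})$, where $\cR_{\cL,y,\sigma,\rho^\circ}$ acts on $V_{\rho^\circ}\otimes E^\circ_{y,\sigma,r,\rho^\circ}$ by $\gamma\mapsto I^\gamma\otimes J^\gamma$, with $I^\gamma$ as in \eqref{eq:2.13}--\eqref{eq:2.14}, $J^\gamma$ as in Proposition~\ref{prop:2.13}, and with associated $2$-cocycle $\natural_\cL^{-1}$ (Lemma~\ref{lem:2.11}). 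Since $\rho=\rho^\circ\rtimes\tau^*$ restricted to $\pi_0(Z_{G^\circ}(\sigma,y))$ only involves this orbit, $E_{y,\sigma,r,\rho}=\Hom_{\pi_0(Z_G(\sigma,y))}(\rho,N)$. Applying Clifford theory exactly as in the proof of Lemma~\ref{lem:2.14}, matched against Lemma~\ref{lem:2.11} through the pairing $\tau\leftrightarrow\tau^*$ between $\Irr(\C[\cR_{\cL,y,\sigma,\rho^\circ},\natural_\cL])$ and $\Irr(\C[\cR_{\cL,y,\sigma,\rho^\circ},\natural_\cL^{-1}])$, this $\Hom$-space is identified with $\ind_{\mh H(G^\circ\cR_{\cL,y,\sigma,\rho^\circ},L,\cL)}^{\mh H(G,L,\cL)}(V_\tau\otimes E^\circ_{y,\sigma,r,\rho^\circ})=\tau\ltimes E^\circ_{y,\sigma,r,\rho^\circ}$, the $N_\gamma$-action $\tau(N_\gamma)\otimes J^\gamma$ of Lemma~\ref{lem:2.14} being precisely what survives the $I^\gamma\otimes J^\gamma$-action after passing to $\rho$-isotypic components.

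The main obstacle is the coherence of all the scalar and cocycle choices. The operators $\Ad_\cL(g)$ of \eqref{eq:2.20} are defined only up to scalars, so one must check that the operators $H_*^{M(y)^\circ}(\Ad_\cL(g))$ equipping the induced module with its $\pi_0(Z_G(\sigma,y))$-action are the same ones entering the $J^\gamma$ of Proposition~\ref{prop:2.13}; this holds because the $J^\gamma$ are built from exactly these $\Ad_\cL(g)$ together with the $I^g$. One must also track the dualizations: $E^\circ_{y,\sigma,r}$ carries the action $\Delta(N_w)=\tilde\Delta((N_w)^{-1})^*$ coming from the $\C[W_\cL,\natural_\cL^{-1}]$-action on $K^*$, see \eqref{eq:2.68}, whereas the Clifford datum of Lemma~\ref{lem:2.14} is phrased with $\natural_\cL$; reconciling these is exactly what forces the appearance of $\tau^*$ rather than $\tau$ in $\rho=\rho^\circ\rtimes\tau^*$ and makes the relevant cocycles opposite but compatible. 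Once Lemmas~\ref{lem:2.10}, \ref{lem:2.11}, \ref{lem:2.14}, \ref{lem:2.17} and Proposition~\ref{prop:2.13} are assembled in this way, the remaining verification --- that the two $\mh H(G,L,\cL)$-module structures agree on the common underlying vector space --- is routine.
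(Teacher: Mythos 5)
Your argument is correct and follows essentially the same route as the paper's proof: both rest on the induction description of $E_{y,\sigma,r}$ from Lemma \ref{lem:2.1}, the bimodule bookkeeping of Lemma \ref{lem:2.17}, the cocycle identification of Lemma \ref{lem:2.11}, the intertwiners $J^\gamma$ of Proposition \ref{prop:2.13} entering via Lemma \ref{lem:2.14}, and Clifford theory from \cite[\S 1]{AMS}. The only difference is presentational (you decompose the induced module into $\pi_0 (Z_{G^\circ}(\sigma,y))$-isotypic blocks and take $\Hom$ from $\rho$, where the paper uses Frobenius reciprocity to move the induction outside the $\Hom$), and your final coherence check of the scalar ambiguities is exactly the point the paper settles by noting that the discrepancy is a character of $\cR_{\cL,y,\sigma,\rho^\circ}$ which can be normalized away by adjusting the second isomorphism in Lemma \ref{lem:2.17}.
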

\begin{proof}
By Lemma \ref{lem:2.1}
\[
E_{y,\sigma,r,\rho^\circ \rtimes \tau^*} = \Hom_{\pi_0 (Z_G (\sigma,y))}
\big( \tau^* \ltimes \rho^\circ, \ind_{\mh H (G^\circ,L,\cL}^{\mh H (G,L,\cL)} 
E^\circ_{y,\sigma,r} \big) .
\]
By Frobenius reciprocity this is isomorphic to
\begin{equation}\label{eq:2.29}
\Hom_{\pi_0 (Z_G (\sigma,y))_{\rho^\circ}} \big( \tau^* \otimes \rho^\circ, 
\ind_{\mh H (G^\circ \cR_{\cL,y,\sigma,\rho^\circ},L,\cL)}^{\mh H (G,L,\cL)} 
\ind_{\mh H (G^\circ,L,\cL)}^{\mh H (G^\circ 
\cR_{\cL,y,\sigma,\rho^\circ},L,\cL)} E^\circ_{y,\sigma,r} \big). 
\end{equation}
The action of $\pi_0 (Z_G (\sigma,y))_{\rho^\circ}$ can be constructed entirely 
within $G^\circ \cR_{\cL,y,\sigma,\rho^\circ,L,\cL}$, so we can move the first 
induction outside the brackets. Furthermore we only need the $\rho^\circ
$-isotypical part of $E^\circ_{y,\sigma,r}$, so \eqref{eq:2.29} equals
\begin{equation}\label{eq:2.30}
\ind_{\mh H (G^\circ \cR_{\cL,y,\sigma,\rho^\circ},L,\cL)}^{\mh H (G,L,\cL)} 
\Hom_{\pi_0 (Z_G (\sigma,y))_{\rho^\circ}} \big( \tau^* \otimes \rho^\circ, 
\ind_{\mh H (G^\circ,L,\cL)}^{\mh H (G^\circ \cR_{\cL,y,\sigma,\rho^\circ},
L,\cL)} V_{\rho^\circ} \otimes E^\circ_{y,\sigma,r,\rho^\circ} \big). 
\end{equation}
From Lemma \ref{lem:2.17} and \cite[Proposition 1.1.d]{AMS} we deduce that
\begin{multline}\label{eq:2.33}
\Hom_{\pi_0 (Z_G (\sigma,y))_{\rho^\circ}} \big( \tau^* \otimes \rho^\circ, 
\ind_{\mh H (G^\circ,L,\cL)}^{\mh H (G^\circ \cR_{\cL,y,\sigma,\rho^\circ},
L,\cL)} V_{\rho^\circ} \otimes E^\circ_{y,\sigma,r,\rho^\circ} \big) = \\
\Hom_{\C[\cR_{\cL,y,\sigma,\rho^\circ},\natural_\cL^{-1}]} 
\big( \tau^*, \Hom_{\pi_0 (Z_{G^\circ}(\sigma,y))}(\rho^\circ, 
\C[\cR_{\cL,y,\sigma,\rho^\circ},\natural_\cL^{-1}] \otimes V_{\rho^\circ} 
\otimes E^\circ_{y,\sigma,r,\rho^\circ} )\big) = \\
\Hom_{\C[\cR_{\cL,y,\sigma,\rho^\circ},\natural_\cL^{-1}]} 
\big( \tau^*, \C[\cR_{\cL,y,\sigma, \rho^\circ},\natural_\cL^{-1}] \otimes 
E^\circ_{y,\sigma,r,\rho^\circ} \big).
\end{multline}
Here $\C[\cR_{\cL,y,\sigma,\rho^\circ},\natural_\cL^{-1}]$ fixes
$E^\circ_{y,\sigma,r,\rho^\circ}$ pointwise.
By \cite[Lemma 1.3.c]{AMS} there is an isomorphism of 
$\C[\cR_{\cL,y,\sigma,\rho^\circ},\natural_\cL^{-1}] \times 
\C[\cR_{\cL,y,\sigma,\rho^\circ},\natural_\cL]$-modules
\begin{equation}\label{eq:2.34}
\C[\cR_{\cL,y,\sigma,\rho^\circ},\natural_\cL^{-1}] \cong 
\bigoplus_{\pi \in \Irr (\C[\cR_{\cL,y,\sigma,\rho^\circ},\natural_\cL])}
V_\pi^* \otimes V_\pi .
\end{equation}
Thus the $\mh H (G^\circ \cR_{\cL,y,\sigma,\rho^\circ},L,\cL)$-module 
\eqref{eq:2.33} becomes $V_\tau \otimes E^\circ_{y,\sigma,r,\rho^\circ}$, 
while \eqref{eq:2.29} and \eqref{eq:2.30} become
\begin{equation}\label{eq:2.36}
\ind_{\mh H (G^\circ \cR_{\cL,y,\sigma,\rho^\circ},L,\cL)}^{\mh H (G,L,\cL)} 
( V_\tau \otimes E^\circ_{y,\sigma,r,\rho^\circ} ) .
\end{equation}
The subalgebra $\mh H (G^\circ,L,\cL)$ fixes $V_\tau$ pointwise. To understand
the above $\mh H (G,L,\cL)$-module, it remains to identify the action of
$\C[\cR_{\cL,y,\sigma,\rho^\circ},\natural_\cL]$ on $V_\tau \otimes 
E^\circ_{y,\sigma,r,\rho^\circ}$. For that we return to the first line of
\eqref{eq:2.33}. Taking into account that the actions of $\pi_0 (Z_G (\sigma,y)
)_{\rho^\circ}$ and $\C[\cR_{\cL,y,\sigma,\rho^\circ},\natural_\cL] \subset 
\mh H (G^\circ \cR_{\cL,y,\sigma,\rho^\circ},L,\cL)$ commute, 
\cite[Proposition 1.1.d]{AMS} says that it is isomorphic to
\begin{equation}\label{eq:2.35}
\Hom_{\C[\cR_{\cL,y,\sigma,\rho^\circ}, \natural_\cL^{-1}]} \big( \tau^*, 
\C[\cR_{\cL,y,\sigma,\rho^\circ},\natural_\cL] \otimes 
E^\circ_{y,\sigma,r,\rho^\circ} \big) .
\end{equation}
We have seen in \eqref{eq:2.33} that $\C[\cR_{\cL,y,\sigma,\rho^\circ},
\natural_\cL^{-1}]$ fixes $E^\circ_{y,\sigma,r,\rho^\circ}$ pointwise, and
we know from Theorem \ref{thm:2.4}.d that its action commutes with $\Delta 
(\mh H (G^\circ \cR_{\cL,y,\sigma,\rho^\circ},L,\cL))$. The proof of Lemma 
\ref{lem:2.17} entails that, up to a scalar which depends only on $\gamma
\in \cR_{\cL,y,\sigma,\rho^\circ}$,
\[
N_\gamma \cdot (N \otimes \phi) = N N_\gamma^{-1} \otimes \phi \qquad
N \in \C[\cR_{\cL,y,\sigma,\rho^\circ},
\natural_\cL], \phi \in E^\circ_{y,\sigma,r,\rho^\circ} .
\]
Since this formula already defines an action, the family of scalars (for 
various $\gamma$) must form a character of $\cR_{\cL,y,\sigma,\rho^\circ}$. 
We can make this character trivial by adjusting the choice of the second 
isomorphism in Lemma \ref{lem:2.17}, which means that $\C[\cR_{\cL,y,\sigma,
\rho^\circ},\natural_\cL]$ in \eqref{eq:2.35} becomes a bimodule in the 
standard manner. By \eqref{eq:2.34} for $\C[\cR_{\cL,y,\sigma,\rho^\circ},
\natural_\cL]$, \eqref{eq:2.35} is isomorphic, as $\C[\cR_{\cL,y,\sigma,
\rho^\circ},\natural_\cL]$-module, to $V_\tau \otimes E^\circ_{y,\sigma,r,
\rho^\circ}$. Consequently the $\mh H (G,L,\cL)$ is endowed with the expected 
action of $\C[\cR_{\cL,y,\sigma,\rho^\circ},\natural_\cL]$ on $V_\tau$, 
which means that it can be identified with 
$\tau \ltimes E^\circ_{y,\sigma,r,\rho^\circ}$.
\end{proof}

It will be useful to improve our understanding of standard modules with $r = 0$,
like in Lemma \ref{lem:2.6}.

\begin{lem}\label{lem:2.24}
The $\mh H (G,L,\cL)$-module $E_{y,\sigma,0,\rho}$ is completely reducible
and can be decomposed along the homological degree:
\[
E_{y,\sigma,0,\rho} = \bigoplus\nolimits_n \Hom_{\pi_0 (Z_G (\sigma,y))} 
\big( \rho, H_n (\mc P_y ,\dot{\cL}) \big) . 
\]
\end{lem}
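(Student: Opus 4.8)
The plan is to reduce everything to the connected case already settled in Lemma~\ref{lem:2.6}, using the induction isomorphism of Lemma~\ref{lem:2.1}. First I would record two trivial reductions: since $r=0$ we have $\sigma_0=\sigma$, so $\sigma$ commutes with $y$; and if $\mc P_y=\es$ the module is zero and there is nothing to prove, so assume $\mc P_y\neq\es$. Next, tensoring the isomorphism of Lemma~\ref{lem:2.1} over $H_*^{M(y)^\circ}(\{y\})$ with $\C_{\sigma,0}$ — legitimate because that subalgebra commutes with all of $\mh H(G,L,\cL)$ by Theorem~\ref{thm:2.4}.b, in particular with the basis elements $N_\gamma$ $(\gamma\in\cR_\cL)$ of $\mh H(G,L,\cL)$ over $\mh H(G^\circ,L,\cL)$ — I would obtain an isomorphism of $\mh H(G,L,\cL)$-modules
\[
E_{y,\sigma,0}\;\cong\;\ind_{\mh H(G^\circ,L,\cL)}^{\mh H(G,L,\cL)}E^\circ_{y,\sigma,0}.
\]

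For complete reducibility I would argue as follows. By (the proof of) Lemma~\ref{lem:2.6}, in particular \eqref{eq:2.61}, the $\mh H(G^\circ,L,\cL)$-module $E^\circ_{y,\sigma,0}$ is semisimple. Restricting the displayed induced module to $\mh H(G^\circ,L,\cL)$ gives $\bigoplus_{\gamma\in\cR_\cL}N_\gamma\otimes E^\circ_{y,\sigma,0}$, a finite direct sum of pullbacks of $E^\circ_{y,\sigma,0}$ along the automorphisms $h\mapsto N_\gamma^{-1}hN_\gamma$, hence semisimple. A standard averaging argument then upgrades this to semisimplicity over $\mh H(G,L,\cL) = \C[\cR_\cL,\natural_\cL]\ltimes\mh H(G^\circ,L,\cL)$: given an $\mh H(G,L,\cL)$-submodule $W$ and an $\mh H(G^\circ,L,\cL)$-linear projection $p$ onto $W$, the operator $|\cR_\cL|^{-1}\sum_{\gamma}\Delta(N_\gamma)\,p\,\Delta(N_\gamma)^{-1}$ is an $\mh H(G,L,\cL)$-linear projection onto $W$ — here the cocycle cancels, $\Delta(N_\delta)\Delta(N_\gamma)\,p\,\Delta(N_\gamma)^{-1}\Delta(N_\delta)^{-1}=\Delta(N_{\delta\gamma})\,p\,\Delta(N_{\delta\gamma})^{-1}$. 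Finally, since $\pi_0(M(y))_\sigma\cong\pi_0(Z_G(\sigma,y))$ acts on $E_{y,\sigma,0}$ by $\mh H(G,L,\cL)$-automorphisms (Theorem~\ref{thm:2.4}.d), $E_{y,\sigma,0}=\bigoplus_{\rho'}\rho'\otimes E_{y,\sigma,0,\rho'}$ as $\pi_0(Z_G(\sigma,y))\times\mh H(G,L,\cL)$-modules, so each $E_{y,\sigma,0,\rho}$ is a direct summand and inherits complete reducibility.

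For the homological grading I would again lean on Lemma~\ref{lem:2.6}: its proof (via \eqref{eq:2.69} and the concluding paragraph there) shows that $E^\circ_{y,\sigma,0}$ is, as a graded vector space, $\bigoplus_n H_n(\mc P_y^\circ,\dot{\cL})$, with $\mh H(G^\circ,L,\cL)$ and $\pi_0(M(y))_\sigma^\circ$ preserving the homological degree. Using the decomposition $\mc P_y=\bigsqcup_{w\in\cR_\cL}\mc P_y^\circ w^{-1}$ from \eqref{eq:2.1}, I would identify $E_{y,\sigma,0}\cong\bigoplus_n H_n(\mc P_y,\dot{\cL})$ compatibly with the induction isomorphism (the $\gamma$-summand $N_\gamma\otimes E^\circ_{y,\sigma,0}$ matching $\bigoplus_n H_n(\mc P_y^\circ\gamma^{-1},\dot{\cL})$ via the degree-preserving isomorphism induced by right multiplication). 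Then I would check that each generator preserves this grading: for $\mh H(G^\circ,L,\cL)$ this is the connected statement applied summand by summand, and for $\Delta(N_w)$ with $w\in\cR_\cL$ it follows from \eqref{eq:1.2}/\eqref{eq:2.4}, since that operator is induced by the variety isomorphism $gP\mapsto gw^{-1}P$ permuting the pieces $\mc P_y^\circ\gamma^{-1}$, which is compatible with the $H_*^{M(y)^\circ}(\{y\})$-action and hence descends to a degree-preserving map of $E_{y,\sigma,0}$. Likewise $\pi_0(Z_G(\sigma,y))\subset\pi_0(M(y))$ preserves the grading, its action arising from $M(y)$ acting on the variety $\mc P_y$ and commuting with $H_*^{M(y)^\circ}(\{y\})$. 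With both actions degree-preserving, $\Hom_{\pi_0(Z_G(\sigma,y))}(\rho,-)$ can be applied in each degree, yielding the stated decomposition.

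The step I expect to be the main obstacle is the grading part: one must be sure that forming $E_{y,\sigma,0}=\C_{\sigma,0}\otimes_{H_*^{M(y)^\circ}(\{y\})}H_*^{M(y)^\circ}(\mc P_y,\dot{\cL})$ does not destroy the homological grading — this is exactly the phenomenon special to $r=0$ and is the content of Lemma~\ref{lem:2.6} in the connected case — and, beyond that, that the $\cR_\cL$-part of the $\mh H(G,L,\cL)$-action respects it, which requires tracing through the geometric origin of $\Delta(N_w)$ in \eqref{eq:1.2}. The complete-reducibility half, by contrast, is a routine application of the fact that semisimplicity descends along induction over a finite crossed-product extension in characteristic zero.
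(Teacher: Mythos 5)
Your proposal is correct and follows essentially the same route as the paper's proof: reduce to the connected case of Lemma \ref{lem:2.6} via the induction isomorphism of Lemma \ref{lem:2.1} (the paper's \eqref{eq:2.71}), observe that induction along the finite twisted crossed product $\C[\cR_\cL,\natural_\cL]\ltimes\mh H(G^\circ,L,\cL)$ preserves complete reducibility and matches $\ind H_n(\mc P_y^\circ,\dot{\cL})$ with $H_n(\mc P_y,\dot{\cL})$, and then pass to the $\rho$-isotypic part using that $\pi_0(Z_G(\sigma,y))$ commutes with the $\mh H(G,L,\cL)$-action. Your Maschke-type averaging and the explicit check that $\Delta(N_w)$ preserves the homological degree merely spell out steps the paper leaves implicit.
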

\begin{proof}
By Lemma \ref{lem:2.1}
\begin{equation}\label{eq:2.71}
E_{y,\sigma,0} \cong \ind_{\mh H (G^\circ,L,\cL)}^{\mh H (G,L,\cL)}
E^\circ_{y,\sigma,0} .
\end{equation}
From Lemma \ref{lem:2.6} we know that $E^\circ_{y,\sigma,0}$ is completely
reducible. As $\mh H (G^\circ,L,\cL) = \C [\cR_\cL,\natural_\cL] \ltimes 
\mh H (G^\circ,L,\cL)$ where $\C [\cR_\cL,\natural_\cL]$ is a twisted group
algebra of a finite group acting on $\mh H (G^\circ,L,\cL)$, the induction in 
\eqref{eq:2.71} preserves complete reducibility. 

From Lemma \ref{lem:2.6} we know that $E^\circ_{y,\sigma,0} =
\bigoplus_n H_n (\mc P_y^\circ,\dot{\cL})$. The proof of Lemma \ref{lem:2.1}
shows that 
\[
\ind_{\mh H (G^\circ,L,\cL)}^{\mh H (G,L,\cL)} H_n (\mc P_y^\circ,\dot{\cL})
\cong \C [\cR_\cL,\natural_\cL] \otimes_\C H_n (\mc P_y^\circ,\dot{\cL}) \cong
H_n (\mc P_y,\dot{\cL}) ,
\]
so $E_{y,\sigma,0} = \bigoplus_n H_n (\mc P_y,\dot{\cL})$ as 
$\mh H(G,L,\cL)$-modules.
Since the action of $\pi_0 (Z_G (\sigma,y))$ commutes with that of 
$\mh H (G,L,\cL)$, $E_{y,\sigma,0,\rho} = \Hom_{\pi_0 (Z_G (\sigma,y))} (\rho,
E_{y,\sigma,0})$ is also completely reducible, and the decomposition
according to homological degree persists in $E_{y,\sigma,0,\rho}$.
\end{proof}

We note that the definitions \eqref{eq:2.74} and \eqref{eq:2.18} also
can be used with $G$ instead of $G^\circ$, provided that one involves
the generalized Springer correspondence for disconnected groups from
\cite[\S 4]{AMS}. In this way we define the $\mh H (G,L,\cL)$-module
$M_{y,\sigma_0,0,\rho}$.

Now we are ready to prove the main result of this section. It generalizes
\cite[Corollary 8.18]{Lus5} to disconnected groups $G$.
Recall that Condition \ref{cond:1} is in force.

\begin{thm}\label{thm:2.16}
Let $y \in \mf g$ be nilpotent and let $(\sigma,r)/\!\!\sim \; \in V_y$ 
be semisimple.
Let $\rho \in \Irr \big(\pi_0 (Z_G (\sigma,y))\big)$ be such that 
$\Psi_{Z_G (\sigma_0)}(y,\rho) = (L,\cC_v^L,\cL)$ (up to $G$-conjugation).
\enuma{
\item If $r \neq 0$, then $E_{y,\sigma,r,\rho}$ has a unique irreducible
quotient $\mh H (G,L,\cL)$-module. We call it $M_{y,\sigma,r,\rho}$.
\item If $r = 0$, then $E_{y,\sigma_0,r,\rho}$ has a unique 
irreducible subquotient isomorphic to $M_{y,\sigma_0,0,\rho}$. This subquotient
is the component of $E_{y,\sigma_0,r,\rho}$ in one homological degree
(as in Lemma \ref{lem:2.24}).
\item Parts (a) and (b) set up a canonical bijection between $\Irr_{r}
(\mh H (G,L,\cL))$ and the $G$-orbits of triples $(y,\sigma,\rho)$ as above.
\item The two sets from part (c) are canonically in bijection with the 
collection of $G$-orbits of triples $(y,\sigma_0,\rho)$ as in Proposition 
\ref{prop:2.7}. 
(The only difference is that $\sigma_0 \in Z_{\mf g}(y)$ instead of 
$(\sigma,r) \in \mathrm{Lie}(Z_{G \times \C^\times}(y))$. That is, $(y,
\sigma_0,\rho)$ is obtained from $(y,\sigma,\rho)$ via Lemma \ref{lem:2.2}.) 
} 
\end{thm}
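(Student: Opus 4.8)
The strategy is to deduce Theorem \ref{thm:2.16} from the connected case (Theorem \ref{thm:2.9}) via Clifford theory, using the intertwining operators $J^\gamma$ constructed in Proposition \ref{prop:2.13} and the module identifications assembled in Lemmas \ref{lem:2.10}--\ref{lem:2.15}. The point is that $\mh H (G,L,\cL) = \C[\cR_\cL,\natural_\cL] \ltimes \mh H (G^\circ,L,\cL)$ is a crossed product of a twisted group algebra of a finite group with $\mh H (G^\circ,L,\cL)$, so Clifford theory (in the form quoted from \cite[\S 1]{AMS}, \cite[Theorem 1.2]{Sol2}) reduces everything to the connected group together with the bookkeeping of $2$-cocycles.

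\textbf{Step 1: reduction of irreducibles to the connected case.} First I would fix $r \in \C$ and start from Theorem \ref{thm:2.9}.c, which parametrizes $\Irr_r(\mh H (G^\circ,L,\cL))$ by $G^\circ$-orbits of triples $(y,\sigma,\rho^\circ)$ with $\Psi_{Z_{G^\circ}(\sigma_0)}(y,\rho^\circ) = (L,\cC_v^L,\cL)$. Every $\pi \in \Irr_r(\mh H (G,L,\cL))$ restricts to a nonzero semisimple $\mh H (G^\circ,L,\cL)$-module on which ${\mb r}$ acts as $r$, so some $M^\circ_{y,\sigma,r,\rho^\circ}$ occurs in $\pi|_{\mh H (G^\circ,L,\cL)}$. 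By Lemma \ref{lem:2.14}, for fixed $(y,\sigma,\rho^\circ)$ the irreducibles of $\mh H (G,L,\cL)$ containing $M^\circ_{y,\sigma,r,\rho^\circ}$ are exactly the $\tau \ltimes M^\circ_{y,\sigma,r,\rho^\circ}$ with $\tau \in \Irr(\C[\cR_{\cL,y,\sigma,\rho^\circ},\natural_\cL])$. Now I would combine this with Lemma \ref{lem:2.11}: via $\rho = \rho^\circ \rtimes \tau^*$ (and the opposite-cocycle bijection $\tau \leftrightarrow \tau^*$ of \cite[Lemma 1.3]{AMS}), the pairs $(y,\sigma,\rho^\circ,\tau)$ are in bijection with $G$-orbits of triples $(y,\sigma,\rho)$ where $\rho \in \Irr(\pi_0(Z_G(\sigma,y)))$ restricts to a sum of $\cR_\cL$-conjugates of $\rho^\circ$, and the condition $\Psi_{Z_{G^\circ}(\sigma_0)}(y,\rho^\circ) = (L,\cC_v^L,\cL)$ becomes $\Psi_{Z_G(\sigma_0)}(y,\rho) = (L,\cC_v^L,\cL)$ by \cite[Proposition 5.6]{AMS} (the disconnected cuspidal support is determined by the connected one plus the $\cR_\cL$-action on enhancements). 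This yields part (c) once parts (a) and (b) are in place, and part (d) is then immediate from Lemma \ref{lem:2.2}.

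\textbf{Step 2: the quotient (a) and subquotient (b) statements.} For $r \neq 0$: by Lemma \ref{lem:2.15}, $E_{y,\sigma,r,\rho} \cong \tau \ltimes E^\circ_{y,\sigma,r,\rho^\circ} = \ind_{\mh H (G^\circ \cR_{\cL,y,\sigma,\rho^\circ},L,\cL)}^{\mh H (G,L,\cL)} (V_\tau \otimes E^\circ_{y,\sigma,r,\rho^\circ})$, where $\rho = \rho^\circ \rtimes \tau^*$. By Theorem \ref{thm:2.9}.a, $E^\circ_{y,\sigma,r,\rho^\circ}$ has unique irreducible quotient $M^\circ_{y,\sigma,r,\rho^\circ}$, and all other constituents $M^\circ_{y',\sigma',r,\rho'}$ have strictly larger $\dim \cC_{y'}^{G^\circ}$ (Theorem \ref{thm:2.9}.d). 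The functor $\tau \ltimes (-)$ (tensoring with the fixed projective rep $V_\tau$ and inducing) is exact, so it sends the canonical quotient map to a surjection $E_{y,\sigma,r,\rho} \twoheadrightarrow \tau \ltimes M^\circ_{y,\sigma,r,\rho^\circ} = M_{y,\sigma,r,\rho}$; uniqueness of this irreducible quotient follows because any other constituent of $E_{y,\sigma,r,\rho}$ restricts (over $\mh H (G^\circ,L,\cL)$) to $\cR_\cL$-conjugates of some $M^\circ_{y',\sigma',r,\rho'}$ with $\dim \cC_{y'}^{G^\circ} > \dim \cC_y^{G^\circ}$, hence cannot be a quotient isomorphic to $M_{y,\sigma,r,\rho}$ (whose $\mh H (G^\circ,L,\cL)$-constituents all lie on $\dim \cC_y^{G^\circ}$) — here I would use that $\cR_\cL$ permutes the $G^\circ$-orbits without changing the dimensions of the nilpotent orbits. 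For $r = 0$: I would run the same argument through Lemma \ref{lem:2.24}, which gives that $E_{y,\sigma_0,0,\rho}$ is completely reducible with a decomposition $\bigoplus_n \Hom_{\pi_0(Z_G(\sigma,y))}(\rho, H_n(\mc P_y,\dot{\cL}))$ by homological degree; applying Lemma \ref{lem:2.8} together with Lemma \ref{lem:2.14} (and the definition of $M_{y,\sigma_0,0,\rho}$ via the disconnected generalized Springer correspondence of \cite[\S 4]{AMS}, noted just before the theorem), one sees $M_{y,\sigma_0,0,\rho}$ occurs exactly once, as the component in the homological degree predicted by Lemma \ref{lem:2.8} for the connected case, shifted appropriately.

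\textbf{Main obstacle.} The delicate point is matching up the $2$-cocycles so that the Clifford-theory bijections of Lemma \ref{lem:2.11} (governing how $\rho$ decomposes, with cocycle $\natural_\cL^{-1}$) and Lemma \ref{lem:2.14} (governing how $\pi$ decomposes, with cocycle $\natural_\cL$) are genuinely inverse to each other after passing through $\tau \mapsto \tau^*$, and that Lemma \ref{lem:2.15} correctly identifies $E_{y,\sigma,r,\rho}$ with $\tau \ltimes E^\circ_{y,\sigma,r,\rho^\circ}$ for the \emph{same} $\tau$ that appears in $M_{y,\sigma,r,\rho}$. All three lemmas are already proven in the excerpt and the cocycle identifications carried out there (via the action of $\natural_\cL$ on $\cH^*(K)|_y$ in Lemma \ref{lem:2.11}, and via Proposition \ref{prop:2.13}), so what remains is essentially to check that the quotient map $E^\circ_{y,\sigma,r,\rho^\circ} \to M^\circ_{y,\sigma,r,\rho^\circ}$ is $\cR_{\cL,y,\sigma,\rho^\circ}$-equivariant for the operators $J^\gamma$ — which it is, since $J^\gamma$ is an $\mh H (G^\circ,L,\cL)$-semilinear automorphism and the quotient is canonical — so that applying $\tau \ltimes (-)$ to it produces the asserted quotient of $E_{y,\sigma,r,\rho}$. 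Once this compatibility is recorded, parts (a)--(d) fall out as described.
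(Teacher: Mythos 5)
Your overall route is the same as the paper's: reduce to $\mh H (G^\circ,L,\cL)$ via Clifford theory, with Lemmas \ref{lem:2.11}, \ref{lem:2.14} and \ref{lem:2.15} doing the cocycle bookkeeping, Theorem \ref{thm:2.9} supplying the connected case, and Lemma \ref{lem:2.2} giving part (d). The genuine gap is in part (a), at the words ``uniqueness of this irreducible quotient follows because any other constituent \dots cannot be a quotient isomorphic to $M_{y,\sigma,r,\rho}$''. What this argument proves is only that $M_{y,\sigma,r,\rho}$ occurs with multiplicity one in $E_{y,\sigma,r,\rho}$; it does not exclude that some other constituent $M_{y',\sigma',r,\rho'}$ (with $\dim \cC_{y'}^{G^\circ} > \dim \cC_y^{G^\circ}$) is \emph{also} an irreducible quotient, in which case the head of $E_{y,\sigma,r,\rho}$ would not be irreducible. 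Exactness of $\tau \ltimes (-)$ transports the quotient map $E^\circ_{y,\sigma,r,\rho^\circ} \to M^\circ_{y,\sigma,r,\rho^\circ}$, but it does not transport the property ``unique maximal proper submodule'' from the connected case to the crossed product. This is exactly where the paper spends most of its proof of (a): using Wedderburn's theorem and the disjointness of the constituent sets of $\tau \ltimes M^\circ_{y,\sigma,r,\rho^\circ}$ and $\tau \ltimes N^\circ$ (where $N^\circ = \ker (E^\circ_{y,\sigma,r,\rho^\circ} \to M^\circ_{y,\sigma,r,\rho^\circ})$), it produces an element $h^\circ \in \mh H (G^\circ,L,\cL)$ annihilating $\tau \ltimes N^\circ$ and acting as the identity on $\tau \ltimes M^\circ_{y,\sigma,r,\rho^\circ}$, together with elements $h_{b b'}$ permuting the subspaces $b\, M^\circ_{y,\sigma,r,\rho^\circ}$, and then verifies that every $x \notin \tau \ltimes N^\circ$ generates all of $E_{y,\sigma,r,\rho}$; hence $\tau \ltimes N^\circ$ is the \emph{unique} maximal proper submodule. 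Some argument of this kind (the paper's generation argument, or alternatively a Frobenius reciprocity argument using that the restriction of any irreducible $\mh H (G,L,\cL)$-module to $\mh H (G^\circ,L,\cL)$ is semisimple, so that any irreducible quotient of $E_{y,\sigma,r,\rho}$ must restrict to a sum of $\cR_\cL$-conjugates of quotients of $E^\circ_{y,\sigma,r,\rho^\circ}$, i.e.\ of copies of $M^\circ_{y,\sigma,r,\rho^\circ}$) must be supplied; as written, the step fails.

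The same remark applies to the multiplicity-one claims in part (b): the paper again invokes the element $h^\circ$ there, whereas your sketch quietly assumes it. Note also that the paper's proof of (b) does not work only with Lemma \ref{lem:2.8} and Lemma \ref{lem:2.24}: it first passes through parabolic induction from $Q = Z_G(\sigma_0)$, via \eqref{eq:2.38} and the identification \eqref{eq:2.56} of $M^Q_{y,\sigma_0,0,\rho^\circ \rtimes \tau^*}$ with $\tau \ltimes M^{Q^\circ}_{y,\sigma_0,0,\rho^\circ}$, in order to define the surjection \eqref{eq:2.39} onto $M_{y,\sigma_0,0,\rho}$ and to control the homological degree; your ``shifted appropriately'' hides this step. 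Parts (c) and (d), given (a) and (b), proceed essentially as you describe (the paper makes the isomorphism criterion precise with \cite[Theorem 1.2]{AMS} and \cite[Theorem 1.2]{Sol2}).
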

\begin{proof}
Let $\rho^\circ$ be an irreducible constituent of $\rho |_{\pi_0 (Z_{G^\circ}
(\sigma,y))}$. By Lemma \ref{lem:2.11} there is a unique $\tau^* \in \Irr 
(\C [\cR_{\cL,y,\sigma,\rho^\circ}, \natural_\cL^{-1}])$ such that 
$\rho \cong \rho^\circ \rtimes \tau^*$.\\
(a) From Lemma \ref{lem:2.15} we know that 
\[
E_{y,\sigma,r,\rho} \cong \tau \ltimes E^\circ_{y,\sigma,r,\rho^\circ} .
\]
By Lemma \ref{lem:2.14} it has the irreducible quotient 
\begin{equation}\label{eq:2.43}
\tau \ltimes M^\circ_{y,\sigma,r,\rho^\circ} =  
(\tau \ltimes E^\circ_{y,\sigma,r,\rho^\circ}) / (\tau \ltimes N^\circ ) ,
\end{equation}
where $N^\circ = \ker (E^\circ_{y,\sigma,r,\rho^\circ} \to M^\circ_{y,\sigma,
r,\rho^\circ})$. Hence $\tau \ltimes N^\circ$ is a maximal proper submodule of 
$E_{y,\sigma,r,\rho^\circ}$. We define 
\begin{align*}
& I_M = \{ V \in \Irr_r (\mh H (G^\circ,L,\cL)) : V \text{ is a constituent of }
\tau \ltimes M^\circ_{y,\sigma,r,\rho^\circ} \} , \\
& I_N = \{ V \in \Irr_r (\mh H (G^\circ,L,\cL)) : V \text{ is a constituent of }
\tau \ltimes N^\circ \} . 
\end{align*}
Recall from Theorem \ref{thm:2.9}.d that all the irreducible 
$\mh H (G^\circ,L,\cL)$-constituents of $N^\circ$ are of the form
$M^\circ_{y',\sigma',r,\rho'^\circ}$, where $\dim \cC_{y'}^{G^\circ} >
\dim \cC_y^{G^\circ}$. Since $\cR_\cL$ acts by algebraic automorphisms on
$G^\circ$, the same holds for all $\mh H (G^\circ,L,\cL)$-constituents of
$\tau \ltimes N^\circ$. Hence $I_M$ and $I_N$ are disjoint. Moreover these
sets are finite, so by Wedderburn's theorem about irreducible representations
the canonical map
\[
\mh H (G^\circ,L,\cL) \to \bigoplus\nolimits_{V \in I_M} \End (V) \oplus
\bigoplus\nolimits_{V \in I_N} \End (V)
\]
is surjective. In particular there exists an element of $\mh H (G^\circ,L,
\cL)$ which annihilates all $V \in I_N$ and fixes all $V \in I_M$ pointwise.
By Theorem \ref{thm:2.4}.d $\tau \ltimes N^\circ$ has finite length, so a 
suitable power $h^\circ$ of that element annihilates $\tau \ltimes N^\circ$. 
Since $\tau \ltimes M^\circ_{y,\sigma,r,\rho^\circ}$ is completely reducible as
$\mh H (G^\circ,L,\cL)$-module, $h^\circ$ acts as the identity on it.

Choose a basis $\cB$ of $\C [\cR_\cL,\natural_\cL] \underset{\C[\cR_{\cL,y,
\sigma,\rho^\circ},\natural_\cL]}{\otimes} V_\tau$, consisting of elements
of the form $b = N_\gamma \otimes v$ with $\gamma \in \cR_\cL$
and $v \in V_\tau$. Since $\tau \ltimes M^\circ_{y,\sigma,r,\rho^\circ}$ is
irreducible, we can find for all $b,b' \in \cB$ an element $h_{b b'} \in 
\mh H (G,L,\cL)$ which maps $b' M^\circ_{y,\sigma,r,\rho^\circ}$ bijectively 
to $b M^\circ_{y,\sigma,r,\rho^\circ}$ and annihilates all the other 
subspaces $b'' M^\circ_{y,\sigma,r,\rho^\circ}$. 

Consider any $x \in E_{y,\sigma,r,\rho^
\Hom_{\pi_0 (Z_{G^\circ} 
(s,u))} (\rho^\circ, H_{d(u)} (\mc B_{G^\circ}^{s,u},\C) )\circ} \setminus \tau \ltimes 
N^\circ$. Write it in terms of $\cB$ as $x = \sum_{b \in \cB} b \otimes x_b$
with $x_b \in E^\circ_{y,\sigma,r,\rho}$. For at least one $b' \in \cB ,\, 
x_{b'} \in E^\circ_{y,\sigma,r,\rho} \setminus N^\circ$. Then 
\[
h^\circ h_{b b'} x = b \otimes v' \text{ for some } 
v' \in E^\circ_{y,\sigma,r,\rho^\circ} \setminus N^\circ .
\]
As $\mh H (G^\circ,L,\cL)$-representation 
\[
b E^\circ_{y,\sigma,r,\rho^\circ} = (N_\gamma \otimes v) 
E^\circ_{y,\sigma,r,\rho^\circ} \cong \gamma \cdot 
E^\circ_{y,\sigma,r,\rho^\circ} ,
\]
which has the unique maximal proper submodule $\gamma \cdot N^\circ 
\cong b N^\circ$. Hence 
\[
\mh H (G^\circ,L,\cL) h^\circ h_{b b'} x = b E^\circ_{y,\sigma,r,\rho^\circ} .
\]
This works for every $b \in \cB$, so $\mh H (G,L,\cL) x = E_{y,\sigma,r,\rho}$.
Consequently there is no other maximal proper submodule of 
$E_{y,\sigma,r,\rho}$ besides $\tau \ltimes N^\circ$.\\
(b) Put $Q = Z_G (\sigma_0)$. By Lemma \ref{lem:2.1} and \eqref{eq:2.12} 
\[
E_{y,\sigma_0,0} \cong \ind_{\mh H (Q,L,\cL)}^{\mh H (G,L,\cL)} 
E^Q_{y,\sigma_0,0} = \ind_{\mh H (Q^\circ,L,\cL)}^{\mh H (G,L,\cL)} 
E^{Q^\circ}_{y,\sigma_0,0} .
\]
Now Theorem \ref{thm:2.4}.d and \eqref{eq:2.37} (but for $G$) imply 
\[
E_{y,\sigma_0,0,\rho} \cong 
\ind_{\mh H (Q,L,\cL)}^{\mh H (G,L,\cL)} E^Q_{y,\sigma_0,0,\rho}.  
\]
By Lemma \ref{lem:2.15} $E^Q_{y,\sigma_0,0,\rho^\circ \rtimes \tau^*} 
\cong \tau \ltimes E^{Q^\circ}_{y,\sigma_0,0,\rho^\circ}$, whereas 
\cite[(54)]{AMS} shows that $M^Q_{y,\rho^\circ \rtimes \tau^*} \cong \tau 
\ltimes M^{Q^\circ}_{y,\rho^\circ}$ as $\C[W_\cL^Q,\natural_\cL^Q]$-modules.
Decreeing that $S(\mf t^*)$ acts trivially on $V_\tau$, we obtain an 
isomorphism of $\C[W_\cL^Q,\natural_\cL^Q] \ltimes S (\mf t^*)$-modules
\begin{equation}\label{eq:2.56}
M^Q_{y,\sigma_0,0,\rho^\circ \rtimes \tau^*} \cong \tau \ltimes 
M^{Q^\circ}_{y,\sigma_0,0,\rho^\circ} .
\end{equation}
From Lemma \ref{lem:2.8} we know that $ E^{Q^\circ}_{y,\sigma_0,0,
\rho^\circ}$ has a direct summand isomorphic to $M^{Q^\circ}_{y,\sigma_0,0,
\rho^\circ}$. Hence there is a surjective $\mh H (G,L,\cL)$-module map
\begin{equation}\label{eq:2.39}
E_{y,\sigma_0,0,\rho} \cong \ind_{\mh H (Q,L,\cL)}^{\mh H (G,L,\cL)} 
(\tau \ltimes E^{Q^\circ}_{y,\sigma_0,0,\rho^\circ}) \to
\ind_{\mh H (Q,L,\cL)}^{\mh H (G,L,\cL)} (\tau 
\ltimes M^{Q^\circ}_{y,\sigma_0,0,\rho^\circ}) \cong M_{y,\sigma_0,0,\rho}.
\end{equation}
The same argument as for part (a) shows that there exists a $h^\circ \in 
\mh H (G^\circ,L,\cL)$ which annihilates $\ker (E_{y,\sigma_0,0,\rho} \to
M_{y,\sigma_0,0,\rho})$ and acts as the identity on $M_{y,\sigma_0,0,\rho}$. 
Therefore $M_{y,\sigma_0,0,\rho}$ appears with multiplicity one in 
$E_{y,\sigma_0,0,\rho}$. By the complete reducibility from Lemma \ref{lem:2.24}, 
it appears as a direct summand.

Recall from from Lemma \ref{lem:2.15} and \eqref{eq:2.38} that there are
isomorphisms of $\mh H (G,L,\cL)$-modules 
\[
E_{y,\sigma_0,0,\rho} \cong \tau \ltimes E^\circ_{y,\sigma_0,0,\rho^\circ} 
\cong \tau \ltimes \ind_{\mh H (Q^\circ,L,\cL)}^{\mh H (G,L,\cL)} 
E^{Q^\circ}_{y,\sigma_0,0,\rho^\circ} .
\]
From these and \eqref{eq:2.39} we deduce
\begin{equation}\label{eq:2.72}
M_{y,\sigma,0,\rho^\circ \rtimes \tau^*} \cong 
\tau \ltimes M^\circ_{y,\sigma,0,\rho^\circ} .
\end{equation}
Combining these with Lemma \ref{lem:2.8}, we see that 
$M_{y,\sigma,0,\rho^\circ \rtimes \tau^*}$ is the component of 
$E_{y,\sigma_0,0,\rho}$ in one homological degree.\\
(c) For $r \neq 0$, part (a) and Lemma \ref{lem:2.15} 
induce an isomorphism of $\mh H (G,L,\cL)$-modules 
\begin{equation}\label{eq:2.40}
M_{y,\sigma,r,\rho^\circ \rtimes \tau^*} \cong 
\tau \ltimes M^\circ_{y,\sigma,r,\rho^\circ} .
\end{equation}
From Lemma \ref{lem:2.14} we see that the irreducible modules \eqref{eq:2.40} 
and \eqref{eq:2.72} exhaust \\ $\Irr (\mh H (G,L,\cL))$. By \cite[Theorem 1.2]{AMS} 
and \cite[Theorem 1.2]{Sol2} two such representations are isomorphic if and only 
if there is a $\gamma \in \cR_\cL$ such that 
\begin{equation}\label{eq:2.41}
M^\circ_{y,\sigma,r,\rho^\circ} \cong \gamma \cdot M^\circ_{y',\sigma',
r,\rho'^\circ} \quad \text{and} \quad \tau \cong \gamma \cdot \tau' .
\end{equation}
By Theorem \ref{thm:2.9}.c the first isomorphism means that $(y,\sigma,
\rho^\circ)$ and $(y',\sigma',\rho'^\circ)$ are $G$-conjugate, while the 
second is equivalent to $\tau^*$ and $\tau'^*$ being associated under the 
action of $G / G^\circ$. With Lemma \ref{lem:2.11} we see that \eqref{eq:2.41} 
is equivalent to: 
\[
(y,\sigma,\rho = \rho^\circ \rtimes \tau^*) \quad \text{and} \quad (y',\sigma',
\rho' = \rho'^\circ \rtimes \tau'^*) \quad \text{are } G\text{-conjugate.}
\]
This yields the bijection between $\Irr (\mh H (G,L,\cL))$ and the indicated set 
of parameters. It is canonical because $M_{y,\sigma,r,\rho}$ does not depend 
on any arbitrary choices, in particular the 2-cocycles from the 
previous paragraph do not appear in $\rho$.\\
(d) Apply Lemma \ref{lem:2.2}.b to part (c).
\end{proof}

Recall that all the above was proven under Condition \ref{cond:1}. Now we
want to lift this condition, so we consider a group $G$ which does not
necessarily equal $G^\circ N_G (P,\cL)$. In \eqref{eq:1.12} we saw that
$\mh H (G,L,\cL)$ remains as in this section, but the parameters for irreducible
representations could change when we replace $G^\circ N_G (P,\cL)$ by $G$.

\begin{lem}\label{lem:2.19}
The parametrizations of $\Irr_r (\mh H (G,L,\cL))$ obtained in Theorem
\ref{thm:2.16} remain valid without Condition \ref{cond:1}.
\end{lem}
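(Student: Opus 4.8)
The plan is to reduce the statement without Condition \ref{cond:1} to the case where Condition \ref{cond:1} holds, using the equality of algebras \eqref{eq:1.12}, namely $\mh H (G,L,\cL) = \mh H (G^\circ N_G(P,\cL),L,\cL)$. Write $G_1 = G^\circ N_G (P,\cL)$. This is a normal subgroup of $G$ containing $G^\circ$, it satisfies Condition \ref{cond:1} (by construction $G_1 = G_1^\circ N_{G_1}(P,\cL)$, since $G_1^\circ = G^\circ$ and $N_{G_1}(P,\cL) = N_G(P,\cL)$), and its associated graded Hecke algebra literally coincides with $\mh H(G,L,\cL)$. So Theorem \ref{thm:2.16} applies verbatim to $G_1$, giving a parametrization of $\Irr_r(\mh H(G,L,\cL)) = \Irr_r(\mh H(G_1,L,\cL))$ by $G_1$-orbits of triples $(y,\sigma,\rho_1)$ with $\rho_1 \in \Irr\big(\pi_0(Z_{G_1}(\sigma,y))\big)$ such that $\Psi_{Z_{G_1}(\sigma_0)}(y,\rho_1) = (L,\cC_v^L,\cL)$. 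The task is therefore to show that this set of parameters is canonically in bijection with the analogous set formed with $G$ in place of $G_1$.

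First I would observe that the three groups $G^\circ \subset G_1 \subset G$ all have the same identity component, and $W_\cL = N_G(\cL)/L$ is already equal to $N_{G_1}(\cL)/L$ because $W_\cL = W_\cL^\circ \rtimes \cR_\cL$ with $\cR_\cL = N_G(P,\cL)/L \subset G_1/G^\circ$; likewise the 2-cocycle $\natural_\cL$ and the parameters $c_i$ depend only on $G_1$. This is exactly the content of \eqref{eq:1.12}. Next, for any nilpotent $y$ and semisimple $(\sigma,r)$ as in the theorem, the variety $\cP_y$ and the $\mh H(G,L,\cL)$-module structure on $H_*^{M(y)^\circ}(\cP_y,\dot{\cL})$ are built entirely from $G_1$: indeed the isomorphism \eqref{eq:2.1} gives $\cP_y \cong \cP_y^\circ \times \cR_\cL$, and $\cR_\cL$ sits inside $G_1$. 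So the standard modules $E_{y,\sigma,r,\rho}$ and their irreducible quotients/summands $M_{y,\sigma,r,\rho}$, and all the parametrizing data, are intrinsic to $G_1$ even when we think of them as living over $G$. The remaining point is purely group-theoretic: the quotient $G/G_1$ acts on the collection of $G_1$-orbits of triples, but I claim it acts trivially, so that $G_1$-orbits and $G$-orbits of such triples coincide. To see this, note that $\mh H(G,L,\cL) = \mh H(G_1,L,\cL)$ as algebras, so $\Irr_r$ is literally the same set; under the Theorem \ref{thm:2.16} bijection for $G_1$ this set is identified with $G_1$-orbits of triples, and also (taking $G = G_1$ there) there is nothing further to identify. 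Conjugation by $g \in G$ induces an algebra automorphism of $\mh H(G_1,L,\cL)$, but by Lemma \ref{lem:1.7} this algebra is canonically attached to $(G_1,L,\cL) = (G,L,\cL)$, and the automorphism is inner (it is implemented by the intertwiners of Proposition \ref{prop:2.13} together with the $\C[\cR_\cL,\natural_\cL]$-action); hence it fixes every isomorphism class, so it must fix every $G_1$-orbit of parameters. Therefore $G_1$-orbits $=$ $G$-orbits of triples $(y,\sigma,\rho)$, and the same holds for the triples $(y,\sigma_0,\rho)$ of part (d) via Lemma \ref{lem:2.2}.

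Assembling these observations: apply Theorem \ref{thm:2.16} to $G_1 = G^\circ N_G(P,\cL)$, which satisfies Condition \ref{cond:1}; use \eqref{eq:1.12} to identify $\mh H(G_1,L,\cL)$ with $\mh H(G,L,\cL)$ and $\Irr_r$ on both sides; use the identifications $N_G(\cL)/L = N_{G_1}(\cL)/L$, $Z_G(\sigma,y) = Z_{G_1}(\sigma,y)$ (the latter because $Z_G(\sigma,y)/G^\circ$-part of $G$ relevant to $\mh H$ is $\cR_{\cL,y,\sigma} \subset G_1/G^\circ$ by Lemma \ref{lem:2.10}), and the fact that the cuspidal support map $\Psi_{Z_G(\sigma_0)}$ restricted to $(y,\rho)$ with the correct cuspidal support only involves data in $G_1$, to match parameters. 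The one verification that needs care — and which I expect to be the main obstacle — is showing that the relevant component groups $\pi_0(Z_G(\sigma,y))$, as far as they enter the parametrization, are the same as $\pi_0(Z_{G_1}(\sigma,y))$; more precisely, that enhancements $\rho$ of $(y,\sigma)$ over $G$ satisfying $\Psi_{Z_G(\sigma_0)}(y,\rho) = (L,\cC_v^L,\cL)$ factor through the image of $Z_{G_1}(\sigma,y)$, equivalently that $Z_G(\sigma,y) = Z_{G_1}(\sigma,y)$ whenever $\cP_y \neq \es$ and $\Psi(y,\rho) = \tau$. This should follow because any $g \in Z_G(\sigma,y)$ with $\cP_y \neq \es$ normalizes a parabolic of the correct type and stabilizes $\cL$ up to the required compatibility, hence lies in $N_G(P,\cL)G^\circ = G_1$ after adjusting within $Z_{G^\circ}(\sigma,y)$; this is essentially Lemma \ref{lem:2.10}(a), which identifies $\cR_{\cL,y,\sigma}$ with $\pi_0(Z_G(\sigma,y))/\pi_0(Z_{G^\circ}(\sigma,y))$ and shows the relevant quotient already lives inside $\cR_\cL \subset G_1/G^\circ$. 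Once this is in place, Theorem \ref{thm:2.16}(c),(d) for $G_1$ transfers word for word to $G$, completing the proof.
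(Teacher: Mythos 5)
Your reduction to $G_1 := G^\circ N_G (P,\cL)$ via \eqref{eq:1.12} is the right starting point, but the step you yourself flag as the main obstacle is resolved incorrectly. The claim that $Z_G (\sigma,y) = Z_{G_1}(\sigma,y)$ whenever $\cP_y \neq \emptyset$ and the cuspidal support condition holds (equivalently, that the relevant enhancements $\rho$ for $G$ factor through the image of $Z_{G_1}(\sigma,y)$) is false in general, and Lemma \ref{lem:2.10} cannot be quoted for it, since that lemma was proved under Condition \ref{cond:1}, i.e.\ precisely for $G_1$. An element $g \in G \setminus G_1$ can perfectly well centralize $(\sigma,y)$ (already $y = \sigma = 0$ gives $Z_G (\sigma,y) = G \supsetneq G_1$); what $g$ fails to do is stabilize $\cL$ -- it carries it to a non-isomorphic cuspidal local system $\cL' = \Ad (g)^* \cL$ on the same orbit. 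For the same reason the argument that $G/G_1$ acts trivially on $G_1$-orbits of parameters "because the conjugation automorphism is inner" does not work: conjugation by such a $g$ does not even preserve the condition "cuspidal support equal to $(L,\cC_v^L,\cL)$ up to $G_1$-conjugacy", and the statement of Theorem \ref{thm:2.16} for $G$ genuinely involves representations of the larger group $\pi_0 (Z_G (\sigma_0,y))$, not of $\pi_0 (Z_{G_1}(\sigma_0,y))$.

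The missing idea is to pass between the two parameter sets by \emph{induction} of enhancements rather than by identifying component groups. Since no element of $G \setminus G_1$ stabilizes the $G_1$-orbit of $(L,\cC_v^L,\cL)$, the stabilizer in $Z_G (\sigma_0,y)$ of a $G_1$-relevant enhancement $\rho$ is exactly $Z_{G_1}(\sigma_0,y)$; hence $\rho^+ := \ind_{Z_{G_1}(\sigma_0,y)}^{Z_G (\sigma_0,y)}(\rho)$ is irreducible, and by \cite[Theorem 4.8.a]{AMS} it satisfies $\Psi_{Z_G (\sigma_0)}(y,\rho^+) = (L,\cC_v^L,\cL)$ up to $G$-conjugation. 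Conversely, from the $G$-orbit of $(y,\sigma_0,\rho^+)$ one recovers the $G_1$-orbit of $(y,\sigma_0,\rho)$ as the unique $G_1$-orbit inside it whose cuspidal support is $(L,\cC_v^L,\cL)$ up to $G_1$-conjugacy (the other constituents of $\rho^+ |_{Z_{G_1}(\sigma_0,y)}$ have cuspidal supports $\cL'$). Thus $(y,\sigma_0,\rho) \mapsto (y,\sigma_0,\rho^+)$ is the bijection between the triples of Theorem \ref{thm:2.16}.d for $G_1$ and for $G$, and Lemma \ref{lem:2.2} converts this to the triples of part (c). Your proposal is missing exactly this induction step, and the substitute you offer (equality of centralizers, triviality of the $G/G_1$-action) would fail whenever $G \neq G_1$.
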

\begin{proof}
By the definition of $N_G (P,\cL)$, no element of $G \setminus G^\circ N_G (P,\cL)$
can stabilize the $G^\circ N_G (P,\cL)$-orbit of $(L,\cC_v^L,\cL)$. So,
when we replace $G^\circ N_G (P,\cL)$ by $G$, the orbit of the cuspidal support 
$(L,\cC_v^L,\cL)$ becomes $[G : G^\circ N_G (P,\cL)]$ times larger. More
precisely, $G \cdot (L,\cC_v^L,\cL)$ can be written as a disjoint union of
$[G : G^\circ N_G (P,\cL)]$ orbits for $G^\circ N_G (P,\cL)$ with representatives
$(L,\cC_v^L,\cL')$, where $\cL' = \Ad (g)^* \cL$ for some $g \in N_G (P,L)$.

Let $(y,\sigma_0,\rho)$ be as in Theorem \ref{thm:2.16}, for the group
$G^\circ N_G (P,\cL)$. By Theorem \ref{thm:2.16}.d the stabilizer of 
$G^\circ N_G (P,\cL) \cdot (y,\sigma_0,\rho)$ in $G$ equals that of 
$G^\circ N_G (P,\cL) \cdot (L,\cC_v^L,\cL)$, so it is $G^\circ N_G (P,\cL)$.
In particular the $Z_G (\sigma_0,y)$-stabilizer of $\rho$ is precisely
$Z_{G^\circ N_G (P,\cL)}(\sigma_0,y)$, which implies that 
\[
\rho^+ = \ind_{Z_{G^\circ N_G (P,\cL)}(\sigma_0,y)}^{Z_G (\sigma_0,y)} (\rho)
\]
is an irreducible $\pi_0 (Z_G (\sigma_0,y))$-representation. By 
\cite[Theorem 4.8.a]{AMS} 
\[
\Psi_{Z_G (\sigma_0)}(y,\rho^+) = (L,\cC_v^L,\cL) 
\quad \text{up to } G\text{-conjugation.}
\]
From $G \cdot (\sigma_0,y,\rho^+)$ we can recover
$G^\circ N_G (P,\cL) \cdot (y,\sigma_0,\rho)$ as the unique 
$G^\circ N_G (P,\cL)$-orbit contained in it with cuspidal support $(L,\cC_v^L,\cL)$
up to $G^\circ N_G (P,\cL)$-conjugation. 

Consequently the canonical map $(y,\sigma_0,\rho) \mapsto (y,\sigma_0,\rho^+)$
provides a bijection between the triples in Theorem \ref{thm:2.16}.d for 
$G^\circ N_G (P,\cL)$, and the same triples for $G$. With Lemma \ref{lem:2.2}
(which is independent of Condition \ref{cond:1}) we can replace $(y,\sigma_0,\rho^+)$
by $(y,\sigma,\rho^+)$, obtaining the same triples as in Theorem \ref{thm:2.16}.c,
but for $G$.
\end{proof}

In Theorem \ref{thm:2.24} we showed that the assignment $(\sigma,y,r) \mapsto
E_{y,\sigma,r}$ is compatible with parabolic induction. That cannot be true for
the modules $E_{y,\sigma,r,\rho}$, if only because $\rho$ is not a correct part
of the data when $G$ is replaced by a Levi subgroup. Nevertheless a weaker 
version of Theorem \ref{thm:2.24} holds for $E_{y,\sigma,r,\rho}$ and 
$M_{y,\sigma,r,\rho}$.

Let $Q \subset G$ be an algebraic subgroup such that $Q \cap G^\circ$ is a Levi
subgroup of $G^\circ$ and $L \subset Q^\circ$. Let $y,\sigma,r,\rho$ be as in
Theorem \ref{thm:2.16}, with $\sigma,y \in \mf q = \Lie (Q)$. By \cite[\S 3.2]{Ree}
the natural map
\begin{equation}
\pi_0 (Z_Q (\sigma,y)) = \pi_0 ( Z_{Q \cap Z_G (\sigma_0)}(y)) \to
\pi_0 (Z_{Z_G (\sigma_0)}(y)) = \pi_0 (Z_G (\sigma,y)) 
\end{equation}
is injective, so we can consider the left hand side as a subgroup of the right
hand side. Let $\rho^Q \in \Irr \big( \pi_0 (Z_Q (\sigma,y)) \big)$ be such that 
$\Psi_{Z_Q (\sigma_0)} (y,\rho^Q) = (L,\cC_v^L,\cL)$. Then $E_{y,\sigma,r,\rho}, 
M_{y,\sigma,r,\rho}, E^Q_{y,\sigma,r,\rho^Q}$ and $M^Q_{y,\sigma,r,\rho^Q}$ are 
defined. 

\begin{prop}\label{prop:2.25}
\textbf{Erratum.} \texttt{For this proposition to hold (with the same proof) we
need an extra condition $r=0$ or $\epsilon (\sigma,r) \neq 0$, see the appendix.}
\enuma{
\item There is a natural isomorphism of $\mh H (G,L,\cL)$-modules
\[
\mh H (G,L,\cL) \underset{\mh H (Q,L,\cL)}{\otimes} E^Q_{y,\sigma,r,\rho^Q} \cong 
\bigoplus\nolimits_\rho \Hom_{\pi_0 (Z_Q (\sigma,y))} (\rho^Q ,\rho) \otimes 
E_{y,\sigma,r,\rho} ,
\]
where the sum runs over all $\rho \in \Irr \big( \pi_0 (Z_G (\sigma,y)) \big)$
with $\Psi_{Z_G (\sigma_0)}(y,\rho) = (L,\cC_v^L,\cL)$.
\item For $r=0$ part (a) contains an isomorphism of $S(\mf t^*) \rtimes \C[W_\cL ,
\natural_\cL]$-modules
\[
\mh H (G,L,\cL) \underset{\mh H (Q,L,\cL)}{\otimes} M^Q_{y,\sigma,0,\rho^Q} \cong 
\bigoplus\nolimits_\rho \Hom_{\pi_0 (Z_Q (\sigma,y))} (\rho^Q ,\rho) \otimes 
M_{y,\sigma,0,\rho} .
\]
\item The multiplicity of $M_{y,\sigma,r,\rho}$ in $\mh H (G,L,\cL) 
\underset{\mh H (Q,L,\cL)}{\otimes} E^Q_{y,\sigma,r,\rho^Q}$ is
$[\rho^Q : \rho]_{\pi_0 (Z_Q (\sigma,y))}$. \\
It already appears that many times as a 
quotient, via $E^Q_{y,\sigma,r,\rho^Q} \to M^Q_{y,\sigma,r,\rho^Q}$. 
More precisely, there is a natural isomorphism
\[
\Hom_{\mh H (Q,L,\cL)} (M^Q_{y,\sigma,r,\rho^Q}, M_{y,\sigma,r,\rho}) \cong
\Hom_{\pi_0 (Z_Q (\sigma,y))} (\rho^Q ,\rho)^* .
\]
} 
\end{prop}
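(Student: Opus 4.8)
The plan is to reduce everything to the corresponding (already proven) statements for the connected group $G^\circ$ together with Clifford theory, exactly as in the proof of Theorem \ref{thm:2.16}. First I would fix an irreducible constituent $\rho^{Q,\circ}$ of $\rho^Q|_{\pi_0 (Z_{Q^\circ}(\sigma,y))}$, so that $\rho^Q \cong \rho^{Q,\circ} \rtimes \tau_Q^*$ for a unique $\tau_Q^* \in \Irr(\C[\cR^Q_{\cL,y,\sigma,\rho^{Q,\circ}},\natural_\cL^{-1}])$ via Lemma \ref{lem:2.11} applied to $Q$. Then, by Lemma \ref{lem:2.15} for $Q$, one has $E^Q_{y,\sigma,r,\rho^Q} \cong \tau_Q \ltimes E^{Q^\circ}_{y,\sigma,r,\rho^{Q,\circ}}$. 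Since $\mh H(G,L,\cL) = \C[\cR_\cL,\natural_\cL] \ltimes \mh H(G^\circ,L,\cL)$ and likewise for $Q$, inducing from $\mh H(Q,L,\cL)$ to $\mh H(G,L,\cL)$ can be carried out in two stages: first restrict to $\mh H(Q^\circ,L,\cL)$, induce up to $\mh H(G^\circ,L,\cL)$, and then push forward along the twisted group algebra of $\cR_\cL$. The connected case is precisely Theorem \ref{thm:2.24}(b): $\mh H(G^\circ,L,\cL)\otimes_{\mh H(Q^\circ,L,\cL)} E^{Q^\circ}_{y,\sigma,r} \cong E^\circ_{y,\sigma,r}$ (under the erratum condition $r=0$ or $\epsilon(\sigma,r)\neq 0$), and taking $\rho^{Q,\circ}$-isotypic components together with Proposition \ref{prop:2.5} yields $\mh H(G^\circ,L,\cL)\otimes_{\mh H(Q^\circ,L,\cL)} E^{Q^\circ}_{y,\sigma,r,\rho^{Q,\circ}} \cong \bigoplus_{\rho^\circ} \Hom_{\pi_0(Z_{Q^\circ}(\sigma,y))}(\rho^{Q,\circ},\rho^\circ) \otimes E^\circ_{y,\sigma,r,\rho^\circ}$, the sum over $\rho^\circ$ with cuspidal support $(L,\cC_v^L,\cL)$. (This is the $Q^\circ\subset G^\circ$ analogue, which I would either extract from \cite{Lus7} or prove by the same component-group bookkeeping as in Proposition \ref{prop:2.25} but in the connected setting; it is the genuine input.)

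Next I would combine this connected-group identity with the Clifford-theoretic dictionary. Tensoring the displayed connected isomorphism with $\tau_Q$ and applying $\mh H(G,L,\cL)\otimes_{\mh H(G^\circ\cR_{\cL,y,\sigma,\rho^{Q,\circ}},L,\cL)}(-)$, Lemma \ref{lem:2.15} (now for $G$) converts each summand $\tau_Q \ltimes E^\circ_{y,\sigma,r,\rho^\circ}$ into $E_{y,\sigma,r,\rho^\circ\rtimes\tau_Q^*}$. The multiplicities reorganize: by Frobenius reciprocity and the standard induction–restriction identity for the finite groups $\pi_0(Z_{Q^\circ}(\sigma,y)) \subset \pi_0(Z_Q(\sigma,y))$ and $\pi_0(Z_{G^\circ}(\sigma,y)) \subset \pi_0(Z_G(\sigma,y))$ (the two squares fitting together because all four centralizers with the semisimple $(\sigma,r)$ are compatible, cf.\ Lemma \ref{lem:2.2} and the injectivity \eqref{eq:...} before Proposition \ref{prop:2.25}), the coefficient of $E_{y,\sigma,r,\rho}$ becomes $\Hom_{\pi_0(Z_Q(\sigma,y))}(\rho^Q,\rho)$. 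This gives part (a). The main check here is that the $2$-cocycles match up: the cocycle $\natural_\cL$ governing $\cR_\cL$ restricts to the one governing $\cR^Q_\cL$, which holds by the construction in \cite[\S4]{AMS} and Lemma \ref{lem:1.7}, so the twisted-group-algebra bookkeeping in Lemmas \ref{lem:2.11}, \ref{lem:2.14}, \ref{lem:2.15} is consistent between $Q$ and $G$.

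For part (b), take $r=0$ in part (a). By Lemma \ref{lem:2.24} every module in sight is completely reducible and graded by homological degree, and by Lemma \ref{lem:2.8} (and its $G$-version \eqref{eq:2.72}) the summand $M_{y,\sigma,0,\rho}$ sits in a single homological degree inside $E_{y,\sigma,0,\rho}$; the map $\cP^Q_y \to \cP_y$ shifts all degrees uniformly, so applying the exact functor "component in that degree" to the isomorphism of part (a) extracts precisely the asserted isomorphism of $S(\mf t^*)\rtimes\C[W_\cL,\natural_\cL]$-modules, using \eqref{eq:2.56} to identify $M^Q_{y,\sigma,0,\rho^Q}$ with $\tau_Q\ltimes M^{Q^\circ}_{y,\sigma,0,\rho^{Q,\circ}}$. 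For part (c), the multiplicity count is read off from part (a) together with the fact (Theorem \ref{thm:2.16}(a)) that $M_{y,\sigma,r,\rho}$ is the unique irreducible quotient of $E_{y,\sigma,r,\rho}$ and, by Theorem \ref{thm:2.9}(d) and its $G$-analogue in the proof of Theorem \ref{thm:2.16}, does not occur in any $E_{y',\sigma',r,\rho'}$ with $\dim\cC^G_{y'} > \dim\cC^G_y$; hence $M_{y,\sigma,r,\rho}$ appears in $\mh H(G,L,\cL)\otimes_{\mh H(Q,L,\cL)}E^Q_{y,\sigma,r,\rho^Q}$ with multiplicity exactly $[\rho^Q:\rho]_{\pi_0(Z_Q(\sigma,y))}$, and already as a quotient via $E^Q_{y,\sigma,r,\rho^Q}\to M^Q_{y,\sigma,r,\rho^Q}$. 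The final $\Hom$-formula then follows by adjunction: $\Hom_{\mh H(Q,L,\cL)}(M^Q_{y,\sigma,r,\rho^Q},M_{y,\sigma,r,\rho}) \cong \Hom_{\mh H(G,L,\cL)}(\mh H(G,L,\cL)\otimes_{\mh H(Q,L,\cL)}M^Q_{y,\sigma,r,\rho^Q},M_{y,\sigma,r,\rho})$, and the left tensor factor maps onto $\bigoplus_\rho \Hom_{\pi_0(Z_Q(\sigma,y))}(\rho^Q,\rho)\otimes M_{y,\sigma,r,\rho}$ (by the same degree-and-quotient argument used for (a)–(b)), whose $M_{y,\sigma,r,\rho}$-isotypic Hom-space is $\Hom_{\pi_0(Z_Q(\sigma,y))}(\rho^Q,\rho)^*$ by Schur. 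The main obstacle, and the place where the erratum hypothesis enters, is the very first reduction: Theorem \ref{thm:2.24}(b) — hence the connected-group identity $\mh H(G^\circ,L,\cL)\otimes_{\mh H(Q^\circ,L,\cL)}E^{Q^\circ}_{y,\sigma,r,\rho^{Q,\circ}}\cong\bigoplus E^\circ_{y,\sigma,r,\rho^\circ}$ — is only valid when $r=0$ or $\epsilon(\sigma,r)\neq 0$, which is exactly the condition flagged in the erratum; everything downstream is then formal Clifford theory and homological-degree bookkeeping.
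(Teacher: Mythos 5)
Your route for part (a) — reduce to $G^\circ$ by Clifford theory and then reassemble — is not the paper's, and as written it contains the real gap of the proposal: the connected-group identity
$\mh H (G^\circ,L,\cL) \otimes_{\mh H (Q^\circ,L,\cL)} E^{Q^\circ}_{y,\sigma,r,\rho^{Q,\circ}} \cong \bigoplus_{\rho^\circ} \Hom_{\pi_0 (Z_{Q^\circ}(\sigma,y))}(\rho^{Q,\circ},\rho^\circ)\otimes E^\circ_{y,\sigma,r,\rho^\circ}$,
which you yourself call ``the genuine input'', is neither available in \cite{Lus7} in this enhanced form (Lusztig works with $E_{y,\sigma,r}$ without the $\rho$-isotypic refinement) nor proved in your text; proposing to prove it ``by the same component-group bookkeeping as in Proposition \ref{prop:2.25} but in the connected setting'' is circular, since that is exactly the statement at issue. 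The subsequent reassembly (``the multiplicities reorganize \dots the coefficient of $E_{y,\sigma,r,\rho}$ becomes $\Hom_{\pi_0 (Z_Q (\sigma,y))}(\rho^Q,\rho)$'') is itself a nontrivial Mackey/Clifford computation that you do not carry out: $E^Q_{y,\sigma,r,\rho^Q}$ is induced from $\mh H (Q^\circ \cR^Q_{\cL,y,\sigma,\rho^{Q,\circ}},L,\cL)$, not from $\mh H (Q^\circ,L,\cL)$, and matching $\rho^{Q,\circ}\rtimes\tau_Q^*$ against the various $\rho^\circ\rtimes\tau^*$ involves two a priori different stabilizer groups (in $Q$ and in $G$) and their cocycles. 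The paper bypasses all of this: under the erratum hypothesis ($r=0$ or $\epsilon(\sigma,r)\neq 0$), Theorem \ref{thm:2.24}.b already provides an isomorphism $\mh H (G,L,\cL)\otimes_{\mh H (Q,L,\cL)} E^Q_{y,\sigma,r} \cong E_{y,\sigma,r}$ equivariant for $\pi_0 (Z_Q(\sigma,y))$; applying $\Hom_{\pi_0 (Z_Q (\sigma,y))}(\rho^Q,-)$ and then Frobenius reciprocity twice for $\pi_0 (Z_Q (\sigma,y))\subset \pi_0 (Z_G (\sigma,y))$ gives part (a) in a few lines, with the multiplicity space $\Hom_{\pi_0 (Z_Q (\sigma,y))}(\rho^Q,\rho)$ appearing automatically. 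If you insist on the connected-first route, the only evident proof of your ``input'' is this same short argument applied to $Q^\circ\subset G^\circ$, which makes the whole Clifford detour redundant.

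Parts (b) and (c) follow the paper's structure. For (b), using the explicit degree formula of Lemma \ref{lem:2.8} to see that the uniform shift matches the degrees on both sides is legitimate (the paper notes this, though it prefers an argument avoiding \cite{LusCS}). For (c), the multiplicity count and the adjunction are fine, but your assertion that $\mh H (G,L,\cL)\otimes_{\mh H (Q,L,\cL)} M^Q_{y,\sigma,r,\rho^Q}$ surjects onto $\bigoplus_\rho \Hom_{\pi_0 (Z_Q (\sigma,y))}(\rho^Q,\rho)\otimes M_{y,\sigma,r,\rho}$ cannot be justified by a degree argument when $r\neq 0$ (the modules are not graded then); the paper proves it by a generator argument, choosing generators of the right-hand side of (a) with nonzero images in the $M_{y,\sigma,r,\rho}$ and using that $M^Q_{y,\sigma,r,\rho^Q}$ is the unique irreducible quotient of $E^Q_{y,\sigma,r,\rho^Q}$ to show none of them can lie in $\mh H (G,L,\cL)\otimes \ker\big( E^Q_{y,\sigma,r,\rho^Q}\to M^Q_{y,\sigma,r,\rho^Q}\big)$. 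That factoring step is what you would still need to spell out.
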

\textbf{Remark.} When we set $(\sigma,r) = (0,0)$, part (b) gives a natural
isomorphism of $\C[W_\cL ,\natural_\cL]$-modules
\[
\C[W_\cL ,\natural_\cL] \underset{\C[W_\cL^Q ,\natural_\cL]}{\otimes} 
M^Q_{y,\rho^Q} \cong \bigoplus\nolimits_\rho \Hom_{\pi_0 (Z_Q (y))} 
(\rho^Q ,\rho) \otimes M_{y,\rho} .
\]
Consequently $[M^Q_{y,\rho^Q} : M_{y,\rho}]_{\C[W_\cL^Q ,\natural_\cL]} =
[\rho^Q : \rho]_{\pi_0 (Z_Q (y))}$. As the modules $M_{y,\rho}$ and $M^Q_{y,\rho^Q}$
are obtained with the generalized Springer correpondence for disconnected groups from 
\cite[Theorem 4.7]{AMS}, this solves the issue with the multiplicities mentioned
in \cite[Theorem 4.8.b]{AMS}.
\begin{proof}
(a) By Theorem \ref{thm:2.24}.b
\begin{equation}\label{eq:2.57}
\mh H (G,L,\cL) \underset{\mh H (Q,L,\cL)}{\otimes} E^Q_{y,\sigma,r,\rho^Q} =
\Hom_{\pi_0 (Z_Q (\sigma,y))} (\rho^Q, E_{y,\sigma,r}) .
\end{equation}
With Frobenius reciprocity we can rewrite this as
\[
\Hom_{\pi_0 (Z_Q (\sigma,y))} \big( \ind_{\pi_0 (Z_Q (\sigma,y))}^{\pi_0 
(Z_G (\sigma,y))} \rho^Q, E_{y,\sigma,r} \big) = 
\big( \big( \ind_{\pi_0 (Z_Q (\sigma,y))}^{\pi_0 (Z_G (\sigma,y))
} V_{\rho^Q} \big)^* \otimes E_{y,\sigma,r} \big)^{\pi_0 (Z_G (\sigma,y))} .
\]
Similarly $E_{y,\sigma,r,\rho} = (V_\rho^* \otimes E_{y,\sigma,r} 
)^{\pi_0 (Z_G (\sigma,y))}$. Again by Frobenius reciprocity
\begin{multline}\label{eq:2.66}
\Hom_{\pi_0 (Z_G (\sigma,y))} \big( V_\rho^*,\big( \ind_{\pi_0 (Z_Q (\sigma,y))}^{
\pi_0 (Z_G (\sigma,y))} V_{\rho^Q} \big)^* \big) = \\
\Hom_{\pi_0 (Z_G (\sigma,y))} \big( \ind_{\pi_0 (Z_Q (\sigma,y))}^{
\pi_0 (Z_G (\sigma,y))} \rho^Q , \rho \big) 
= \Hom_{\pi_0 (Z_Q (\sigma,y))} (\rho^Q ,\rho) .
\end{multline}
(b) Now we assume that $r = 0$. From Theorem \ref{thm:2.16}.c we know that 
$M_{y,\sigma,0,\rho}$ is the component of $E_{y,\sigma,0,\rho}$ in one homological degree.
By Lemma \ref{lem:2.8} this degree, say $n^G$, does not depend on $\rho$. Similarly 
\[
M^Q_{y,\sigma,0,\rho^Q} = \Hom_{\pi_0 (Z_Q (\sigma,y))} 
\big( \rho^Q, H_{n^Q} (\mc P_y^Q ,\dot{\cL}) \big) .
\]
The isomorphism in part (a) comes eventually from Theorem \ref{thm:2.24}.b, so by 
\eqref{eq:2.67} it changes all homological degrees by a fixed amount 
$d = \dim \mc P_y - \dim P_y^Q$. Thus part (a) restricts to
\begin{align}
\nonumber \mh H (G,L,\cL) \underset{\mh H (Q,L,\cL)}{\otimes} M^Q_{y,\sigma,r,\rho^Q} 
= \mh H (G,L,\cL) \underset{\mh H (Q,L,\cL)}{\otimes} 
\Hom_{\pi_0 (Z_Q (\sigma,y))} \big( \rho^Q, H_{n^Q} (\mc P_y^Q,\dot{\cL}) \big) \\
\label{eq:2.77} \cong \bigoplus\nolimits_\rho \Hom_{\pi_0 (Z_Q (\sigma,y))} (\rho^Q ,\rho) 
\otimes \Hom_{\pi_0 (Z_G (\sigma,y))} \big( \rho, H_{n^Q + d} (\mc P_y,\dot{\cL}) \big) 
\end{align}
We want to show that $n^Q + d = n^G$, for then \eqref{eq:2.77} becomes the desired 
isomorphism. This is easily seen from the explicit formula given in Lemma \ref{lem:2.8},
but we prefer an argument that does not use \cite{LusCS}.
Since $n^G$ does not depend on $\rho$, it suffices to consider one $\rho$.
By \cite[Theorem 4.8.a]{AMS} we can pick $\rho$ such that $\Hom_{\pi_0 (Z_Q (\sigma,y))} 
(\rho^Q ,\rho) \neq 0$, while maintaining the condition on the cuspidal support.

By \eqref{eq:2.56}, \eqref{eq:2.39} and \eqref{eq:2.63} the $(\sigma,0)$-weight 
space of $M_{y,\sigma,0,\rho^\circ \rtimes \tau^*}$ is
\[
\tau \ltimes M_{y,\rho^\circ} \in \Irr (\C [W_{\cL,\sigma},\natural_\cL]) .
\]
For the same reasons the $(\sigma,0)$-weight space of \eqref{eq:2.77} is
\[
\ind_{\C [W_{\cL,\sigma}^Q,\natural_\cL]}^{\C 
[W_{\cL,\sigma},\natural_\cL]} (\tau^Q \ltimes M_{y,\rho^{Q^\circ}} ) \in 
\Mod ( \C [W_{\cL,\sigma},\natural_\cL] ) .
\]
Here $\tau \ltimes M_{y,\rho^\circ}$ is the representation attached to
$(y,\rho = \rho^\circ \rtimes \tau^*)$ by the generalized Springer correspondence
for $Z_G (\sigma)$ from \cite[\S 4]{AMS}. In the same way, only for $Z_Q (\sigma)$,
$\tau^Q \ltimes M_{y,\rho^{Q^\circ}}$ is related to $(y,\rho^Q = \rho^{Q^\circ}
\rtimes \tau^Q)$. 

As $\rho^Q$ appears in $\rho$, \cite[Proposition 4.8.b]{AMS} guarantees that 
$\tau^Q \ltimes M_{y,\rho^{Q^\circ}}$ appears in $\tau \ltimes M_{y,\rho^\circ}$.
Hence the $\C [W_{\cL,\sigma},\natural_\cL]$-module $\tau \ltimes M_{y,\rho^\circ}$
appears in \eqref{eq:2.77}. In view of the irreducibility of $M_{y,\sigma,0,\rho}$,
this implies that $M_{y,\sigma,0,\rho}$ is a quotient of 
\[
\Hom_{\pi_0 (Z_G (\sigma,y))} \big( \rho, H_{n^Q + d} (\mc P_y,\dot{\cL}) \big)
\subset E_{y,\sigma,0,\rho} .
\]
By Theorem \ref{thm:2.9}.d and \eqref{eq:2.72}, this is only possible if 
$n^Q + d = n^G$.\\
(c) From Theorem \ref{thm:2.16} we know that $M_{y,\sigma,r,\rho}$ appears with
multiplicity one in $E_{y,\sigma,r,\rho}$. It follows from \cite[Corollary 10.7 and 
Proposition 10.12]{Lus5} that all other irreducible constituents of the standard
module $E_{y,\sigma,r,\rho}$ are of the form $M_{y',\sigma,r,\rho'}$, where 
$\cC_{y'}^G$ is a nilpotent orbit of larger dimension then $\cC_y^G$. Together with
part (a) this shows the indicated multiplicity is 
\[
\dim \Hom_{\pi_0 (Z_Q (\sigma,y))} (\rho^Q,\rho) = 
[\rho^Q : \rho]_{\pi_0 (Z_Q (\sigma,y))} .
\]
Now we assume that $r \neq 0$, so that $M_{y,\sigma,r,\rho}$ is the 
unique irreducible quotient of $E_{y,\sigma,r,\rho}$. 

For every $\rho$ as with $\Psi_{Z_G (\sigma_0)}(y,\rho) = (L,\cC_v^L,\cL)$ we choose 
an element $f_\rho \in E_{y,\sigma,r,\rho}$ with $f_\rho \neq 0$ in $M_{y,\sigma,r,\rho}$, 
and we choose a basis $\{ b_{\rho,i} \}_i$ of $\Hom_{\pi_0 (Z_Q (\sigma,y))} (\rho^Q ,\rho)$.
The set $F := \{ b_{\rho,i} \otimes f_\rho \}_{\rho,i}$ generates the right hand 
side of part (a) as a $\mh H (G,L,\cL)$-module, and no proper subset of it
has the same property. Via the canonical isomorphism of part (a) we consider
$F$ as a subset of the left hand side. Suppose that one element $b_{\rho,i} \otimes 
f_\rho$ belongs to
\begin{equation}\label{eq:2.59}
\mh H (G,L,\cL) \underset{\mh H (Q,L,\cL)}{\otimes} 
\ker \big( E^Q_{y,\sigma,r,\rho^Q} \to M^Q_{y,\sigma,r,\rho^Q} \big) .
\end{equation}
The remaining elements of $F$ generate $\mh H (G,L,\cL) \underset{\mh H (Q,L,\cL)}{
\otimes} M^Q_{y,\sigma,r,\rho^Q}$. Since $M^Q_{y,\sigma,r,\rho^Q}$ is the 
unique irreducible quotient of $E^Q_{y,\sigma,r,\rho^Q}$, they also generate the 
modules in part~(a). This contradiction shows that all elements of $F$ are nonzero in\\
$\mh H (G,L,\cL) \underset{\mh H (Q,L,\cL)}{\otimes} M^Q_{y,\sigma,r,\rho^Q}$,
and that \eqref{eq:2.59} is contained in
\[
\bigoplus\nolimits_\rho \Hom_{\pi_0 (Z_Q (\sigma,y))} (\rho^Q ,\rho) \otimes 
\ker \big( E_{y,\sigma,r,\rho} \to M_{y,\sigma,r,\rho} \big) . 
\]
Consequently the canonical surjection
\[
\mh H (G,L,\cL) \underset{\mh H (Q,L,\cL)}{\otimes} E^Q_{y,\sigma,r,\rho^Q} \to
\bigoplus\nolimits_\rho \Hom_{\pi_0 (Z_Q (\sigma,y))} (\rho^Q ,\rho) \otimes 
M_{y,\sigma,r,\rho} 
\]
factors through $\mh H (G,L,\cL) \underset{\mh H (Q,L,\cL)}{\otimes} 
M^Q_{y,\sigma,r,\rho^Q}$. We deduce natural isomorphisms
\begin{multline*}
\Hom_{\pi_0 (Z_Q (\sigma,y))} (\rho^Q ,\rho)^*  \cong \Hom_{\mh H (G,L,\cL)} 
\big( \mh H (G,L,\cL) \underset{\mh H (Q,L,\cL)}{\otimes} E^Q_{y,\sigma,r,\rho^Q}, 
M_{y\sigma,r,\rho} \big) \cong \\
\Hom_{\mh H (G,L,\cL)} \big( \mh H (G,L,\cL) \!\! \underset{\mh H (Q,L,\cL)}{\otimes}
\!\! M^Q_{y,\sigma,r,\rho^Q}, M_{y\sigma,r,\rho} \big) 
\cong \Hom_{\mh H (Q,L,\cL)} (M^Q_{y,\sigma,r,\rho^Q}, M_{y,\sigma,r,\rho}) .
\end{multline*}
For $r=0$ we can apply the functor $\Hom_{\mh H (G,L,\cL)} (?,M_{y,\sigma,0,\rho})$ 
to part (b). A computation analogous to the above yields the desired result.
\end{proof}

Depending on the circumstances, it might be useful to present the parameters
from Theorem \ref{thm:2.16} and Lemma \ref{lem:2.19} in another way. 
If one is primarily interested in
the algebra $\mh H (G,L,\cL) = \mh H (\mf t, W_\cL, c\mb r, \natural_\cL)$, then 
it is natural to involve the Lie algebra $\mf t$. On the other hand, for studying
the parameter space some simplification can be achieved by combining $y$ and 
$\sigma$ in a single element of $\mf g$. Of course that is done with the Jordan 
decomposition $x = x_S + x_N$, where $x_S$ (respectively $x_N$) denotes the 
semisimple (respectively nilpotent) part of $x \in \mf g$ \cite[Theorem 4.4.20]{Spr}.

\begin{cor}\label{cor:2.18}
In the setting of Lemma \ref{lem:2.19}, there exists a canonical bijection 
between the following sets:
\begin{itemize}
\item $\Irr_{r} (\mh H (G,L,\cL))$;
\item $N_G (L)/L$-orbits of triples $(\sigma_0,\cC,\cF)$ where $\sigma_0 \in 
\mf t$, $\cC$ is a nilpotent $Z_G (\sigma_0)$-orbit in $Z_{\mf g}(\sigma_0)$ 
and $\cF$ is an irreducible $Z_G (\sigma_0)$-equivariant local system on $\cC$
such that $\Psi_{Z_G (\sigma_0)}(\cC,\cF) = (L,\cC_v^L,\cL)$ (up to
$Z_G (\sigma_0)$-conjugacy);
\item $G$-orbits of pairs $(x,\rho)$ with $x \in \mf g$ and $\rho \in
\Irr \big( \pi_0 (Z_G (x)) \big)$ such that \\
$\Psi_{Z_G (x_S)}(x_N,\rho) = (L,\cC_v^L,\cL)$ (up to $G$-conjugacy).
\end{itemize}
\end{cor}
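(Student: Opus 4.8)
The plan is to show that all three sets are canonically in bijection with the set of $G$-orbits of triples $(y,\sigma_0,\rho)$ already produced in Theorem~\ref{thm:2.16} together with Lemma~\ref{lem:2.19}. Recall what this set is: $y\in\mf g$ is nilpotent, $\sigma_0\in Z_{\mf g}(y)$ (automatically semisimple, since in \eqref{eq:2.52} it is a sum of the two commuting semisimple elements $\sigma$ and $-rh$), $\rho\in\Irr\big(\pi_0(Z_G(\sigma_0,y))\big)$, and $\Psi_{Z_G(\sigma_0)}(y,\rho)=(L,\cC_v^L,\cL)$ up to $Z_G(\sigma_0)$-conjugacy; two such triples are identified when they are $G$-conjugate. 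That $\Irr_{r}(\mh H(G,L,\cL))$ is in canonical bijection with these orbits is exactly Theorem~\ref{thm:2.16}(c),(d) combined with Lemma~\ref{lem:2.19}, using Lemma~\ref{lem:2.2} to pass between $\sigma$ and $\sigma_0$. The remaining two bullets of the corollary are then just two different repackagings of this same data.

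For the third bullet I would invoke the Jordan decomposition, as in the remark following Lemma~\ref{lem:2.2}. Given a triple $(y,\sigma_0,\rho)$ as above, set $x:=\sigma_0+y$; since $\sigma_0$ is semisimple, $y$ is nilpotent and $[\sigma_0,y]=0$, this is the Jordan decomposition of $x$, with $x_S=\sigma_0$ and $x_N=y$. By uniqueness of the Jordan decomposition, $Z_G(x)=Z_G(x_S)\cap Z_G(x_N)=Z_G(\sigma_0,y)$, so $\rho$ is naturally an irreducible representation of $\pi_0(Z_G(x))$, and $\Psi_{Z_G(x_S)}(x_N,\rho)=\Psi_{Z_G(\sigma_0)}(y,\rho)$. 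The inverse map sends $(x,\rho)$ to $(x_N,x_S,\rho)$. Both maps are $G$-equivariant, hence descend to a bijection of $G$-orbits. This step is routine.

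For the second bullet I would, within each $G$-orbit, choose the representative with $\sigma_0\in\mf t$ supplied by Proposition~\ref{prop:2.3}(c); then $L\subseteq Z_G(\sigma_0)$ automatically, because $L$ centralizes $\mf t$. Writing $M:=Z_G(\sigma_0)$, let $\cC$ be the $\Ad(M)$-orbit of $y$ in $Z_{\mf g}(\sigma_0)$ and $\cF$ the irreducible $M$-equivariant local system on $\cC$ with stalk $\rho$ at $y$, using $\pi_0(\Stab_M(y))=\pi_0(Z_M(y))=\pi_0(Z_G(\sigma_0,y))$; the cuspidal support condition for $(\sigma_0,\cC,\cF)$ is then literally the condition $\Psi_{Z_G(\sigma_0)}(y,\rho)=(L,\cC_v^L,\cL)$, by the definition of the (enhanced) cuspidal support map. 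The real content of this step is to check that the only ambiguity in the choice of representative is an element of $N_G(L)/L$, and, conversely, that two such representatives which are $G$-conjugate are already $N_G(L)$-conjugate. Here one uses that $N_G(L)$ normalizes $\mf t=\Lie(Z(L)^\circ)$ and stabilizes $\cC_v^L$ by \cite[Theorem 3.1.a]{AMS}, that the cuspidal support is a conjugation invariant, and the standard argument that a $g\in G$ conjugating one representative to the other may be corrected by an element of $Z_G(\sigma_0)$ (which fixes $\sigma_0$) so as to fix $(L,\cC_v^L,\cL)$, i.e.\ to lie in $N_G(L)$. This last normalization bookkeeping with cuspidal supports, carried out in the spirit of \cite[\S 5]{AMS}, is the only genuinely delicate point; canonicity of all the bijections is then clear, since no arbitrary choices (in particular no $2$-cocycles) enter the data $(x,\rho)$ or $(\sigma_0,\cC,\cF)$.
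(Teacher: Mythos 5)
Your proposal is correct and follows essentially the same route as the paper: both reduce everything to the parameter set of Theorem \ref{thm:2.16}.d, use Proposition \ref{prop:2.3}.c to normalize $\sigma_0$ into $\mf t$ (so that the $G$-orbit becomes an $N_G(L)/L$-orbit) for the second bullet, and use the Jordan decomposition $x=\sigma_0+y$ with $Z_G(x)=Z_G(\sigma_0,y)$ for the third. Your extra care with the $N_G(L)/L$ bookkeeping only spells out what the paper leaves implicit.
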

\begin{proof}
By Proposition \ref{prop:2.3}.c we may assume that $\sigma$ and $\sigma_0$ lie 
in $\mf t$. Upon requiring that, the $G$-orbit of $\sigma$ (or 
$\sigma_0$) reduces to a $N_G (L)/L$-orbit in $\mf t$. The nilpotent element $y$
lies in $Z_{\mf g}(\sigma_0)$, and only its $Z_G (\sigma_0)$-orbit matters.
The data of $\rho \in \Irr \big( \pi_0 (Z_G (\sigma_0,y)) \big)$ are equivalent
to that of an irreducible $Z_G (\sigma_0)$-equivariant local system $\cF$ on 
$\cC_y^{Z_G (\sigma_0)}$. Now Theorem \ref{thm:2.16}.d provides a canonical 
bijection between the first two sets. 

We put $x = \sigma_0 + y \in \mf g$. By the Jordan decomposition every element
of $\mf g$ is of this form, and $Z_G (x) = Z_G (y,\sigma_0)$. Again 
Theorem \ref{thm:2.16}.d yields the desired bijection, between the first and
third sets.
\end{proof}

We note that the third set of Corollary \ref{cor:2.18} is included in the set
of all $G$-orbits of pairs $(x,\rho)$ with $x \in \mf g$ and $\rho \in 
\Irr \big( \pi_0 (Z_G (x)) \big)$. It follows that the latter set is 
canonically in bijection with
\begin{equation}\label{eq:2.42}
\bigsqcup\nolimits_{(L,\cC_v^L,\cL)} \Irr_r (\mh H (G,L,\cL)) , 
\end{equation}
where the disjoint union runs over all cuspidal supports for $G$ 
(up to $G$-conjugacy).

\subsection{Tempered representations and the discrete series} \
\label{par:temp}

In this paragraph we study two analytic properties of $\mh H (G,L,\cL)$-modules,
temperedness and discrete series. Of course these are well-known for
representations of reductive groups over local fields, and the definition
in our context is designed to mimick those notions.

The complex vector space $\mf t = X_* (T) \otimes_\Z \C$ has a canonical real
form $\mf t_\R = X_* (T) \otimes_\Z \R$. The decomposition of an element
$x \in \mf t$ along $\mf t = \mf t_\R \oplus i \mf t_\R$ will be written as
$x = \Re (x) + i \Im (x)$.
We define the positive cones
\begin{align*}
& \mf t_\R^+ := \{ x \in \mf t_\R : \inp{x}{\alpha} \geq 0 \; 
\forall \alpha \in R(P,T) \} , \\
& \mf t_\R^{*+} := \{ \lambda \in \mf t_\R^* : \inp{\alpha^\vee}{\lambda} 
\geq 0 \; \forall \alpha \in R(P,T) \} .
\end{align*}
The antidual of $\mf t_\R^{*+}$ is the obtuse negative cone 
\[
\mf t_\R^- := \{ x \in \mf t_\R : \inp{x}{\lambda} \leq 0 \; \forall
\lambda \in \mf t_\R^{*+} \} .
\]
It can also be described as 
\[
\mf t_\R^- = \big\{ \sum\nolimits_{\alpha \in R(P,T)} x_\alpha \alpha^\vee : 
x_\alpha \leq 0 \big\} . 
\]
The interior $\mf t_\R^{--}$ of $\mf t_\R^-$ is given by
\begin{equation}\label{eq:2.46}
\mf t_\R^{--} = \left\{
\begin{array}{ll}
\{ \sum_{\alpha \in R(P,T)} x_\alpha \alpha^\vee : x_\alpha < 0 \} &
\text{if } R(G,T)^\vee \text{ spans } \mf t_\R \\
\emptyset & \text{otherwise.}
\end{array}
\right.
\end{equation}
Let $(\pi,V)$ be a $\mh H (\mf t, W_\cL,c \mb r, \natural)$-module. We call 
$x \in \mf t$ a weight of $V$ if there is a $v \in V 
\setminus \{0\}$ such that $\pi (\xi) v = \xi (x) v$ for all $\xi \in 
S (\mf t^*)$. This is equivalent to requiring that the generalized weight space
\begin{equation}\label{eq:2.50}
V_x := \{ v \in V : (\pi (\xi) - \xi (x))^n v = 0 \text{ for some } n \in \N \}
\end{equation}
is nonzero. Since $S(\mf t^*)$ is commutative, $V$ is the direct sum of its 
generalized weight spaces whenever it has finite dimension. We denote the set of 
weights of $(\pi,V)$ by Wt$(\pi,V)$, Wt$(V)$ or Wt$(\pi)$.

\begin{defn}\label{def:2.temp}
Let $(\pi,V)$ be a finite dimensional $\mh H (\mf t, W_\cL,c \mb r, \natural)
$-module. We call it tempered if $\Re (\mathrm{Wt}(\pi,V)) \subset \mf t_\R^-$.
We call it discrete series if $\Re (\mathrm{Wt}(\pi,V)) \subset \mf t_\R^{--}$.

Similarly we say that $(\pi,V)$ is anti-tempered (respectively
anti-discrete series) if $\Re (\mathrm{Wt}(\pi,V)) \subset - \mf t_\R^-$ 
(respectively $\subset - \mf t_\R^{--}$).

We denote the set of irreducible tempered representations of this algebra by
$\Irr_\temp (\mh H (\mf t, W_\cL,c \mb r, \natural))$.
\end{defn}

\begin{thm}\label{thm:2.19}
Let $y,\sigma,\rho$ be as in Corollary \ref{cor:2.18}, 
with $\sigma, \sigma_0 \in \mf t$.
\enuma{
\item Suppose that $\Re (r) \leq 0$. The following are equivalent:
\begin{enumerate}
\item $E_{y,\sigma,r,\rho}$ is tempered;
\item $M_{y,\sigma,r,\rho}$ is tempered;
\item $\sigma_0 \in i \mf t_\R$.
\end{enumerate}
\item Suppose that $\Re (r) \geq 0$. Then part (a) remains valid if we
replace tempered by anti-tempered.
} 
\end{thm}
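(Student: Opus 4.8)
The plan is to reduce Theorem \ref{thm:2.19} to the corresponding statement for the connected group $G^\circ$, which is part of Lusztig's work (via \cite{Lus5,Lus7}), and then transport temperedness through the Clifford-theoretic constructions of Section~\ref{sec:reps}. First I would record the key observation that temperedness of an $\mh H(G,L,\cL)$-module depends only on the set of $S(\mf t^*)$-weights, and that $\mf t_\R^-$ is defined via the same root datum $R(G^\circ,T)$ for $G$ as for $G^\circ$; moreover $W_\cL$ and $W_\cL^\circ$ act on $\mf t$ preserving $\mf t_\R$ and permuting the positive system only up to the action of $\cR_\cL$, which stabilises $R(P,T)$ by Lemma~\ref{lem:1.1}(b). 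Hence $\mr{Wt}(\pi) \subset \mf t_\R^-$ (after taking real parts) is a condition insensitive to passage between $G^\circ$ and $G$: if $\pi|_{\mh H(G^\circ,L,\cL)} = \bigoplus_i \pi_i^\circ$, then $\mr{Wt}(\pi) = \bigcup_i \mr{Wt}(\pi_i^\circ)$, so $\pi$ is tempered iff each $\pi_i^\circ$ is. This is the crucial compatibility.

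Next I would invoke the structural results already in place. By Lemma~\ref{lem:2.15}, $E_{y,\sigma,r,\rho} \cong \tau \ltimes E^\circ_{y,\sigma,r,\rho^\circ}$, and by \eqref{eq:2.40}/\eqref{eq:2.72}, $M_{y,\sigma,r,\rho} \cong \tau \ltimes M^\circ_{y,\sigma,r,\rho^\circ}$, where $\rho^\circ$ is an irreducible constituent of $\rho|_{\pi_0(Z_{G^\circ}(\sigma,y))}$. Restricting these induced modules to $\mh H(G^\circ,L,\cL)$ gives a direct sum of the $\cR_\cL$-translates of $E^\circ_{y,\sigma,r,\rho^\circ}$ (resp.\ $M^\circ_{y,\sigma,r,\rho^\circ}$). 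Now $\gamma \cdot E^\circ_{y,\sigma,r,\rho^\circ} \cong E^\circ_{\gamma(y),\gamma(\sigma),r,\gamma(\rho^\circ)}$, and since $\gamma \in \cR_\cL$ acts on $\mf t$ by an automorphism preserving $\mf t_\R$ and the positive cone, the weight set of the translate is the $\gamma$-image of the original weight set; in particular $\Re(\mr{Wt})$ lands in $\mf t_\R^-$ for the translate iff it does for the original. Thus $E_{y,\sigma,r,\rho}$ is tempered iff $E^\circ_{y,\sigma,r,\rho^\circ}$ is, and likewise for $M$ versus $M^\circ$. Also note $\sigma_0$ for the triple $(y,\sigma,\rho)$ is the same element used in the $G^\circ$-statement, and the condition $\sigma_0 \in i\mf t_\R$ is manifestly $\cR_\cL$-stable. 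I would then simply cite the connected-group version: the equivalence (i)$\Leftrightarrow$(ii)$\Leftrightarrow$(iii) for $\mh H(G^\circ,L,\cL)$ when $\Re(r) \le 0$ is Lusztig's temperedness criterion for geometric graded Hecke algebras (the relevant statement following from \cite[\S 10]{Lus5} together with the analysis of \cite[\S 1--2]{Lus7}, along the lines used e.g.\ by Ciubotaru; one uses that the weights of $E^\circ_{y,\sigma,r,\rho^\circ}$ lie in $W_\cL^\circ(\sigma)$ by Proposition~\ref{prop:2.3}, and the relation \eqref{eq:2.52} between $\sigma$, $\sigma_0$ and $\mr{d}\gamma_y$ converts the temperedness inequality on $\Re(\sigma)$ into the vanishing $\Re(\sigma_0)=0$).

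For part (b), the passage $\Re(r) \ge 0$ to anti-tempered is handled by the Iwahori--Matsumoto type involution, or more simply by the symmetry $r \mapsto -r$, $\mf t_\R^- \mapsto -\mf t_\R^-$ built into the defining relations of $\mh H(\mf t, W_\cL, c\mb r, \natural)$ in Proposition~\ref{prop:1.4}: replacing $\mb r$ by $-\mb r$ and $\mf t$ by $\mf t$ with the sign-flipped action intertwines $E_{y,\sigma,r,\rho}$ at a given $r$ with the analogous module at $-r$, sending weights $x$ to $-x$ and hence $\mf t_\R^-$-membership of $\Re(\mr{Wt})$ to $-\mf t_\R^-$-membership. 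Since $\sigma_0$ is built symmetrically in \eqref{eq:2.52}, the condition (iii) is unchanged. Applying part (a) at parameter $-r$ (which now has $\Re(-r) \le 0$) yields part (b). The main obstacle I anticipate is not conceptual but bookkeeping: one must check carefully that the $\cR_\cL$-action and the twist by $\tau$ in Lemma~\ref{lem:2.15} do not introduce weights outside the expected set — this is exactly where the identification of the $S(\mf t^*)$-module structure of $\tau \ltimes E^\circ_{y,\sigma,r,\rho^\circ}$ with a direct sum of $\cR_\cL$-translates of the weight structure of $E^\circ_{y,\sigma,r,\rho^\circ}$ (established in the proof of Lemma~\ref{lem:2.15} and implicit in \eqref{eq:2.28}) is indispensable; and secondarily one must confirm that the connected-group temperedness criterion is genuinely available in the graded setting for all $\rho^\circ$ (not just the ``geometric'' ones), which follows from Theorem~\ref{thm:2.9} identifying $\Irr_r(\mh H(G^\circ,L,\cL))$ with the relevant triples.
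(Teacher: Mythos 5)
Your reduction to the connected group is exactly the paper's first step: via Lemma \ref{lem:2.15} and \eqref{eq:2.40}/\eqref{eq:2.72} one has $\mathrm{Wt}(M_{y,\sigma,r,\rho}) = \cR_\cL \,\mathrm{Wt}(M^\circ_{y,\sigma,r,\rho^\circ})$ (this is \eqref{eq:2.45}), and $\cR_\cL$ preserves $\mf t_\R^-$, so temperedness passes between $G$ and $G^\circ$. The gap is in what you do next: you treat the connected case as a pure citation "for $\Re (r) \leq 0$", but Lusztig's criterion \cite[Theorem 1.21]{Lus7} (with $\tau = \Re$) is only invoked in the paper for $\Re (r) < 0$, and even there two nontrivial verifications are needed: (i) his condition is that all eigenvalues of $\ad (\sigma_0)$ on $\mf g$ are purely imaginary, which is blind to the central component of $\sigma_0$; to upgrade this to $\sigma_0 \in i \mf t_\R$ one must split off $S(Z(\mf g)^*)$ via \eqref{eq:1.11} and argue separately that temperedness forces $\Re (z_0) = 0$ (because $\mf t_\R^- \cap Z(\mf g) = \{0\}$); (ii) his notion of $\tau$-temperedness, phrased through eigenvalues of the operators $\xi_V$, has to be matched with Definition \ref{def:2.temp}, and this matching itself uses $\Re (r) < 0$. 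Most importantly, the boundary case $\Re (r) = 0$ — which is part of the statement and is precisely where (a) and (b) overlap — is not covered by your citation at all. The paper needs a genuine extra argument there: for (iii)$\Rightarrow$(i) a continuity/algebraicity argument in $r$ using that $\mf t_\R^-$ is closed, and for (ii)$\Rightarrow$(iii) a direct weight computation from Proposition \ref{prop:2.3}.b together with $\textup{d}\gamma_y \matje{r}{0}{0}{-r} \in i \mf t_\R$ for $r \in i\R$. Your sketch ("converts the temperedness inequality on $\Re (\sigma)$ into $\Re (\sigma_0) = 0$") gestures at the latter but does not supply either step.

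For part (b) there is a second gap. The map $N_w \mapsto N_w$, $\xi \mapsto -\xi$, $\mb r \mapsto -\mb r$ is indeed an algebra automorphism, and pulling back along it exchanges tempered with anti-tempered weights; but to "apply part (a) at parameter $-r$" you must know that this pullback carries $E_{y,\sigma,r,\rho}$ (resp.\ $M_{y,\sigma,r,\rho}$) to a standard module (resp.\ its distinguished quotient) of the form $E_{y,\sigma',-r,\rho'}$, since part (a) is a statement about those specific geometric modules, not about arbitrary modules with a given central character. You assert this intertwining but do not prove it, and nothing in the paper provides it (note that the analogous statement for $\IM^*$ is certainly not available: the paper introduces $\IM^* M_{y,\sigma,r,\rho}$ later precisely because it is not again of the form $M_{y',\sigma',r,\rho'}$). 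The paper sidesteps this entirely by re-running the proof of (a) with $\tau = -\Re$ in Lusztig's framework, which is formulated for an arbitrary homomorphism $\tau : \C \to \R$; that is the clean route and the one you should take, or else you must actually establish the claimed $r \mapsto -r$ symmetry of standard modules.
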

\begin{proof}
(a) Choose $\tau$ and $\rho^\circ \in \Irr \big( \pi_0 (Z_G (\sigma,y)) \big)$ 
as before, so that $\rho = \rho^\circ \rtimes \tau^*$. By Clifford theory and 
Lemmas \ref{lem:2.14} and \ref{lem:2.15}
\begin{equation}\label{eq:2.45}
\mathrm{Wt}(M_{y,\sigma,r,\rho}) = 
\cR_\cL \mathrm{Wt}(M^\circ_{y,\sigma,r,\rho^\circ}) ,
\end{equation}
and similarly for $E^\circ_{y,\sigma,r,\rho^\circ}$. Since $\cR_\cL$ stabilizes
$\mf t_\R^-$, it follows that $E_{y,\sigma,r,\rho}$ (respectively 
$M_{y,\sigma,r,\rho}$) is tempered if and only if $E^\circ_{y,\sigma,r,\rho^\circ}$ 
(respectively $M^\circ_{y,\sigma,r,\rho^\circ}$ is tempered. This reduces the 
claim to the case where $G$ is connected. 

From now on we assume that $\cR_\cL = 1$. From Proposition \ref{prop:1.4} we see that
\[
\mh H (G^\circ,L,\cL) = \mh H (G^\circ_\der ,L \cap G_\der,\cL) \otimes S (Z(\mf g)^*) . 
\]
Write $\sigma_0 = \sigma_{0,\der} + z_0$ with $\sigma_{0,\der} \in \mathrm{Lie} 
(G_\der)$ and $z_0 \in Z(\mf g)$. By Proposition \ref{prop:2.3}.b both  
$M^\circ_{y,\sigma,r,\rho^\circ}$ and $E^\circ_{y,\sigma,r,\rho^\circ}$ admit
the $S(Z(\mf g)^*)$-character $z_0$. By definition $\mf t_\R^- \cap Z(\mf g) = \{0\}$.
Thus $M^\circ_{y,\sigma,r,\rho^\circ}$ and $E^\circ_{y,\sigma,r,\rho^\circ}$ are
tempered as $Z(\mf Z (g)^*)$-modules if and only if $\Re (z_0) = 0$, or
equivalently $z_0 \in X_* (Z(G^\circ)^\circ) \otimes_\Z i \R$. This achieves 
further reduction, to the case where $G = G^\circ$ is semisimple.

When $\Re (r) < 0$, we will apply \cite[Theorem 1.21]{Lus7}. It says that the 
following are equivalent:
\begin{itemize}
\item[(i)'] $E^\circ_{y,\sigma,r,\rho^\circ}$ is $\tau$-tempered
(where $\tau$ refers to the homomorphism $\Re : \C \to \R$);
\item[(ii)'] $M^\circ_{y,\sigma,r,\rho^\circ}$ is $\tau$-tempered;
\item[(iii)'] All the eigenvalues of $\ad (\sigma_0) : \mf g \to \mf g$ 
are purely imaginary.
\end{itemize}
As $\mf t$-module, $\mf g$ is the direct sum of the weight spaces $\mf g_\alpha$ with 
$\alpha \in R(G,T) \cup \{0\}$. We note that $R(G,T) \cup \{0\} \subset X^* (T) \subset 
\Hom (\mf t,\R)$ and $R(G,T)$ spans $\mf t_\R^*$ (for $G$ is semisimple). Hence
(iii)' is equivalent to (iii). 

As $\Re (r) < 0$, the condition for $\tau$-temperedness of a module $E$ 
\cite[1.20]{Lus7} becomes
\begin{equation}\label{eq:2.44}
\Re (\lambda) \leq 0 \text{ for any eigenvalue of } \xi_V \text{ on } E. 
\end{equation}
Here $\xi_V \in \mf t^*$ is determined by an irreducible finite dimensional 
$\mf g$-module $V$ which contains a unique line $\C v$ annihilated by $\mf u$. Then
$\xi_V$ is the character by which $\mf t$ acts on $\C v$. When we vary $V$, $\xi_V$ 
runs through a set of dominant weights which spans $\mf t_\R^{*+}$ over $\R_{\geq 0}$. 
Hence the condition \eqref{eq:2.44} is equivalent to
$\Re (\mathrm{Wt}(E)) \subset \mf t_\R^-$. In other words, temperedness is the
same as $\tau$-temperedness when $\Re (r) < 0$, (i) is the same as (i)' and
(ii) is equivalent to (ii)'. Thus \cite[Theorem 1.21]{Lus7} is our required
result in this setting.

It remains to settle the case $\Re (r) = 0, G = G^\circ$ semisimple. 
Assume (iii). When we vary $r$ and keep $\sigma_0$ fixed, the weights of 
$(E^\circ_{y,\sigma,r,\rho^\circ})$ depend algebraically on $r$. We have already 
shown that $\Re (\mathrm{Wt}(E^\circ_{y,\sigma,r,\rho^\circ})) \subset \mf t_\R^-$ 
when $\Re (r) < 0$. Clearly $\mf t_\R^-$ is closed in $\mf t_\R$, so by continuity
this property remains valid when $\Re (r) = 0$. That proves (i), upon which
(ii) follows immediately. 

Conversely, suppose that (iii) does not hold. We may assume that $\gamma_y :
\SL_2 (\C) \to G$ has image in $Z_G (\sigma_0)$. Recall from Proposition
\ref{prop:2.3}.b that
\[
\mathrm{Wt}( M^\circ_{y,\sigma,r,\rho^\circ} ) \subset
W_\cL \big( \sigma_0 + \textup{d}\gamma_y \matje{r}{0}{0}{-r} \big) .
\]
In particular we can find a $w \in W_\cL$ such that $w \big( \sigma_0 + 
\textup{d}\gamma_y \matje{r}{0}{0}{-r} \big)$ is a $S(\mf t^*)$-weight of
$M^\circ_{y,\sigma,r,\rho^\circ}$. The map $z \mapsto \gamma_y 
\matje{z}{0}{0}{z^{-1}}$ is a cocharacter of $T$ and $r \in i \R$, so 
$\textup{d}\gamma_y \matje{r}{0}{0}{-r} \in i \mf t_\R$. By our assumption
$\sigma_0 + \textup{d}\gamma_y \matje{r}{0}{0}{-r} \in \mf t \setminus i \mf t_\R$,
and this does not change upon applying $w \in W_\cL$. Hence 
$M^\circ_{y,\sigma,r,\rho^\circ}$ is not tempered. We proved that
(ii) implies (iii) when $\Re (r) = 0$.\\
(b) This is completely analogous to part (a), when we interpret $\tau$-tempered
with $\tau = -\Re : \C \to \R$.
\end{proof}

From Definition \ref{def:2.temp} and \eqref{eq:2.46} we immediately see that
$\mh H (G,L,\cL)$ has no discrete series representations if $R(G,T)$ does not
span $\mf t_\R^*$. That is equivalent to $Z(\mf g) \neq 0$. Therefore we only 
formulate a criterium for discrete series when $G^\circ$ is semisimple.

\begin{thm}\label{thm:2.20}
Let $G^\circ$ be semisimple. Let $y,\sigma,\rho$ be as in Corollary \ref{cor:2.18}, 
with $\sigma, \sigma_0 \in \mf t$.
\enuma{
\item Suppose that $\Re (r) < 0$. The following are equivalent:
\begin{enumerate}
\item $M_{y,\sigma,r,\rho}$ is discrete series;
\item $y$ is distinguished in $\mf g$, that is, it is not contained in any
proper Levi subalgebra of $\mf g$.
\end{enumerate}
Moreover, if these conditions are fulfilled, then $\sigma_0 = 0$ and
$E_{y,\sigma,r,\rho} = M_{y,\sigma,r,\rho}$.
\item Suppose that $\Re (r) > 0$. Then part (a) remains valid if we replace
(i) by: $M_{y,\sigma,r,\rho}$ is anti-discrete series.
\item For $\Re (r) = 0$ there are no (anti-)discrete series representations
on which $\mb r$ acts as $r$.
} 
\end{thm}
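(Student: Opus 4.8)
The plan is to treat the two easy ends — part (c) and the reduction to connected $G$ — and to isolate connected semisimple $G$ with $\Re(r)<0$ as the substantive case. Throughout recall that, as $G^\circ$ is semisimple, $Z(\mf g)=0$ and $R(G^\circ,T)^\vee$ spans $\mf t_\R$, so by \eqref{eq:2.46} $\mf t_\R^{--}$ is the interior of the pointed cone $\mf t_\R^-$; pairing against a strictly dominant weight shows $0\notin\mf t_\R^{--}$. For part (c): a discrete series module is in particular tempered, so if $M\in\Irr_r(\mh H (G,L,\cL))$ is discrete series with $\Re(r)=0$ then Theorem \ref{thm:2.19}(a) (applicable since $\Re(r)\leq 0$) forces $\sigma_0\in i\mf t_\R$ for the parameter of $M$. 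By Proposition \ref{prop:2.3}(b) and \eqref{eq:2.52} every weight of $M$ lies in $W_\cL\big(\sigma_0+\textup{d}\gamma_y\matje{r}{0}{0}{-r}\big)$, and since $r\in i\R$ we may take $\textup{d}\gamma_y\matje{r}{0}{0}{-r}\in i\mf t_\R$; as $W_\cL$ preserves $i\mf t_\R$ this gives $\Re(\mathrm{Wt}(M))=\{0\}\not\subseteq\mf t_\R^{--}$, contradicting discreteness. The anti-discrete case is identical.

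For parts (a) and (b) I would first reduce to $G=G^\circ$ connected semisimple, exactly as in the proof of Theorem \ref{thm:2.19}. Writing $\rho=\rho^\circ\rtimes\tau^*$ as in Lemma \ref{lem:2.14}, Lemma \ref{lem:2.15} and \eqref{eq:2.40} give $M_{y,\sigma,r,\rho}\cong\tau\ltimes M^\circ_{y,\sigma,r,\rho^\circ}$ and $E_{y,\sigma,r,\rho}\cong\tau\ltimes E^\circ_{y,\sigma,r,\rho^\circ}$ (for $r\neq 0$), whence $\mathrm{Wt}(M_{y,\sigma,r,\rho})=\cR_\cL\,\mathrm{Wt}(M^\circ_{y,\sigma,r,\rho^\circ})$ and similarly for $E$. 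Since $\cR_\cL$ acts by automorphisms of $(G^\circ,T)$ preserving the positive system $R(P,T)$, it stabilises $\mf t_\R^-$ and $\mf t_\R^{--}$, so for $S\subseteq\mf t_\R$ one has $\cR_\cL S\subseteq\mf t_\R^{--}$ iff $S\subseteq\mf t_\R^{--}$. Hence $M_{y,\sigma,r,\rho}$ is (anti-)discrete series iff $M^\circ_{y,\sigma,r,\rho^\circ}$ is, the hypothesis ``$y$ distinguished in $\mf g$'' is unaffected, and $E^\circ_{y,\sigma,r,\rho^\circ}=M^\circ_{y,\sigma,r,\rho^\circ}$ transports to $E_{y,\sigma,r,\rho}=M_{y,\sigma,r,\rho}$ with $\sigma_0$ unchanged. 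Part (b) is the case $-\Re\colon\C\to\R$ of part (a).

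It then remains to treat connected semisimple $G$ with $\Re(r)<0$; this is the discrete-series companion of \cite[Theorem 1.21]{Lus7}. Two observations are elementary. If $y$ is distinguished then $Z_G(y)^\circ$ is unipotent (as $G$ is semisimple), so $Z_{\mf g}(y)=\Lie(Z_G(y)^\circ)$ consists of nilpotent elements; since $\sigma_0\in Z_{\mf g}(y)$ is semisimple, $\sigma_0=0$, hence $\sigma=r\,\textup{d}\gamma_y\matje{1}{0}{0}{-1}$ and $Z_G(\sigma,y)=Z_G(\textup{d}\gamma_y(\mf{sl}_2))$ is finite; one then checks (or reads off from \cite[\S 10]{Lus5}) that $E^\circ_{y,\sigma,r,\rho^\circ}$ is irreducible — by Theorem \ref{thm:2.9}(d) its only other possible constituents would lie on strictly larger nilpotent orbits with the same central character, which cannot occur when $\sigma_0=0$. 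Conversely, $M^\circ_{y,\sigma,r,\rho^\circ}$ is tempered whenever $\sigma_0\in i\mf t_\R$ by Theorem \ref{thm:2.19}, and by Proposition \ref{prop:2.3} its weights lie in $W_\cL^\circ\big(\sigma_0+r\,\textup{d}\gamma_y\matje{1}{0}{0}{-1}\big)$, with real parts in $W_\cL^\circ\big(\Re(r)\,\textup{d}\gamma_y\matje{1}{0}{0}{-1}\big)$ once $\sigma_0\in i\mf t_\R$. The geometric point, which I would draw from \cite[\S 10]{Lus5} and \cite[\S 1]{Lus7}, is that under the temperedness constraint the real parts of the weights of $M^\circ_{y,\sigma,r,\rho^\circ}$ all lie in the open cone $\mf t_\R^{--}$ if and only if $y$ is not contained in any proper Levi subalgebra: if $y$ lies in a proper Levi some $W_\cL^\circ$-translate of $\Re(\sigma)$ meets a proper face of $\mf t_\R^-$ and occurs as a weight, while for distinguished $y$ the standard module is small enough ($\sigma_0=0$, $E^\circ=M^\circ$) that this does not happen. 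Combining these gives (a) for connected semisimple $G$, including the ``moreover'' clause.

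The main obstacle is precisely this last, geometric, step for connected $G$. Unlike temperedness, discreteness is \emph{not} detected by the single ``Langlands weight'' $\sigma$, because $\mf t_\R^{--}$ is not $W_\cL$-stable (already for $\mh H$ of type $A_2$ one can have $\Re(\sigma)\in\mf t_\R^{--}$ while $w\Re(\sigma)\in\mf t_\R^-\setminus\mf t_\R^{--}$ for some $w$); so one genuinely needs Lusztig's description, via the equivariant homology of $\cP_y$, of \emph{which} $W_\cL^\circ$-translates of $\sigma$ actually occur as weights of $M^\circ_{y,\sigma,r,\rho^\circ}$, together with the Bala--Carter fact that $y$ fails to be distinguished exactly when its neutral element lies on a wall cut out by a proper Levi subalgebra. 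Once this connected input is in hand, part (c) and the Clifford-theoretic descent from $G$ to $G^\circ$ are formal and complete the proof.
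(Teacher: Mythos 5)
Your treatment of part (c), the Clifford-theoretic reduction to connected $G$ via \eqref{eq:2.45}, Lemmas \ref{lem:2.14}--\ref{lem:2.15}, and the deduction $\sigma_0=0$ for distinguished $y$ all match the paper's argument. But the heart of the theorem --- the equivalence, for connected semisimple $G$ with $\Re(r)<0$, between ``$M^\circ_{y,\sigma,r,\rho^\circ}$ is discrete series'' and ``$y$ is distinguished'', together with $E^\circ_{y,\sigma,r,\rho^\circ}=M^\circ_{y,\sigma,r,\rho^\circ}$ --- is exactly the step you declare to be ``the main obstacle'' and then leave unproven. In the paper this is not re-derived: the definition of discrete series is rewritten as a weight condition and identified, by the same dictionary used in the proof of Theorem \ref{thm:2.19} for temperedness, with Lusztig's notion of $\tau$-square integrability for $\tau=\Re$ (resp.\ $\tau=-\Re$ for part (b)); then \cite[Theorem 1.22]{Lus7} is invoked, which states precisely that $\tau$-square integrability is equivalent to $y$ being distinguished and that in this case the standard module equals its irreducible quotient. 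Your proposal gestures at \cite[\S 10]{Lus5} and \cite[\S 1]{Lus7} and sketches a geometric argument about which $W_\cL^\circ$-translates of $\Re(\sigma)$ occur and about faces of $\mf t_\R^-$, but this is a heuristic, not a proof; nothing in your text actually establishes either implication for connected $G$.

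Two subsidiary claims in your sketch are also unjustified. First, your argument that $E^\circ_{y,\sigma,r,\rho^\circ}$ is irreducible when $y$ is distinguished --- ``by Theorem \ref{thm:2.9}(d) the other constituents would lie on strictly larger nilpotent orbits with the same central character, which cannot occur when $\sigma_0=0$'' --- does not hold up: having $\sigma_0=0$ does not by itself preclude a larger orbit $\cC_{y'}$ from carrying a parameter with the same central character $(W_\cL\sigma,r)$, and in the paper the equality $E^\circ=M^\circ$ is an output of \cite[Theorem 1.22]{Lus7}, not of a central-character count. Second, even granting $\sigma_0=0$, you never verify that the real parts of the weights of $M^\circ_{y,\sigma,r,\rho^\circ}$, which a priori only lie in $W_\cL^\circ\big(\Re(r)\,\textup{d}\gamma_y \matje{1}{0}{0}{-1}\big)$, actually fall in the open cone $\mf t_\R^{--}$; as you yourself note, $\mf t_\R^{--}$ is not $W_\cL$-stable, so this requires the precise control of occurring weights that Lusztig's theorem provides. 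To complete the proof along the paper's lines you should translate ``(anti-)discrete series'' into $\tau$-square integrability exactly as temperedness was translated into $\tau$-temperedness in the proof of Theorem \ref{thm:2.19}, and then cite \cite[Theorem 1.22]{Lus7} for the connected case.
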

\begin{proof}
(a) Since $[\sigma_0,y] = 0$ and $\mf g$ is semisimple, $\sigma_0 = 0$ whenever
$y$ is distinguished. 

In view of \eqref{eq:2.45} it suffices to prove the equivalence of (i) and
(ii) when $G$ is connected, so we assume that for the moment. 
We can reformulate (ii) as:
\[
\inp{x}{\alpha} < 0 \text{ for all } x \in \mathrm{Wt}(M^\circ_{y,\sigma,r,
\rho^\circ}) \text{ and all } \alpha \in R(P,T) .
\]
The same argument as for temperedness shows that this is equivalent to 
$M^\circ_{y,\sigma,r,\rho^\circ}$ being $\tau$-square integrable with $\tau = 
\Re$, in the sense of \cite{Lus7}. By \cite[Theorem 1.22]{Lus7} that in turn is
equivalent to (i). The same result also that $E^\circ_{y,\sigma,r,\rho^\circ} = 
M^\circ_{y,\sigma,r,\rho^\circ}$ when (i) and (ii) hold.

The last statement can be lifted from $G^\circ$ to $G$ by
\eqref{eq:2.43} and Lemma \ref{lem:2.15}:
\[
E_{y,\sigma,r,\rho} \cong \tau \ltimes E_{y,\sigma,r,\rho^\circ} =
\tau \ltimes M^\circ_{y,\sigma,r,\rho^\circ} \cong M_{y,\sigma,r,\rho} .
\]
(b) This can be shown in the same way as part (a), when we consider $\tau$-square
integrable with $\tau = -\Re : \C \to \R$.\\
(c) Suppose that $V$ is a discrete series $\mh H (G,L,\cL)$-module on which
$\mb r$ acts as $r \in i \R$. By definition $\dim V < \infty$, so $V$ has an
irreducible subrepresentation, say $M_{y,\sigma,r,\rho}$. Its weights are a
subset of those of $V$, so it is also discrete series. By \eqref{eq:2.45} this
means that $M^\circ_{y,\sigma,r,\rho^\circ} \in \Irr (\mh H (G^\circ,L,\cL))$
is discrete series. 

In particular it is tempered, so by Theorem \ref{thm:2.19} $\sigma_0 \in i 
\mf t_\R$. As $\gamma_y : \SL_2 (\C) \to G^\circ$ is algebraic and $r \in i \R$, 
d$\gamma_y \matje{r}{0}{0}{-r} \in i \mf t_\R$ as well. From Proposition
\ref{prop:2.3}.b we know that 
\[
\mathrm{Wt}(M^\circ_{y,\sigma,r,\rho^\circ}) \subset W_\cL^\circ \sigma =
W_\cL^\circ \big( \sigma_0 + \textup{d}\gamma_y \matje{r}{0}{0}{-r} \big)
\subset i \mf t_\R .
\]
Consequently $\Re (x) = 0 \notin \mf t_\R^{--}$ for every 
$x \in \mathrm{Wt}(M^\circ_{y,\sigma,r,\rho^\circ})$. This contradicts the
definition of discrete series.
\end{proof}

When $R(G,T)$ does not span $\mf t_\R^*$, it is sometimes useful to relax 
the notion of the discrete series in the following way. 
\begin{defn}\label{defn:2.23}
Let $(\pi,V)$ be a finite dimensional $\mh H (\mf t, W_\cL,c \mb r, \natural)
$-module, and let $\mf t' \subset \mf t$ be the $\C$-span of the coroots for 
$W_\cL^\circ$. We say that $(\pi,V)$ is essentially (anti-) discrete series if 
its restriction to $\mh H (\mf t', W_\cL,c\mb r)$ is (anti-) discrete series.
\end{defn}

\begin{cor}\label{cor:2.21}
Let $r \in \C$ with $\Re (r) < 0$, and let $y,\sigma,\rho$ be as in Corollary 
\ref{cor:2.18}, with $\sigma, \sigma_0 \in \mf t$. Then $M_{y,\sigma,r,\rho}$ 
is essentially discrete series if and only if $y$ is distinguished in $\mf g$.

When $\Re (r) > 0$, the same holds with essentially anti-discrete series. 
\end{cor}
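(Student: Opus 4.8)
The plan is to run the argument from the proof of Theorem \ref{thm:2.20}(a), but with Definition \ref{defn:2.23} in place of Definition \ref{def:2.temp}; this is exactly what allows us to drop the hypothesis that $G^\circ$ be semisimple. Recall that the only role of that hypothesis in Theorem \ref{thm:2.20} is to guarantee that $R(G,T)^\vee$ spans $\mf t_\R$, so that by \eqref{eq:2.46} the open cone $\mf t_\R^{--}$ equals $\{u\in\mf t_\R:\inp{u}{\alpha}<0\ \forall\alpha\in R(P,T)\}$; once $\mf t$ is replaced by the span $\mf t'$ of the coroots of $W_\cL^\circ$ this holds automatically.

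First I would reduce to the case $\cR_\cL=1$, i.e.\ $G$ connected. Writing $\rho=\rho^\circ\rtimes\tau^*$ as in Theorem \ref{thm:2.16}, \eqref{eq:2.45} gives $\mathrm{Wt}(M_{y,\sigma,r,\rho})=\cR_\cL\,\mathrm{Wt}(M^\circ_{y,\sigma,r,\rho^\circ})$. Since $\cR_\cL$ stabilizes the positive system $R(P,T)$ (Lemma \ref{lem:1.1}(b)) and normalizes $W_\cL^\circ$, hence preserves the subspace $\mf t'$, the condition ``$\inp{\Re(x)}{\alpha}<0$ for all $x\in\mathrm{Wt}$ and all $\alpha\in R(P,T)$'' is $\cR_\cL$-invariant in $x$; so $M_{y,\sigma,r,\rho}$ is essentially discrete series iff $M^\circ_{y,\sigma,r,\rho^\circ}$ is.

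Next I would unwind Definition \ref{defn:2.23} in the connected case. Any $u\in\mf t=Z(\mf l)$ fixed by $W_\cL^\circ$ is killed by every $\alpha\in R(G^\circ,T)$, hence centralizes $\mf l$ and every root space, so $\mf t^{W_\cL^\circ}=Z(\mf g)$; consequently $\mf t'$ is a $W_\cL$-stable complement to $Z(\mf g)$ in $\mf t$ with $\mathrm{Lie}(Z(L)\cap G_\der)=\mf t'$, and by \eqref{eq:1.11} $\mh H(G^\circ,L,\cL)=\mh H(\mf t',W_\cL,c\mb r)\otimes S(Z(\mf g)^*)$. As $S(Z(\mf g)^*)$ is central and $M^\circ_{y,\sigma,r,\rho^\circ}$ is irreducible (Theorem \ref{thm:2.9}(a), using $r\ne0$), the weights of its restriction to $\mh H(\mf t',W_\cL,c\mb r)$ are the images under the projection $\mf t=\mf t'\oplus Z(\mf g)\to\mf t'$ of the weights of $M^\circ_{y,\sigma,r,\rho^\circ}$. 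Because $R(P,T)^\vee$ spans $(\mf t')_\R$ by definition of $\mf t'$, \eqref{eq:2.46} identifies $(\mf t')_\R^{--}$ with $\{u:\inp{u}{\alpha}<0\ \forall\alpha\in R(P,T)\}$, and since each $\alpha\in R(P,T)$ vanishes on $Z(\mf g)$ the projection does not affect the pairing with $\alpha$. Hence $M^\circ_{y,\sigma,r,\rho^\circ}$ is essentially discrete series precisely when $\inp{\Re(x)}{\alpha}<0$ for every $x\in\mathrm{Wt}(M^\circ_{y,\sigma,r,\rho^\circ})$ and every $\alpha\in R(P,T)$.

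Finally I would invoke the part of the proof of Theorem \ref{thm:2.20}(a) that is independent of semisimplicity: that last condition is exactly $\Re$-square integrability of $M^\circ_{y,\sigma,r,\rho^\circ}$ in the sense of \cite{Lus7}, and by \cite[Theorem 1.22]{Lus7} (applicable since $\Re(r)<0$, with $\tau=\Re$) it is equivalent to $y$ being distinguished in $\mf g$. This settles $\Re(r)<0$; the case $\Re(r)>0$ is identical, taking $\tau=-\Re\colon\C\to\R$ and anti-square integrability throughout. I expect the main obstacle to be bookkeeping rather than genuine difficulty: one must verify that neither the reduction to $G$ connected nor the passage to $\mf t'$ disturbs the relevant weight inequalities (they do not, since $R(P,T)$ is $\cR_\cL$-stable and vanishes on $Z(\mf g)$), and confirm that \cite[Theorem 1.22]{Lus7} is really phrased through the cone $\{\inp{\cdot}{\alpha}<0\}$ and so does not tacitly require $R(G,T)^\vee$ to span---which is precisely the point that makes Definition \ref{defn:2.23} the right notion here.
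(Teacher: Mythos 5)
Your strategy is the same as the paper's: reduce to $G^\circ$ via \eqref{eq:2.45}, split off the centre with \eqref{eq:1.11} (the paper restricts $M_{y,\sigma,r,\rho}$ to $\mh H (G^\circ,L,\cL) = \mh H (\mf t \cap \mf g_\der, W_\cL^\circ, c\mb r) \otimes S(Z(\mf g)^*)$, identifies the restriction as $V_\tau \otimes M_{y,\sigma - z_0,r,\rho^\circ} \otimes \C_{z_0}$, and then quotes Theorem \ref{thm:2.20} for the semisimple factor). But your final step contains a genuine error. You assert that \eqref{eq:2.46} identifies $(\mf t')_\R^{--}$ with $\{u : \inp{u}{\alpha} < 0 \ \forall \alpha \in R(P,T)\}$. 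It does not: by \eqref{eq:2.46}, $\mf t_\R^{--}$ is the interior of the cone of non-positive combinations of the positive coroots, which in rank $\geq 2$ strictly contains the open antidominant chamber (in type $A_2$, $-2\alpha_1^\vee - \epsilon \alpha_2^\vee$ lies in $\mf t_\R^{--}$ but pairs positively with $\alpha_2$). Correspondingly, Lusztig's $\tau$-square-integrability is phrased through strict inequalities against the dominant weights $\xi_V$, which span $\mf t_\R^{*+}$; its translation is therefore the coroot-cone condition of Definition \ref{def:2.temp}, not your chamber condition. Your two misidentifications cancel in composition, but each intermediate equivalence you write down is false — a discrete series module attached to a distinguished non-regular $y$ will in general have weights outside the antidominant chamber — so the proof as written does not go through.

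Moreover, the verification you defer at the end resolves in the opposite direction: since \cite[Theorem 1.22]{Lus7} is controlled by strict inequalities against a set of dominant weights spanning $\mf t_\R^{*+}$, a nontrivial $Z(\mf g)$ leaves no room for square-integrable modules at all, so a verbatim application of that theorem to a non-semisimple connected reductive $G^\circ$ cannot yield the corollary; this is exactly why the paper states Theorem \ref{thm:2.20} only for semisimple $G^\circ$ and routes the reductive case through the tensor factorization. The repair is small and is what the paper does: after your (correct) passage to $\mf t' = \mf t \cap \mf g_\der$, identify the restriction of $M^\circ_{y,\sigma,r,\rho^\circ}$ to the factor $\mh H (\mf t \cap \mf g_\der, W_\cL^\circ, c\mb r)$ as $M_{y,\sigma - z_0,r,\rho^\circ}$ (same $y$, shifted $\sigma$), apply Theorem \ref{thm:2.20} — equivalently the $\xi_V$-argument from the proof of Theorem \ref{thm:2.19} — to that semisimple-type algebra, and drop the chamber entirely; distinguishedness in $\mf g_\der$ then coincides with distinguishedness in $\mf g = \mf g_\der \oplus Z(\mf g)$.
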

\begin{proof}
Fix $r \in \C$ with $\Re (r) < 0$. Notice that for algebra under consideration 
$\mf t'$ as in Definition \eqref{defn:2.23} equals $\mf t \cap \mf g_\der$.
Namely, the roots of the semisimple Lie algebra $\mf g_\der$ span the linear dual
of any Cartan subalgebra, and hence also of $\mf t$.

Recall from \eqref{eq:1.11} that  
\[
\mh H (G^\circ,L,\cL) = 
\mh H (\mf t \cap \mf g_\der ,W_\cL^\circ,c \mb r ) \otimes S (Z(\mf g)^*) . 
\]
The restriction of $M_{y,\sigma,r,\rho} = M_{y,\sigma,r,\rho^\circ \rtimes \tau^*}$
to $\mh H (G^\circ,L,\cL)$ is
\[
V_\tau \otimes M_{y,\sigma - z_0,r,\rho^\circ} \otimes \C_{z_0}, \text{ where }
\sigma = (\sigma - z_0) + z_0 \in (\mf t \cap \mf g_\der) \oplus Z(\mf g) .
\]
The action on $V_\tau$ is trivial and there is no condition on the character
$z_0$ by which $S(Z(\mf g)^*)$ acts. Hence $M_{y,\sigma,r,\rho}$ is essentially
discrete series if and only if
\[
M_{y,\sigma - z_0,r,\rho^\circ} \in \Irr (\mh H (\mf t \cap \mf g_\der ,W_\cL^\circ,c \mb r ))
\]
is discrete series. By Theorem \ref{thm:2.20} that is equivalent to $y$ being 
distinguished in $\mf g_\der$. Since $\mf g  = \mf g_\der \oplus Z(\mf g)$, that 
is the case if and only if $y$ is distinguished in $\mf g$.

The case $\Re (r) > 0$ can be shown in the same way.
\end{proof}

Unfortunately Theorems \ref{thm:2.19}, \ref{thm:2.20} and Corollary \ref{cor:2.21}
do not work as we would like them when $\Re (r) > 0$, the prefix "anti" should
rather not be there. In the Langlands program $r$ will typically be $\log (q)$,
where $q$ is cardinality of a finite field, so $r \in \R_{>0}$ is the default.
This problem comes from \cite{Lus7} and can be traced back to Lusztig's conventions
for the generalized Springer correspondence in \cite{Lus2}, see also Remark 
\ref{rem:1.L}.

To make the properties of $\mh H (G,L,\cL)$-modules fit with those of Langlands
parameters, we need a small adjustment. Extend the sign representation of the
Weyl group $W_\cL^\circ$ to a character of $W_\cL = W_\cL^\circ \rtimes \cR_\cL$
by means of the trivial representation of $\cR_\cL$. Then $N_w \mapsto 
\mathrm{sign} (w) N_w$ extends linearly to an involution of $\C [W_\cL,\natural_\cL]$.

The Iwahori--Matsumoto involution of $\mh H (G,L,\cL)$ is defined as the unique
algebra automorphism such that
\begin{equation}\label{eq:2.53}
\IM (N_w) = \mathrm{sign} (w) N_w ,\qquad \IM (\mb r) = \mb r, 
\qquad \IM ( \xi ) = - \xi \quad (\xi \in \mf t^*) .
\end{equation}
Notice that IM preserves the braid relation 
\[
N_{s_i} \xi - {}^{s_i}\xi N_{s_i} = c_i {\mb r} (\xi - {}^{s_i} \xi) / \alpha_i ,
\]
for $\alpha_i$ is also multiplied by -1.
We also note that the Iwahori--Matsumoto involutions for various graded Hecke
algebras are compatible with parabolic induction. Suppose that $Q \subset G$ is
as in Proposition \ref{prop:2.25} and let $V$ be any $\mh H (Q,L,\cL)$-module.
There is a canonical isomorphism of $\mh H (G,L,\cL)$-modules
\begin{equation}\label{eq:2.62}
\begin{array}{ccc} 
\mh H (G,L,\cL) \underset{\mh H (Q,L,\cL)}{\otimes} \IM^* ( V ) & \to &
\IM^* \big( \mh H (G,L,\cL) \underset{\mh H (Q,L,\cL)}{\otimes} V \big) \\
h \otimes v & \mapsto & \IM (h) \otimes v 
\end{array}.
\end{equation}
This allows us to identify the two modules, and then Proposition \ref{prop:2.25}
remains valid upon composition with IM.

Clearly IM has the effect $x \leftrightarrow -x$ on $S(\mf t^*)$-weights of
$\mh H (G,L,\cL)$-representations. Hence IM exchanges tempered with anti-tempered
representations, and discrete series with anti-discrete series representations.
For $\Re (r) \geq 0$ Theorem \ref{thm:2.19} yields equivalences
\begin{equation}\label{eq:2.47}
\IM^* E_{y,\sigma,r,\rho} \text{ is tempered } \Longleftrightarrow \;
\IM^* M_{y,\sigma,r,\rho} \text{ is tempered } \Longleftrightarrow \;
\sigma_0 \in i \mf t_\R .
\end{equation}
For $\Re (r) > 0$ Corollary \ref{cor:2.21} says that
\begin{equation}\label{eq:2.48}
\IM^* M_{y,\sigma,r,\rho} \text{ is essentially discrete series }
\Longleftrightarrow \; y \text{ is distinguished in } \mf g .
\end{equation}
We note that $\IM^*$ changes central characters of these representations: 
by Proposition \ref{prop:2.3}.b both $\IM^* E_{y,\sigma,r,\rho}$ and 
$\IM^* M_{y,\sigma,r,\rho}$ 
\begin{equation}\label{eq:2.49}
\text{admit the central character } (-W_\cL \sigma ,r) \in \mf t / W_\cL \times \C.
\end{equation}
Composition with the Iwahori--Matsumoto involution corresponds to two changes
in the previous setup:
\begin{itemize}
\item In \eqref{eq:1.1} the action of $\C [W_\cL,\natural_\cL^{-1}]$
on $K^*$ is twisted by the sign character of $W_\cL$, that is, we use a normalization
different from that of Lusztig in \cite{Lus2}.
\item The action \eqref{eq:2.9} of $\mf t^* \subset H^*_{G \times \C^\times}(\dot{\mf g})$ 
on standard modules is adjusted by a factor -1.
\end{itemize}
To $r \in \C$ and a triple $(y,\sigma_0,\rho)$ as in Theorem \ref{thm:2.16}.c we will
associate the irreducible representation $\IM^* 
M_{y,\textup{d}\gamma_y \matje{r}{0}{0}{-r} - \sigma_0, r ,\rho}$. This parametrization
of $\Irr_r (\mh H (G,L,\cL))$ is in some respects more suitable than that in Theorem 
\ref{thm:2.16}, for example to study tempered representations. 

We use it here to highlight the relation with extended quotients. Recall that
$W_\cL$ acts linearly on $\mf t$ and that $\C [W_\cL,\natural_\cL] \subset
\mh H (G,L,\cL)$. We write
\[
\tilde{\mf t}_{\natural_\cL} = \{ (x,\pi_x) : x \in \mf t, \pi_x \in
\Irr (\C [(W_\cL)_x,\natural_\cL] ) \} .
\]
The group $W_\cL$ acts on $\tilde{\mf t}_{\natural_\cL}$ by
\[
w \cdot (x,\pi_x) = (w x, w^* \pi_x) \text{ where }
(w^* \pi_x) (N_v) = \pi_x (N_w^{-1} N_v N_w) \text{ for } v \in (W_\cL)_{wx} .
\]
The twisted extended quotient of $\mf t$ by $W_\cL$ (with respect to $\natural_\cL$)
is defined as
\begin{equation}\label{eq:2.51}
(\mf t \q W_\cL )_{\natural_\cL} = \tilde{\mf t}_{\natural_\cL} / W_\cL .
\end{equation}

\begin{thm}\label{thm:2.22}
Let $r \in \C$. There exists a canonical bijection
\[
\mu_{G,L\cL} : (\mf t \q W_\cL )_{\natural_\cL} \to \Irr_r (\mh H (G,L,\cL))
\]
such that:
\begin{itemize}
\item $\mu_{G,L,\cL}( i \mf t_\R \q W_\cL )_{\natural_\cL} = 
\Irr_{r,\temp} (\mh H (G,L,\cL))$ when $\Re (r) \geq 0$.\\
For $\Re (r) \leq 0$ it is the anti-tempered part of $\Irr_r (\mh H (G,L,\cL))$.
\item The central character of $\mu_{G,L,\cL}(x,\pi_x)$ is $\big( W_\cL \big( x + 
\textup{d}\gamma \matje{r}{0}{0}{-r} \big), r \big)$ for some algebraic homomorphism
$\gamma : \SL_2 (\C) \to Z_G (x)^\circ$.
\end{itemize}
\end{thm}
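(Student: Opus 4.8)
The plan is to build the bijection $\mu_{G,L,\cL}$ by combining the parametrization of $\Irr_r(\mh H(G,L,\cL))$ from Theorem \ref{thm:2.16} (and Lemma \ref{lem:2.19}) with a purely combinatorial description of the twisted extended quotient $(\mf t \q W_\cL)_{\natural_\cL}$. First I would recall, from \cite[Theorem 1.1]{Sol1} or the analogous statement for twisted group algebras used in the proof of Proposition \ref{prop:2.7}, that $\Irr(\C[W_\cL,\natural_\cL]\ltimes S(\mf t^*))$ is in canonical bijection with $(\mf t \q W_\cL)_{\natural_\cL}$: a point $(x,\pi_x)$ goes to $\ind_{\C[(W_\cL)_x,\natural_\cL]\ltimes S(\mf t^*)}^{\C[W_\cL,\natural_\cL]\ltimes S(\mf t^*)}(\C_x \otimes \pi_x)$. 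So the right-hand side $(\mf t \q W_\cL)_{\natural_\cL}$ is canonically identified with the irreducible representations of $\mh H(G,L,\cL)/(\mb r)$. The point of the theorem is to transport this along the family of algebras $\mh H(G,L,\cL)$ as $\mb r$ specializes to $r$, i.e. to set up a parameter-preserving correspondence between $\Irr_0$ and $\Irr_r$.

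Next I would make the translation between the two sets of parameters explicit. Given $(x,\pi_x)\in (\mf t \q W_\cL)_{\natural_\cL}$, I want to produce a triple $(y,\sigma_0,\rho)$ as in Theorem \ref{thm:2.16}(c)-(d). The idea is: $x$ plays the role of the semisimple element $\sigma_0 \in \mf t$ (up to $N_G(L)/L$-conjugacy, which is exactly the $W_\cL$-action on $\mf t$), and $\pi_x \in \Irr(\C[(W_\cL)_x,\natural_\cL])$ should correspond, via the generalized Springer correspondence for the disconnected group $Z_G(\sigma_0)$ (from \cite[\S 4]{AMS}), to a nilpotent orbit $\cC$ in $Z_{\mf g}(\sigma_0)$ carrying a $Z_G(\sigma_0)$-equivariant local system $\cF$ with cuspidal support $(L,\cC_v^L,\cL)$ — provided such a pair exists. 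Here the crucial point is that $(W_\cL)_x$, the stabilizer of $x=\sigma_0$ in $W_\cL$, is precisely $W_\cL^{Z_G(\sigma_0)}$, the relative Weyl group appearing in the generalized Springer correspondence for $Z_G(\sigma_0)$ (this is used implicitly in the proofs of Lemma \ref{lem:2.8} and Proposition \ref{prop:2.7}). Not every $(x,\pi_x)$ arises this way, but one first observes — as in \eqref{eq:2.42} — that running over all cuspidal supports, the disjoint union of the $(\mf t_{L}\q W_{\cL})$'s exhausts all parameters, and Lusztig's disjointness \eqref{eqn:Unip} then pins down the bijection for each fixed $(L,\cC_v^L,\cL)$. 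Then, given $(y,\sigma_0,\rho)$, I set $\mu_{G,L,\cL}(x,\pi_x):=\IM^* M_{y,\textup{d}\gamma_y\matje{r}{0}{0}{-r}-\sigma_0,r,\rho}$, using the Iwahori--Matsumoto-twisted parametrization introduced just before the theorem; bijectivity follows from Theorem \ref{thm:2.16}(c)+(d) and the fact that $\IM^*$ is an involution. Canonicity is inherited from Theorem \ref{thm:2.16}: the only arbitrary choices were the 2-cocycles, which cancel (as in the proof of Theorem \ref{thm:2.16}(c)).

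For the first bulleted property, the central character computation \eqref{eq:2.49} shows $\IM^* M_{y,\sigma,r,\rho}$ admits central character $(-W_\cL \sigma, r)$; translating through the substitution $\sigma = \sigma_0 + \textup{d}\gamma_y\matje{r}{0}{0}{-r}$ and $x = -\sigma_0$ gives exactly $\big(W_\cL(x + \textup{d}\gamma\matje{r}{0}{0}{-r}), r\big)$ with $\gamma = \gamma_y$ landing in $Z_G(\sigma_0)^\circ = Z_G(x)^\circ$, as required — this is essentially Proposition \ref{prop:2.3}(c) read in reverse. For the temperedness statement, \eqref{eq:2.47} says $\IM^* M_{y,\sigma,r,\rho}$ is tempered iff $\sigma_0 \in i\mf t_\R$ when $\Re(r)\ge 0$. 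Under the identification $x=-\sigma_0$, the condition $\sigma_0 \in i\mf t_\R$ is the same as $x \in i\mf t_\R$, and one checks that the set of $(x,\pi_x)$ with $x\in i\mf t_\R$ is exactly $(i\mf t_\R \q W_\cL)_{\natural_\cL}$, since $i\mf t_\R$ is $W_\cL$-stable. The case $\Re(r)\le 0$ uses part (b) of Theorem \ref{thm:2.19} and the fact that IM swaps tempered and anti-tempered.

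The main obstacle I anticipate is the careful bookkeeping of the two layers of "extra structure": on one side the 2-cocycle $\natural_\cL$ and the Clifford-theoretic decomposition $\rho = \rho^\circ \rtimes \tau^*$ (Lemma \ref{lem:2.11}), on the other side the $\pi_x\in\Irr(\C[(W_\cL)_x,\natural_\cL])$ which itself decomposes via Clifford theory over $(W_\cL^\circ)_x \rtimes \cR_{\cL,x}$. I must verify these two decompositions are compatible — that the $\tau$ appearing in $\rho$ matches the "disconnected part" of $\pi_x$ under the generalized Springer correspondence for $Z_G(\sigma_0)$ — which is precisely the content of \cite[(54)]{AMS} and \eqref{eq:2.56} but needs to be invoked at the right place. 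A secondary subtlety is that the substitution $\sigma_0 \leftrightarrow x$ involves a sign (because of IM), so I must be careful that $i\mf t_\R$ is preserved (it is, being a real subspace stable under negation) while $\mf t_\R^-$ is not — this is exactly why the IM-twisted parametrization, rather than that of Theorem \ref{thm:2.16} directly, is the one that makes the tempered locus land on $i\mf t_\R$.
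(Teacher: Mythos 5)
Your construction is essentially the paper's: identify $(\mf t \q W_\cL)_{\natural_\cL}$ with $\Irr_0 (\mh H (G,L,\cL))$ via the extended-quotient/Clifford-theory bijection, use the parametrization by triples (Lemma \ref{lem:2.19} at $r=0$) to extract $(y,\rho)$, transport to the IM-twisted module at the given $r$, and read off the two bullets from \eqref{eq:2.49}, Theorem \ref{thm:2.19} and \eqref{eq:2.47}. Two points of bookkeeping, one of which is an actual error. First, your sign conventions are inconsistent: you define $\mu_{G,L,\cL}$ with $x$ playing the role of $\sigma_0$, but then verify the central character using $x=-\sigma_0$, and in either reading \eqref{eq:2.49} gives the central character $\big( W_\cL \big( x - \textup{d}\gamma_y \matje{r}{0}{0}{-r} \big), r \big)$ (or its negative), not the stated $\big( W_\cL \big( x + \textup{d}\gamma \matje{r}{0}{0}{-r} \big), r \big)$ with $\gamma = \gamma_y$. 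Since the theorem only asks for \emph{some} algebraic homomorphism $\gamma : \SL_2 (\C) \to Z_G (x)^\circ$, the fix is the one the paper uses: replace $\gamma_y$ by its conjugate $\gamma (g) = \gamma_y \matje{0}{1}{-1}{0} \gamma_y (g) \gamma_y \matje{0}{-1}{1}{0}$, for which $\textup{d}\gamma \matje{r}{0}{0}{-r} = - \textup{d}\gamma_y \matje{r}{0}{0}{-r}$; with that one line your computation goes through. Second, your worry that ``not every $(x,\pi_x)$ arises this way'' is unfounded: for the fixed cuspidal support $(L,\cC_v^L,\cL)$, Lemma \ref{lem:2.19} at $r=0$ (equivalently Proposition \ref{prop:2.7} together with the Clifford-theoretic steps of Section \ref{par:param}) already yields, for every $(x,\pi_x)$, a pair $(y,\rho)$ unique up to $Z_G(x)$-conjugation with $\C_x \rtimes \pi_x \cong \IM^* M_{y,-x,0,\rho}$; no appeal to the disjointness over all cuspidal supports as in \eqref{eq:2.42} is needed, though your detour does arrive at the same conclusion.
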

\textbf{Remark.} This establishes a version of the ABPS-conjectures \cite[\S 15]{ABPS2}
for the twisted graded Hecke algebra $\mh H (G,L,\cL)$.
\begin{proof}
By \cite[Lemma 2.3]{ABPS7} there exists a canonical bijection
\[
\begin{array}{ccc}
(\mf t \q W_\cL )_{\natural_\cL} & \to & \Irr (S(\mf t^*) \rtimes \C [W_\cL,\natural_\cL]) \\
(x,\pi_x) & \mapsto & \C_x \rtimes \pi_x = \ind_{S(\mf t^*) \rtimes \C [(W_\cL)_x,
\natural_\cL]}^{S(\mf t^*) \rtimes \C [W_\cL,\natural_\cL]} (\C_x \otimes V_{\pi_x}) 
\end{array} .
\]
We can consider $\C_x \rtimes \pi_x$ as an irreducible $\mh H (G,L,\cL)$-representation
with central character $(W_\cL x, 0)$. By Lemma \ref{lem:2.19} there are $y,\rho$,
unique up to $Z_G (x)$-conjugation, such that 
$\C_x \rtimes \pi_x \cong \IM^* M_{y,-x,0,\rho}$. Choose an algebraic homomorphism
$\gamma_y : \SL_2 (\C) \to Z_G (x)^\circ$ with d$\gamma_y \matje{0}{1}{0}{0} = y$.
Now we can define
\[
\mu_{G,L,\cL}(x,\pi_x) = 
\IM^* M_{y,\textup{d}\gamma_y \matje{r}{0}{0}{-r} - x, r,\rho} .
\]
This is canonical because all the above choices are unique up to conjugation. By 
\eqref{eq:2.49} its central character is $\big( W_\cL \big( x -
\textup{d}\gamma_y \matje{r}{0}{0}{-r} \big), r \big)$. Define $\gamma : 
\SL_2 (\C) \to Z_G (x)^\circ$ by
\[
\gamma (g) = \gamma_y \matje{0}{1}{-1}{0} \gamma_y (g) \gamma_y \matje{0}{-1}{1}{0}, 
\]
it is associated to the unipotent element $\gamma_y \matje{1}{0}{1}{1}$. 
As $\textup{d}\gamma \matje{r}{0}{0}{-r} = - \textup{d}\gamma_y \matje{r}{0}{0}{-r}$, 
the central character of $\mu_{G,L,\cL}$ attains the desired form.

The claims about temperedness follow from Theorem \ref{thm:2.19} and \eqref{eq:2.47}.
\end{proof}

\section{The twisted graded Hecke algebra of a cuspidal quasi-support}
\label{sec:3}

In disconnected reductive groups one sometimes has to deal with disconnected
variations on Levi subgroups. Here we will generalize the results of the
previous two sections to that setting. 

Recall \cite{AMS} that a quasi-Levi subgroup of $G$ is a group of the form
$M = Z_G (Z(L)^\circ)$, where $L$ is a Levi subgroup of $G^\circ$. Thus
$Z (M)^\circ = Z(L)^\circ$ and $M \longleftrightarrow L = M^\circ$ is a bijection
between quasi-Levi subgroups of $G$ and the Levi subgroups of $G^\circ$.

\begin{defn}\label{defn:3.5}
A cuspidal quasi-support for $G$ is a triple $(M,\cC_v^M,q\cL)$ where:
\begin{itemize}
\item $M$ is a quasi-Levi subgroup of $G$;
\item $\cC_v^M$ is a nilpotent $\Ad(M)$-orbit in $\mf m = \mathrm{Lie}(M)$.
\item $q \cL$ is a $M$-equivariant cuspidal local system on $\cC_v^M$, i.e. as 
$M^\circ$-equivariant local system it is a direct sum of cuspidal local systems.
\end{itemize}
We denote the $G$-conjugacy class of $(M,v,q \cL)$ by $[M,v,\cL]_G$.
With this cuspidal quasi-support we associate the groups
\begin{equation}\label{eq:3.15}
N_G (q \cL) = \mathrm{Stab}_{N_G (M)}(q \cL) \quad \text{and} \quad
W_{q \cL} = N_G (q \cL) / M .
\end{equation}
\end{defn}
Let $\cL$ be an irreducible constituent of $q\cL$ as $M^\circ$-equivariant
local system on $\cC_v^{M^\circ} = \cC_v^M$. Then
\[
W_\cL^\circ = N_{G^\circ}(M^\circ) / M^\circ \cong N_{G^\circ} (M^\circ) M / M
\]
is a a subgroup of $W_{q\cL}$. It is normal because $G^\circ$ is normal in $G$.

Let $P^\circ$ be a parabolic subgroup of $G^\circ$ with Levi decomposition
$P^\circ = M^\circ \ltimes U$. The definition of $M$ entails that it normalizes
$U$, so
\[
P := M \ltimes U
\]
is a again a group. We put
\begin{align*}
& N_G (P,q\cL) = N_G (P,M) \cap N_G (q\cL) , \\
& \cR_{q\cL} = N_G (P,q\cL) / M .
\end{align*}
The same proof as for Lemma \ref{lem:1.1}.b shows that 
\begin{equation}\label{eq:3.1}
W_{q\cL} = W_\cL^\circ \rtimes \cR_{q\cL} .
\end{equation}
We define $\dot{\mf g}$ as before, but with $M$ instead of $L$, 
and with the new $P$. We put
\begin{equation}\label{eq:3.14}
K = (\mathrm{pr}_1)_! \dot{q \cL} \quad \text{and} \quad
K^* = (\mathrm{pr}_1)_! \dot{q \cL^*}, 
\end{equation}
these are perverse sheaves on $\mf g$. Considering $(\mathrm{pr}_1)_! \dot{q \cL}$ 
as a perverse sheaf on $\mf g_{RS}$, \cite[Lemma 5.4]{AMS} says that
\[
\End_{\mathcal{P}_{G}(\mf g_{RS})}((\mathrm{pr}_1)_! \dot{q \cL} ) 
\cong \C[W_{q \cL},\natural_{q \cL}],
\]
where $\natural_{q\cL} : (W_{q\cL} / W_\cL^\circ )^2 \to \C^\times$ is a suitable 
2-cocycle. As in \eqref{eq:1.E}
\begin{equation}\label{eq:3.60}
\End^+_{\mathcal{P}_{G}(\mf g_{RS})}((\mathrm{pr}_1)_! \dot{q \cL} ) 
\cong \C[\cR_{q \cL},\natural_{q \cL}] .
\end{equation}
To $(M,\cC_v^M,q\cL)$ we associate the twisted graded Hecke algebra
\[
\mh H (G,M,q\cL) := \mh H (\mf t, W_{q\cL},c \mb r,\natural_{q \cL}) ,
\]
where the parameters $c_i$ are as in \eqref{eq:1.5}. As in Lemma \ref{lem:1.7}, we
can consider it as 
\[
\mh H (G,M,q\cL) = \mh H (\mf t, W_\cL^\circ,c \mb r) \rtimes 
\End_{\mathcal{P}_{G}(\mf g_{RS})}((\mathrm{pr}_1)_! \dot{q \cL} ) ,
\]
and then it depends canonically on $(G,M,q\cL)$.
We note that \eqref{eq:3.1} implies
\begin{equation}\label{eq:3.13}
\mh H (G^\circ N_G (P,q\cL),M,q\cL) = \mh H (G,M,q\cL) .
\end{equation}
All the material from Proposition \ref{prop:1.2} up to and including
Theorem \ref{thm:2.4}, and the parts of \cite{Lus3} on which it is based, 
extend to this situation with the above substitutions. We will use these results 
also for $\mh H (G,M,q\cL)$.

To generalize the remainder of Section \ref{sec:reps} we need to assume that:
\begin{cond}\label{cond:3}
The group $G$ equals $G^\circ N_G (P,q\cL)$. 
\end{cond}
By \eqref{eq:3.13} this imposes no further restriction on the collection of 
twisted graded Hecke algebras under consideration. Let us write
\[
\cP_y^\circ = \{ gP \in G^\circ M / P : \Ad (g^{-1}) y \in \cC_v^M + \mf u \} 
= \cP_y \cap G^\circ M / P .
\]
Condition \ref{cond:3} guarantees that $\cP_y = \cP_y^\circ \times \cR_{q\cL}$ 
as $M(y)$-varieties. With these minor modifications Lemma \ref{lem:2.1} also 
goes through: there is an isomorphism of $\mh H (G,M,q\cL)$-modules
\begin{equation}\label{eq:3.2}
H_*^{M(y)^\circ}(\cP_y,\dot{q\cL}) \cong \ind_{\mh H (G^\circ M,M,q\cL)}^{\mh H 
(G,M,q\cL)} H_*^{M(y)^\circ}(\cP_y^\circ,\dot{q\cL}) . 
\end{equation}
We note that $N_G (q\cL) \cap G^\circ = N_{G^\circ}(M^\circ)$, for by 
\cite[Theorem 9.2]{Lus2} $N_G (M^\circ)$ stabilizes all $M^\circ$-equivariant
cuspidal local systems contained in $q\cL$. Hence
\begin{equation}\label{eq:3.3}
N_{G^\circ M}(q\cL)/M \cong N_{G^\circ}(q\cL)/M^\circ = 
N_{G^\circ}(M^\circ) / M^\circ = W_\cL^\circ .
\end{equation}
Moreover the 2-cocycles $\natural_{q\cL}$ and $\natural_\cL$ are trivial on 
$W_\cL^\circ$, so we can 
\begin{equation}\label{eq:3.11}
\text{identify} \quad \mh H (G^\circ M,M,q\cL) 
\quad \text{with} \quad \mh H (G^\circ,L,\cL) .
\end{equation}
We already performed the construction and parametrization of $\mh H (G^\circ,L,\cL)$ 
in Theorem \ref{thm:2.9}, but now we want it in terms of $M$ and $q\cL$. To this end 
we need to recall how $q\cL$ can be constructed from $\cL$. Let $M_\cL$ be the 
stabilizer in $M$ of $(\cC_v^{M^\circ},\cL)$. Let $K_M$ be like $K$, 
but for $M$. About this perverse sheaf on $\mf m$ \cite[Proposition 4.5]{AMS} says
\[
\End_{\mathcal{P}_{G}(\mf m_{RS})} (K_M) \cong \C [M_\cL / M^\circ,\natural_\cL] .
\]
By \cite[(63)]{AMS} there is a unique 
$\rho_M \in \Irr (\C [M_\cL / M^\circ,\natural_\cL^{-1}] )$ such that 
\begin{equation}\label{eq:3.4}
q \cL = \Hom_{\C [M_\cL / M^\circ,\natural_\cL]} (\rho_M^* ,K_M) . 
\end{equation}
From the proof of \cite[Proposition 3.5]{AMS} we see that the stalk of 
\eqref{eq:3.4} at $v \in \cC_v^M$, considered as $Z_M (v)$-representation, is
\[
(q \cL)_v \cong \ind_{Z_M (v)_{\cL_v}}^{Z_M (v)} (\rho_M \otimes \cL_v) =
\ind_{Z_{M_\cL} (v)}^{Z_M (v)} (\rho_M \otimes \cL_v) .
\]
Here $Z_M (v)_{\cL_v}$ denotes the stabilizer of $\cL_v \in \Irr (Z_{M^\circ}(v))$
in $Z_M (v)$. The same holds for other elements in the $M$-conjugacy class of 
$v$, so as $M$-equivariant sheaves
\begin{equation}\label{eq:3.5}
q \cL \cong \ind_{M_\cL}^M (\rho_M \otimes \cL) .
\end{equation}
We recall from \cite[(64)]{AMS} that the cuspidal support map $\Psi_G$ has a "quasi"
version $q \Psi_G$, which associates to every pair $(y,\rho)$ with $y \in G^\circ$ 
unipotent and $\rho \in \Irr_{\pi_0 (Z_G (y))}$ a cuspidal quasi-support.

\begin{lem}\label{lem:3.1}
Let $y \in \mf g$ be nilpotent such that $\cP_y$ is nonempty. Then $M$ stabilizes
the $M^\circ$-orbit of $y$.
\end{lem}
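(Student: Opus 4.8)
The statement asserts that for nilpotent $y \in \mf g$ with $\cP_y \neq \es$, the quasi-Levi $M = Z_G(Z(L)^\circ)$ stabilizes the $M^\circ$-orbit $\cC_y^{M^\circ}$ of $y$ (here $y$ is implicitly taken to lie in $\mf m$, since $\cP_y \neq \es$ forces a $G$-conjugate of $y$ into $\cC_v^M + \mf u \subset \mf p$, hence into $\mf m$ after the $\mf u$-part is absorbed). The natural approach is to reduce to a statement about the cuspidal quasi-support of $y$ and then invoke the already-established compatibility of $q\Psi$ with conjugation.

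First I would fix $g P \in \cP_y$, so that $\Ad(g^{-1}) y \in \cC_v^M + \mf u$; replacing $y$ by $\Ad(g^{-1}) y$ and using that conjugation by $g$ carries $(M,\cC_v^M,q\cL)$ to a $G$-conjugate cuspidal quasi-support, I may assume $y \in \cC_v^M + \mf u$. Applying the exponential-map dictionary (as in the Remark following the construction of $K,K^*$) to transport the relevant statement of \cite{AMS} from $G^\circ$ to $\mf g$, the key input is that the cuspidal quasi-support $q\Psi_G(\cC_y^G,\cdot)$ of a nilpotent orbit obtained this way is exactly $[M,\cC_v^M,q\cL]_G$, and moreover the generalized Springer correspondence for disconnected groups attaches to $y$ a representation of $W_{q\cL}$ — which only makes sense once $M$ (equivalently $N_G(q\cL)$) is shown to act on the relevant data. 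The cleanest route, though, is local: since $y \in \cC_v^M + \mf u$ and $\mf u = \Lie(U)$ with $M$ normalizing $U$ (the defining property of the quasi-Levi $M$, used to form $P = M \ltimes U$), every $m \in M$ satisfies $\Ad(m) y \in \Ad(m)\cC_v^M + \mf u = \cC_v^M + \mf u$, because $M$ stabilizes $\cC_v^M$ by definition of the cuspidal quasi-support. Now I project modulo $\mf u$: the map $\cC_v^M + \mf u \to \cC_v^M$, $x \mapsto \mathrm{pr}_{\cC_v^M}(x)$, is $M$-equivariant and its value at $y$ lies in $\cC_v^M$. Thus $\Ad(m) y$ and $y$ have the same image in $\cC_v^M$ up to the $\mf u$-part, and a standard argument (e.g. applying $\Ad$ of a suitable element of $U$, or using that $y$ and its $\cC_v^M$-projection are $P$-conjugate since $\Ad(P)(\cC_v^M + \mf u) = \cC_v^M + \mf u$ with the $U$-part acting by "Richardson"-type conjugacy) shows $\Ad(m) y$ is $\Ad(M^\circ)$-conjugate to $y$: both are $\Ad(M)$-conjugate into $\cC_v^M$, which is a single $\Ad(M^\circ)$-orbit by definition, so $\Ad(m) y \in \cC_y^{M^\circ}$.

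The main obstacle is the passage from "$\Ad(m) y$ and $y$ lie in $\cC_v^M + \mf u$ with the same $\cC_v^M$-component up to $\Ad(M^\circ)$" to "$\Ad(m) y \in \Ad(M^\circ) y$" — i.e.\ controlling the $\mf u$-component. Here I would argue that the element of $\cP_y^{M^\circ}$ (nonempty since $y \in \cC_v^M + \mf u$, witnessed by $P \in \cP_y$) together with \cite[1.4.b]{Lus3}-type finiteness forces the $P$-orbit of $y$ inside $\cC_v^M + \mf u$ to meet $\cC_v^M$ in exactly one $M^\circ$-orbit; equivalently, one invokes that $\mathrm{pr}_{\cC_v^M} \colon \cC_v^M + \mf u \to \cC_v^M$ identifies $M^\circ$-orbits in its domain that meet $\cC_v^M$ with points of $\cC_v^M/\!\sim$, which is Lusztig's basic observation underlying the cleanness of cuspidal local systems. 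Since $\mathrm{pr}_{\cC_v^M}$ intertwines the $M$-actions and $\cC_v^M$ is $M$-stable, $M$ permutes the fibers covering a fixed $M^\circ$-orbit, hence stabilizes $\cC_y^{M^\circ}$; combined with the reduction in the first paragraph (absorbing the original $g$ and the $\mf u$-shift), this yields the claim for the given $y$.
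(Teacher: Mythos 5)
Your reduction misreads the setting, and your key step fails. The hypothesis $\cP_y \neq \es$ only says that some $\Ad (g^{-1})y$ lies in $\cC_v^M + \mf u$; it does not let you ``absorb the $\mf u$-part'' and place $y$ in $\cC_v^M$ (hence in $\mf m$). A generic element of $\cC_v^M + \mf u$ lies in a strictly larger (induced) nilpotent orbit -- already for $M^\circ$ a torus and $v=0$, any nonzero $y \in \mf u$ projects to $0$ but is not conjugate to $0$ -- and the lemma is needed precisely for such $y$: in the proof of Lemma \ref{lem:3.2}, equation \eqref{eq:3.8}, it is applied to nilpotent elements $y \in Z_{\mf g}(\sigma_0)$ which are typically far from $\cC_v^M$ (for instance distinguished). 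For the same reason your decisive step is unsupported: knowing that $\mathrm{pr}_{\cC_v^M}(\Ad (m) y)$ and $\mathrm{pr}_{\cC_v^M}(y)$ are $\Ad (M^\circ)$-conjugate says nothing about the $\mf u$-components; a fibre of $\mathrm{pr}_{\cC_v^M}$ meets many distinct $M^\circ$- (indeed $G$-) orbits, the $P$-orbit of $y$ in general does not meet $\cC_v^M$ at all, and cleanness of cuspidal local systems is a statement about intermediate extensions, not about orbits in $\cC_v^M + \mf u$. So your argument only covers the degenerate case $y \in \cC_v^M$, where the claim is immediate (there it is essentially \cite[Theorem 3.1.a]{AMS}, not ``by definition'': $\cC_v^M$ is by definition an $\Ad (M)$-orbit, and the nontrivial point is that it is a single $\Ad(M^\circ)$-orbit).

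The paper's proof runs through a different mechanism, which is the ingredient your sketch lacks: a comparison of the $\mf{sl}_2$-triples of $y$ and of $v$. One first produces $\rho \in \Irr \big( \pi_0 (Z_G (y)) \big)$ with $q\Psi_G (y,\rho) = (M,\cC_v^M,q\cL)$, then uses \cite[Lemma 7.6]{AMS} to choose homomorphisms $\gamma_y, \gamma_v$ adapted to $y$ and $v$ with $\textup{d}\gamma_v \matje{1}{0}{0}{-1} - \textup{d}\gamma_y \matje{1}{0}{0}{-1} \in \Lie (Z(M)^\circ)$. Since $\cC_v^M = \cC_v^{M^\circ}$, every $m \in M$ admits $m_0 \in M^\circ$ with $\Ad (m) \textup{d}\gamma_v \matje{1}{0}{0}{-1} = \Ad (m_0) \textup{d}\gamma_v \matje{1}{0}{0}{-1}$; as $M = Z_G (Z(M)^\circ)$ acts trivially on $\Lie (Z(M)^\circ)$, the same identity holds with $\gamma_y$ in place of $\gamma_v$, and the fact that the orbit of a nilpotent element is determined by the orbit of the semisimple member of its triple (\cite[Proposition 5.6.4]{Car}) yields $\Ad (m) y \in \Ad (M^\circ) y$. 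Any repair of your approach would need an input of this kind tying the triple of $y$ to that of $v$; the projection onto $\cC_v^M$ alone cannot control the $\mf u$-component.
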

\begin{proof}
From \cite[Theorem 6.5]{Lus2} and \cite[(64)]{AMS} we deduce that there exists a
$\rho \in \Irr \big( \pi_0 (Z_G (y)) \big)$ such that $q\Psi_G (y,\rho) = 
(M,\cC_v^M,q\cL)$ (up to $G$-conjugacy). Now \cite[Lemma 7.6]{AMS} says that there 
exist algebraic homomorphisms $\gamma_y, \gamma_v : \SL_2 (\C) \to M^\circ$ such that 
\begin{equation}\label{eq:3.7}
\textup{d}\gamma_y \matje{0}{1}{0}{0} = y ,\quad 
\textup{d}\gamma_v \matje{0}{1}{0}{0} = v
\quad \text{and} \quad \textup{d}\gamma_v \matje{1}{0}{0}{-1} - 
\textup{d}\gamma_y \matje{1}{0}{0}{-1} \in \mathrm{Lie}(Z(M)^\circ) .
\end{equation}
In view of \cite[Proposition 5.6.4]{Car} the $G^\circ$-conjugacy class of $y$ (resp. 
the $M^\circ$-orbit of $v$) is completely determined by the $G^\circ$-conjugacy class 
of $\textup{d}\gamma_y \matje{1}{0}{0}{-1}$ (resp. the $M^\circ$-conjugacy class of
$\textup{d}\gamma_v \matje{1}{0}{0}{-1}$). By \cite[Theorem 3.1.a]{AMS} 
$\cC_v^{M^\circ} = \cC_v^M$. It follows that for every $m \in M$ there is a 
$m_0 \in M^\circ$ such that
\[
\Ad (m) \textup{d}\gamma_v \matje{1}{0}{0}{-1} = 
\Ad (m_0) \textup{d}\gamma_v \matje{1}{0}{0}{-1} .
\]
We calculate, using \eqref{eq:3.7}:
\begin{multline*}
\Ad (m) \textup{d}\gamma_y \matje{1}{0}{0}{-1} = \Ad (m) \textup{d}\gamma_v 
\matje{1}{0}{0}{-1} + \Ad (m) \big( \textup{d}\gamma_y \matje{1}{0}{0}{-1} -
\textup{d}\gamma_v \matje{1}{0}{0}{-1} \big) \\
\Ad (m) \textup{d}\gamma_v \matje{1}{0}{0}{-1} + \Ad (m) \big( \textup{d}\gamma_y 
\matje{1}{0}{0}{-1} - \textup{d}\gamma_v \matje{1}{0}{0}{-1} \big) = 
\Ad (m_0) \textup{d}\gamma_y \matje{1}{0}{0}{-1} .
\end{multline*}
This implies that $m$ stabilizes the $M^\circ$-orbit of $y$.
\end{proof}

\begin{lem}\label{lem:3.2}
Let $\sigma_0 \in \mf t = \mathrm{Lie}(Z(M)^\circ)$ be semisimple, write $Q = Z_G 
(\sigma_0)$ and let $y \in Z_{\mf g}(\sigma_0) = \mathrm{Lie}(Q)$ be nilpotent. 
The map $\rho^\circ \mapsto \rho^\circ \ltimes \rho_M$ is a bijection between 
the following sets:
\begin{align*}
& \big\{ \rho^\circ \in \Irr \big( \pi_0 (Z_Q (y)) \big) : \Psi_Q (y,\rho^\circ) = 
(M^\circ,\cC_v^M,\cL) \text{ up to } G^\circ\text{-conjugation} \big\} , \\
& \big\{ \tau^\circ \in \Irr \big( \pi_0 (Z_{QM} (y)) \big) : q\Psi_{QM} 
(y,\tau^\circ) = (M,\cC_v^M,q\cL) \text{ up to } G^\circ M\text{-conjugation} \big\}.
\end{align*}
\end{lem}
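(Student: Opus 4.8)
The plan is to reduce the asserted bijection to the corresponding statement about local systems on the nilpotent orbit via the generalized Springer correspondence, using the Clifford-theory formalism of \cite[\S 1]{AMS} and the description \eqref{eq:3.5} of $q\cL$ as an induced local system. First I would recall that, by Lemma \ref{lem:3.1} applied to the quasi-Levi $M$ inside $QM$, the group $M$ stabilizes the $M^\circ$-orbit of $y$ whenever the relevant varieties are nonempty, so that the induction $\rho^\circ \ltimes \rho_M$ makes sense as a representation of $\pi_0(Z_{QM}(y))$ — here one uses that $Z_{QM}(y)/Z_Q(y)$ maps onto (a subgroup of) $M/M^\circ$, and that $\rho_M$ is a projective representation of $M_\cL/M^\circ$ whose cocycle $\natural_\cL^{-1}$ matches the cocycle governing the $Z_Q(y)$-stabilizer of $\rho^\circ$ (this is exactly the situation treated in Lemma \ref{lem:2.11}). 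The key structural input is that $q\Psi_{QM}$ is built from $\Psi_Q$ by the same ``induction of local systems'' recipe \eqref{eq:3.5} that produces $q\cL$ from $\cL$; this is \cite[(64) and Proposition 4.8]{AMS}.

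The main steps, in order, would be: (1) establish that the map $\rho^\circ \mapsto \rho^\circ \ltimes \rho_M$ is well-defined, i.e.\ that its target does lie in the second set — for this I would invoke \cite[Theorem 4.8.a]{AMS} (compatibility of cuspidal support with induction of local systems), which gives $q\Psi_{QM}(y,\rho^\circ \ltimes \rho_M) = (M,\cC_v^M,q\cL)$ from $\Psi_Q(y,\rho^\circ)=(M^\circ,\cC_v^M,\cL)$; (2) show injectivity, which follows because two different $\rho^\circ$ in the first set are not $Z_Q(y)$-conjugate and the Clifford correspondence of \cite[\S 1]{AMS} (or \cite[Theorem 1.2]{Sol2}) turns distinct orbit representatives into non-isomorphic induced representations; (3) show surjectivity: given $\tau^\circ$ in the second set, restrict it to $\pi_0(Z_Q(y))$ and use that $q\Psi_{QM}(y,\tau^\circ)=(M,\cC_v^M,q\cL)$ forces, via \cite[Proposition 4.8.b]{AMS} (the multiplicity/containment statement repaired in Proposition \ref{prop:2.25} above), the existence of a constituent $\rho^\circ$ with $\Psi_Q(y,\rho^\circ)=(M^\circ,\cC_v^M,\cL)$, and then the Clifford correspondence identifies $\tau^\circ$ with $\rho^\circ \ltimes \rho_M$. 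Step (2) and (3) together are really one application of Clifford theory for the pair $\pi_0(Z_Q(y)) \trianglelefteq \pi_0(Z_{QM}(y))$, once the cuspidal-support constraint is seen to be preserved.

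The hard part will be verifying that the $2$-cocycle on $\pi_0(Z_{QM}(y))_{\rho^\circ}/\pi_0(Z_Q(y))_{\rho^\circ}$ governing the extension problem for $\rho^\circ$ really coincides with the restriction of $\natural_\cL^{-1}$ (equivalently, with the cocycle built into $\rho_M$), so that ``$\ltimes$'' here is literally the same operation as in Lemma \ref{lem:2.11}. I expect this to follow by the same argument as in the proof of Lemma \ref{lem:2.11}: the cuspidal support condition $\Psi_Q(y,\rho^\circ)=(M^\circ,\cC_v^M,\cL)$ means $\rho^\circ$ occurs in $\cH^*(K_Q)|_y$, so the action of $W_\cL^\circ$ on that stalk — and the $2$-cocycle $\natural_\cL$ extending it — restricts to give the relevant extension data on $V_{\rho^\circ}$, matching \eqref{eq:3.4}. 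Once this matching is in place the proof is just bookkeeping with Frobenius reciprocity and the disjointness of $Z_Q(y)$-orbits. I would also remark that both sets are naturally acted on compatibly by $\cR_{\cL}$ (resp.\ $\cR_{q\cL}$), so the bijection is equivariant, which is what will be needed when this lemma is applied in the disconnected setting.
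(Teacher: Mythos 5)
There is a genuine gap in your plan, and it sits exactly where the actual content of the lemma lies. Your steps (1)--(3) all hinge on knowing how the \emph{quasi}-support map $q\Psi_{QM}$ behaves on the Clifford packet above $\rho^\circ$: you need that $q\Psi_{QM}(y,\rho^\circ\ltimes\rho_M)=(M,\cC_v^M,q\cL)$, and, crucially, that among all irreducibles of $\pi_0 (Z_{QM}(y))$ lying over $\rho^\circ$ (which Clifford theory parametrizes by irreducibles of a twisted group algebra of the stabilizer quotient) \emph{only} the one built from $\rho_M$ has quasi-support $q\cL$ --- the others land on quasi-supports $(M,\cC_v^M,q\cL')$ with $q\cL'\not\cong q\cL$. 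Abstract Clifford theory cannot see this, and the references you invoke do not supply it: \cite[Theorem 4.8.a]{AMS} and \cite[Proposition 4.8.b]{AMS} compare the ordinary support maps $\Psi_G$ and $\Psi_Q$ for a group and a subgroup, not $\Psi$ with $q\Psi$ for the same pair $(Q,QM)$. Likewise, your claim that the stabilizer of $\rho^\circ$ in $\pi_0 (Z_{QM}(y))/\pi_0 (Z_Q (y))\cong M/M^\circ$ is $M_\cL/M^\circ$ and that the cocycle is the restriction of $\natural_\cL^{-1}$ is asserted, not proved; a Lemma \ref{lem:2.11}-style argument lives on a different pair of groups and does not transfer for free. (Even injectivity is subtler than "different $\rho^\circ$ are not $Z_Q(y)$-conjugate": one must rule out that two elements of the first set become conjugate under $Z_{QM}(y)$.)

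The paper avoids all of this by a different mechanism: it composes the two canonical correspondences $\Sigma_\cL \colon \Psi_Q^{-1}(M^\circ,\cC_v^M,\cL)\to \Irr (W_\cL^Q)$ (\cite[Theorem 9.2]{Lus2}) and $q\Sigma_{q\cL}\colon q\Psi_{QM}^{-1}(M,\cC_v^M,q\cL)\to \Irr (W_\cL^Q)$ (\cite[Lemma 5.3, Theorem 5.5.a]{AMS}), so bijectivity is automatic, and the real work is the explicit computation that the composite sends $(y,\rho^\circ)$ to $\big( y,\ind_{Z_{M_\cL}(y)}^{Z_M (y)}(\rho_M\otimes\rho^\circ)\big)$, using the description \eqref{eq:3.5} of $q\cL$, the constructions of $\Sigma_\cL$ and $q\Sigma_{q\cL}$ in \cite[\S 5]{AMS}, and the component-group identity \eqref{eq:3.8} (which rests on Lemma \ref{lem:3.1}); the stabilizer statement you need is then \emph{deduced} from the irreducibility of this induced representation, not assumed. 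To repair your argument you would have to import precisely this input from \cite[\S 5]{AMS} (how $q\Psi$ distributes a Clifford packet among the quasi-supports), at which point you are essentially reproducing the paper's proof.
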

\begin{proof}
Notice that $M^\circ \subset Q$, for $\sigma \in \mf t$. By \cite[Theorem 9.2]{Lus2}
there is a canonical bijection
\[
\Sigma_\cL : \Psi_Q^{-1} (M^\circ,\cC_v^M,\cL) \to \Irr (W_\cL^Q) .
\]
Similarly, by \cite[Lemma 5.3 and Theorem 5.5.a]{AMS} there is a canonical bijection
\[
q \Sigma_{q \cL} : q \Psi_{QM}^{-1} (M,\cC_v^M,q\cL) \to \Irr (N_{QM}(M,q\cL ) / M) 
\cong \Irr (W_\cL^Q) ,
\]
where we used \eqref{eq:3.3} for the last identification. Composing these two,
we obtain a bijection
\begin{equation}\label{eq:3.6}
q \Sigma_{q\cL}^{-1} \circ \Sigma_\cL : \Psi_Q^{-1} (M^\circ,\cC_v^M,\cL) \to
q \Psi_{QM}^{-1} (M,\cC_v^M,q\cL) .
\end{equation}
Since $\cL$ is a subsheaf of $\cL$ and the $W_\cL^\circ$-action on $q\cL$ extends that
on $\cL$, $\Sigma_\cL (y,\rho^\circ)$ is contained in $q \Sigma_{q\cL}(y,\tau^\circ)$
for some $\tau^\circ$. Hence \eqref{eq:3.6} preserves the fibers over $y$. This 
provides a canonical bijection between the two sets figuring in the lemma.

The action of $W_\cL^Q$ on $q \cL$ and the sheafs associated to it for $\Sigma_\cL$
and $q\Sigma_{q \cL}$ comes from $Q \subset G^\circ$, so it fixes the part 
$\ind_{M_\cL}^M \rho_M$ in \eqref{eq:3.5}. Now it follows from the descriptions of 
$\Sigma_\cL$ and $q \Sigma_{q\cL}$ in \cite[\S 5]{AMS} that
\begin{equation}\label{eq:3.9}
q \Sigma_{q\cL}^{-1} \circ \Sigma_\cL (y,\rho^\circ) = 
\big( y, \big( \ind_{M_\cL}^M (\rho_M \otimes \rho^\circ ) \big)_y \big) = 
\big( y, \ind_{Z_{M_\cL}(y)}^{Z_M (y)} (\rho_M \otimes \rho^\circ ) \big) .
\end{equation}
For the same reasons the action of $\pi_0 (Z_Q (y))$ on \eqref{eq:3.9} fixes the
$\ind_{M_\cL}^M \rho_M$ part pointwise, and sees only $\rho^\circ$. To analyse
the right hand side as representation of $\pi_0 (Z_{QM}(y))$, we investigate
$Z_M (y) / Z_{M^\circ}(y)$. Using Lemma \ref{lem:3.1} we find 
\begin{equation}\label{eq:3.8}
\begin{split}
\pi_0 ( Z_{QM}(y) ) / \pi_0 (Z_Q (y)) & = Z_{QM} (y) / Z_Q (y) \; = \;
Z_{QM}(y) / Z_{Q M^\circ}(y) \\
& \cong \mathrm{Stab}_{M / M^\circ} (\Ad (Q M^\circ) y) \; = \; M / M^\circ \\
& = \mathrm{Stab}_{M / M^\circ} (\Ad (M^\circ) y) \; \cong \; Z_M (y) / Z_{M^\circ}(y).
\end{split}
\end{equation}
With \eqref{eq:3.8} we can identify the representation on the right hand side of
\eqref{eq:3.9} with ,
\begin{equation}\label{eq:3.10}
\ind_{\pi_0 (Z_{Q M_\cL}(y))}^{\pi_0 (Z_{QM} (y))} (\rho_M \otimes \rho^\circ ) .
\end{equation}
We already knew that it is irreducible, so $\pi_0 (Z_{Q M_\cL}(y))$ must be the
stabilizer of $\rho^\circ \in \Irr \big( \pi_0 (Z_Q (y)) \big)$ in $\pi_0 (Z_{QM} 
(y)) / \pi_0 (Z_Q (y)) \cong M / M^\circ$. In other words, \eqref{eq:3.10} equals 
$\rho_M \ltimes \rho^\circ \in \Irr \big( \pi_0 (Z_{QM} (y)) \big)$.
\end{proof}

\begin{lem}\label{lem:3.3}
Let $\sigma_0, y, \rho^\circ$ be as in Lemma \ref{lem:3.2}, and define $\sigma 
\in \mf g$ as in Lemma \ref{lem:2.2}. With the identification \eqref{eq:3.11},
the $\mh H (G^\circ M,M,q\cL)$-module $E^\circ_{y,\sigma,r,\rho^\circ \rtimes \rho_M}$
is canonically isomorphic to the $\mh H (G^\circ,M^\circ,\cL)$-module
$E^\circ_{y,\sigma,r,\rho^\circ}$.
\end{lem}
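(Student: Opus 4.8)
The plan is to unwind both standard-module constructions and check that, after the canonical identifications, they are built from literally the same equivariant homology, the passage to the relevant isotypic subspace absorbing the $\rho_M$-twist. First I would recall why the two algebras in the statement agree, so the claim makes sense: by \eqref{eq:3.3} one has $N_{G^\circ M}(q\cL)/M = W_\cL^\circ$, the cocycle $\natural_{q\cL}$ restricts trivially to $W_\cL^\circ$, and the integers $c_i$ of \eqref{eq:1.5} depend only on $v$ and on the root system $R(G^\circ,T)$ with $T = Z(M)^\circ = Z(L)^\circ$; hence $\mh H(G^\circ M,M,q\cL) = \mh H(\mf t,W_\cL^\circ,c\mb r) = \mh H(G^\circ,L,\cL)$, which is exactly \eqref{eq:3.11}. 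Since $P = M \ltimes U$, $P^\circ = M^\circ \ltimes U$, $G^\circ \cap P = P^\circ$ and $G^\circ M = G^\circ P$, the inclusion $G^\circ \hookrightarrow G^\circ M$ induces an isomorphism of $G^\circ$-varieties $G^\circ/P^\circ \isom G^\circ M/P$; combined with $\cC_v^M = \cC_v^{M^\circ}$ (\cite[Theorem 3.1.a]{AMS}) this identifies the variety $\cP_y^\circ$ attached to $(G^\circ M,M,q\cL)$ with the one attached to $(G^\circ,M^\circ,\cL)$, compatibly with the $M(y)^\circ$-action. As $(G^\circ M)^\circ = G^\circ$, the group $M(y)^\circ$, the ring $H^*_{M(y)^\circ}(\{y\})$ and its character $\C_{\sigma,r}$ are the same on both sides, so the $S(\mf t^*)\otimes\C[\mb r]$-actions on the two modules automatically agree once the underlying vector spaces are matched.

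It then remains to compare the two local systems on the now identified variety $\cP_y^\circ$. Using \eqref{eq:3.5}, which writes $q\cL = \ind_{M_\cL}^M(\rho_M \otimes \cL)$, and decomposing $G^\circ M$ into cosets of $M^\circ$, I would show that $H_*^{M(y)^\circ}(\cP_y^\circ,\dot{q\cL})$ is, as a module over the twisted group algebra $\C[M_\cL/M^\circ,\natural_\cL] = \End(K_M)$ (the action underlying \eqref{eq:3.4}), induced from the datum $\rho_M \otimes H_*^{M(y)^\circ}(\cP_y^\circ,\dot{\cL})$. This is the $\dot{(-)}$- and homology-version of the relation between $q\cL$ and $K_M$ recorded in \cite[Proposition 3.5 and (63)]{AMS}.

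Finally I would pass to isotypic components. Forming $E^\circ_{y,\sigma,r} = \C_{\sigma,r}\otimes_{H^*_{M(y)^\circ}(\{y\})}(-)$ commutes with everything above, and $E^\circ_{y,\sigma,r,\rho^\circ\rtimes\rho_M}$ is obtained by applying $\Hom_{\pi_0(Z_{G^\circ M}(\sigma,y))}(\rho^\circ\rtimes\rho_M,-)$. By Lemma \ref{lem:3.2}, $\rho^\circ\rtimes\rho_M = \ind_{\pi_0(Z_{G^\circ M_\cL}(\sigma,y))}^{\pi_0(Z_{G^\circ M}(\sigma,y))}(\rho_M\otimes\rho^\circ)$, the induced representation matching the induced local system, and by Lemma \ref{lem:2.2} the subgroup occurring here is the stabilizer of $\rho^\circ$, reflecting $M_\cL \subset M$. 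Hence Frobenius reciprocity together with the Clifford-theoretic dictionary \cite[Proposition 1.1]{AMS}, applied to this induction, identifies $\Hom_{\pi_0(Z_{G^\circ M}(\sigma,y))}(\rho^\circ\rtimes\rho_M,-)$ of the induced object with $\Hom_{\pi_0(Z_{G^\circ}(\sigma,y))}(\rho^\circ, H_*^{M(y)^\circ}(\cP_y^\circ,\dot{\cL}))$, that is, with $E^\circ_{y,\sigma,r,\rho^\circ}$ for $(G^\circ,M^\circ,\cL)$ after tensoring with $\C_{\sigma,r}$. The resulting isomorphism is $\C[W_\cL^\circ]$-linear because on both sides this action comes from the same construction on $\dot{\mf g}$ and, as already exploited in the proof of Lemma \ref{lem:3.2}, the $W_\cL^\circ$-action on $q\cL$ restricts to the given one on $\cL$; it is canonical since no cocycle choices enter $\rho^\circ$, $\rho_M$, or the geometric identifications.

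The main obstacle is the middle step: making precise, with the full $M(y)^\circ$-equivariant structure intact, that $\dot{q\cL}$ is the ``$\C[M_\cL/M^\circ,\natural_\cL]$-induction'' of $\dot{\cL}$, so that the endomorphism action of $\End(K_M)$ is transported correctly to $H_*^{M(y)^\circ}(\cP_y^\circ,\dot{q\cL})$ and the isotypic pieces genuinely correspond. Once that structural statement is in hand — it is essentially \cite[Proposition 3.5]{AMS} carried along the $\dot{(-)}$-construction — the rest is Frobenius reciprocity and the bookkeeping of \cite[\S 1]{AMS}, paralleling the proof of Lemma \ref{lem:3.2}.
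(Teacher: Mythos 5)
Your route is essentially the paper's: identify the two copies of $\cP_y^\circ$ via $G^\circ /P^\circ \isom G^\circ M/P$, use \eqref{eq:3.5} to rewrite $\C_{\sigma,r} \otimes_{H_*^{M(y)^\circ}(\{y\})} H_*^{M(y)^\circ}(\cP_y^\circ ,\dot{q\cL})$ as $\ind_{M_\cL}^{M}\big( \rho_M \otimes E^\circ_{y,\sigma,r}\big)$, and then invoke the Clifford-theoretic adjunction of \cite[Proposition 1.1.d]{AMS}. The gap is in your final step. After Frobenius reciprocity/Clifford theory one is left with computing $\Hom_{\pi_0 (Z_{G^\circ}(\sigma_0,y))}\big(\rho^\circ , \ind_{M_\cL}^{M}(\rho_M \otimes E^\circ_{y,\sigma,r})\big)$, and restricting the induced object to $\pi_0 (Z_{G^\circ}(\sigma_0,y))$ produces a direct sum over the cosets $m M_\cL \in M/M_\cL$, in which the summand attached to $m \notin M_\cL$ is a standard module built on the translated local system $m\cdot \cL$ rather than on $\cL$. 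Your phrase ``the induced representation matching the induced local system'' silently discards these off-diagonal summands; Frobenius reciprocity and \cite[Proposition 1.1]{AMS} by themselves do not make them vanish.

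The missing input is exactly the cuspidal-support argument of the paper: by Proposition \ref{prop:2.5}, the $\rho^\circ$-isotypic part of a standard module built on $\cL'$ is nonzero only if $\Psi_{Z_{G^\circ}(\sigma_0)}(y,\rho^\circ)$ is $G^\circ$-conjugate to $(M^\circ ,\cC_v^M ,\cL')$, and since every element of $N_{G^\circ}(M^\circ)$ stabilizes $\cL$ by \cite[Theorem 9.2]{Lus2}, the triple $(M^\circ ,\cC_v^M , m\cdot \cL)$ with $m\cdot \cL \not\cong \cL$ is never $G^\circ$-conjugate to $(M^\circ ,\cC_v^M ,\cL)$. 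Hence only the identity coset contributes, giving $\rho_M \otimes E^\circ_{y,\sigma,r,\rho^\circ}$, after which the outer $\Hom_{\C[M_\cL /M^\circ ,\natural_\cL^{-1}]}(\rho_M , -)$ yields $E^\circ_{y,\sigma,r,\rho^\circ}$. Note also that, contrary to what you single out as the main obstacle, carrying \eqref{eq:3.5} through the $\dot{(\;)}$-construction and equivariant homology is the routine part; the genuinely non-formal step is this vanishing of the $M\setminus M_\cL$ contribution (and the stabilizer fact you attribute to Lemma \ref{lem:2.2} is really established at the end of the proof of Lemma \ref{lem:3.2}).
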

\begin{proof}
Let us recall that
\begin{equation}\label{eq:3.12}
\begin{array}{ll}
E^\circ_{y,\sigma,r,\rho^\circ} & = \; \Hom_{\pi_0 (Z_{G^\circ}(\sigma_0,y))}
\big( \rho^\circ, \C_{\sigma,r} \underset{H_*^{M(y)^\circ}(\{y\}) }{\otimes} 
H_*^{M(y)^\circ} (\cP_y^\circ,\dot{\cL}) \big) , \\
E^\circ_{y,\sigma,r,\rho^\circ \rtimes \rho_M}\hspace{-2mm} & 
= \; \Hom_{\pi_0 (Z_{G^\circ M}(\sigma_0,y))} \big( \rho^\circ \rtimes \rho_M, 
\C_{\sigma,r} \!\! \underset{H_*^{M(y)^\circ}(\{y\}) }{\otimes} \hspace{-3mm}
H_*^{M(y)^\circ} (\cP_y^\circ,\dot{q \cL}) \big) .
\end{array}
\end{equation}
Here the first $\cP_y^\circ$ is a subset of $G^\circ / P^\circ$, whereas the second 
$\cP_y^\circ$ is contained in $G^\circ M / P$. Yet they are canonically isomorphic via
$G^\circ / P^\circ \isom G^\circ P / P = G^\circ M / P$. By \eqref{eq:3.5}
\begin{align*}
\C_{\sigma,r} \underset{H_*^{M(y)^\circ}(\{y\}) }{\otimes}  H_*^{M(y)^\circ} 
(\cP_y^\circ,\dot{q \cL}) & \cong \ind_{M_\cL}^M \big( \rho_M \otimes \C_{\sigma,r} 
\underset{H_*^{M(y)^\circ}(\{y\})}{\otimes} 
H_*^{M(y)^\circ} (\cP_y^\circ,\dot{\cL}) \big) \\
& = \ind_{M_\cL}^M \big( \rho_M \otimes E^\circ_{y,\sigma,r} \big)
\end{align*}
From this and Proposition \cite[prop:1.1.d]{AMS} we see that
\[
E^\circ_{y,\sigma,r,\rho^\circ \rtimes \rho_M} \cong
\Hom_{\C[M_\cL / M^\circ,\natural_\cL^{-1}]} \Big( \rho_M, 
\Hom_{\pi_0 (Z_{G^\circ}(\sigma_0,y))} \big( \rho^\circ, \ind_{M_\cL}^M 
\big( \rho_M \otimes E^\circ_{y,\sigma,r} \big) \big) \Big) .
\]
Recall from Proposition \ref{prop:2.5} that $\rho^\circ$ only sees the cuspidal
support $(M^\circ,v,\cL)$. In the above expression the part of 
$\ind_{M_\cL}^M$ associated to $M \setminus M_\cL$ gives rise to cuspidal supports 
$(M^\circ,v,m \cdot \cL)$ with $m \cdot \cL \not\cong \cL$, so this part does not
contribute to $E^\circ_{y,\sigma,r,\rho^\circ \rtimes \rho_M}$. We conclude that
\[
\begin{split}
E^\circ_{y,\sigma,r,\rho^\circ \rtimes \rho_M} \cong
\Hom_{\C[M_\cL / M^\circ,\natural_\cL^{-1}]} \Big( \rho_M, 
\Hom_{\pi_0 (Z_{G^\circ}(\sigma_0,y))} \big( \rho^\circ, 
\rho_M \otimes E^\circ_{y,\sigma,r} \big) \Big) = \\
\Hom_{\C[M_\cL / M^\circ,\natural_\cL^{-1}]} \Big( \rho_M, \rho_M \otimes 
E^\circ_{y,\sigma,r,\rho^\circ} \Big) = E^\circ_{y,\sigma,r,\rho^\circ} . \qedhere
\end{split}
\]
\end{proof}

We note that, as a consequence of Lemmas \ref{lem:3.2}, \ref{lem:3.3} and Theorem 
\ref{thm:2.9}, Theorem \ref{thm:2.9} is also valid with $G^\circ$ replaced by 
$G^\circ M$, $L$ by $M$ and $\cL$ by $q\cL$. Knowing this and assuming Condition
\ref{cond:3}, we can use Clifford
theory to relate $\Irr (\mh H (G,M,q\cL))$ to $\Irr (\mh H (G^\circ M,M,q\cL))$.
All of Paragraphs \ref{par:intop}--\ref{par:temp} remains valid in the
setting of the current section. Let us summarise the most important results,
analogues of Theorem \ref{thm:2.16} and Corollary \ref{cor:2.18}.
In view of Lemma \ref{lem:2.19} we do not need Condition \ref{cond:3} anymore
once we have obtained these results. Therefore we state them without assuming
Condition \ref{cond:3}.

\begin{thm}\label{thm:3.4}
Fix $r \in \C$. 
\enuma{
\item Let $y,\sigma \in \mf g$ with $y$ nilpotent, $\sigma$ semisimple
and $[\sigma,y] = 2r y$. Let $\tau \in \\ \Irr \big( \pi_0 (Z_G (\sigma_0,y)) \big)$
such that $q \Psi_{Z_G (\sigma_0)} (y,\tau) = (M,\cC_v^M,q\cL)$ (up to 
$G$-conjugation). With these data we associate the 
$\mh H (G^\circ N_G (P,q\cL),M,q\cL)$-module
\[
E_{y,\sigma,r,\tau} = \Hom_{\pi_0 (Z_{G^\circ N_G (P,q\cL)}(\sigma_0,y))}
\big( \tau, \C_{\sigma,r} \underset{H_*^{M(y)^\circ}(\{y\}) }{\otimes} 
H_*^{M(y)^\circ} (\cP_y^\circ,\dot{q\cL}) \big) .
\]
Via \eqref{eq:3.13} we consider it also as a $\mh H (G,M,q\cL)$-module.

Then the $\mh H (G,M,q\cL)$-module $E_{y,\sigma,r,\tau}$ has a 
distinguished irreducible quotient $M_{y,\sigma,r,\tau}$, which appears with 
multiplicity one in $E_{y,\sigma,r,\tau}$.
\item The map $M_{y,\sigma,r,\tau} \longleftrightarrow (y,\sigma,\tau)$ gives
a bijection between $\Irr_r (\mh H (G,M,q\cL))$ and $G$-conjugacy classes of
triples as in part (a).
\item The set $\Irr_r (\mh H (G,M,q\cL))$ is also canonically in bijection with
the following two sets:
\begin{itemize}
\item $G$-orbits of pairs $(x,\tau)$ with $x \in \mf g$ and $\tau \in \Irr \big(
\pi_0 (Z_G (x)) \big)$ such that $q \Psi_{Z_G (x_S)} (x_N,\tau) = (M,\cC_v^M,q\cL)$
up to $G$-conjugacy.
\item $N_G (M)/M$-orbits of triples $(\sigma_0,\cC,\cF)$, with $\sigma_0 \in \mf t$,
$\cC$ a nilpotent $Z_G (\sigma_0)$-orbit in $Z_{\mf g}(\sigma_0)$ and $\cF$ a
$Z_G (\sigma_0)$-equivariant cuspidal local system on $\cC$ such that
$q \Psi_{Z_G (\sigma_0)} (\cC,\cF) = (M,\cC_v^M,q\cL)$ up to $G$-conjugacy.
\end{itemize}
}
\end{thm}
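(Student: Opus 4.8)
The plan is to deduce Theorem \ref{thm:3.4} from the already-established results about $\mh H(G,L,\cL)$ in Sections \ref{sec:reps} (especially Theorem \ref{thm:2.16}, Lemma \ref{lem:2.19} and Corollary \ref{cor:2.18}) by reducing the quasi-support statements to the ordinary-support statements via Lemmas \ref{lem:3.1}, \ref{lem:3.2} and \ref{lem:3.3}. First I would reduce to Condition \ref{cond:3}: by \eqref{eq:3.13} we have $\mh H(G,M,q\cL) = \mh H(G^\circ N_G(P,q\cL),M,q\cL)$, so I may assume $G = G^\circ N_G(P,q\cL)$ when proving parts (a) and (b), and then invoke the quasi-version of Lemma \ref{lem:2.19} at the end to drop Condition \ref{cond:3} for the parametrization statements (b) and (c). Fix an irreducible constituent $\cL$ of $q\cL$ as $M^\circ$-equivariant local system, and recall from \eqref{eq:3.11} the identification $\mh H(G^\circ M,M,q\cL) \cong \mh H(G^\circ,L,\cL)$ with $L = M^\circ$.

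The heart of the argument is the chain: $\mh H(G^\circ,L,\cL)$-modules $\rightsquigarrow$ $\mh H(G^\circ M,M,q\cL)$-modules $\rightsquigarrow$ $\mh H(G,M,q\cL)$-modules. For the first step, Lemma \ref{lem:3.3} gives a canonical isomorphism $E^\circ_{y,\sigma,r,\rho^\circ \rtimes \rho_M} \cong E^\circ_{y,\sigma,r,\rho^\circ}$ as $\mh H(G^\circ,L,\cL) = \mh H(G^\circ M,M,q\cL)$-modules, and Lemma \ref{lem:3.2} identifies the relevant parameter sets: the $\tau^\circ \in \Irr(\pi_0(Z_{QM}(y)))$ with $q\Psi_{QM}(y,\tau^\circ) = (M,\cC_v^M,q\cL)$ are exactly the $\rho^\circ \rtimes \rho_M$ for $\rho^\circ \in \Irr(\pi_0(Z_Q(y)))$ with $\Psi_Q(y,\rho^\circ) = (L,\cC_v^L,\cL)$, where $Q = Z_G(\sigma_0)$. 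So Theorem \ref{thm:2.9} (which applies to $\mh H(G^\circ,L,\cL)$) transfers verbatim to $\mh H(G^\circ M,M,q\cL)$, with its parametrization rewritten in terms of $q\Psi_{QM}$ and $q\cL$; this is exactly the remark made right before the statement of Theorem \ref{thm:3.4}. For the second step, $\mh H(G,M,q\cL) = \C[\cR_{q\cL},\natural_{q\cL}] \ltimes \mh H(G^\circ M,M,q\cL)$ by \eqref{eq:3.1}, and $\cR_{q\cL}$ acts on $G^\circ M$ by algebraic automorphisms fixing the cuspidal quasi-support data, so the entire machinery of Paragraphs \ref{par:intop}--\ref{par:param} — the intertwining operators $J^\gamma$ of Proposition \ref{prop:2.13}, the Clifford theory of Lemma \ref{lem:2.14}, the identification $E_{y,\sigma,r,\tau^\circ \rtimes \sigma^*} \cong \sigma \ltimes E^\circ_{y,\sigma,r,\tau^\circ}$ of Lemma \ref{lem:2.15}, and finally the generation argument of Theorem \ref{thm:2.16}(a) showing $E_{y,\sigma,r,\tau}$ has a unique maximal proper submodule — goes through with $L$ replaced by $M$, $\cL$ by $q\cL$, $G^\circ$ by $G^\circ M$. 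This yields (a); part (b) follows from the same Clifford-theoretic bookkeeping as in Theorem \ref{thm:2.16}(c), using \cite[Theorem 1.2]{AMS} and \cite[Theorem 1.2]{Sol2} to see when two such $M_{y,\sigma,r,\tau}$ are isomorphic, and the cuspidal-support identification $q\Psi_{QM}$ versus $q\Psi_G$ from \cite[\S 5]{AMS}.

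For part (c), I would copy the proof of Corollary \ref{cor:2.18}: by the quasi-analogue of Proposition \ref{prop:2.3}(c) we may assume $\sigma, \sigma_0 \in \mf t = \mathrm{Lie}(Z(M)^\circ)$, whereupon the $G$-orbit of $\sigma_0$ reduces to an $N_G(M)/M$-orbit in $\mf t$; the data $\tau \in \Irr(\pi_0(Z_G(\sigma_0,y)))$ is equivalent to that of a $Z_G(\sigma_0)$-equivariant local system $\cF$ on $\cC_y^{Z_G(\sigma_0)}$; and the substitution $x = \sigma_0 + y$ with Jordan decomposition $x = x_S + x_N$ recovers the pair $(x,\tau)$ with $Z_G(x) = Z_G(\sigma_0,y)$. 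One subtlety to flag: in part (c) the condition $q\Psi_{Z_G(\sigma_0)}(y,\tau) = (M,\cC_v^M,q\cL)$ is taken up to $G$-conjugacy rather than $Z_G(\sigma_0)$-conjugacy, so I should be careful about exactly which normalizer acts, using \cite[Theorem 4.8.a]{AMS} (the analogue used in Lemma \ref{lem:2.19}) to compare stabilizers. The main obstacle I anticipate is not conceptual but organizational: making precise that \emph{every} lemma and proposition of Paragraphs \ref{par:standard}--\ref{par:temp} — in particular the delicate $\lambda(s_i,\gamma)=1$ computation inside Lemma \ref{lem:2.12} and the Clifford-theoretic identification of the 2-cocycle $\natural_{q\cL}$ in the analogue of Lemma \ref{lem:2.11} — survives the substitution $(G^\circ,L,\cL) \leadsto (G^\circ M,M,q\cL)$ without any new input, beyond Lemmas \ref{lem:3.1}--\ref{lem:3.3} which supply precisely the facts (that $M$ stabilizes the $M^\circ$-orbit of $y$, and that the induced-local-system description $q\cL = \ind_{M_\cL}^M(\rho_M \otimes \cL)$ is compatible with the component-group bookkeeping) that the connected-support proofs implicitly used about the connected case. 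I would therefore present the proof as a systematic dictionary followed by a short verification that Lemmas \ref{lem:3.2} and \ref{lem:3.3} close the only gap, rather than re-running each argument.
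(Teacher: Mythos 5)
Your proposal is correct and follows essentially the same route as the paper: the paper itself proves Theorem \ref{thm:3.4} exactly by using \eqref{eq:3.13} and \eqref{eq:3.11} to reduce to Condition \ref{cond:3}, invoking Lemmas \ref{lem:3.2} and \ref{lem:3.3} to transfer Theorem \ref{thm:2.9} to $\mh H (G^\circ M,M,q\cL)$ with parameters rewritten via $q\Psi$, then declaring that all of Paragraphs \ref{par:intop}--\ref{par:temp} (intertwining operators, Clifford theory, Theorem \ref{thm:2.16}, Corollary \ref{cor:2.18}, Lemma \ref{lem:2.19}) go through under the substitution $(G^\circ,L,\cL)\leadsto (G^\circ M,M,q\cL)$. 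Your only deviations are cosmetic (e.g.\ citing \eqref{eq:3.1} rather than the crossed-product description analogous to \eqref{eq:1.10} for $\mh H (G,M,q\cL) = \C[\cR_{q\cL},\natural_{q\cL}]\ltimes \mh H (G^\circ M,M,q\cL)$), and the dictionary-plus-verification presentation you propose is precisely how the paper organizes this argument.
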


\appendix

\section{Compatibility with parabolic induction}

It has turned out that Theorem \ref{thm:2.24} is not correct as stated. The maps
given there have almost all the claimed properties, the only problem is that
usually they are not surjective. In this appendix (written in June 2018) we will 
repair that, by proving a weaker version of the Theorem. 

Given $Q$ as on page \pageref{eq:2.54}, $P Q^\circ$ is a parabolic subgroup of
$G^\circ$ with $Q^\circ$ as Levi factor. The unipotent radical $\mc R_u (P Q^\circ)$
is normalized by $Q^\circ$, so its Lie algebra $\mf u_Q = \mr{Lie} (\mc R_u (P Q^\circ))$
is stable under the adjoint actions of $Q^\circ$ and $\mf q$. In particular 
ad$(y)$ acts on $\mf u_Q$. We denote the cokernel of ad$(y) : \mf u_Q \to \mf u_Q$ by
$_y \mf u_Q$. For $v \in \mf u_Q$ and $(\sigma,r) \in \mr{Lie}(M(y)^\circ)$ we have
\[
[\sigma, [y,v]] = [y, [\sigma,v]] + [[\sigma,y], v] = 
[y, [\sigma,v]] + [2r y,v] \in \mr{ad}(y)\mf u_Q .
\]
Hence $\mr{ad}(y)$ descends to a linear map ${}_y \mf u_Q \to {}_y \mf u_Q$. 
Following Lusztig \cite[\S 1.16]{Lus7}, we define 
\[
\begin{array}{cccc}
\epsilon : & \mr{Lie}(M^Q (y)^\circ) & \to & \C \\
& (\sigma,r) & \mapsto & \det( \mr{ad}(\sigma) - 2r : {}_y \mf u_Q \to {}_y \mf u_Q) 
\end{array}.
\]
It is easily seen that $\epsilon$ is invariant under the adjoint action of $M^Q (y)$, so it 
defines an element of $H^*_{M^Q (y)}(\{y\})$. For a given $y$, all the parameters 
$(y,\sigma,r)$ for which parabolic induction from $\mh H (Q,L,\mc L)$ to 
$\mh H (G,L,\mc L)$ can behave problematically, are zeros of $\epsilon$. 

For any closed subgroup $C$ of $M^Q (y)^\circ$, $\epsilon$ yields an element $\epsilon_C$ of 
$H^*_C (\{y\})$ (by restriction). We recall from \cite[Proposition 7.5]{Lus3} that there is
a natural isomorphism 
\begin{equation}\label{eq:A.1}
H^C_* (\mc P_y, \dot{\mc L}) \cong H_C^* (\{y\}) \underset{H^*_{M^Q (y)}(\{y\})}{\otimes}
H^{M (y)^\circ}_* (\mc P_y, \dot{\mc L}) .
\end{equation}
Here $H_C^* (\{y\})$ acts on the first tensor leg and $\mh H (G,L,\mc L)$ acts on the second 
tensor leg. By Theorem \ref{thm:2.4}.b these actions commute, and $H^C (\mc P_y, \dot{\mc L})$
becomes a module over $H_C^* (\{y\}) \otimes_\C \mh H (G,L,\mc L)$.

Now we can formulate an improved version of Theorem \ref{thm:2.24}.

\begin{thm}\label{thm:A.1}
Let $Q$ and $y$ be as on page \pageref{eq:2.54}, and let $C$ be a maximal torus of 
$M^Q (y)^\circ$.
\enuma{
\item The map \eqref{eq:2.54} induces an injection of $\mh H (G,L,\cL)$-modules
\[
\mh H (G,L,\cL) \underset{\mh H (Q,L,\cL)}{\otimes} H_*^C (\mc P_y^Q,\dot{\cL})
\to H_*^C (\mc P_y,\dot{\cL}) .
\]
It respects the actions of $H_C^* (\{y\})$ and its image contains 
$\epsilon_C H_*^C (\mc P_y,\dot{\cL})$.
\item Let $(\sigma,r) / \!\! \sim \in V_y^Q$ be such that $\epsilon (\sigma,r) \neq 0$
or $r=0$. The map \eqref{eq:2.54} induces an isomorphism of $\mh H (G,L,\cL)$-modules
\[
\mh H (G,L,\cL) \underset{\mh H (Q,L,\cL)}{\otimes} E^Q_{y,\sigma,r} 
\to E_{y,\sigma,r} ,
\]
which respects the actions of $\pi_0 (M^Q (y))_\sigma$.
}
\end{thm}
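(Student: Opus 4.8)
The plan is to follow the proof of Theorem \ref{thm:2.24}, but now keeping track of the failure of surjectivity. Write $\Phi$ for the map in part (a). By \cite[1.16]{Lus7} it is well defined, $\mh H(G,L,\cL)$-linear and $H_C^*(\{y\})$-linear, since it is induced by the $M^Q(y)$-equivariant morphism \eqref{eq:2.54} via \eqref{eq:2.67}. So the content of (a) is that $\Phi$ is injective with $\epsilon_C H_*^C(\mc P_y,\dot{\cL}) \subseteq \mathrm{im}(\Phi)$; equivalently, that $\Phi$ becomes an isomorphism after inverting $\epsilon_C$ in $H_C^*(\{y\})$. And for (b), the plan is to deduce it from (a) by localizing at $(\sigma,r)$, with a separate argument at $r=0$.

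First I would treat the connected case $G=G^\circ$, $Q=Q^\circ$, where $\Phi$ is the left slanted map $\Phi^\circ$ in the commutative diagram \eqref{eq:2.55}. The error in the original argument was the claim that the right slanted map $\Psi^\circ$ is an isomorphism and that $\iota : H_*^C(\mc P_y^\circ,\dot{\cL}) \to H_*^C(\dot{\mc A},\dot{\cL})$ is injective; what \cite[\S 1.16]{Lus7} and the relevant parts of \cite[\S 2]{Lus7} actually give is the weaker statement that $\Psi^\circ$ is injective and that the cokernels of $\Psi^\circ$ and of $\iota$ are annihilated by $\epsilon_C$, so that both become isomorphisms after inverting $\epsilon_C$. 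Granting this, $\iota\circ\Phi^\circ=\Psi^\circ$ is injective, hence so is $\Phi^\circ$; and over the localized ring $H_C^*(\{y\})[\epsilon_C^{-1}]$ all three maps in \eqref{eq:2.55} are isomorphisms, so the cokernel of $\Phi^\circ$ is $\epsilon_C$-torsion, i.e. $\epsilon_C H_*^C(\mc P_y^\circ,\dot{\cL}) \subseteq \mathrm{im}(\Phi^\circ)$. I would then deduce the disconnected case exactly as in the proof of Theorem \ref{thm:2.24}(a): via Lemma \ref{lem:2.1} (for $Q$ and for $G$) and associativity of the tensor product, $\Phi$ is identified with $\mathrm{id}_{\mh H(G,L,\cL)} \underset{\mh H(G^\circ,L,\cL)}{\otimes}\Phi^\circ$, and since $\mh H(G,L,\cL)$ is free of finite rank over $\mh H(G^\circ,L,\cL)$ by \eqref{eq:1.10}, this tensor operation preserves injectivity and the property that $\epsilon_C$ kills the cokernel, and respects the $H_C^*(\{y\})$-action (which commutes with $\mh H(G,L,\cL)$ by Theorem \ref{thm:2.4}(b)). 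This gives (a).

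For (b), conjugate so that $(\sigma,r) \in \Lie(C)$. Applying $\C_{\sigma,r}\underset{H_C^*(\{y\})}{\otimes}-$ to the two sides of (a) and using \cite[Proposition 7.5]{Lus3} together with the commuting of the $\mh H$- and $H_C^*(\{y\})$-actions yields $\mh H(G,L,\cL) \underset{\mh H(Q,L,\cL)}{\otimes} E^Q_{y,\sigma,r}$ and $E_{y,\sigma,r}$. If $\epsilon(\sigma,r)\neq0$: in the short exact sequence $0 \to \big(\mh H(G,L,\cL) \underset{\mh H(Q,L,\cL)}{\otimes} H_*^C(\mc P_y^Q,\dot{\cL})\big) \xrightarrow{\Phi} H_*^C(\mc P_y,\dot{\cL}) \to N \to 0$ the cokernel $N$ is a finitely generated $H_C^*(\{y\})$-module killed by $\epsilon_C$, while $\epsilon_C$ acts on $\C_{\sigma,r}$ as the invertible scalar $\epsilon(\sigma,r)$; hence $\C_{\sigma,r}\otimes_{H_C^*(\{y\})}N=0$ and the same reasoning kills $\mathrm{Tor}_1^{H_C^*(\{y\})}(\C_{\sigma,r},N)$, so tensoring the sequence with $\C_{\sigma,r}$ gives the isomorphism. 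For $r=0$ one cannot localize at $\epsilon_C$, so I would argue as in the proof of Lemma \ref{lem:2.6}: since $\mc P_y^Q\neq\emptyset$ we may conjugate so that $\sigma\in\mf t$, put $R=Z_G(\sigma)$ and $Q'=Z_Q(\sigma)$ (so $L\subseteq Q'^\circ=Z_{Q^\circ}(\sigma)\subseteq R^\circ$ and $y\in\Lie(Q'^\circ)$), and observe that $\ad(\sigma)$ acts invertibly on the Lie algebras of the unipotent radicals of the parabolics of $G^\circ$ and of $Q^\circ$ with Levi factors $R^\circ$ and $Q'^\circ$; thus \cite[Corollary 1.18]{Lus7}, lifted to the disconnected setting via Lemma \ref{lem:2.1}, identifies $E_{y,\sigma,0}$ with $\mh H(G,L,\cL)\underset{\mh H(R,L,\cL)}{\otimes}E^R_{y,\sigma,0}$ and $E^Q_{y,\sigma,0}$ with $\mh H(Q,L,\cL)\underset{\mh H(Q',L,\cL)}{\otimes}E^{Q'}_{y,\sigma,0}$. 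As $\sigma$ is central in $R$ (hence in $Q'$), Lemma \ref{lem:2.6} reduces the remaining comparison of $\mh H(R,L,\cL)\underset{\mh H(Q',L,\cL)}{\otimes}E^{Q'}_{y,\sigma,0}$ with $E^R_{y,\sigma,0}$, after stripping off the scalar $S(\mf t^*)$-action, to compatibility of the (disconnected) generalized Springer correspondence with parabolic induction, i.e. \cite[Theorem 6.5]{Lus2} together with \cite[\S 4]{AMS}. In all cases the isomorphism is $\pi_0(M^Q(y))_\sigma$-equivariant because \eqref{eq:2.54} is $M^Q(y)$-equivariant.

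The main obstacle I anticipate is the connected case of part (a): extracting from \cite[\S 2]{Lus7} the precise refined statement that the comparison maps $\Psi^\circ$ and $\iota$ are injective with $\epsilon_C$-torsion cokernel rather than isomorphisms, and carrying out the diagram chase over $H_C^*(\{y\})[\epsilon_C^{-1}]$ cleanly. A secondary subtlety is the $r=0$ case of (b): one must route through $Z_G(\sigma)$ so that Lusztig's \cite[Corollary 1.18]{Lus7} applies, and there is no $\epsilon$-condition because the obstruction to surjectivity is a phenomenon of $r\neq0$.
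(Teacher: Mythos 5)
Your overall architecture (repair the proof of Theorem \ref{thm:2.24} by tracking where $\epsilon_C$ enters, then deduce (b) by specializing, with a separate argument at $r=0$) matches the paper, and your Tor-vanishing argument for the case $\epsilon (\sigma,r)\neq 0$ of part (b) is correct and essentially equivalent to what the paper does. However, your diagnosis in part (a) of what goes wrong in the original proof is not the right one: in \cite{Lus7} the right slanted map of diagram \eqref{eq:2.55} really is an isomorphism (Theorem 2.16 there) and the horizontal map really is injective (2.8.(g)); what fails is the \emph{commutativity} of \eqref{eq:2.55}, which holds only up to multiplication by $\epsilon_C$ — the discrepancy between $i_!$ and $(p^*)^{-1}$ for a vector bundle, identified as $\epsilon_C$ in \cite[Lemma 2.18]{Lus7}. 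Your claim that Lusztig only gives injectivity of $\Psi^\circ$ and $\iota$ with cokernels killed by $\epsilon_C$ is not what is available; fortunately, with the correct input ($\iota \circ \Phi^\circ = \epsilon_C \Psi^\circ$, $\iota$ injective, $\Psi^\circ$ an isomorphism) an analogous chase still yields injectivity of $\Phi^\circ$ and $\epsilon_C H^C_*(\mc P_y,\dot{\cL}) \subseteq \mathrm{im}(\Phi^\circ)$, so part (a) survives, but as written your intermediate assertions are false and also only give image $\supseteq \epsilon_C^k H^C_*$ via localization rather than the first power.

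The genuine gap is in your $r=0$ case of part (b). After routing through $R = Z_G(\sigma)$ and $Q' = Z_Q(\sigma)$ via \cite[Corollary 1.18]{Lus7} (which is fine, and is \eqref{eq:2.12}), the remaining comparison of $\mh H (R,L,\cL) \otimes_{\mh H (Q',L,\cL)} E^{Q'}_{y,\sigma,0}$ with $E^R_{y,\sigma,0}$ is exactly the degenerate point: since $\sigma$ is central in $R$ and $r=0$, the relevant $\epsilon$ for the pair $(R,Q')$ vanishes there, so no generic-parameter result applies, and this comparison cannot be dismissed as "compatibility of the generalized Springer correspondence with parabolic induction, \cite[Theorem 6.5]{Lus2} plus \cite[\S 4]{AMS}". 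Theorem 6.5 of \cite{Lus2} only describes the top-degree stalks, not an induction isomorphism for the full homology $H_*(\mc P_y,\dot{\cL})$; and in the disconnected setting the compatibility of the generalized Springer correspondence with parabolic induction at the level of multiplicities is precisely what this paper establishes \emph{afterwards}, as a consequence of Proposition \ref{prop:2.25} (see the Remark there about \cite[Theorem 4.8.b]{AMS}), so the citation is unavailable and circular. The paper closes this gap differently: by Theorem \ref{thm:2.4}.c the homology is finitely generated projective over $H^*_{M(y)^\circ}(\{y\})$, so the $\C[W_\cL]$-module structure of the specializations $E_{y,\sigma,r}$ (and of the induced modules) is constant in $(\sigma,r)$; one then transports the isomorphism from a parameter with $\epsilon(\sigma,r)\neq 0$ to $(\sigma_0,0)$, where both sides are determined by their $W$-module structure because $S(\mf t^*\oplus\C)$ acts by evaluation at $(\sigma_0,0)$, and finishes by transitivity of induction and Lemma \ref{lem:2.1}. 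Some such deformation or rigidity argument is needed at this point; without it your $r=0$ case does not go through.
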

\begin{proof}
(a) The given proof of Theorem \ref{thm:2.24} is valid with only one modification.
Namely, the diagram \eqref{eq:2.55} does not commute. A careful consideration of
\cite[\S 2]{Lus7} shows failure to do so stems from the difference between certain
maps $i_!$ and $(p^*)^ {-1}$, where $p$ is the projection of a vector bundle on its
base space and $i$ is the zero section of the same vector bundle. In \cite[Lemma 2.18]{Lus7}
this difference is identified as multiplication by $\epsilon_C$. \\
(b) For $(\sigma, r) \in \mr{Lie}(C)$ with $\epsilon (\sigma,r) \neq 0$, the proof
of Theorem \ref{thm:2.24}.b needs only one small adjustment. From \eqref{eq:A.1} we get
\begin{multline*}
\C_{\sigma,r} \underset{H_C^* (\{y\})}{\otimes} \epsilon_C H_*^C (\mc P_y, \dot{\cL}) \cong 
\C_{\sigma,r} \underset{H_C^* (\{y\})}{\otimes} \epsilon_C H^*_C (\{y\}) \underset{
H_{M(y)^\circ}^* (\{y\})}{\otimes} H_*^{M (y)^\circ} (\mc P_y, \dot{\cL}) \cong \\
\C_{\sigma,r} \underset{H_{M(y)^\circ}^* (\{y\})}{\otimes} H_*^{M (y)^\circ} 
(\mc P_y, \dot{\cL}) = E_{y,\sigma,r} .
\end{multline*}
The difference with before is the appearance of $\epsilon_C$, with that and the above
the proof of Theorem \ref{thm:2.24}.b goes through.

For $r = 0$, we know from \eqref{eq:2.12} that
\[
\mh H (G^\circ, L,\mc L) \underset{\mh H (Z_{G^\circ} (\sigma^\circ ),L,\mc L)}{\otimes}
E^{Z_{G^\circ} (\sigma^\circ )}_{y,\sigma_0,0} = E^\circ_{y,\sigma_0,0} .
\]
Let $Q_y \subset Z_{G^\circ} (\sigma^\circ )$ be a Levi subgroup which is minimal for the 
property that it contains $L$ and $\exp (y)$. Then $S(\mf t^* \oplus \C)$ acts on both
\begin{equation}\label{eq:A.2}
E^{Z_{G^\circ} (\sigma^\circ )}_{y,\sigma_0,0} \quad \mr{and} \quad
\mh H (Z_{G^\circ} (\sigma^\circ ), L,\mc L) \underset{\mh H (Q_y,L,\mc L)}{\otimes}
E^{Q_y}_{y,\sigma_0,0} 
\end{equation}
by evaluation at $(\sigma_0,0)$. Hence the structure of these two 
$\mh H (Z_{G^\circ} (\sigma^\circ ), L,\mc L)$-modules is completely determined by
the action of $\C [ W_\cL^{Z_{G^\circ} (\sigma^\circ )} ]$. But by Theorem \ref{thm:2.4}.c
\begin{equation}\label{eq:A.3}
E^{Z_{G^\circ} (\sigma^\circ )}_{y,\sigma,r} \quad \mr{and} \quad
\mh H (Z_{G^\circ} (\sigma^\circ ), L,\mc L) \underset{\mh H (Q_y,L,\mc L)}{\otimes}
E^{Q_y}_{y,\sigma,r} 
\end{equation}
do not depend on $(\sigma,r)$ as $\C [ W_\cL^{Z_{G^\circ} (\sigma^\circ )} ]$-modules.
From a case with $\epsilon (\sigma,r)$ we see that these two 
$W_\cL^{Z_{G^\circ} (\sigma^\circ )}$-representations are naturally isomorphic. Together
with \eqref{eq:2.12} that gives a natural isomorphism of $\mh H (G^\circ,L,\mc L)$-modules
\begin{equation}\label{eq:A.4}
\mh H (G^\circ, L,\mc L) \underset{\mh H (Q_y,L,\mc L)}{\otimes}
E^{Q_y}_{y,\sigma_0,0} \to E^\circ_{y,\sigma_0,0} . 
\end{equation}
By the transitivity of induction, \eqref{eq:A.4} entails that
\begin{equation}\label{eq:A.5}
\mh H (G^\circ, L,\mc L) \underset{\mh H (Q^\circ,L,\mc L)}{\otimes}
E^{Q^\circ}_{y,\sigma_0,0} \cong E^\circ_{y,\sigma_0,0} .
\end{equation}
From \eqref{eq:A.5} and Lemma \ref{lem:2.1} we get natural isomorphisms of 
$\mh H (G^\circ,L,\mc L)$-modules
\begin{multline*}
E_{y,\sigma_0,0} \cong \mh H (G, L,\mc L) \underset{\mh H (G^\circ,L,\mc L)}{\otimes}
E^\circ_{y,\sigma_0,0} \cong \mh H (G, L,\mc L) \underset{\mh H (Q^\circ,L,\mc L)}{\otimes}
E^{Q^\circ}_{y,\sigma_0,0} \\
\mh H (G, L,\mc L) \underset{\mh H (Q,L,\mc L)}{\otimes}
\mh H (Q, L,\mc L) \underset{\mh H (Q^\circ,L,\mc L)}{\otimes}
E^{Q^\circ}_{y,\sigma_0,0} \cong \mh H (G, L,\mc L) \underset{\mh H (Q,L,\mc L)}{\otimes}
E^Q_{y,\sigma_0,0} . 
\end{multline*}
Here the composed isomorphism is still induced by \eqref{eq:2.54}, so just as in the case
$\epsilon (\sigma,r) \neq 0$ it is $\pi_0 (M^Q (y))_\sigma$-equivariant.
\end{proof}

There is just result in the paper that uses Theorem \ref{thm:2.24}, namely Proposition
\ref{prop:2.25}. We have to replace that by a version which involves only the cases of
Theorem \ref{thm:2.24} covered by Theorem \ref{thm:A.1}. Fortunately, under the extra
condition $r=0$ or $\epsilon (\sigma,r) \neq 0$ the proof of Proposition \ref{prop:2.25}
goes through, when we replace the references to Theorem \ref{thm:2.24} by references to
Theorem \ref{thm:A.1}.

Since $\epsilon$ is a polynomial function, its zero set is a subvariety of smaller 
dimension (say of $V_y$). Nevertheless, we also want to explicitly exhibit a large class
of parameters $(y,\sigma,r)$ on which $\epsilon$ does not vanish. By Proposition 
\ref{prop:2.3}.c it suffices to do so under the assumption that $\sigma ,\sigma_0 \in \mf t$.

Let us call $x \in \mf t$ (strictly) positive with respect to $P Q^\circ$ if 
$\Re (\alpha (t))$ is (strictly) positive for all $\alpha \in R (\mc R_u (P Q^\circ),T)$.
We say that $x$ is (strictly) negative with respect to $P Q^\circ$ if $-x$ is (strictly)
positive.

\begin{lem}\label{lem:A.2}
Let $y \in \mf q$ be nilpotent and let $(\sigma,r) \in \mf t \oplus \C$ with
$[\sigma,y] = 2r y$. Suppose that $\sigma = \sigma_0 + \textup d \gamma_y \matje{r}{0}{0}{-r}$
as in \eqref{eq:2.52}, with $\sigma_0 \in Z_{\mf t}(y)$. Assume furthermore that one 
of the following holds:
\begin{itemize}
\item $\Re (r) > 0$ and $\sigma_0$ is negative with respect to $P Q^\circ$;
\item $\Re (r) < 0$ and $\sigma_0$ is positive with respect to $P Q^\circ$;
\item $\Re (r) = 0$ and $\sigma_0$ is strictly positive or strictly negative
with respect to $P Q^\circ$.
\end{itemize}
Then $\epsilon (\sigma,r) \neq 0$.
\end{lem}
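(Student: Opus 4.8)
The plan is to show directly that the linear map $\mr{ad}(\sigma) - 2r$ on the quotient ${}_y\mf u_Q$ has no kernel, by analysing its action on weight spaces with respect to the torus $T$ and the $\mf{sl}_2$-triple attached to $y$. First I would decompose $\mf u_Q = \bigoplus_{\alpha} \mf u_{Q,\alpha}$ into $T$-weight spaces, where $\alpha$ runs over $R(\mc R_u(PQ^\circ),T)$. Since $\sigma_0 \in \mf t$ commutes with $y$ and $\mr{d}\gamma_y$ takes values in $Z_G(\sigma_0)$, the element $\sigma = \sigma_0 + \mr{d}\gamma_y\matje{r}{0}{0}{-r}$ acts on the $\alpha$-weight space by $\alpha(\sigma_0)$ plus the semisimple part of the $\mf{sl}_2$-action. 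Further decomposing each $\mf u_{Q,\alpha}$ into isotypic components for the $\mf{sl}_2$ generated by $y$ and using the standard fact that $\mr{ad}(y)$ is surjective on the non-highest-weight part of each irreducible $\mf{sl}_2$-summand, I would identify ${}_y\mf u_Q$ with the direct sum of the highest-weight lines. On such a line of $\mf{sl}_2$-weight $2m\geq 0$ inside $\mf u_{Q,\alpha}$, the operator $\mr{ad}(\sigma)-2r$ acts as the scalar $\alpha(\sigma_0) + 2mr - 2r = \alpha(\sigma_0) + 2(m-1)r$.

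So the claim $\epsilon(\sigma,r)\neq 0$ reduces to showing that $\alpha(\sigma_0) + 2(m-1)r \neq 0$ for every $\alpha \in R(\mc R_u(PQ^\circ),T)$ and every $m \geq 0$ occurring. This is where the sign hypotheses enter. When $\Re(r)>0$ and $\sigma_0$ is negative with respect to $PQ^\circ$, we have $\Re(\alpha(\sigma_0))\leq 0$; on the summand with $m=0$ the eigenvalue is $\alpha(\sigma_0)-2r$, whose real part is $\leq -2\Re(r)<0$, hence nonzero; and for $m\geq 1$ the real part $\Re(\alpha(\sigma_0)) + 2(m-1)\Re(r)$ could a priori vanish. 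To rule that out I would need to recall that the $m\geq 1$ summands of ${}_y\mf u_Q$ sit in root spaces $\mf u_{Q,\alpha}$ for which $\alpha$, restricted along the cocharacter $\mr{d}\gamma_y$, pairs non-negatively; combined with negativity of $\sigma_0$ and positivity of $\Re(r)$ one checks the sign does not cancel. The cases $\Re(r)<0$ and $\Re(r)=0$ are handled symmetrically, with the strictness in the $\Re(r)=0$ case needed precisely because then the $2(m-1)r$ term is purely imaginary and cannot compensate a vanishing $\Re(\alpha(\sigma_0))$, so one must force $\alpha(\sigma_0)\neq 0$ outright via strict positivity or negativity.

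The main obstacle I anticipate is bookkeeping the interaction between the two gradings: the $T$-weight $\alpha$ and the $\mf{sl}_2$-weight $2m$ are not independent, because $\mr{d}\gamma_y\matje{1}{0}{0}{-1}$ can be chosen in $\mf t$ (by Proposition \ref{prop:2.3}.c), so that the $\mf{sl}_2$-weight is itself $\beta(\mr{d}\gamma_y\matje{1}{0}{0}{-1})$ for the appropriate $T$-weight $\beta$ of the line in question, and one has to be careful that ``negative with respect to $PQ^\circ$'' is a statement only about the $T$-weights of $\mf u_Q$, while the relevant eigenvalue mixes $\sigma_0$ (a $T$-element) with a multiple of $r$. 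I would resolve this by working entirely inside $\mf t\oplus\C$ after the reduction of Proposition \ref{prop:2.3}.c, writing the eigenvalue on a given weight line as $\gamma(\sigma_0) + (n_\gamma - 2)r$ where $\gamma$ is its $T$-weight and $n_\gamma\geq 0$ is the $\mf{sl}_2$-weight, and then invoking that $\gamma$ is a weight of $\mf u_Q$ together with a uniform inequality $n_\gamma \geq $ (something controlled by $\gamma$ being a root of the unipotent radical) to see that $\Re$ of the eigenvalue is bounded away from zero under each of the three hypotheses. This last inequality is the crux and is essentially the content of the cleanness/distinguishedness arguments in \cite{Lus7}; I would cite the relevant lemma there rather than reprove it.
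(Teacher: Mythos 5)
There is a genuine gap, and it sits exactly where you yourself flagged discomfort: the identification of ${}_y\mf u_Q$ with the highest-weight lines. The element $y=\textup{d}\gamma_y \matje{0}{1}{0}{0}$ acts through $\textup{d}\gamma_y$ as the \emph{raising} operator of $\mf{sl}_2$, and on an irreducible summand $\mr{Sym}^n (\C^2)$ the image of the raising operator is everything except the \emph{lowest} weight line; its kernel, not its cokernel, is the highest weight line. Hence ${}_y\mf u_Q$ is spanned by the lowest-weight lines, on which $\textup{d}\gamma_y \matje{r}{0}{0}{-r}$ acts by $-nr$ with $n \geq 0$. This is the computation in the paper: decomposing only into eigenspaces $_\lambda \mf u_Q$ of $\mr{ad}(\sigma_0)$ (these are $\mf{sl}_2$-stable because $\sigma_0$ commutes with $\textup{d}\gamma_y (\mf{sl}_2 (\C))$, whereas individual $T$-weight spaces need not be), one gets
\[
\epsilon (\sigma,r) = \prod\nolimits_{\lambda} \prod\nolimits_{n \geq 0}
\big( \lambda - (n+2) r \big)^{\mu (\lambda,n)} ,
\]
with $\lambda = \alpha (\sigma_0)$ for $\alpha \in R(\mc R_u (P Q^\circ),T)$. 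Since the coefficient of $r$ is always $-(n+2) \leq -2$, all three cases are immediate: for instance when $\Re (r) > 0$ and $\sigma_0$ is negative with respect to $P Q^\circ$, every factor has $\Re \big( \lambda - (n+2)r \big) \leq -2 \Re (r) < 0$; when $\Re (r) = 0$ the real part is $\Re (\alpha (\sigma_0))$, nonzero by strictness.

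Because you put the cokernel on the wrong end of the $\mf{sl}_2$-strings, your eigenvalues come out as $\alpha (\sigma_0) + 2(m-1)r$, whose real parts genuinely can vanish under the stated hypotheses (as you noticed for $m \geq 1$: a nonpositive $\Re (\alpha (\sigma_0))$ against a nonnegative $2(m-1)\Re (r)$). The patch you propose --- a uniform inequality linking the $\mf{sl}_2$-weight of a summand to its $T$-weight, to be imported from ``cleanness/distinguishedness arguments'' in Lusztig --- is not available in that form and is not what \cite[\S 1.16, Lemma 2.18]{Lus7} provides; no such input is needed. Once the cokernel is correctly identified with the lowest-weight lines the sign argument closes uniformly, and the rest of your set-up (commuting $\sigma_0$ with the $\mf{sl}_2$-triple, reducing to a scalar computation on each line) matches the paper's proof.
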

\begin{proof}
Via d$\gamma_y : \mf{sl}_2 (\C) \to \mf q$, $\mf u_Q$ becomes a finite dimensional
$\mf{sl}_2 (\C)$-module. Since $\sigma_0 \in \mf t$ commutes with $y$, it commutes with
d$\gamma_y (\mf{sl}_2 (\C))$. For any eigenvalue $\lambda \in \C$ of $\sigma_0$, let
$_\lambda \mf u_Q$ be the eigenspace in $\mf u_Q$. 

For $n \in \Z_{\geq 0}$ let $\mr{Sym}^n (\C^2)$ be the unique irreducible 
$\mf{sl}_2 (\C)$-module of dimension $n+1$. We decompose the $\mf{sl}_2 (\C)$-module 
$_\lambda \mf u_Q$ as
\[
_\lambda \mf u_Q = \bigoplus\nolimits_{n \geq 0} \mr{Sym}^n (\C^2)^{\mu (\lambda,n)} 
\quad \text{with} \quad \mu (\lambda,n) \in \Z_{\geq 0}.
\] 
The cokernel of $\matje{0}{1}{0}{0}$ on $\mr{Sym}^n (\C^2)$ is the lowest weight space
$W_{-n}$ in that representation, on which $\matje{r}{0}{0}{-r}$ acts as $-nr$. Hence
$\sigma$ acts on
\[
\mr{coker}(\mr{ad}(y) : _\lambda \mf u_Q \to _\lambda \mf u_Q) \cong
\bigoplus_{n \geq 0} W_{-n}^{\mu (\lambda,r)} \quad \text{as} \quad 
\bigoplus_{n \geq 0} (\lambda - nr) \mr{Id}_{W_n^{\mu (\lambda,r)}}.
\]
Consequently
\[
(\mr{ad}(\sigma) - 2r)\big|_{_\lambda \mf u_Q} = 
\bigoplus_{\lambda \in \C} \bigoplus_{n \geq 0} 
(\lambda - (n+2)r) \mr{Id}_{W_n^{\mu (\lambda,r)}}.
\]
By definition then 
\[
\epsilon (\sigma,r) = \prod_{\lambda \in \C} 
\prod_{n \geq 0} ( \lambda - (n+2) r )^{\mu (\lambda,n)} .
\]
When $\Re (r) > 0$ and $\sigma_0$ is negative with respect to $P Q^\circ$,
$\Re (\lambda - (n+2) r) < 0$ for all eigenvalues $\lambda$ of $\sigma_0$ on $\mf u_Q$,
and in particular $\epsilon (\sigma,r) \neq 0$.

Similarly, we see that $\epsilon (\sigma,r) \neq 0$ in the other two possible cases in the lemma.
\end{proof}

As an application of Lemma \ref{lem:A.2}, we prove a result in the spirit of the Langlands
classification for graded Hecke algebras \cite{Eve}. It highlights a procedure to obtain
irreducible $\mh H (G,L,\mc L)$-modules from irreducible tempered modules of a parabolic
subalgebra $\mh H (Q,L,\mc L)$: twist by a central character which is strictly positive
with respect to $P Q^\circ$, induce parabolically and then take the unique irreducible
quotient.

\begin{prop}\label{prop:A.3}
Let $y,\sigma,r,\rho$ be as in \ref{thm:2.16}
\enuma{
\item If $\Re (r) \neq 0$ and $\sigma_0 \in i \mf t_\R + Z(\mf g)$, then
$M_{y,\sigma,r,\rho} = E_{y,\sigma,r,\rho}$.
\item Suppose that $\Re (r) > 0$ and $\sigma,\sigma_0 \in \mf t$ such that $\Re (\sigma_0)$
is negative with respect to $P$. Then $\Re (\sigma_0)$ is strictly negative with
respect to $P Q^\circ$, where $Q = Z_G (\Re(\sigma_0))$. Further $M_{y,\sigma,r,\rho}$ is 
the unique irreducible quotient of $\mh H (G,L,\mc L) \underset{\mh H (Q,L,\mc L)}{\otimes} 
M^Q_{y,\sigma,r,\rho}$.
\item In the setting of part (b), $\IM^* M_{y,\sigma,r,\rho}$ is the unique irreducible 
quotient of 
\[
\IM^* \big( \mh H (G,L,\mc L) \underset{\mh H (Q,L,\mc L)}{\otimes} M^Q_{y,\sigma,r,\rho} \big)
\cong \mh H (G,L,\mc L) \underset{\mh H (Q,L,\mc L)}{\otimes} \IM^* (M^Q_{y,\sigma,r,\rho}) .
\]
Furthermore $\IM^* (M^Q_{y,\sigma,r,\rho})$ comes from the twist of a tempered 
$\mh H (Q^\circ,L,\mc L)$-module by a strictly positive character of $S(Z(\mf q^*))$.
}
\end{prop}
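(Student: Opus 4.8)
\textbf{Proof proposal for Proposition \ref{prop:A.3}.}

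The plan is to treat the three parts in order, using the machinery of Paragraphs \ref{par:standard}--\ref{par:temp} together with Lemma \ref{lem:A.2} and Theorem \ref{thm:A.1}. For part (a), first I would reduce to the connected semisimple case exactly as in the proof of Theorem \ref{thm:2.20}: write $\sigma_0 = \sigma_{0,\der} + z_0$ and split off the central factor $S(Z(\mf g)^*)$, then use \eqref{eq:2.45} to pass from $G$ to $G^\circ$, so that it suffices to show $M^\circ_{y,\sigma,r,\rho^\circ} = E^\circ_{y,\sigma,r,\rho^\circ}$ when $G = G^\circ$ is semisimple, $\Re(r)\neq 0$ and $\sigma_0\in i\mf t_\R$. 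Since $\sigma_0\in i\mf t_\R$, by Theorem \ref{thm:2.19} the module $E^\circ_{y,\sigma,r,\rho^\circ}$ is tempered (for $\Re(r)<0$) or anti-tempered (for $\Re(r)>0$); combined with \cite[Theorem 1.21]{Lus7}, an irreducible tempered standard module equals its own unique irreducible quotient, giving $M^\circ_{y,\sigma,r,\rho^\circ} = E^\circ_{y,\sigma,r,\rho^\circ}$. (For $\Re(r)>0$ one applies the same with $\tau=-\Re$.) Then lift back through \eqref{eq:2.43} and Lemma \ref{lem:2.15} as in Theorem \ref{thm:2.20}.

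For part (b), I would first observe that if $\Re(\sigma_0)$ is negative with respect to $P$ then, on the root subspaces spanning $\mf u_Q$ (the unipotent radical of $PQ^\circ$), the real parts of the relevant roots are \emph{strictly} negative, because $Q = Z_G(\Re(\sigma_0))$ precisely removes the roots killed by $\Re(\sigma_0)$; this gives strict negativity with respect to $PQ^\circ$. Hence Lemma \ref{lem:A.2} applies (case $\Re(r)>0$), so $\epsilon(\sigma,r)\neq 0$, and Theorem \ref{thm:A.1}(b) gives the isomorphism $\mh H(G,L,\cL)\otimes_{\mh H(Q,L,\cL)} E^Q_{y,\sigma,r} \cong E_{y,\sigma,r}$. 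Taking $\pi_0(M^Q(y))_\sigma$-isotypical components and invoking the (corrected) Proposition \ref{prop:2.25}, I get that $M_{y,\sigma,r,\rho}$ appears as a quotient of $\mh H(G,L,\cL)\otimes_{\mh H(Q,L,\cL)} M^Q_{y,\sigma,r,\rho}$, and by Proposition \ref{prop:2.25}(c) with multiplicity one. Uniqueness of the irreducible quotient then follows because any irreducible quotient of the induced module is an irreducible quotient of $\mh H(G,L,\cL)\otimes_{\mh H(Q,L,\cL)} E^Q_{y,\sigma,r,\rho^Q}$, and those are controlled by Theorem \ref{thm:2.16}(a) applied on the nose: the parabolically induced standard module surjects onto $E_{y,\sigma,r,\rho}$ which has the single irreducible quotient $M_{y,\sigma,r,\rho}$.

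For part (c), I would apply the Iwahori--Matsumoto involution and use its compatibility with parabolic induction, namely the canonical isomorphism \eqref{eq:2.62}, to transport the statement of part (b) through $\IM^*$; since $\IM^*$ is an equivalence of categories it sends the unique irreducible quotient to the unique irreducible quotient. It then remains to describe $\IM^*(M^Q_{y,\sigma,r,\rho})$ concretely. For this I would use the splitting $\mh H(Q^\circ,L,\cL) = \mh H(\mf t\cap\mf q_\der, W_\cL^{Q^\circ,\circ}, c\mb r)\otimes S(Z(\mf q)^*)$ from \eqref{eq:1.11}, write $\sigma_0 = \sigma_{0,\der}+z_0$ accordingly, and note that by Proposition \ref{prop:2.3}(b) the $S(Z(\mf q)^*)$-character is $z_0$, whose real part is (strictly) negative with respect to $PQ^\circ$ by hypothesis; the remaining tensor factor is a module over the semisimple part, which is tempered by Theorem \ref{thm:2.19} combined with $\sigma_{0,\der}\in i(\mf t\cap\mf q_\der)_\R$ (this last point needs the observation that modulo the center of $\mf q$ the element $\sigma_0$, being a parameter for $M^Q$ and having negative-with-respect-to-$P$ real part which becomes zero after projecting to $\mf q_\der/(\mf t\cap\mf q_\der)$ is only true on the tempered locus — more carefully, I would invoke that $M^Q_{y,\sigma,r,\rho}$ restricted to the semisimple part is tempered iff its semisimple $\sigma_0$-part lies in $i\mf t_\R$, which holds exactly when the tempered-induction-datum interpretation is valid). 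Applying $\IM^*$ flips the sign of $S(\mf t^*)$-weights, turning the strictly negative central character into a strictly positive one, so $\IM^*(M^Q_{y,\sigma,r,\rho})$ is indeed the twist of a tempered $\mh H(Q^\circ,L,\cL)$-module by a strictly positive character of $S(Z(\mf q)^*)$.

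The main obstacle I anticipate is part (c), specifically getting the temperedness of the semisimple-part factor of $M^Q_{y,\sigma,r,\rho}$ to line up cleanly: the hypothesis only says $\Re(\sigma_0)$ is negative with respect to $P$ (not to $PQ^\circ$, and a priori not that its image in $\mf q_\der$ is purely imaginary), so I need to argue carefully that after projecting away the center of $\mf q$ one lands on the tempered locus for $\mh H(Q^\circ_\der,\ldots)$. I expect this to come down to the same continuity/closedness argument used at the end of the proof of Theorem \ref{thm:2.19} (the cone $\mf t_\R^-$ relative to $Q^\circ$ is closed, and the $Q$-datum $\sigma_0$ has its $\mf q_\der$-part bounded by the $P$-positivity condition in a way that forces it into $i(\mf t\cap\mf q_\der)_\R$), but I would want to double-check the cone bookkeeping. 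Parts (a) and (b) I expect to be essentially routine given the corrected Theorem \ref{thm:A.1} and Proposition \ref{prop:2.25}.
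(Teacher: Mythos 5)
Your part (a) and the first claim of part (c) follow the paper's own route (split off the central character, quote \cite[Theorem 1.21]{Lus7}, and use \eqref{eq:2.62} together with the fact that $\IM^*$ is an equivalence). The genuine problems are in (b) and in the second half of (c), and both stem from one observation you never make: since $Q = Z_G (\Re (\sigma_0))$ and $\Re (\sigma_0)$ is semisimple, $\Re (\sigma_0)$ is central in $\mf q$; hence, writing $\sigma_0 = \sigma_{0,\der} + z_0$ along $\mf q = \mf q_\der \oplus Z(\mf q)$, one has $\Re (z_0) = \Re (\sigma_0)$ and $\sigma_{0,\der} \in i \mf t_\R$, so $\sigma_0 \in i\mf t_\R + Z(\mf q)$; moreover (as the paper's proof notes in its first line) $Z_G (\sigma,y) = Z_Q (\sigma,y)$, which is also what legitimises writing $M^Q_{y,\sigma,r,\rho}$ with the same $\rho$.

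In (b) your uniqueness step is a non sequitur as written: by Proposition \ref{prop:2.25}.a the module $\mh H (G,L,\cL) \otimes_{\mh H (Q,L,\cL)} E^Q_{y,\sigma,r,\rho^Q}$ is a direct sum over all $\rho'$ with $\Hom_{\pi_0 (Z_Q (\sigma,y))}(\rho^Q,\rho') \neq 0$ of copies of $E_{y,\sigma,r,\rho'}$, so in general it has several non-isomorphic irreducible quotients $M_{y,\sigma,r,\rho'}$; the fact that it surjects onto one $E_{y,\sigma,r,\rho}$ with a unique irreducible quotient controls nothing. The argument only closes because $Z_G (\sigma,y) = Z_Q (\sigma,y)$ forces $\rho^Q = \rho$ and collapses the sum to the single standard module $E_{y,\sigma,r,\rho}$. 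The paper in fact takes a shorter route: since $\sigma_0 \in i\mf t_\R + Z(\mf q)$, part (a) applied with $Q$ in place of $G$ gives $M^Q_{y,\sigma,r,\rho} = E^Q_{y,\sigma,r,\rho}$, so after Lemma \ref{lem:A.2} and Proposition \ref{prop:2.25} the induced module is literally $E_{y,\sigma,r,\rho}$ and Theorem \ref{thm:2.16} finishes. In (c), the obstacle you flag (why the $\mf q_\der$-component of $\sigma_0$ is purely imaginary) is not a matter of continuity or cone bookkeeping: it is immediate from $\Re (\sigma_0) \in Z(\mf q)$, which gives $\Re (z_0) = \Re (\sigma_0)$ (strictly negative by the first claim of (b)) and $\Re (\sigma_{0,\der}) = 0$. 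With that in hand, note also that for $\Re (r) > 0$ Theorem \ref{thm:2.19}.b makes the derived-part factor \emph{anti}-tempered, not tempered; it is the Iwahori--Matsumoto twist that simultaneously turns anti-tempered into tempered and the strictly negative central character into a strictly positive one, which is exactly the assertion. As submitted, your (c) is unfinished at precisely this point, and your proposed fix points in the wrong direction.
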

\textbf{Remark.} By \eqref{eq:2.48} the extra condition in part (a) holds for instance when
$\Re (r) > 0$ and $\IM^* (M_{y,\sigma,r,\rho})$ is tempered. By Proposition \ref{prop:2.3} 
and Lemma \ref{lem:1.1} every parameter $(y,\sigma)$ is 
$G^\circ$-conjugate to one with the properties as in (b) and (c).
\begin{proof}
(a) Write $\sigma_0 = \sigma_{0,\der} + z_0$ with $\sigma_{0,\der} \in \mf g_\der$ and 
$z_0 \in Z(\mf g)$. Then, as in the proof of \ref{cor:2.21},
\[
E^\circ_{y,\sigma,r,\rho^\circ} = E^\circ_{y,\sigma- z_0,r,\rho^\circ} \otimes \C_{z_0}
\quad \text{and} \quad 
M^\circ_{y,\sigma,r,\rho^\circ} = M^\circ_{y,\sigma- z_0,r,\rho^\circ} \otimes \C_{z_0} .
\]
By \cite[Theorem 1.21]{Lus7} $E^\circ_{y,\sigma - z_0,r,\rho^\circ} = 
M^\circ_{y,\sigma - z_0,r,\rho^\circ}$ as $\mh H (G_\der,L \cap G_\der,\mc L)$-modules, so
$E^\circ_{y,\sigma,r,\rho^\circ} = M^\circ_{y,\sigma,r,\rho^\circ}$ as 
$\mh H (G^\circ,L,\mc L)$-modules. Together with Lemma \ref{lem:2.15} and \eqref{eq:2.71} 
this gives $E_{y,\sigma,r,\rho} = M_{y,\sigma,r,\rho}$.\\
(b) Notice that $Z_G (\sigma,y) = Z_Q (\sigma,y)$, so by \cite[Theorem 4.8.a]{AMS} $\rho$ 
is a valid enhancement of the parameter $(\sigma,y)$ for $\mh H (Q,L,\mc L)$.

By construction $\Re (\sigma_0)$ is strictly negative with respect to
$P Q^\circ$. Now Lemma \ref{lem:A.2} says that we may apply Proposition \ref{prop:2.25}. 
That and part (a) yield
\[
\mh H (G,L,\mc L) \underset{\mh H (Q,L,\mc L)}{\otimes} M^Q_{y,\sigma,r,\rho} =
\mh H (G,L,\mc L) \underset{\mh H (Q,L,\mc L)}{\otimes} E^Q_{y,\sigma,r,\rho} =
E_{y,\sigma,r,\rho} .
\]
Now apply Theorem \ref{thm:2.16}.b.\\
(c) The first statement follows from part (b) and \ref{eq:2.62}. Write
\[
M^{Q^\circ}_{y,\sigma,r,\rho^\circ} = M^{Q^\circ}_{y,\sigma- z_0,r,\rho^\circ} \otimes \C_{z_0}
= M^{Q^\circ}_{y,\sigma- z_0,r,\rho^\circ} \otimes \big( \C_{z_0 - \Re (z_0)} \otimes
\C_{\Re (z_0)} \big)
\]
as in the proof of part (a), with $Q$ in the role of $G$. By Theorem \ref{thm:2.19}.b
$M^{Q^\circ}_{y,\sigma- z_0,r,\rho^\circ} \otimes \C_{z_0 - \Re (z_0)}$ is anti-tempered.
The definition of $Q$ entails that $\Re (z_0) = \Re (\sigma_0)$, which we know is strictly 
negative. Hence 
\[
\IM^* (M^{Q^\circ}_{y,\sigma,r,\rho^\circ}) = \IM^* (M^{Q^\circ}_{y,\sigma- z_0,r,\rho^\circ} 
\otimes \C_{z_0 - \Re (z_0)}) \otimes \C_{-\Re (\sigma_0)},
\]
where the right hand side is the twist of a tempered $\mh H (Q^\circ,L,\mc L)$-module by the 
strictly positive character $-\Re (\sigma_0)$ of $S(Z(\mf q^*))$. By \eqref{eq:2.53}
\begin{equation}\label{eq:A.2}
\IM^* (M^Q_{y,\sigma,r\rho}) = \tau \ltimes \IM^* (M^{Q^\circ}_{y,\sigma,r,\rho^\circ}) . 
\qedhere
\end{equation}
\end{proof}
 
We note that by Lemma \ref{lem:2.14} $S(Z(\mf q^*))$ acts on \eqref{eq:A.2} by the characters 
$\gamma (-\Re (\sigma_0))$ with $\gamma \in \mf R^Q$. Since $\mf R^Q$ normalizes $P Q^\circ$, 
it preserves the strict positivity of $-\Re (\sigma_0)$. In this sense 
$\IM^* (M^Q_{y,\sigma,r\rho})$ is essentially the twist of a tempered 
$\mh H (Q,L,\mc L)$-module by a strictly positive central character.

\end{document}